\definecolor{my-blue}{rgb}{0.05,0.1,0.5}
\definecolor{my-red}{rgb}{0.5,0.0,0.0}
\numberwithin{equation}{section}
\newtheorem{theorem}{\color{my-blue}{\sc Theorem}}[section]
\newtheorem{lemma}[theorem]{\color{my-blue}{\sc Lemma}}
\newtheorem{proposition}[theorem]{\color{my-blue}{\sc Proposition}}
\newtheorem{corollary}[theorem]{\color{my-blue}{\sc Corollary}}
\newtheorem{definition}[theorem]{\color{my-blue}{\sc Definition}}
\newtheorem{remark}[theorem]{\color{my-blue}{\sc Remark}}
\newcommand{\TV}[1]{{\lVert #1 \rVert}_{\normalfont \text{TV}}}
\def\mix{\textup{mix}}
\def\E{\mathbb{E}}
\def\P{\mathbb{P}}
\def\N{\mathbb{N}}
\def\R{\mathbb{R}}
\def\Z{\mathbb{Z}}
\newcommand{\I}{\mathrm{i}}
\newcommand{\C}{\mathbb{C}}
\renewcommand{\Im}{\operatorname{Im}}
\newcommand{\Or}{{\mathcal O}}
\newcommand{\expon}{\bar{\kappa}}
\newcommand{\expont}{\kappa^{\prime}}
\def\Jz{\mathcal{J}^{\mathbb{Z}}}
\def\Jn{\mathcal{J}^{\mathbb{N}}}
\def\Jo{\mathcal{J}}
\def\e{\varepsilon}
\def\c{\textup{c}} 
\def\h{\textup{h}} 
\def \one{\mathbf 1}
\def \two{\mathbf 2}
\def \zero{\mathbf 0}
\def \onep{\mathbf 1'}
\def \zerop{\mathbf 0'}
\def \lbr {\llbracket}
\def \rbr {\rrbracket}
\def\modi{\textup{mod}}
\def\cou{\textup{couple}}
\def\Aup{\normalfont\textsf{A}}
\def\Bup{\normalfont\textsf{B}}
\def\Cup{\normalfont\textsf{C}}
\def\rev{\textup{r}}
\def\texit{\tau_{\textup{exit}}}
\def\Lup{\normalfont\textsf{L}}
\def\Rup{\normalfont\textsf{R}}
\renewcommand{\ln}{\log}
\DeclareMathOperator{\Var}{Var}
\begin{document}

\begin{frontmatter}
\title{Mixing times for the open ASEP at the triple point}
\runtitle{Mixing times at the triple point}

\begin{aug}

\author[Patrik Ferrari]{\fnms{Patrik} \snm{Ferrari}\ead[label=e1]{ferrari@uni-bonn.de}}
\author[Dominik Schmid]{\fnms{Dominik} \snm{Schmid}\ead[label=e2]{d.schmid@uni-a.de}}

\address[Patrik Ferrari]{University of Bonn, Germany, \printead{e1}}
\address[Dominik Schmid]{University of Augsburg, Germany, \printead{e2}}

\end{aug}

\begin{abstract}
We consider mixing times for the open asymmetric simple exclusion process (ASEP) at the triple point. We show that the mixing time of the open ASEP on a segment of length $N$ for bias parameter $q$ is of order $N^{3/2+\kappa}$ if $1-q \asymp N^{-\kappa}$ for some $\kappa \in [0,\frac{1}{2})$, and the same result with poly-logarithmic corrections for $\kappa=\frac{1}{2}$. Our proof combines a fine analysis of the current of the open ASEP, moderate deviations of second class particles, the censoring inequality, and various couplings and multi-species extensions of the ASEP. Moreover, we establish a comparison between moderate deviations for the current of the open ASEP and the ASEP on $\Z$,
as well as bounds on mixing times for the open ASEP in the weakly high density phase, which are of independent interest.
\end{abstract}

\begin{keyword}[class=MSC2020]
\kwd[Primary ]{60K35}
\kwd[; secondary ]{60K37, 60J27}
\end{keyword}

\begin{keyword}
\kwd{exclusion process}
\kwd{mixing times}
\kwd{triple point}
\kwd{KPZ universality class} 
\end{keyword}

\end{frontmatter}



\section{Introduction} \label{sec:Introduction}
\sloppy

The asymmetric simple exclusion process (ASEP) is a fundamental model in the study of interacting particle systems. Over recent decades, it has been analyzed from various  perspectives, including statistical mechanics, probability theory, and combinatorics; see  \cite{BE:Nonequilibrium,L:Book2,WBE:CombinatorialMappings} for relevant surveys. In recent years, ASEP has gained further attention as one of the best studied and rigorously established examples belonging to the Kardar–Parisi–Zhang (KPZ) universality class; see \cite{C:KPZReview,C:SurveyStationary} for an overview by Corwin.

In this work, we focus on the asymmetric simple exclusion process with open boundaries, also known as the open ASEP. This model can be intuitively described as follows: Consider a segment of length $N$, where each site is either occupied by a particle or left empty. Each site has two independent Poisson clocks with rates $1$ and $q$, where $q = q(N) \in [0,1)$. When the rate $1$ clock rings at an occupied site, the particle moves to the right if the neighboring site is empty. Similarly, when the rate $q$ clock rings at an occupied site, the particle moves to the left, provided the destination site is empty. At the left boundary, particles can enter with rate $\alpha$ or exit with rate $\gamma$, while at the right boundary, particles exit with rate $\beta $ or enter with rate $\delta$; see Figure~\ref{fig:ASEP}. Here, $\alpha, \beta, \gamma, \delta \geq 0$ are parameters which may also depend on $N$. Depending on the choice of these boundary parameters, the open ASEP exhibits three distinct phases, illustrated in Figure~\ref{fig:phase}. Our analysis focuses on a particular case called the triple point, where the three phases meet, and which is closely linked to constructing solutions to the open KPZ equation \cite{BL:StationaryKPZ,BKWW:MarkovKPZ,BWW:ASEPtoKPZ,C:SurveyStationary,CK:StationaryKPZ,CS:OpenASEPWeakly,H:EDRep}.

A topic of particular interest for exclusion processes is the analysis of mixing times. Mixing times provide a standard way for Markov chains of quantifying the speed at which a process converges to its stationary distribution; see \cite{LPW:markov-mixing} for an introduction. In the context of the open ASEP, questions about mixing times are closely linked to properties of the ASEP on $\Z$. It  is well-known that the fluctuations of the particle current, i.e., the number of particles passing through a given site in the ASEP, can be used to express the speed of perturbation in the system, usually described in terms of second class particles; see  \cite{BCS:CubeRoot}. Intuitively, mixing times reflect the time required for an  initially placed perturbation, respectively second class particle, to disappear. We will elaborate more on this connection after introducing the main model and presenting the main results.

\subsection{Model and results}

We define the \textbf{open ASEP} as a continuous-time Markov chain  $(\eta_t)_{t\geq 0}$ with state space  $\Omega_N=\{0,1\}^{N}$ for some $N \in \N$, and with generator
\begin{equation} \label{def:openASEP}
\begin{split}
\mathcal{L}f(\eta) &= \sum_{x =1}^{N-1} \big(\eta(x)(1-\eta(x+1)) + q \eta(x+1)(1-\eta(x)) \big) \left[ f(\eta^{x,x+1})-f(\eta) \right]  \\
 &+ \alpha (1-\eta(1)) \left[ f(\eta^{1})-f(\eta) \right] \hspace{2pt} + \beta \eta(N)\left[ f(\eta^{N})-f(\eta) \right]   \\
  &+ \gamma \eta(1) \left[ f(\eta^{1})-f(\eta) \right] \hspace{2pt} + \delta (1-\eta(N))\left[ f(\eta^{N})-f(\eta) \right]
  \end{split}
\end{equation}
 for all measurable functions $f\colon \Omega_{N} \rightarrow \R$. Here, we use the standard notations
\begin{equation*}
\eta^{x,y} (z) = \begin{cases}
 \eta (z) & \textrm{ for } z \neq x,y\\
 \eta(x) &  \textrm{ for } z = y\\
 \eta(y) &  \textrm{ for } z = x \,
 \end{cases} \qquad \text{ and } \qquad \eta^{a} (z) = \begin{cases}
 \eta (z) & \textrm{ for } z \neq a\\
 1-\eta(z) &  \textrm{ for } z = a \,
 \end{cases}
\end{equation*}
to denote swapping of values in a configuration $\eta \in \Omega_{N}$ at sites $x,y \in \lbr N \rbr := \{1,2,\dots,N\}$, and flipping at $a\in \lbr N \rbr$, respectively. We say that site $x$ is \textbf{occupied} if $\eta(x)=1$, and \textbf{vacant} otherwise. A visualization of this process is given in Figure \ref{fig:ASEP}.
\begin{figure}
\centering
\begin{tikzpicture}[scale=1.05]

\def\spiral[#1](#2)(#3:#4:#5){
\pgfmathsetmacro{\domain}{pi*#3/180+#4*2*pi}
\draw [#1,
       shift={(#2)},
       domain=0:\domain,
       variable=\t,
       smooth,
       samples=int(\domain/0.08)] plot ({\t r}: {#5*\t/\domain})
}

\def\particles(#1)(#2){

  \draw[black,thick](-3.9+#1,0.55-0.075+#2) -- (-4.9+#1,0.55-0.075+#2) -- (-4.9+#1,-0.4-0.075+#2) -- (-3.9+#1,-0.4-0.075+#2) -- (-3.9+#1,0.55-0.075+#2);

  	\node[shape=circle,scale=0.6,fill=red] (Y1) at (-4.15+#1,0.2-0.075+#2) {};
  	\node[shape=circle,scale=0.6,fill=red] (Y2) at (-4.6+#1,0.35-0.075+#2) {};
  	\node[shape=circle,scale=0.6,fill=red] (Y3) at (-4.2+#1,-0.2-0.075+#2) {};
   	\node[shape=circle,scale=0.6,fill=red] (Y4) at (-4.45+#1,0.05-0.075+#2) {};
  	\node[shape=circle,scale=0.6,fill=red] (Y5) at (-4.65+#1,-0.15-0.075+#2) {}; }

  \def\annhil(#1)(#2){	  \spiral[black,thick](9.0+#1,0.09+#2)(0:3:0.42);
  \draw[black,thick](8.5+#1,0.55+#2) -- (9.5+#1,0.55+#2) -- (9.5+#1,-0.4+#2) -- (8.5+#1,-0.4+#2) -- (8.5+#1,0.55+#2); }

	\node[shape=circle,scale=1.5,draw] (B) at (2.3,0){} ;
	\node[shape=circle,scale=1.5,draw] (C) at (4.6,0) {};
	\node[shape=circle,scale=1.2,fill=red] (CB) at (2.3,0) {};
    \node[shape=circle,scale=1.5,draw] (A) at (0,0){} ;
 	\node[shape=circle,scale=1.5,draw] (D) at (6.9,0){} ;
 	 	\node[shape=circle,scale=1.5,draw] (Z) at (-2.3,0){} ;
	\node[shape=circle,scale=1.2,fill=red] (YZ) at (-2.3,0) {};
   \node[line width=0pt,shape=circle,scale=1.6] (B2) at (2.3,0){};
	\node[line width=0pt,shape=circle,scale=2.5] (D2) at (6.9,0){};
		\node[line width=0pt,shape=circle,scale=2.5] (Z2) at (-2.3,0){};
		
	\node[line width=0pt,shape=circle,scale=2.5] (X10) at (6.8,0){};
		\node[line width=0pt,shape=circle,scale=2.5] (X11) at (-2.2,0){};	
			\node[line width=0pt,shape=circle,scale=2.5] (X12) at (8.4,0){};
		\node[line width=0pt,shape=circle,scale=2.5] (X13) at (-3.8,0){};

		\draw[thick] (Z) to (A);	
	\draw[thick] (A) to (B);	
		\draw[thick] (B) to (C);	
  \draw[thick] (C) to (D);

\particles(0)(0);
\particles(6.9+4.9+1.6)(0);


\draw [->,line width=1pt]  (B2) to [bend right,in=135,out=45] (C);

   \draw [->,line width=1pt] (B2) to [bend right,in=-135,out=-45] (A);
    \node (text1) at (3.5,1){$1$} ;
	\node (text2) at (1.1,1){$q$} ;
	\node (text3) at (-2.3-1,1){$\alpha$};
	\node (text4) at (6.9+1,1){$\beta$};

	\node (text3) at (-2.3-1,-1){$\gamma$};
	\node (text4) at (6.9+1,-1){$\delta$};

    \node[scale=0.9] (text1) at (-2.2,-0.7){$1$} ;
    \node[scale=0.9] (text1) at (6.8,-0.7){$N$} ;
  	
  \draw [->,line width=1pt] (-3.9,0.475) to [bend right,in=135,out=45] (Z);
   \draw [->,line width=1pt] (Z) to [bend right,in=135,out=45] (-3.9,-0.475);
   \draw [->,line width=1pt] (6.9+1.6,-0.475) to [bend right,in=135,out=45] (D);	
   \draw [->,line width=1pt] (D) to [bend right,in=135,out=45] (6.9+1.6,0.475);

	\end{tikzpicture}	
	\caption{\label{fig:ASEP} Visualization of the open ASEP with respect to parameters $q,\alpha,\beta,\gamma,\delta$.}
	\end{figure}
		In order to study mixing times of the open ASEP, it will be convenient to consider specific functions $A,C$ of the boundary rates $\alpha,\beta,\gamma,\delta$ and the bias parameter $q$. Here and in the following, we usually drop the dependence on $N$ for ease of notation. When $\beta>0$, set
\begin{equation}\label{def:a}
A=A(\beta,\delta,q) := \frac{1}{2 \beta}\left( 1-q -\beta+ \delta + \sqrt{ (1-q -\beta+ \delta)^2 +4\beta\delta}\right)
\end{equation} and similarly, for $\alpha>0$,
\begin{equation}\label{def:c}
C=C(\alpha,\gamma,q) := \frac{1}{2 \alpha}\left( 1-q -\alpha+ \gamma + \sqrt{ (1-q -\alpha+ \gamma)^2 +4\alpha\gamma} \right) \ .
\end{equation}
We will assume that $\alpha,\beta>0$ in the following, hereby ensuring that the open ASEP has a unique stationary distribution $\mu_N$. We will see in Section \ref{sec:Prelim} that the parameters $A$ and $C$ play a crucial role as \textbf{effective reservoir densities}
\begin{equation}\label{def:EffectiveDensities}
\rho_{\Lup} := \frac{1}{1+C} \quad \text{and} \quad \rho_{\Rup} := \frac{A}{1+A} ,
\end{equation} which are closely related to basic properties of the stationary distribution $\mu_N$ of the open ASEP. {An important special case for the above parameters is \textbf{Liggett's condition} where the parameters have the simple form
\begin{equation}
    \gamma = (1-\alpha)q  , \quad \delta = (1-\beta)q , \quad  \rho_{\Lup}=\alpha , \quad  \rho_{\Rup}=1-\beta 
\end{equation}
}
Throughout this article, we will impose the assumption that
\begin{align}\label{def:TriplePointScaling}
q= \exp( - \psi N^{-\kappa} ) = 1 - \frac{\psi}{N^{\kappa}} + \mathcal{O}\left( N^{-2\kappa}\right)
\end{align}
for constants $\kappa \in [0,\frac{1}{2}]$ and $\psi>0$ -- see Section \ref{sec:Notation} for the respective asymptotic notation -- and
\begin{equation}\label{def:LiggettTypeCondition}
{\alpha + q^{-1} \gamma = 1 + \mathcal{O}(N^{-1/2}) \quad \text{and}   \quad  \beta + q^{-1} \delta = 1 + \mathcal{O}(N^{-1/2}) }.
\end{equation}
{Note that the special case when the error term $\mathcal{O}(N^{-1/2})$ vanishes is exactly Liggett's condition.}
We investigate the speed of convergence to the stationary distribution under the  \textbf{total variation distance}, i.e., for two probability measure $\nu,\nu^{\prime}$ on $\Omega_{N}$, we set
\begin{equation}\label{def:TVDistance}
\TV{ \nu - \nu^{\prime} } := \frac{1}{2}\sum_{x \in \Omega_{N}} |\nu(x)-\nu^{\prime}(x)| = \max_{A \subseteq \Omega_{N}} \left\{\nu(A)-\nu^{\prime}(A)\right\}.
\end{equation} We study the $\boldsymbol\varepsilon$\textbf{-mixing time} of $(\eta_t)_{t \geq 0}$, which is defined as
\begin{equation}\label{def:MixingTime}
t^{N}_{\text{\normalfont mix}}(\varepsilon) := \inf\Big\lbrace t\geq 0 \ \colon \max_{\eta^{\prime} \in \Omega_{N}} \TV{\P\left( \eta_t \in \cdot \ \right | \eta_0 = \eta^{\prime}) - \mu_{N}} < \varepsilon \Big\rbrace
\end{equation} for all $\varepsilon \in (0,1)$, and have the following result on the mixing time of the open ASEP when the effective densities are within order $N^{-1/2}$ of the \textbf{triple point}  $A=C=1$. {In the following, for functions $f,g \colon \N \rightarrow \R$, we write $f \lesssim g$ if $f(N)g(N)^{-1} \leq C_0$ as well as
$f \asymp g$ if $c'_0 \leq f(N)g(N)^{-1} \leq C'_0$ for constants $c_0,C_0',C_0>0$, and all $N$ large enough.}
\begin{theorem}\label{thm:MixingTimesTriple} Let $q$ satisfy \eqref{def:TriplePointScaling} for some $\kappa \in [0,\frac{1}{2})$ and $\psi>0$. Let $\alpha,\beta,\gamma,\delta$ satisfy condition  \eqref{def:LiggettTypeCondition}. Assume that there exist some constants $\tilde{A},\tilde{C} \in \R$ such that $A,C$ satisfy
\begin{equation}\label{eq:ScalingCondition}
A = \exp(-\tilde{A}N^{-1/2}) \quad \text{and} \quad C = \exp(-\tilde{C}N^{-1/2})
\end{equation}  for all $N$. Then for all $\varepsilon\in (0,1)$ fixed, we get that
\begin{equation}
t^{N}_{\text{\normalfont mix}}(\varepsilon) \asymp  N^{3/2+\kappa} .
\end{equation}
\end{theorem}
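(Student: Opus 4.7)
The plan is to prove matching upper and lower bounds of order $N^{3/2+\kappa}$, exploiting the fact that at the triple point both effective reservoir densities $\rho_{\Lup}$ and $\rho_{\Rup}$ equal $1/2 + \mathcal{O}(N^{-1/2})$. This puts the system in the KPZ regime at bulk density $1/2$, where a generic perturbation (second class particle) has zero mean drift. Under the scaling $1-q \asymp N^{-\kappa}$, the characteristic KPZ time for such a second class particle to exit a window of length $N$ is $N^{3/2}/(1-q) \asymp N^{3/2+\kappa}$, which is the scale the theorem predicts.

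For the upper bound I would first invoke the censoring inequality of Peres--Winkler to reduce the worst-case initial condition to the two extremal configurations $\zero$ and $\one$; since $\rho_{\Lup}, \rho_{\Rup}$ are both close to $1/2$, neither extremal state is much farther from equilibrium than a generic one, and the monotonicity arguments carry over. I would then couple two copies started from $\zero$ and $\one$ via the basic coupling and study the resulting discrepancy process. Using a multi-species extension of the ASEP I would embed the discrepancies as second class particles, and then compare with the ASEP on $\mathbb{Z}$ at density $1/2$ using the comparison of moderate deviations for currents between the open ASEP and the ASEP on $\mathbb{Z}$ announced in the abstract. Combined with moderate deviation estimates for second class particles in the line ASEP, this should show that by time $C N^{3/2+\kappa}$ with $C$ large enough, every discrepancy has exited through one of the boundaries, giving $t^{N}_{\text{\normalfont mix}}(\varepsilon) \lesssim N^{3/2+\kappa}$.

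For the lower bound I would start from $\zero$ and track the natural test statistic $\sum_{x=1}^{N} \eta_t(x)$, the total particle count, which concentrates near $N/2$ under $\mu_N$. The total number of particles in the system at time $t$ is bounded above by the integrated incoming current at the left boundary (minus the outgoing current at the right). Using the fine current bounds at the triple point, where the average current is $1/4 + o(1)$ per unit rescaled time and fluctuations are of KPZ order, one checks that for $t = c N^{3/2+\kappa}$ with $c$ small enough the total particle count stays bounded away from $N/2$ with high probability. An alternative route, cleaner for $\kappa \in (0,\frac{1}{2})$, is to perturb $A$ and $C$ slightly so as to sit just inside the weakly high density phase, invoke the mixing time lower bounds developed there (announced in the abstract), and pass to the limit back to the triple point.

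I expect the main obstacle to be the fine comparison of currents between the open ASEP and the ASEP on $\mathbb{Z}$. Away from the triple point one or both reservoirs dictate the bulk density and such a comparison is relatively robust; at the triple point itself the bulk KPZ fluctuations, the boundary fluctuations from the reservoirs, and the second-class-particle dispersion are all of the same order, so the coupling must preserve quantitative tail bounds rather than merely scaling limits. Making this work in the weakly asymmetric regime requires a moderate deviation principle that is uniform in both the system size and the asymmetry parameter $\kappa$, which is precisely where the multi-species couplings and the \emph{weakly high density phase} bounds advertised in the abstract are likely to enter the proof.
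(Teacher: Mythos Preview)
Your upper bound sketch has a genuine gap. Coupling from $\zero$ and $\one$ yields $N$ discrepancies, and the Gantert--Nestoridi--Schmid machinery pushes them out by comparing currents of two stationary systems with nearby boundary parameters. But at the triple point both comparison systems live in the maximal current phase, where the stationary current is $(1-q)/4$ to leading order regardless of the boundary data; the difference is only of order $(1-q)N^{-1}\asymp N^{-1-\kappa}$ (this is exactly the second--order expansion of Proposition~\ref{pro:CurrentMaxCurrent1}). Integrating this over time $T$ gives $T N^{-1-\kappa}$, and to flush out $N$ discrepancies you would need $T\gtrsim N^{2+\kappa}$, not $N^{3/2+\kappa}$. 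The paper circumvents this via the iterative scheme of Section~\ref{sec:MixingHighLow}: for each $n\le \tfrac12\log_2 N$ one passes through an auxiliary open ASEP in the weakly high density phase at density $\tfrac12+2^{-n}$, where the current gap is $(1-q)4^{-n}$ and the number of remaining discrepancies is $O(N2^{-n})$, so step $n$ costs time $\asymp N2^{n}(1-q)^{-1}$; summing gives $N^{3/2+\kappa}$ and leaves only $O(\sqrt N)$ discrepancies. Only then does the maximal--current comparison (Lemma~\ref{lem:CurrentMaxPhase}, which needs the strict monotonicity of $F(\tilde A,\tilde C)$ from Lemma~\ref{lemMonotonicity}) finish the job. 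Your proposal mentions neither the iteration nor the second--order current estimate, and without them the argument stalls at $N^{2+\kappa}$.

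Your lower bound via $\sum_x\eta_t(x)$ from $\zero$ also falls short: the hydrodynamic front reaches the whole segment in time $\asymp N(1-q)^{-1}=N^{1+\kappa}$, after which the total particle count is already $N/2+O(\sqrt N)$, so this statistic only gives $t_{\mix}^{N}\gtrsim N^{1+\kappa}$. The paper instead starts from a configuration with the \emph{correct} macroscopic density $\tfrac12$ but with an extra $\Theta(\sqrt N)$ second class particles planted on $[\tfrac38 N,\tfrac58 N]$, then uses the second--class--particle moderate deviation bounds (Lemmas~\ref{lem:MaxSecondClassOrder} and~\ref{lem:ExtraSecondClass}) together with the boundary comparison (Proposition~\ref{pro:SpeedOfDisagreement}, Lemma~\ref{lem:SpeedOfPropagationCoupling}) to show none of them escape by time $cN^{3/2+\kappa}$; the excess particle count on the middle half then distinguishes this state from $\mu_N$ via Lemma~\ref{lem:StationaryParticleDensity}. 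Your alternative of perturbing into the weakly high density phase does not help either, since the paper proves only \emph{upper} bounds there (Propositions~\ref{pro:MixingTimesWeaklyHighLow} and~\ref{pro:MixingTimesWeaklyHighLowCritical}).
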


This confirms Conjecture 1.9 by Gantert et.\ al in \cite{GNS:MixingOpen} for the open ASEP at the triple point  {under Liggett's condition, i.e., when $\tilde{A}=\tilde{C}=0$ and $\kappa=0$, and where we have the simple parameter expression
\begin{equation}
    \alpha = \beta= \frac{1}{2}, \quad \gamma=\delta=\frac{q}{2} . 
\end{equation} }
To our best knowledge, this is the first time a sub-diffusive bound on the mixing time for a partially asymmetric system is established; see \cite{S:MixingTASEP} for the statement of Theorem \ref{thm:MixingTimesTriple} when $q=0$. We conjecture that Theorem \ref{thm:MixingTimesTriple} remains valid in the \textbf{maximal current phase} when we allow for general $A,C \leq 1$ {; see Remark~\ref{rem:MaxCurrent} on the possible extension of our arguments to the maximal current phase}.
When $\kappa=\frac{1}{2}$, we obtain the following bound on the mixing time.
\begin{theorem}\label{thm:MixingTimesKPZ} Let $q$ satisfy \eqref{def:TriplePointScaling} for $\kappa=\frac{1}{2}$ and some $\psi>0$. Let $\alpha,\beta,\gamma,\delta$ satisfy Liggett's condition. Assume that there exist some constants $\tilde{A},\tilde{C} \in \R$ such that \eqref{eq:ScalingCondition} holds for all $N\in \N$.
Then for all $\varepsilon\in (0,1)$ fixed, we get that
\begin{equation}
N^{2}\log^{-1}(N) \lesssim t^{N}_{\text{\normalfont mix}}(\varepsilon)  \lesssim  N^2\log^3(N)  .
\end{equation}
\end{theorem}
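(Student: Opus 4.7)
The plan is to follow the same blueprint used for Theorem \ref{thm:MixingTimesTriple}, proving the lower and upper bounds separately, while keeping careful track of the extra polylogarithmic overhead that arises exactly at the KPZ scaling $\kappa=\tfrac{1}{2}$. At this value, the asymmetry $1-q \asymp N^{-1/2}$ is on the scale that drives the open ASEP to the open KPZ equation, and this is precisely the scale at which the moderate deviation estimates for the current and for second class particles pick up their first logarithmic correction.

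For the lower bound $t^{N}_{\text{\normalfont mix}}(\varepsilon) \gtrsim N^{2}\log^{-1}(N)$, I would use a distinguishing statistic argument, starting from the extremal configuration $\eta_0 \equiv \zero$ (or $\eta_0 \equiv \one$). A natural choice of statistic is either the total occupation $\sum_{x=1}^{N}\eta_{t}(x)$ or the integrated current through a fixed bond, both of which have a computable fluctuation scale under $\mu_{N}$ but whose expectations take macroscopic time to saturate when started from $\zero$ or $\one$. The key technical input, advertised in the abstract and established earlier in the paper, is the comparison of current moderate deviations between the open ASEP and the ASEP on $\Z$: in the $\kappa=\tfrac{1}{2}$ window the appropriate tail estimate carries a logarithmic correction, so that after time $cN^{2}/\log N$ the chosen statistic still differs from its stationary mean by more than its stationary standard deviation, ruling out mixing before then.

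For the upper bound $t^{N}_{\text{\normalfont mix}}(\varepsilon) \lesssim N^{2}\log^{3}(N)$, I would apply the censoring inequality to reduce the problem to coupling two copies of the open ASEP started from $\zero$ and $\one$, inserting second class particles at the discrepancies. The goal is to show that all second class particles have exited $\lbr N \rbr$ by time $T = CN^{2}\log^{3}(N)$ with probability at least $1-\varepsilon$. This I would do by iterating a moderate deviation bound for the second class particle displacement over $\mathcal{O}(\log N)$ successive epochs, combined with the multi-species coupling to the ASEP on $\Z$ and the bounds on mixing in the weakly high density phase (also stated in the abstract as a separately established result) to deal with boundary effects. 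Each log factor in $\log^{3}(N)$ is contributed by one of these three steps: the single-epoch moderate deviation bound, the union bound over epochs, and the reduction to the weakly high density phase near the two endpoints.

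The main obstacle in both directions is the sharp control of the logarithmic corrections in the KPZ window. For the lower bound one needs a matching lower moderate deviation estimate for the current that is accurate up to the correct logarithmic order; for the upper bound one needs an upper moderate deviation bound for the second class particle which is tight enough so that iterating over $\mathcal{O}(\log N)$ epochs does not blow up the exponent beyond $\log^{3}(N)$. The remaining ingredients, namely censoring, couplings, and multi-species extensions, go through essentially as in the proof of Theorem \ref{thm:MixingTimesTriple} with $\kappa$ replaced formally by $\tfrac{1}{2}$, but the arithmetic of the log exponents has to be checked carefully in order to produce the advertised exponents $-1$ and $+3$ on the two sides.
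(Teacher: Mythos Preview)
Your proposal is a high-level outline rather than a proof, and in both directions it diverges from the paper's actual argument in ways that matter.

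\textbf{Lower bound.} You propose starting from the extremal configuration $\zero$ (or $\one$) and using total occupation or the current as a distinguishing statistic. The paper does something different and more direct: it starts from a configuration that is Bernoulli-$\tfrac{1}{2}$ everywhere except on the middle interval $[\tfrac{3}{8}N,\tfrac{5}{8}N]$, where an extra density $z/\sqrt{N}$ of second class particles is planted (the measure $\bar{\pi}^{z}_N$). One then shows via Proposition~\ref{pro:SpeedOfDisagreement}, Lemma~\ref{lem:SpeedOfPropagationCoupling} and Lemma~\ref{lem:ExtraSecondClass} that with high probability none of these second class particles escape $[\tfrac{1}{4}N,\tfrac{3}{4}N]$ by time $cN^{2}\log^{-1}(N)$, so the particle count on that sub-interval remains too large compared to its stationary value (Lemma~\ref{lem:StationaryParticleDensity}). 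Your approach of starting from $\zero$ is harder to execute: the density profile relaxes to $\tfrac{1}{2}$ on the hydrodynamic scale $N/(1-q)\asymp N^{3/2}\ll N^{2}/\log N$, so the total occupation is no longer an effective distinguishing statistic at the target time, and one would need to detect discrepancies at the fluctuation level, which is a substantially different analysis.

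\textbf{Upper bound.} Your accounting of the three logarithms is not the mechanism the paper uses. The paper first applies Proposition~\ref{pro:MixingTimesWeaklyHighLowCritical} to sandwich the process between Bernoulli-$(\tfrac{1}{2}\pm C\log(N)/\sqrt{N})$ configurations. Then, in a single epoch of length $T\asymp N^{2}\log(N)$, Lemma~\ref{lem:ExitMaxCritical} (via Lemma~\ref{lem:CurrentCriticalKappaMax}) compares the stationary currents of two open ASEPs using the exact Askey--Wilson asymptotics of Proposition~\ref{pro:CurrentMaxCurrent3} and Lemma~\ref{lemMonotonicityBis}, obtaining $\E[\mathcal{J}^{(2)}_T-\mathcal{J}^{(1)}_T]\asymp \sqrt{N}\log(N)$ while $\Var(\mathcal{J}^{(2)}_T-\mathcal{J}^{(1)}_T)\lesssim N\log^{4}(N)$. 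The Paley--Zygmund inequality then gives coalescence probability $\gtrsim \log^{-2}(N)$ per epoch; iterating $\log^{2}(N)$ epochs of length $N^{2}\log(N)$ yields $N^{2}\log^{3}(N)$. You do not mention the Paley--Zygmund step or the exact current estimates, and your attribution of one log factor to a ``union bound over epochs'' and one to ``reduction to the weakly high density phase'' does not match this decomposition. Also, the censoring inequality is not used to set up the coupling here; the relevant structure is the partially ordered multi-species open ASEP of Definition~\ref{def:PartiallyOrdered} and the diminished process of Definition~\ref{def:PartiallyDiminished}.
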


We conjecture that the correct order of the mixing time in Theorem \ref{thm:MixingTimesKPZ} is $N^2$. This is supported by results of Corwin and Shen, as well as Parekh, who establish in \cite{CS:OpenASEPWeakly} and \cite{P:KPZlimit} convergence under the scaling of Theorem \ref{thm:MixingTimesKPZ} to the open KPZ equation, and by Knizel and Matetski as well as Parekh, who establish a one force one solution principle for the open KPZ equation \cite{KM:StrongFeller,P:KPZergodic}.
On the way of establishing the main theorems, we will also prove  bounds on moderate deviations and the variance of the current of the open ASEP in Proposition~\ref{pro:CurrentNtoO} and  Corollary~\ref{cor:ModDevitationCurrentO}, as well as upper bounds on the mixing time in the weakly high density phase in Propositions~\ref{pro:MixingTimesWeaklyHighLow} and \ref{pro:MixingTimesWeaklyHighLowCritical}, which are of independent interest.

\subsection{Related work} \label{sec:RelatedWork}

Exclusion processes can be studied from a variety of different perspectives. Classical studies on exclusion processes focus on characterizing the invariant measure; see \cite{L:interacting-particle} for an introduction. While the set of invariant measures for ASEP on $\mathbb{Z}$ has a simple explicit description -- see Section 3 in Part VIII of \cite{L:interacting-particle} -- the stationary distribution of the open ASEP is still not fully understood.   In a classical result, Liggett establishes in \cite{L:ErgodicI} the phase diagram from Figure \ref{fig:phase}, showing that the overall density in the open ASEP depends on the effective densities at the boundaries. Since the 90s, an important tool to study the invariant measures of the open ASEP is the matrix product ansatz, introduced by Derrida, Evans, Hakim and Pasquier in \cite{DEHP:ASEPCombinatorics} when particles can move only in one direction; and which was later extended to the full set of parameters $\alpha,\beta,\gamma,\delta,q$; see \cite{BECE:ExactSolutionsPASEP,USW:PASEPcurrent} as well as \cite{BE:Nonequilibrium} for an introductory survey. Formally, the matrix product ansatz assigns to every configuration a weight, which can be represented as a product
of matrices and vectors. The matrices and vectors must satisfy certain relations, usually
called the DEHP algebra named after \cite{DEHP:ASEPCombinatorics}. Finding suitable weights in the matrix product ansatz is a question that gained lots of recent
attention. It also led to numerous combinatorial descriptions such as weighted Catalan paths and staircase tableaux; see for example \cite{BCEPR:CombinatoricsPASEP,CW:TableauxCombinatorics}.

A powerful matrix product ansatz representation was established by Uchiyama, Sasamoto and Wadati~in \cite{USW:PASEPcurrent} using Askey--Wilson polynomials. This led to a series of works by Bryc, Kuznetsov, Wang, Wesołowski, and many others, where the stationary distribution of the open ASEP in the fan region $AC \leq 1$ can be expressed using Askey–Wilson processes. As an application, this representation allows to study for example the limiting density and large deviations for the particle density of the open ASEP, as well as the convergence to a solution to the open KPZ equation, among various other properties \cite{BKWW:MarkovKPZ,BW:AskeyWilsonProcess,BW:QuadraticHarnesses,BW:Density,H:WASEP}. Very recently,  Wang, Wesołowski and Yang introduced signed Askey--Wilson measures in \cite{WWY:ASEPshock} to study properties of the open ASEP in the shock region $AC>1$. Another very recent line of research concerns the two--layer Gibbs representation  of the stationary distribution of the open ASEP established in   \cite{BCY:StationaryStrip,B:TwoLayerASEP,B:TASEPTwoLAyer}. Intuitively, the stationary distribution can be written as the top curve a coupled pair of suitably reweighted simple random walk trajectories.
Moreover, the approximation of the stationary distribution in total-variation distance by product measures was studied in \cite{NS:Stationary,Y:Approximation}. More generally, similar approaches involving different kinds of orthogonal polynomials or queuing interpretations of the stationary measures also allow to study the open ASEP with second class particles or to multi-species exclusion processes; see for example \cite{ALS:TwoSpeciesSemiPermeable, ALS:ClassesTwoSpecies,C:Koornwinder, FM:TASEPmulti,M:StationaryASEP,SY:OpenLight}. Finally, let us mention that for special choices of the boundary parameters in the shock {region} of the open ASEP, Schütz describes the stationary measure and the evolution of the open ASEP by a reversible exclusion process with finitely many particles, introducing the concept of reverse duality~\cite{S:ASEPReverse}.\\

Outside of the stationary distribution, the behavior of the open ASEP  is closely linked to properties of the current and second class particles for the asymmetric simple exclusion process on the integer lattice. In celebrated work \cite{BS:OrderCurrent}, Balázs and Seppäläinen establish that under a Bernoulli-$\frac{1}{2}$-product measure, the fluctuations of the current of the ASEP on $\Z$ at time $t$ are of order $t^{1/3}$. This was refined at the level of limiting distribution by Aggarwal in \cite{A:CurrentFluctuations}, the first proof in models without determinantal structures of the Baik-Rains limiting distribution~\cite{BR00}; see also \cite{F:ShockFluctuations,FF:CurrentFluctuations,FF:ShockFluctuations,PS:CurrentFluctuations} for a selection of classical results on the fluctuations of currents and second class particles. Let us mention that the results on the current fluctuations in \cite{BS:OrderCurrent} are established by proving order $t^{2/3}$ fluctuations for a second class particle in a Bernoulli-$\frac{1}{2}$-product measure, and using a scaling relation from~\cite{BS:FluctuationBounds}; see identity (2.1) therein. This  generalizes results from~\cite{PS01}, relating the two-point function to the probability density of second class particle, which in turn can be written in terms of height functions; see Proposition 4.1 of~\cite{PS01}. This was used to prove the asymptotic of the law of second class particle for TASEP, see~\cite{BFP12,BCS:CubeRoot,FS05a} for more details between current fluctuations and second class particles for a more general class of growth models.

In order to establish the fluctuations for second class particles,  \cite{BS:OrderCurrent} crucially relies on a multi-species asymmetric simple exclusion processes on the integers, using ideas from~\cite{FKS:MicroscopicStructure}. Similar instances for the use of multi-species ASEPs include the ASEP speed process relying on Rezakhanlou's coupling \cite{R:Coupling}; see also \cite{DH:SixSpeed} for recent extension to the six vertex model. We will in this paper require similar ideas for the study of multi-species ASEPs, relying instead of Rezakhanlou's coupling on the censoring inequality by Peres and Winkler from \cite{PW:Censoring}, which was used in the context of exclusion process for example in \cite{GNS:MixingOpen,L:CutoffSEP,S:MixingBallistic}.

Recently, Landon and Sosoe established in  \cite{LS:Tails} moderate deviations for the current and second class particles for the ASEP on $\Z$.
Their arguments rely on interpreting the ASEP as a limit of the stochastic six vertex model, as well as the so-called Rains–EJS identity, an exact formula developed by Emrah, Janjigian, and Seppäläinen \cite{EJS:Identity} for the moment generation function of  stationary exponential last passage percolation, recovering  unpublished work by Rains~\cite{R:Identity}. In a broader context, these scaling relations for the current and second class particle fluctuations  manifest the role of the ASEP as a central model in the KPZ universality class; see also \cite{ACG:ASEPspeed,ACH:ScalingASEP,QS:KPZfix} for recent results on the existence of the ASEP speed process and convergence to the directed landscape, as well as \cite{Y:kpzASEP} for KPZ scaling of a generalization of the ASEP with long range interactions. \\

For exclusion processes on a finite state space, mixing time are a key tool to quantify the speed of convergence to the stationary distribution; see \cite{LPW:markov-mixing} for a general introduction to mixing times.
Over the last decade, significant progress was achieved in the study of mixing times of the asymmetric exclusion processes on a closed segment. In \cite{LL:CutoffASEP}, Labbé and Lacoin verify that the mixing times is linear in the size of the segment and  the occurrence of the cutoff phenomenon, a sharp transition in the convergence to equilibrium, where the system goes from being far from equilibrium to being close to equilibrium within a small time window. These results were generalized in \cite{LL:CutoffWeakly} to the weakly asymmetric simple exclusion process where $q=q(N)\rightarrow 1$ as the system size growths. Recently, the results were further sharpened. Bufetov and Nejjar in \cite{BN:CutoffASEP} establish a Tracy-Widom limit profile using a delicate color position symmetry for the ASEP on $\Z$ from \cite{BB:ColorPosition}, where they interpret the ASEP as a random walk on Hecke algebra; see also \cite{Z:Metropolis} for the limit profile for a multi-species ASEP by Zhang. A similar result was established in \cite{HS:Limits} for the ASEP with one open boundary, using current fluctuations for the ASEP on $\N$ by He \cite{H:Boundary}.

All these sharp  results have in common that the respective exclusion processes are reversible, and that the stationary distribution is given by a variant or projection of Mallows measure. This is in contrast to the ASEP with two open boundaries, which is in general not reversible, and where the available results are much more sparse. For the high and low density phase, i.e., when $A>\max(1,C)$ or $C>\max(1,A)$ holds, Gantert et al.\ establish in \cite{GNS:MixingOpen} a mixing time of order $N$ for the open ASEP on a segment of length $N$. We will elaborate on their approach in more detail in Section \ref{sec:Overview} when we discuss our strategy in order to study mixing times of the open ASEP at the triple point. In the special case of the open TASEP, i.e., where $\gamma=\delta=q=0$, mixing times are much better understood. For the maximal current phase $AC \leq 1$, a mixing time of order $N^{3/2}$ and the absence of cutoff are shown in \cite{S:MixingTASEP,SS:TASEPcircle}. Similar results are also available for the TASEP on the circle where the mixing time is shown in \cite{SS:TASEPcircle} to be of order $N^{2}\min(k,N-k)^{-1/2}$ for a system with $k$ particles. In the high and low density phase, \cite{ES:HighLow} verifies that the cutoff phenomenon occurs while in the co-existence line $A=C>1$, the mixing time is of order $N^2$ and we see no cutoff. The reason for this drastically increasing precision in the results is an alternative representation of the open TASEP as a last passage percolation model. In~\cite{SS:TASEPcircle}, it is shown that the mixing time can be expressed by coalescence times of geodesics in exponential last passage percolation. In return, precise bounds on the coalescence of geodesics are due to
exact formulas for various quantities in last passage percolation,  which are themselves achieved by a connection via RSK correspondence and (Pfaffian) Schur processes to random matrix theory  \cite{BBCS:Halfspace,BGHK:BetaEnsembles,BSV:SlowBond,LR:BetaEnsembles,SI03}. Let us stress that a similarly powerful set of tools is currently not available for the asymmetric simple exclusion process with $q>0$,  thus requiring different ideas to establish mixing times.

\subsection{Overview of the proof}\label{sec:Overview}

As a reader’s guide, let us in the following give an outline of the
main ideas and the strategy for the proof of mixing times  at the triple point. \\

To this end, we recall the overall strategy from Gantert et al.~in \cite{GNS:MixingOpen} for mixing times for the open ASEP in the high density phase for constant boundary and bias parameters. Consider two open ASEPs started from the extremal configurations, where all sites are either fully occupied by particles or are left empty, respectively. Then under the basic coupling, the two open ASEPs can be interpreted as one open ASEP $(\xi_t)_{t \geq 0}$ with second class particles. We obtain an upper bound on the mixing by providing estimates on the time it takes for all initially $N$ many second class particles to exit the segment. In order to study this exit time, Gantert et al.\ couple the open ASEP with second class particles to a system of two stationary open ASEPs with different boundary parameters -- one of them with the same boundary parameters as $(\xi_t)_{t \geq 0}$ and one where $\alpha$ and $\beta$ are decreased. They show that if the currents of the two stationary systems until some time $T$ differ by at least order $N$, then this implies an upper bound on the mixing time of order $T$. \\

While this approach is sufficient for effective bounds on the mixing time for the high density phase and constant boundary parameters,  it indicates three main challenges for the triple point. First, we need effective bounds on the expected current of the open ASEP in the maximal current phase, and when the parameters $\alpha,\beta,\gamma,\delta,q$ are allowed to depend on the system size $N$. Second, as the most delicate part of the argument, we require bounds on the fluctuations of the current of the open ASEP in order to guarantee that the currents differ at the order of their expected differences with positive probability. Lastly, even with an optimal bound on the {expected} difference and fluctuations of the current, the arguments in \cite{GNS:MixingOpen} a priori only yield an upper bound of order $N^{2+\kappa}$ {(instead of $N^{3/2+\kappa}$)} on the mixing time of the open ASEP at  the triple point. Let us now describe how we address all three challenges.

In order to obtain effective bounds on the expected current of the open ASEP in the maximal current phase when the parameters $\alpha,\beta,\gamma,\delta,q$ are allowed to depend on the system size $N$, we rely on an exact representation of the stationary current by Uchiyama, Sasamoto,  Wadati from \cite{USW:PASEPcurrent} as a certain contour integral. This formula uses that the invariant measure of the open ASEP can be studied via the matrix product ansatz, and a particular solution related to Askey--Wilson polynomials.
We provide a sharp asymptotic analysis of this exact expression for the expected current using various identities of Gamma functions and $q$-calculus. In particular, we utilize recent fine asymptotic results on \mbox{$q$-Pochhammer} symbols by Corwin and Knizel from  \cite{CK:StationaryKPZ}.

In order to study the fluctuations of the current of the open ASEP, we introduce a framework to compare moderate deviations for the current of the open ASEP to moderate deviations of the current of the ASEP on $\Z$. Precise moderate deviations for the ASEP on $\Z$ were recently established by Landon and Sosoe \cite{LS:Tails}. In order to compare the currents, we use the basic coupling between the open ASEP and the ASEP on $\Z$. We establish moderate deviations for the speed of second class particles, which are inserted  at sites $1$ and $N$, using different hierarchies of particles and the censoring inequality to study the motion among them.
As the arguments on moderate deviations take most of the body of this article, and are of independent interest, we will provide a more detailed outline of the strategy of the proof in Section~\ref{sec:Strategy}.

In order to control the exit time of second class particles, we provide an iterative scheme, using a generalization of the partially ordered  multi-species exclusion process introduced by Gantert et al.\ in \cite{GNS:MixingOpen}. More precisely, by iterating over $n$ with  $2^{n} \leq \sqrt{N}$, after a time of order $N^{1+\kappa}2^{n}$, the law of the open ASEP is with probability at least \mbox{$1-\exp(\min(-2^{-n}\sqrt{N},N^{\expont})^{1/2})$} for some constant $\expont>0$ stochastically dominated from above by a Bernoulli-$(\frac{1}{2}+\frac{1}{2^n})$ product measure, and from below by a Bernoulli-$(\frac{1}{2}-\frac{1}{2^n})$ product measure.
This ensures that after time of order $N^{3/2+\kappa}$ there are, with positive probability, at most order $\sqrt{N}$ many second class particles in the open ASEP. This allows us to then apply the arguments of Gantert et al.\ from \cite{GNS:MixingOpen} to obtain an upper bound of order $N^{3/2+\kappa}$ on the mixing time of the open ASEP at the triple point.

\subsection{Outline of the paper} \label{sec:OutlinePaper}

This paper is structured as follows. In Section \ref{sec:Prelim}, we state preliminary results on the open ASEP and the asymmetric simple exclusion process on the integers and the half-line. This includes the basic coupling for multi-species exclusion processes, a  characterization of invariant measures for one-dimensional exclusion processes, the phase diagram for the open ASEP, the censoring inequality, and recent results on moderate deviations for the current of the ASEP on $\Z$. In Section \ref{sec:CurrentEstimates}, we study the stationary current of the open ASEP and provide sharp estimates in the maximal current phase. In Section~\ref{sec:ModDevCurrent}, we introduce an extended disagreement process, which allows us to compare the motion of second class particles in the ASEP on $\Z$, the ASEP on $\N$ and the open ASEP. We then use this process to convert moderate deviations  for the current of the ASEP on $\Z$ to moderate deviations for the current of the open ASEP. Section~\ref{sec:MixingHighLow} uses the previous results to achieve an upper bound on the mixing time of the open ASEP in the weakly high and weakly low density phase via an iterative scheme. The respective bounds at the triple point are established in Section~\ref{sec:MixingTriple}.
We conclude by providing the corresponding lower bounds on the mixing times in Section~\ref{sec:MixingTimesLowerBounds}.

\subsection{Notation}\label{sec:Notation}
We use standard asymptotic notation throughout this paper. For functions $f,g \colon \N \rightarrow \infty$, we will write $f=o(g)$ if $f(N)g(N)^{-1}\rightarrow 0$ and  $f \sim g$ if $f(N)g(N)^{-1}\rightarrow 1$ for $N \rightarrow \infty$. Similarly, we write  $f=\mathcal{O}(g)$ if $f(N)g(N)^{-1} \leq C_0$ as well as
$f=\Theta(g)$ if $c_0 \leq f(N)g(N)^{-1} \leq C_0$ for constants $c_0,C_0>0$ and all $N$ large enough. We will sometimes write $\asymp$ instead of $\Theta$, and  $\lesssim$ instead of $\mathcal{O}$, as well as $\ll$ instead of $o$. We allow the parameters $\alpha,\beta,\gamma,\delta,q$, and hence the corresponding functions $A,B,C,D$, to depend on the system size $N$, while the constants $c_0,C_0,c_1,C_1,\dots$ do not depend on $N$ and may change from line to line. A special case are the constants $\kappa,\expont,\psi$ as well as $\tilde{A},\tilde{B},\tilde{C},\tilde{D}$, which capture the scaling relations for the open ASEP, and hence do not depend on $N$.

\section{Preliminaries}\label{sec:Prelim}

In the following, we collect preliminary results on the asymmetric simple exclusion process, which we will use throughout this paper.

\subsection{The asymmetric simple exclusion process}\label{sec:ASEPs}

Recall the asymmetric simple exclusion process with open boundaries (open ASEP), which we defined in \eqref{def:openASEP}. We will now define two related exclusion processes, which can both be interpreted as a limit as $N \rightarrow \infty$ for the open ASEP; the asymmetric simple exclusion process on the half-line $\N$ and on the integers $\Z$. For parameters $\alpha,\gamma \geq 0$ and $q\in [0,1)$, we define the \textbf{ASEP on $\N$} as the Markov process  $(\eta^{\N}_t)_{t\geq 0}$ with state space $\{0,1\}^{\N}$ and generator
\begin{equation} \label{def:halflineASEP}
\begin{split}
\mathcal{L}_{\N}f(\eta) &= \sum_{x =1}^{\infty} \big(\eta(x)(1-\eta(x+1)) + q \eta(x+1)(1-\eta(x)) \big) \left[ f(\eta^{x,x+1})-f(\eta) \right]  \\
 &+ \alpha (1-\eta(1)) \left[ f(\eta^{1})-f(\eta) \right] \hspace{2pt} +  \gamma \eta(1) \left[ f(\eta^{1})-f(\eta) \right]
  \end{split}
\end{equation}
 with respect to all cylinder functions $f\colon \{0,1\}^{\N} \rightarrow \R$. {Note that as in \eqref{def:EffectiveDensities}, we can assign an effective density $\rho_{\mathsf{L}}$ to the ASEP on $\N$.} Similar, we let $(\eta^{\Z}_t)_{t\geq 0}$ denote the \textbf{ASEP on} $\boldsymbol{\Z}$ with state space $\{0,1\}^{\Z}$ and generator
\begin{equation} \label{def:fullASEP}
\mathcal{L}_{\Z}f(\eta) = \sum_{x \in \Z} \big(\eta(x)(1-\eta(x+1)) + q \eta(x+1)(1-\eta(x)) \big) \left[ f(\eta^{x,x+1})-f(\eta) \right] 
\end{equation}
for all cylinder functions $f\colon \{0,1\}^{\Z} \rightarrow \R$. We refer to Liggett \cite{L:interacting-particle} for an introduction to this model.
Moreover, on the integer lattice, we define the \textbf{multi-species ASEP} $(\zeta_t)_{t \geq 0}$. This is a Markov process taking values in $(\N \cup \{\infty\})^{\Z}$, i.e., we assign types in $\N \cup \{\infty\}$ to all vertices in $\Z$. Along each edge $\{x,x+1\}$ with $x\in \Z$, we place two independent rate $1$ and $q$ Poisson clocks. Whenever the rate $1$ clock rings, we sort the endpoints in decreasing order, whenever the rate $q$ clock rings, we sort the endpoints in increasing order.
Note that for any $k\in \N$, we obtain from $(\zeta_t)_{t \geq 0}$ an ASEP on $\Z$ by identifying types $1,2,\dots,k$ with particles and types $k+1,k+2,\dots$ (including $k=\infty$) with empty sites.

\subsection{The basic coupling}\label{sec:CanonicalCoupling}

Next, we introduce the basic coupling for simple exclusion processes. In contrast to \cite{BBHM:MixingBias} or \cite{GNS:MixingOpen}, we do neither assume that the underlying configurations are componentwise ordered, nor that the exclusion processes are defined with respect to the same underlying graphs. Let $G=(V,E)$ and $G^{\prime}=(V^{\prime},E^{\prime})$ be either the interval $ \lbr N \rbr$, the half-line $\N$, or the integer lattice $\Z$. Without loss of generality assume that $E \subseteq E^{\prime}$.

\begin{definition}[Basic coupling]\label{def:BasicCoupling}
Let $q \in [0,1)$. We define the basic coupling $\mathbf{P}$ between the asymmetric simple exclusion processes on $G$ and $G^{\prime}$ as follows.
We place independent rate $1+q$ Poisson clocks on all edges $e\in E$ and use the same clocks in both processes. Whenever the clock of an edge $e=\{ x,x+1\}$ rings, and the respective exclusion processes are in states $\eta$ and $\eta^{\prime}$, respectively, we sample an independent Uniform-$[0,1]$-distributed random variable $U$ and distinguish two cases:
\begin{itemize}
\item If $U \leq (1+q)^{-1}$, and $\eta(x)=1-\eta(x+1)=1$ holds, we move the particle at site $x$ to site $x+1$ in configuration $\eta$.
\item If $U > (1+q)^{-1}$, and $\eta(x)=1-\eta(x+1)=0$ holds, we move the particle at site $x+1$ to site $x$ in configuration $\eta$.
\end{itemize}
The configuration $\eta^{\prime}$ is updated in the same way, using the same random variable $U$. {For all edges in $E' \setminus E$, we use independent rate $1+q$ Poisson clocks, and only update the asymmetric simple exclusion process on $G'$ according to the above rules.}
In addition, when $G$ {or $G'$} is either the interval or the half-line, we place a rate $\alpha$ Poisson clock (respectively a rate $\gamma$ Poisson clock) on site $1$. Whenever the clock rings, we place a particle (respectively an empty site) at site $1$, irrespective of the current value of $\eta(1)$. Similarly, when $G$ {or $G'$} is the interval, we place a rate $\beta$ Poisson clock (respectively a rate $\delta$ Poisson clock) on site $N$. Whenever the clock rings, we place an empty site (respectively a particle) at site $N$, irrespective of the current value of $\eta(N)$. As in the cases before, we use the same Poisson clocks to update $\eta^{\prime}$.
\end{definition}
The basic coupling allows us to naturally define a process $(\xi_{t})_{t \geq 0}$ taking values in $(\{0,1\}\times \{0,1\})^{\Z}$, which has the laws of $(\eta_t)_{t \geq 0}$  on $V$ and $(\eta^{\prime}_t)_{t \geq 0}$ on $V^{\prime}$ as marginals. We say that $(\xi_{t})_{t \geq 0}$ is occupied by a \textbf{first class particle} at site $x$ if $\xi_t(x)=(1,1)$. Similar, we say that $x$ is occupied by a \textbf{second class particle of type \Aup}  if $\xi_t(x)=(1,0)$, by a \textbf{second class particle of type \Bup} if $\xi_t(x)=(0,1)$, and by an \textbf{empty site} if $\xi_t(x)=(0,0)$. We refer to $(\xi_{t})_{t \geq 0}$ as the \textbf{disagreement process} between $(\eta_t)_{t \geq 0}$ and $(\eta^{\prime}_t)_{t \geq 0}$. \\

Note that the basic coupling respects the partial order, where first class particles have a higher priority than second class particles, and second class particles have a higher priority than empty sites. On the other hand, whenever two adjacent second class particles of different types are updated, they are replaced by a pair of first class particles and empty sites. Hence, $(\xi_t)_{t \geq 0}$ can also be interpreted as a continuous-time Markov chain on the state space $$\Omega_N^{2} := \{ (0,0),(1,0),(0,1),(1,1)\}^{N} . $$

\begin{remark}\label{rem:Disagreement}
Note that whenever the disagreement process contains only second class particles of one type (either type {\Aup} or type {\Bup}), we can identify the disagreement process as a multi-species exclusion process of three types $1,2,\infty$.
\end{remark}

\subsection{The stationary distribution of asymmetric simple exclusion processes}

In the following, we collect results on the stationary distribution of the open ASEP as well as the ASEPs on $\N$ and $\Z$. Note that whenever $\alpha,\beta>0$, the open ASEP has a unique stationary distribution $\mu_N$, which was intensively studied over the past decades. The next  result is due to Liggett \cite{L:ErgodicI} when
\begin{equation}\label{def:LiggettsCondition}
\gamma= q(1-\alpha)\quad \text{ and }   \quad  \delta = q(1-\beta) ;
\end{equation}
 see also Proposition A.2 and Remark A.3 in \cite{NS:Stationary} for general parameters $\alpha,\beta,\gamma,\delta,q$. We write $\textup{Ber}_n(\rho)$ for the Bernoulli-$\rho$-product measure on $\{0,1\}^{n}$ for some $n\in \N$ and $\rho \in [0,1]$.

\begin{theorem}\label{pro:Finite} Let $q\in [0,1)$. Assume that the parameters $\alpha,\beta,\gamma,\delta \geq 0$ and $q$ do not depend on $N$. Let $M>0$ be finite and consider intervals $I=I_N=\lbr a_N, a_N+M-1\rbr$ with
\begin{equation}
\lim_{N \rightarrow \infty} \min(a_N,N-a_N) = \infty .
\end{equation}
Let $\mu_N^{I}$ denote the projection of $\mu_N$ onto the sites $I$. Then
\begin{equation}
\mu_{N}^{I} \rightarrow \begin{cases}  \textup{Ber}_M(\rho_{\Lup} ) & \text{ if } C > \max(A,1) \\
\textup{Ber}_M(\rho_{\Rup}) &  \text{ if } A > \max (C,1) \\
\textup{Ber}_M\big(\frac{1}{2}\big) & \text{ if } \max(A,C) \leq 1
\end{cases}
\end{equation} in the sense of weak convergence, where we recall $\rho_{\Lup}$ and $\rho_{\Rup}$ from \eqref{def:EffectiveDensities}.
\end{theorem}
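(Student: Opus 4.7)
The plan is to combine the matrix product ansatz representation of $\mu_N$ with a coupling argument, using the former to handle all three phases uniformly and the latter as a direct alternative route in the high- and low-density phases. Write $\mu_N(\eta) = Z_N^{-1}\,\langle W| \prod_{i=1}^N ( \eta(i) D + (1-\eta(i)) E) |V\rangle$ using the DEHP algebra extended to general $(\alpha,\beta,\gamma,\delta,q)$, as in \cite{USW:PASEPcurrent,BECE:ExactSolutionsPASEP}, and use the Uchiyama–Sasamoto–Wadati representation to express the partition function and cylinder weights as explicit contour integrals of Askey–Wilson type. A saddle-point analysis of these integrals identifies a unique dominant saddle $z^\star$, equal to $C$ when $C > \max(A,1)$, to $A^{-1}$ when $A > \max(C,1)$, and to $1$ when $\max(A,C) \leq 1$, and the corresponding bulk one-point density is $(1+z^\star)^{-1}$, which matches $\rho_{\Lup}$, $\rho_{\Rup}$, and $\tfrac12$ respectively.

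Since $|I_N|=M$ is bounded with $\min(a_N, N-a_N) \to \infty$, the rescaled indices $i_j/N$ for $i_j \in I_N$ all converge to a common interior point of $[0,1]$. Via the Askey–Wilson process $(Y_s)_{s \in [0,1]}$ of Bryc–Wesołowski, the joint probabilities $\mu_N(\eta(i_1)=\cdots=\eta(i_k)=1)$ reduce in the limit to an expectation of $\prod_{j=1}^k f(Y_{i_j/N})$, which concentrates on the saddle and factorizes to $\prod_j f(z^\star)$, giving the claimed product Bernoulli measure. As a parallel argument in the first two phases, assume first Liggett's condition \eqref{def:LiggettsCondition}; then the half-line ASEP on $\N$ with rates $(\alpha,\gamma)$ has $\textup{Ber}_\infty(\rho_{\Lup})$ as a stationary measure. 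Coupling the stationary open ASEP to this half-line process via the basic coupling of Definition \ref{def:BasicCoupling}, and using that in the low-density regime $C > \max(A,1)$ the characteristic speed is strictly positive so that discrepancies travel rightward and exit through the right boundary, one obtains $\mu_N^{I_N} \to \textup{Ber}_M(\rho_{\Lup})$; the high-density phase follows by the particle-hole involution $\eta \mapsto 1-\eta$, which swaps $A \leftrightarrow C$. The extension from Liggett's condition to the general boundary parameters is obtained by matching effective densities, as in Proposition A.2 of \cite{NS:Stationary}.

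The main obstacle is the maximal current phase $\max(A,C) \leq 1$. There the coupling route fails, because neither boundary imposes the bulk density $\tfrac12$, and one is forced to work with the Askey–Wilson representation at the edge of the fan region $AC \leq 1$, where the saddle sits on the boundary of the natural integration contour. The delicate point is not the one-point density — this is closely related to, and uses the same $q$-Pochhammer asymptotic toolkit as, the current computation carried out in Section \ref{sec:CurrentEstimates} of the present paper — but rather the asymptotic factorization of multi-point correlations on a bounded window, which requires decorrelation estimates for the Askey–Wilson bridge of the type developed in \cite{BW:AskeyWilsonProcess,BKWW:MarkovKPZ}.
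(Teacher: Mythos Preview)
The paper does not actually prove this theorem; it quotes it as a known result, crediting Liggett \cite{L:ErgodicI} for the case of Liggett's condition \eqref{def:LiggettsCondition} and Proposition~A.2 and Remark~A.3 of \cite{NS:Stationary} for general $\alpha,\beta,\gamma,\delta,q$. So there is no ``paper's proof'' to compare against, only a citation.

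Your outline is a reasonable sketch of how such a result can be established, and the two routes you describe (Liggett's coupling argument for high/low density, and the Askey--Wilson machinery for the maximal current phase) are indeed the standard ones. Two points are worth flagging. First, the claim that the Askey--Wilson representation handles ``all three phases uniformly'' is too strong: the Askey--Wilson \emph{process} of Bryc--Weso{\l}owski is constructed only in the fan region $AC\leq 1$, so the parts of the high- and low-density phases lying in the shock region $AC>1$ are not covered by that route; there one really needs either the coupling argument you also sketch, or the signed Askey--Wilson measures of \cite{WWY:ASEPshock}. Relatedly, when $A>1$ or $C>1$ the dominant contribution in the USW integral comes from a \emph{pole} that crosses the contour, not from a saddle on the unit circle, so calling $z^\star=C$ or $z^\star=A^{-1}$ a ``saddle'' misdescribes the mechanism. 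Second, in the maximal current phase you correctly isolate the nontrivial step (factorization of multi-point correlations on a bounded window) but only gesture at it; this is precisely where the argument needs work, and your sketch remains incomplete there. None of this is a deficiency relative to the paper, which simply defers to the literature, but you should be aware that what you have written is a roadmap rather than a proof.
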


Theorem \ref{pro:Finite} motivates the following phase diagram for the open ASEP.
We say that the open ASEP is in the \textbf{maximal current phase} when $AC \leq 1$, it is in the \textbf{high density phase} when $A>\max(1,C)$, and in the \textbf{low density phase} when  $C>\max(1,A)$. We refer to the remaining case $A=C>1$ as the \textbf{coexistence line}. A visualization of the different phases is given in Figure \ref{fig:phase}.

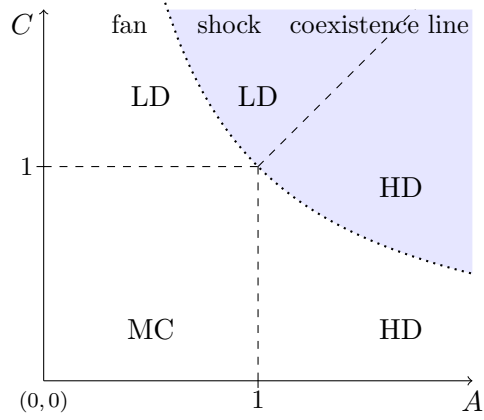
\begin{figure}
    \centering
    \begin{tikzpicture}[scale=0.95]

 \begin{scope}
   \clip (6.7,5) rectangle (11,10.2); 
   \fill[blue, opacity=0.1] plot[domain=6.7:11, smooth, variable=\x] ({\x},{1/((\x-7)*1/3+2/3)*3+5}) -- (11,10.2) -- (6.7,10.2) -- cycle;
 \end{scope}

 \draw[scale = 1,domain=6.7:11,smooth,variable=\x,dotted,thick] plot ({\x},{1/((\x-7)*1/3+2/3)*3+5});
 \draw[->] (5,5) to (5,10.2);
 \draw[->] (5.,5) to (11,5);
   \draw[dashed] (5,8) to (8,8);
  \draw[dashed] (8,8) to (8,5);
   \draw[dashed] (8,8) to (10.2,10.2);
   \node [left] at (5,8) {\small $1$};
   \node[below] at (8,5) {\small $1$};
     \node [below] at (11,5) {$A$};
   \node [left] at (5,10) {$C$};
 \draw[dashed] (8,4.9) to (8,5.1);
  \draw[dashed] (4.9,8) to (5.1,8);
 \node [below] at (5,5) {\scriptsize{$(0,0)$}};
    \node [above] at (6.5,8.7) {LD};\node [above] at (8,8.7) {LD};
    \node at (9.7,10) {\small coexistence line };
    \node [below] at (10,6) {HD};  \node [below] at (10,8) {HD};
 \node [below] at (6.5,6) {MC};
 \node at (6.2,10) {\small{fan}};
 \node at (7.6,10) {\small{shock}};

 \end{tikzpicture}
    \caption{Phase diagrams for the open ASEP stationary measures. The terms LD, HD, and MC, respectively, correspond to the low density, high density and maximal current phase. Moreover, we distinguish between the fan and the shock region for the open ASEP.}
    \label{fig:phase}
\end{figure}

Let us mention that the above results on the stationary distribution can be strengthened; see \cite{BW:Density,WWY:ASEPshock} for scaling limits of the particle density under $\mu_N$, and \cite{NS:Stationary,Y:Approximation} for an approximation of the stationary measure as $M=M(N) \rightarrow \infty$ with $N \rightarrow \infty$ by Bernoulli product measures.
Let us stress that we will in the following allow  $A=A_N$ and $C=C_N$ to depend on $N$. Whenever $A_N \rightarrow 1$ while $A_N > \max(C_N,1)$, we say that the open ASEP belongs to the \textbf{weakly high density phase}.
Similarly, when $C_N \rightarrow 1$ while $C_N > \max(A_N,1)$, we refer to the \textbf{weakly low  density phase}. The next lemma states that in the special case $A C =1$, the stationary measure of the open ASEP has a simple product form.

\begin{lemma}\label{lem:Bernoulli}
Recall $\rho_{\Lup},\rho_{\Rup}$ from \eqref{def:EffectiveDensities}. Whenever $AC=1$, we have that
\begin{equation}
\mu_N = \textup{Ber}_N(\rho_{\Lup})=\textup{Ber}_N(\rho_{\Rup}) .
\end{equation}  Moreover, for all $\alpha,\beta,\gamma,\delta>0$ and $q=q_N \in (0,1)$, we have that
\begin{equation}
 \textup{Ber}_N\big(\max\big(\rho_{\Lup},\rho_{\Rup}\big) \big) \succeq  \mu_N \succeq \textup{Ber}_N\big(\min\big(\rho_{\Lup},\rho_{\Rup}\big) \big) ,
\end{equation} where we denote by $\succeq$ stochastic domination between probability measures on $\{0,1\}^{N}$.
\end{lemma}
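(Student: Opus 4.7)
The plan is to establish the first claim by a direct verification that the Bernoulli product measure is invariant when $AC=1$, and then to deduce the stochastic domination bounds by coupling to two auxiliary open ASEPs whose parameters are tuned so that the first claim applies.

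For the first claim I would verify $\mu\mathcal{L}=0$ directly for $\mu=\textup{Ber}_{N}(\rho)$. Since a Bernoulli product measure is permutation invariant, one has $\mu(\eta^{x,x+1})=\mu(\eta)$, and the bulk contribution to $\mathcal{L}^{\ast}\mu(\eta)/\mu(\eta)$ reduces after telescoping the differences $\eta(x+1)-\eta(x)$ across $x=1,\dots,N-1$ to $(1-q)(\eta(N)-\eta(1))$. Combining this with the reservoir contributions (which use $\mu(\eta^{1})/\mu(\eta)\in\{\rho/(1-\rho),(1-\rho)/\rho\}$ and similarly at site $N$) and grouping by the occupations at sites $1$ and $N$, the coefficient of $\eta(1)$ reduces to $\alpha/\rho-\gamma/(1-\rho)-(1-q)$ and that of $\eta(N)$ to $\delta/\rho-\beta/(1-\rho)+(1-q)$. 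Clearing denominators, these are exactly the quadratic identities defining $C$ and $A$ in \eqref{def:c} and \eqref{def:a}, via $C=(1-\rho)/\rho$ and $A=\rho/(1-\rho)$, so they vanish precisely when $\rho=\rho_{\Lup}$ and $\rho=\rho_{\Rup}$ respectively. The leftover constant term $\rho(\beta+\gamma)-(1-\rho)(\alpha+\delta)$ is then automatic by subtracting the two boundary identities. Uniqueness of the stationary distribution (guaranteed by $\alpha,\beta>0$) gives $\mu_{N}=\textup{Ber}_{N}(\rho_{\Lup})$ whenever $AC=1$.

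For the domination bounds, I would introduce two auxiliary open ASEPs with parameters $(\alpha^{\uparrow},\beta^{\uparrow},\gamma^{\uparrow},\delta^{\uparrow},q)$ and $(\alpha^{\downarrow},\beta^{\downarrow},\gamma^{\downarrow},\delta^{\downarrow},q)$ whose associated effective parameters satisfy $A^{\uparrow}C^{\uparrow}=1$ at common density $\max(\rho_{\Lup},\rho_{\Rup})$ and $A^{\downarrow}C^{\downarrow}=1$ at common density $\min(\rho_{\Lup},\rho_{\Rup})$. In the fan region $AC\le 1$ (where $\rho_{\Lup}\ge\rho_{\Rup}$), the upper comparison keeps $\alpha,\gamma$ untouched and adjusts $(\beta^{\uparrow},\delta^{\uparrow})$ to force $A^{\uparrow}=1/C\ge A$; this is always solvable with positive rates because $A(\beta,\delta,q)$ is continuous, strictly decreasing in $\beta$, strictly increasing in $\delta$, and ranges through $(0,\infty)$. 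Symmetric choices -- adjusting $(\alpha,\gamma)$ to move $C$ -- handle the shock region and the two lower comparisons.

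I would then couple the original process with each auxiliary process via the basic coupling of Definition~\ref{def:BasicCoupling}, augmented by independent Poisson clocks whose rates equal the differences of the reservoir parameters (extra insertions for the upper comparison, extra removals for the lower). A short case analysis on bulk swaps and boundary flips confirms that the componentwise partial order is preserved from time zero; passing to $t\to\infty$ and invoking the first claim to identify the auxiliary stationary laws yields $\textup{Ber}_{N}(\min(\rho_{\Lup},\rho_{\Rup}))\preceq\mu_{N}\preceq\textup{Ber}_{N}(\max(\rho_{\Lup},\rho_{\Rup}))$. The main obstacle is the algebraic verification of invariance at $AC=1$: the three apparently independent cancellation conditions -- one for the $\eta(1)$-part, one for the $\eta(N)$-part and one for the constant term -- must be shown to reduce to the single compatibility $\rho_{\Lup}=\rho_{\Rup}$, which is the real content of the first claim. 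Once this is in place the coupling step is routine.
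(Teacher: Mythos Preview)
Your proposal is correct and essentially reproduces the standard argument that the paper cites rather than spells out: the paper simply refers to Proposition~2 of \cite{BCEPR:CombinatoricsPASEP} for the first claim and to Lemma~2.10 of \cite{GNS:MixingOpen} for the second, and your computation and coupling construction are precisely what those references contain. Your verification of the stationarity condition is accurate (the telescoped bulk term $(1-q)(\eta(N)-\eta(1))$, the boundary coefficients $\alpha/\rho-\gamma/(1-\rho)-(1-q)$ and $\delta/\rho-\beta/(1-\rho)+(1-q)$, and the automatic vanishing of the constant term), and your monotone coupling with auxiliary parameters tuned to $A'C'=1$ is exactly the mechanism used in \cite{GNS:MixingOpen}.
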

\begin{proof}
The first statement is a simple computation; see Proposition 2 in \cite{BCEPR:CombinatoricsPASEP}. The second statement can be found for example as Lemma 2.10 in \cite{GNS:MixingOpen}.
\end{proof}
The condition $AC=1$ provides another partition of the parameter space for the open ASEP. We refer to $AC < 1$ as the \textbf{fan region} and to $AC > 1$ as the \textbf{shock region} of the open ASEP.
We now investigate the stationary measure of the half-line ASEP as well as the ASEP on $\Z$. The next result is standard and follows from a straightforward computation; see Theorem 2.1 in Section VIII of \cite{L:interacting-particle}.

\begin{lemma}
For all $\rho \in [0,1]$ and all $q\in [0,1]$, the Bernoulli-$\rho$-product measures on $\{0,1\}^{\Z}$ are invariant measure for the ASEP on $\Z$. {Recall $C=C(\alpha,\gamma,q)$ from \eqref{def:c}. Then the Bernoulli-$\rho_{\Lup}$-product measure with $\rho_{\Lup}=(C+1)^{-1}$} is an invariant measure for the ASEP on $\N$ with respect to parameters $\alpha,\gamma>0$ and $q\in [0,1)$.
\end{lemma}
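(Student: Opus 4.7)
My plan is to verify invariance by showing that $\int \mathcal{L} f \, d\nu_\rho = 0$ for every cylinder function $f$, where $\nu_\rho$ denotes the Bernoulli-$\rho$ product measure. The key observation, valid in both parts, is that the bulk swap $\eta \mapsto \eta^{x,x+1}$ preserves $\nu_\rho$, so for each edge $\{x,x+1\}$ the change of variables yields
\begin{equation*}
\int c_x(\eta)\bigl[f(\eta^{x,x+1})-f(\eta)\bigr]\,d\nu_\rho = \int \bigl[c_x(\eta^{x,x+1})-c_x(\eta)\bigr]\, f(\eta)\, d\nu_\rho,
\end{equation*}
where $c_x(\eta)=\eta(x)(1-\eta(x+1))+q\eta(x+1)(1-\eta(x))$. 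A direct computation gives $c_x(\eta^{x,x+1})-c_x(\eta)=(1-q)(\eta(x+1)-\eta(x))$, so the bulk contribution to $\int \mathcal{L} f\, d\nu_\rho$ reduces to $(1-q)\sum_x \int[\eta(x+1)-\eta(x)]f(\eta)\,d\nu_\rho$.

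For part one, I would take $\Lambda = \lbr a,b\rbr$ to be the support of the cylinder $f$ on $\Z$. Splitting the sum into edges with both endpoints outside $\Lambda$ (whose contribution vanishes by independence and $\E_{\nu_\rho}[\eta(x+1)-\eta(x)]=0$) and edges touching $\Lambda$, the remaining terms telescope to $0$: the two boundary edges at $x=a-1$ and $x=b$ cancel the $\pm\rho\E[f]$ pieces, and the interior sum telescopes to $\E[\eta(b)f]-\E[\eta(a)f]$, which cancels the boundary $\E[\eta(a)f]$ and $\E[\eta(b)f]$ contributions. Hence $\int \mathcal{L}_\Z f \, d\nu_\rho = 0$ for every cylinder $f$, which is enough to conclude invariance.

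For part two, the same bulk calculation on $\N$ leaves exactly one uncancelled term $(1-q)\bigl[\rho\E[f]-\E[\eta(1)f]\bigr]$ from the missing left boundary of the telescoping sum (this is $0$ unless $f$ depends on $\eta(1)$). The boundary generator contributes $\int\bigl[\alpha(1-\eta(1))+\gamma\eta(1)\bigr]\bigl[f(\eta^{1})-f(\eta)\bigr]d\nu_\rho$; writing $f=\eta(1)g+(1-\eta(1))h$ with $g,h$ depending on $\eta(2),\eta(3),\dots$, one finds $f(\eta^{1})-f(\eta)=(1-2\eta(1))(g-h)$, and using $\eta(1)^2=\eta(1)$ and independence this boundary integral simplifies to $[\alpha(1-\rho)-\gamma\rho]\,\E[g-h]$. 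The bulk term becomes $(1-q)\rho(1-\rho)\E[h-g]$, so invariance reduces to the single scalar identity
\begin{equation*}
\alpha(1-\rho)-\gamma\rho=(1-q)\rho(1-\rho).
\end{equation*}

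The final step, and the only place where algebra is required, is to verify that $\rho=1/(1+A)$ solves this identity. Rearranging into the form $(1+A)(\alpha A-\gamma)=A(1-q)$ and expanding gives the quadratic $\alpha A^2+A(\alpha-1+q-\gamma)-\gamma=0$, which is precisely the relation obtained by squaring the defining formula \eqref{def:a} for $A$. I expect no real obstacle here beyond this short algebraic check; the main conceptual input is the symmetry argument that turns the bulk generator into a telescoping sum, after which the boundary matching on $\N$ is forced by the definition of the effective density $1/(1+A)$.
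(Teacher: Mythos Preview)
Your proposal is correct and is precisely the ``straightforward computation'' the paper alludes to (the paper gives no proof of its own, only the citation to Theorem~2.1 in Section~VIII of Liggett). Your use of the swap symmetry to turn the bulk generator into a telescoping sum, followed by the boundary matching that reduces to the quadratic $\alpha A^{2}+A(\alpha-1+q-\gamma)-\gamma=0$ satisfied by $A(\alpha,\gamma,q)$, is the standard argument and all the algebra checks out.
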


We refer to \cite{Y:Approximation} for a more detailed discussion by Yang on the existence and structure of further invariant measures of the half-line ASEP when $C \geq 1
$. For the ASEP on $\Z$, we consider the family of invariant measures called \textbf{blocking measures} $\nu^{(n)}$ on
\begin{equation}\label{def:BlockingStateSpace}
\mathfrak{A}_n := \Big\{ \eta \in \{ 0,1 \}^{\Z} \, \colon \, \sum_{i>n} (1-\eta(i)) =  \sum_{i \leq n} \eta(i) < \infty \Big\} .
\end{equation}
{Observe that the sets $\mathfrak{A}_n$ are countable and stable under the ASEP $(\eta^{\Z}_t)_{t \geq 0}$ on $\Z$, i.e., if $\eta^{\mathbb{Z}}_0 \in \mathfrak{A}_n$ for some $n\in \Z$, then almost surely $\eta^{\mathbb{Z}}_t \in \mathfrak{A}_n$ for all $t\geq 0$. Set
\begin{equation}
    \mathfrak{A} := \bigcup_{n\in \Z} \mathfrak{A}_n = \Big\{ \eta \in \{ 0,1 \}^{\Z} \, \colon \, \sum_{i>0} (1-\eta(i)) <\infty \textrm{ and }  \sum_{i \leq 0} \eta(i) < \infty \Big\} .
\end{equation}
}
For $q>0$, we denote by $\tilde{\nu}$ the Bernoulli product measure on $\{0,1\}^{\Z}$ with marginals
\begin{equation}
\tilde{\nu}(\eta(x)=1) = \frac{q^{-x}}{1+q^{-x}}
\end{equation} for all $x \in \Z$. {Observe that $\tilde{\nu}$  has its support on the $\mathfrak{A}$, and thus is an atomic measure. Therefore, $\tilde{\nu}(\mathfrak{A}_n)>0$ for all $n\in \N, q>0$, and we set 
$\nu^{(n)} := \tilde{\nu}(\, \cdot \, | \mathfrak{A}_n)$ for all $n\in \Z$.} We make the following observation about  blocking measures, which we will frequently use throughout this article.

\begin{lemma}\label{lem:BlockingMeasures}
Let $q\in (\varepsilon,1)$ for some $\varepsilon>0$. For all $n\in \Z$, the ASEP on $\Z$ restricted to $\mathfrak{A}_n$ is reversible with respect to $\nu^{(n)}$. Moreover, there exist some  constants $c_0,C_1,C_0>0$, depending only on $\varepsilon>0$, such that for all $x>C_1(1-q)^{-1}$,
\begin{equation}\label{eq:BlockingMax}
\begin{split}
\nu^{(n)}\Big( \exists y<n-\frac{x}{1-q} \colon \eta(y) = 1   \Big) &\leq C_0 \exp(-c_0 x) \\
\nu^{(n)}\Big( \exists y^{\prime} > n+\frac{x}{1-q} \colon  \eta(y^{\prime}) = 0  \Big) &\leq C_0 \exp(-c_0 x) .
\end{split}
\end{equation}
\end{lemma}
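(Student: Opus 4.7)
The plan is to handle reversibility and the tail bounds separately, the latter via conditioning on $\mathcal{A}_n$ combined with an anti-concentration estimate for $\tilde\nu(\mathcal{A}_n)$.

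For the first claim, I would verify detailed balance directly. Any $\eta \in \mathcal{A}_n$ differs from the ground state $\mathbf{1}[\cdot > n]$ at only finitely many sites, so the weights $w(\eta) := \prod_y q^{-y\eta(y)}$ are well-defined up to an overall factor, and $\nu^{(n)} \propto w$ on $\mathcal{A}_n$. For a nearest-neighbour swap at $\{x,x+1\}$ with $\eta(x)=1,\eta(x+1)=0$, the weight ratio $w(\eta^{x,x+1})/w(\eta) = q^{-1}$ exactly compensates the ratio $q$ of left-jump to right-jump rates, giving detailed balance. Together with irreducibility of the ASEP on $\mathcal{A}_n$ (any two configurations with the same excess agree outside a finite window, hence are connected by a finite chain of allowed swaps), this makes $\nu^{(n)}$ the unique reversible stationary measure.

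For the tail bounds, I would first reduce to $n=0$ using shift invariance: the shift $S_n$ commutes with the ASEP dynamics and maps $\mathcal{A}_0$ bijectively onto $\mathcal{A}_n$, so by uniqueness $S_n \nu^{(0)} = \nu^{(n)}$, and it suffices to control $\nu^{(0)}(\exists\, y<-x/(1-q):\eta(y)=1)$ and $\nu^{(0)}(\exists\, y'>x/(1-q):\eta(y')=0)$. Removing the conditioning via $\nu^{(0)}(A) \leq \tilde\nu(A)/\tilde\nu(\mathcal{A}_0)$ and applying a union bound, the marginal estimates $\tilde\nu(\eta(y)=1)\leq q^{|y|}$ for $y\leq 0$ and $\tilde\nu(\eta(y)=0)\leq q^{|y|}$ for $y \geq 0$ together with the elementary inequality $q^{1/(1-q)} \leq e^{-1}$ give the geometric-sum bound
\[
\sum_{|y| > x/(1-q)} q^{|y|} \;\leq\; \frac{e^{-x}}{1-q}.
\]

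The main obstacle is a uniform lower bound $\tilde\nu(\mathcal{A}_0) \gtrsim_\varepsilon \sqrt{1-q}$ for $q \in (\varepsilon,1)$. Writing $X = \sum_{i \leq 0}\eta(i)$ and $Y = \sum_{i \geq 1}(1-\eta(i))$ as independent sums of Bernoullis, the identity $p_{-j} = q^{j}/(1+q^j) = 1 - p_{\,j}^{\,\mathrm{hole}}$ shows that $X \stackrel{d}{=} Y' + B$, where $Y'$ is an independent copy of $Y$ and $B \sim \mathrm{Ber}(1/2)$ corresponds to the site $i=0$; hence $\tilde\nu(X=Y) \geq \tfrac{1}{2}\tilde\nu(Y=Y')$. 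A standard anti-concentration step (Chebyshev to confine $Y$ to an interval of length $O(\sqrt{\mathrm{Var}(Y)})$, then Cauchy--Schwarz inside that interval) then produces $\tilde\nu(Y=Y') \gtrsim 1/\sqrt{\mathrm{Var}(Y)+1}$, and $\mathrm{Var}(Y) = \sum_{i\geq 1} q^i/(1+q^i)^2 \leq C(\varepsilon)/(1-q)$ is a direct calculation. Combining these three ingredients yields tails of order $e^{-x}/(1-q)^{3/2}$, and the hypothesis $x > C_1/(1-q)$ lets one dominate the polynomial factor $(1-q)^{-3/2} \leq (x/C_1)^{3/2}$ by halving the exponent, which produces the claimed $C_0\exp(-c_0 x)$ bound.
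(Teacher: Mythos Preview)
Your proof is correct and follows essentially the same route as the paper: detailed balance for reversibility, reduction to $n=0$, tail bounds under the unconditioned measure $\tilde\nu$, division by a lower bound on $\tilde\nu(\mathcal{A}_0)$, and absorption of the polynomial prefactor using the hypothesis $x>C_1/(1-q)$. The only substantive difference is that the paper simply asserts $\tilde\nu(\mathcal{A}_0)\geq c_3(1-q)$ without justification, whereas your anti-concentration/Cauchy--Schwarz argument actually supplies a proof and yields the sharper estimate $\tilde\nu(\mathcal{A}_0)\gtrsim_\varepsilon \sqrt{1-q}$.
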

\begin{proof}
Reversibility of the measure can be checked directly by a simple computation, see Theorem 2.1 in Section VIII of \cite{L:interacting-particle}.
For the second statement, assume without loss of generality that $n=0$. Note that \eqref{eq:BlockingMax} holds under the measure $\tilde{\nu}$ as
\begin{equation*}
\begin{split}
\tilde{\nu}\left( \exists z \in \Big(-\infty,\frac{-y}{1-q}\Big] \, \colon  \eta(z)=1  \right) &\leq \frac{1}{1-q}\exp(-c_2y) \\
\tilde{\nu}\left( \exists z^{\prime} \in \Big[\frac{y}{1-q},\infty\Big) \, \colon \eta(z^{\prime})=0  \right) &\leq \frac{1}{1-q}\exp(-c_2y)
\end{split}
\end{equation*} for some constant $c_2>0$ and all $q\in (\varepsilon,1]$, and all $y\geq 1$. Since
\begin{equation*}
\tilde{\nu}(\mathfrak{A}_0) \geq c_3(1-q)
\end{equation*} for some constant $c_3>0$ and all $q\in (\varepsilon,1)$, this allows us to conclude.
\end{proof}

\subsection{The censoring inequality}

The censoring inequality by Peres and Winkler in \cite{PW:Censoring} intuitively states {that for monotone spin systems} leaving out transitions only increases the distance from equilibrium. This was applied by Lacoin in \cite{L:CutoffSEP} to the symmetric simple exclusion processes on a closed segment. We will in the following apply the censoring inequality with respect to the ASEP $(\eta^{\Z}_t)_{t \geq 0}$ on the integers, restricted to $\mathfrak{A}_n$ from \eqref{def:BlockingStateSpace} for some $n\in \N$.

\begin{definition}[Censoring]\label{def:Censoring}
Let $(\eta^{\Z}_t)_{t \geq 0}$ be an ASEP on $\mathfrak{A}_n$ for some $n\in \Z$ and let $ \mathcal{P}\left( E \right)$ denotes the power set of $E(\Z)$. We call a random càdlàg function
\begin{equation}
\mathcal{C} \colon \mathbb{R}_0^+ \rightarrow \mathcal{P}\left( E \right)
\end{equation} a \textbf{censoring scheme} for the exclusion process $(\eta^{\Z}_t)_{t \geq 0}$ when $\mathcal{C}$ is independent of the process $(\eta^{\Z}_t)_{t \geq 0}$. We define the process $(\eta^{\mathcal{C}}_t)_{t\geq 0}$ as the \textbf{censored exclusion process}, where an update along an edge $e$ at time $t$ is performed if and only if $e \notin \mathcal{C}(t)$.
\end{definition}
Define the partial order $\succeq_{\h}$ on $\mathfrak{A}$ by the relation
\begin{equation}\label{eq:PartialHeight}
\eta \succeq_{\h} \eta^{\prime} \quad \Leftrightarrow {\quad \sum_{y=-\infty}^{x} \eta(y) \geq  \sum_{y=-\infty}^{x} \eta^{\prime}(y) \,  \forall x \in \Z . }
\end{equation}  Similarly, we write $\nu \succeq_{\h} \nu^{\prime}$ for laws $\nu,\nu^{\prime}$ if there exists a coupling {$\mathbf{P}$ such that for $\eta \sim \nu$ and $\eta' \sim \nu'$, 
\begin{equation}
   \mathbf{P}( \eta \succeq_{\h} \eta' ) = 1 . 
\end{equation}
}
The following result is Proposition~2.12 and Remark~2.13 in \cite{GNS:MixingOpen}, {which can be seen as a special case of Theorem 1.1 in \cite{PW:Censoring}.}

\begin{lemma}\label{lem:CensoringASEP}  Let $\mathcal{C}$ be a censoring scheme. Let $P_{\eta}(\eta_t \in \cdot)$ and $P_{\eta}(\eta^{\mathcal{C}}_t \in \cdot)$ denote the law of $(\eta^{\Z}_t)_{t\geq 0}$ and its censored dynamics $(\eta^{\mathcal{C}}_t)_{t\geq 0}$ at time $t\geq 0$, respectively, starting from some $\eta \in \mathfrak{A}_n$. Then for all $n \in \Z$, we have that
\begin{equation}\label{def:GroundStates}
P_{\vartheta_n}\big(\eta^{\mathcal{C}}_t \in \cdot \big) \preceq_\h P_{\vartheta_n}(\eta_t \in \cdot) 
\end{equation} for all $t\geq 0$, 
where we define {$\vartheta_n \in \{0,1\}^{\Z}$} by $\vartheta_n(x):= \mathds{1}_{\{x > n \}}$ for all $x\in \Z$.
\end{lemma}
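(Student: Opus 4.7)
The statement is a version of the Peres--Winkler censoring inequality \cite{PW:Censoring}, transposed from finite-state monotone spin systems to the ASEP on $\Z$ restricted to the irreducible component $\mathcal{A}_n$. The plan is to verify the two standard ingredients of the censoring argument in the present setting -- monotonicity of the basic coupling with respect to $\succeq_{\h}$, and extremality of $\vartheta_n$ in $(\mathcal{A}_n,\succeq_{\h})$ -- and then invoke the Peres--Winkler inductive scheme.

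First I would check monotonicity: given $\eta,\eta' \in \mathcal{A}_n$ with $\eta \succeq_{\h} \eta'$, running both processes under the basic coupling of Definition~\ref{def:BasicCoupling} keeps $\eta_t \succeq_{\h} \eta'_t$ at all times. This reduces to a short case analysis: for each clock ring along an edge $\{x,x+1\}$ and each of the four local configurations, one checks that the partial sum $\sum_{y \geq z}(\eta_t(y)-\eta'_t(y))$ is not pushed below zero at any $z \in \Z$. Next, I would verify that $\vartheta_n$ is the $\succeq_{\h}$-maximum of $\mathcal{A}_n$: using the balance condition $\sum_{y > n}(1-\eta(y)) = \sum_{y \leq n}\eta(y)$ built into the definition of $\mathcal{A}_n$, one obtains $\sum_{y \geq x}(\vartheta_n(y)-\eta(y)) \geq 0$ for every $x \in \Z$ and every $\eta \in \mathcal{A}_n$ by splitting the sum at $n$.

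With monotonicity and extremality in hand, I would apply the Peres--Winkler induction. The scheme is to approximate $\mathcal{C}$ by censoring schemes consisting of only finitely many clock rings in $[0,t]$, and then reduce inductively to the case of a single censored ring. For the single-ring case, the basic coupling between the censored and uncensored chains produces chains that agree until the censored moment and differ by exactly one admissible transition at that time; the extremal starting point $\vartheta_n$ anchors the sign of this one-step deviation, and monotonicity from the previous step propagates the resulting ordering forward for all $t' > t$.

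The main obstacle is handling the continuous-time, infinite-volume nature of $\mathcal{A}_n$: one must justify passing from the finite censoring approximations back to the general scheme $\mathcal{C}$, and one must make sense of the basic coupling when infinitely many edges are involved. For this I would use the reversibility of $\nu^{(n)}$ and the exponential localization estimates \eqref{eq:BlockingMax} from Lemma~\ref{lem:BlockingMeasures} to reduce, on any fixed time window and any desired total-variation accuracy, to a finite window of edges around $n$, where the finite-state Peres--Winkler statement applies verbatim. Combining these window estimates and letting the window grow then gives the claim.
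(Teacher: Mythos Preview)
The paper does not give its own proof of this lemma; it simply records it as Proposition~2.12 and Remark~2.13 of \cite{GNS:MixingOpen}. Your overall outline --- verify that the basic coupling preserves $\succeq_{\h}$, verify that $\vartheta_n$ is extremal in $(\mathcal{A}_n,\succeq_{\h})$, then run the Peres--Winkler induction --- is exactly the framework used there.

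However, your description of the inductive step contains a real gap. You write that under the basic coupling the censored and uncensored chains ``agree until the censored moment and differ by exactly one admissible transition'', and that ``the extremal starting point $\vartheta_n$ anchors the sign of this one-step deviation''. That pathwise claim is false: if both chains sit at the same state $\sigma$ when the censored edge $e=\{x,x+1\}$ rings, then the uncensored chain moves \emph{up} in $\succeq_{\h}$ when $(\sigma(x),\sigma(x+1))=(1,0)$ and \emph{down} when $(\sigma(x),\sigma(x+1))=(0,1)$. There is no almost-sure ordering after that ring, hence nothing for monotonicity to propagate. The genuine Peres--Winkler argument is distributional, not pathwise: one shows by induction that the law $\mu_s$ of the process started from $\vartheta_n$ has a density with respect to $\nu^{(n)}$ that is non-increasing in $\succeq_{\h}$, and that a single local resampling $T_e$ both preserves this property and yields $T_e\mu_s \preceq_{\h} \mu_s$. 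It is this comparison of \emph{laws} that drives the induction; you need to replace the basic-coupling step by the monotone-density argument.

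One further point worth checking as you fill in details. You correctly prove that $\vartheta_n$ is the $\succeq_{\h}$-\emph{maximum} of $\mathcal{A}_n$. The Peres--Winkler conclusion from a maximal starting state is that the censored law dominates the uncensored one, i.e.\ $P_{\vartheta_n}(\eta^{\mathcal{C}}_t\in\cdot)\succeq_{\h}P_{\vartheta_n}(\eta_t\in\cdot)$, which is the reverse of the inequality displayed in the lemma. The $\succeq_{\h}$ direction is what is actually needed in the application to Lemma~\ref{lem:BlockingMeasureMaximum} (to transfer the tail bounds of Lemma~\ref{lem:BlockingMeasures} from $\nu^{(n)}$ to the censored process), so the displayed $\preceq_{\h}$ appears to be a typographical slip; your argument should target $\succeq_{\h}$.
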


We record the following consequence of Lemma \ref{lem:BlockingMeasures} and Lemma \ref{lem:CensoringASEP}.

\begin{lemma}\label{lem:BlockingMeasureMaximum}
Let $q$ satisfy \eqref{def:TriplePointScaling} for some $\kappa \in [0,1]$ and $\psi>0$. Let $(\eta^{\mathcal{C}}_t)_{t \geq 0}$ denote a censored exclusion process on $\mathfrak{A}_n$ for some $n\in \Z$. Let $(L_t)_{t \geq 0}$ and $(R_t)_{t \geq 0}$ denote the position of the leftmost particle and rightmost empty site in $(\eta^{\mathcal{C}}_t)_{t \geq 0}$, respectively. Then there exist constants $c_0,C_0,C_1>0$ such that for any censoring scheme $\mathcal{C}$, and all  $x \geq C_1 \log(N)$,
\begin{equation}
 \P \left(  R_{t},L_{t} \in \Big[ n-xN^{\kappa}, n+ xN^{\kappa}\Big]  \, \forall 0 \leq t \leq N^3  \right) \geq 1- C_0\exp(-c_0 x) .
\end{equation}
\end{lemma}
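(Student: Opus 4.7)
The plan is to bound $L_t$ and $R_t$ by comparison with a stationary version of the censored process drawn from the blocking measure $\nu^{(n)}$, to derive a pointwise-in-$t$ tail bound from Lemma~\ref{lem:BlockingMeasures}, and then to extend to a uniform-in-time statement on $[0,N^3]$ by time discretization.

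The key observation is that $\nu^{(n)}$ is invariant under the censored dynamics for any censoring scheme $\mathcal{C}$. Indeed, Lemma~\ref{lem:BlockingMeasures} asserts that $\nu^{(n)}$ is reversible for the uncensored ASEP on $\mathcal{A}_n$; removing edges preserves detailed balance on each remaining edge, so $\nu^{(n)}$ stays reversible for any fixed censored generator, and by composition it is also invariant under the time-dependent censored dynamics generated by $\mathcal{C}$. Combining this invariance with Lemma~\ref{lem:CensoringASEP} and the monotonicity of the basic coupling in the height order $\succeq_{\h}$, I couple $(\eta_t^{\mathcal{C}})_{t\ge0}$ with a stationary censored process $(\tilde{\eta}_t)_{t\ge0}$ of initial law $\nu^{(n)}$ so that, at each fixed $t\ge0$, the leftmost particle of $(\eta_t^{\mathcal{C}})$ is at least as large as the leftmost particle of $\tilde\eta_t$, while the rightmost empty site $R_t$ of $(\eta_t^{\mathcal{C}})$ is at most as large as the rightmost empty site of $\tilde\eta_t$. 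Applying Lemma~\ref{lem:BlockingMeasures} to $\tilde\eta_t\sim\nu^{(n)}$, and using $(1-q)^{-1}=\Theta(N^\kappa)$ from the scaling $q=\exp(-\psi N^{-\kappa})$, one obtains, after absorbing $\psi$ into the constant, the pointwise-in-$t$ tail bound
\[
\P\bigl(L_t\notin[n-xN^\kappa,n+xN^\kappa]\text{ or }R_t\notin[n-xN^\kappa,n+xN^\kappa]\bigr)\le C_0\exp(-c_0 x)
\]
for every $x$ above a threshold of order $\log N$.

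To upgrade this pointwise estimate to a uniform bound on $[0,N^3]$, I discretize the interval into $\lfloor N^3\rfloor+1$ unit-length sub-intervals and union-bound the single-time estimate over the endpoints. Within each sub-interval, $L_t$ and $R_t$ can change only when a Poisson clock rings on an edge adjacent to their current locations, and the number of such rings is Poisson with parameter at most $1+q\le 2$; standard Poisson tails then control the maximal displacement over a unit interval by $x$ with failure probability at most $\exp(-c_0' x)$. Choosing $C_1$ large enough that $c_0 C_1>3$ absorbs the $N^3$ factor from the union bound into the exponential, and a routine relabeling of constants yields the stated conclusion. The main subtlety is arranging the monotone coupling so that both the lower bound on $L_t$ and the upper bound on $R_t$ follow from a single stationary comparison, since the censoring inequality on its own does not individually preserve these two tail bounds; once this setup is in place, the remaining Poisson concentration estimates and the union-bound computation are routine.
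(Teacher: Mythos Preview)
Your proof is correct and follows essentially the same strategy as the paper's: obtain a pointwise-in-$t$ tail bound by comparison with the blocking measure $\nu^{(n)}$, then upgrade to a uniform bound on $[0,N^3]$ via unit-time discretization, a Poisson-type bound on the intra-interval displacement of $L_t$ and $R_t$, and a union bound absorbed by the condition $x\ge C_1\log N$. The only minor difference is in the first step: the paper invokes Lemma~\ref{lem:CensoringASEP} directly (letting $t\to\infty$) to compare the censored process with $\nu^{(n)}$, whereas you argue explicitly that $\nu^{(n)}$ remains invariant under any censoring and then run a monotone basic coupling between the censored process started from $\vartheta_n$ and a stationary censored copy started from $\nu^{(n)}$; both routes produce the same one-sided height-order domination that simultaneously controls $L_t$ from below and $R_t$ from above.
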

\begin{proof}
Note that by Lemma \ref{lem:CensoringASEP} with $t\rightarrow \infty$, the bounds from Lemma \ref{lem:BlockingMeasures} continue to hold for the censored exclusion process $(\eta^{\mathcal{C}}_t)_{t \geq 0}$ on $\mathfrak{A}_n$ started from $\nu^{(n)}$. Thus, we get that for some constants $c_2,C_2,C_3>0$
\begin{equation}\label{eq:CensorBlock1}
 \P \left(  R_{m},L_{m} \in \Big[ n-x N^{\kappa}, n+ xN^{\kappa}\Big] \, \forall m \in \lbr N^3 \rbr \right) \geq 1- C_2\exp(-c_2 x)
\end{equation} for all $x \geq C_3 \log(N)$ and $N$ large enough. For all $m\in \N$ and all $\tilde{\eta} \in \mathfrak{A}_0$, we see that for some constant $c_3>0$ and all $x$ large enough
\begin{equation}\label{eq:CensorBlock2}
 \P \Big(  \sup_{t \in [m,m+1]}(| R_t - R_{m} |+ |L_{t}-L_m| ) \geq x \, \Big| \, \eta_m =\tilde{\eta} \Big)  \leq \exp(-c_3 x)
\end{equation} as particles can move at most at speed $2$. This
allows us to conclude by \eqref{eq:CensorBlock1} and a union bound on $m \in  \lbr N^{3} \rbr $ in \eqref{eq:CensorBlock2}.
\end{proof}

We will primarily apply Lemma \ref{lem:BlockingMeasureMaximum} with respect to censoring schemes, which result from a projection of a multi-species exclusion process, e.g., where we can interpret the dynamics of second class particles with respect to empty sites as an  exclusion process with censoring by erasing all first class particles (and the corresponding sites), merging the respective edges, and preventing all updates along such newly created edges; see also Lemma~\ref{lem:MaxSecondClassOrder}.

\begin{remark}\label{rem:ASEPRestricted}
Note that Lemma \ref{lem:CensoringASEP} and Lemma \ref{lem:BlockingMeasureMaximum} continue to hold for a stationary  ASEP on a closed interval $I$ by censoring all moves outside of $I$ at all times; see also Proposition 5.1 in \cite{S:MixingBallistic} for a precise statement.
\end{remark}

\subsection{Current fluctuations for the ASEP on $\Z$}\label{sec:CurrentFluctuationsIntegers}

A key observable for the simple exclusion process is the number of  particles passing through a given site over time. For the ASEP $(\eta^{\Z}_t)$ on the integers, we define the \textbf{current}
$(\Jz_t(x))_{t \geq 0}$ through a site $x$ as the net number of particles which have passed through site $x$ until time $t$, i.e., set $\Jz_0(x)=0$ and increase (respectively decrease) the current by one  every time a particle jumps from $x-1$ to $x$ (respectively from $x$ to $x-1$). 
Note that $\Jz_t(x)$ is almost surely finite for all $t\geq 0$ and $x\in \Z$. Similarly, we denote by  $(\Jn_t(x))_{t \geq 0}$ the current of the ASEP on $\N$ through $x$ for all $x\in \N$, and by {$(\Jo^N_t(x))_{t \geq 0}$} the current of the open ASEP for all $x\in \lbr N \rbr$.  Note that the ergodic theorem Markov chain  ensures that
\begin{equation}\label{eq:AsymptoticCurrent}
J_N = \lim_{t \rightarrow \infty} \frac{\Jo^N_t(1)}{t} \qquad \text{almost surely,}
\end{equation} where $J_N$ is the \textbf{stationary current} of the open ASEP given by
\begin{equation}
J_N := \mu_N( \eta(1)=1 , \eta(2)=0 ) - q \mu_N( \eta(1)=0 , \eta(2)=1 ) .
\end{equation}
Similarly, suppose that the ASEP on $\N$ or $\Z$ is stationary, started from a Bernoulli-$\rho$-product measure with some $\rho \in [0,1]$. {Here, we assume for the ASEP on $\N$ that the effective density satisfies $\rho_{\mathsf{L}}=\rho$. We define by  
\begin{equation*}
J^{(\rho)} = (1-q)\rho(1-\rho)
\end{equation*} the respective stationary currents. In particular, note that
\begin{equation*}
\E[\Jz_t(0)] = J^{(\rho)} t \quad \text{and} \quad \E[\Jn_t(1)] = J^{(\rho)} t
\end{equation*} }for all $t \geq 0$. {Let us now study the stationary current of the open ASEP in more detail.} A simple observation from  Lemma~\ref{lem:Bernoulli} is that whenever $AC=1$, the stationary current of the respective open ASEP satisfies
\begin{equation}\label{eq:StatCurrent}
J_N = (1-q) \frac{A}{(1+A)^2} .
\end{equation}
We make the following observation on the stationary current of the open ASEP in the fan region of the (weakly) high and low density phase.

\begin{lemma}\label{lem:CurrentHighLow} Let $(\eta_t)_{t \geq 0}$ be an open ASEP with parameters $\alpha,\beta,\gamma,\delta \geq 0$  and $q\in [0,1)$ in the fan region of the high density phase, i.e., where $AC \leq 1$ and $A > \max(1,C)$, allowing the parameters also to depend on $N$. Then the stationary current satisfies for all $N \in \N$
\begin{equation}\label{eq:CurrentHighLow}
J_N \geq (1-q) \frac{A}{(1+A)^{2}} .
\end{equation}
Similarly, in the fan region of the low density phase, where $AC \leq 1$ and $C > \max(1,A)$, the stationary current satisfies
\begin{equation}\label{eq:CurrentLow}
J_N \geq (1-q) \frac{C}{(1+C)^{2}} .
\end{equation}
\end{lemma}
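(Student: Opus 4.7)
The plan is to establish \eqref{eq:CurrentHighLow} directly using the right-boundary current identity combined with the stochastic domination from Lemma~\ref{lem:Bernoulli}, and then to deduce \eqref{eq:CurrentLow} from \eqref{eq:CurrentHighLow} via a particle--hole--with--reflection involution of the open ASEP. For the high density case I first observe that the net rate of particles crossing the right boundary in stationarity gives the identity
\begin{equation*}
J_N \;=\; \beta\,\mu_N(\eta(N)=1) - \delta\,\mu_N(\eta(N)=0) \;=\; (\beta+\delta)\,\mu_N(\eta(N)=1) - \delta.
\end{equation*}
In the fan region of the high density phase, the condition $AC\leq 1$ is equivalent to $\rho_R\leq\rho_L$, so $\min(\rho_L,\rho_R)=\rho_R$, and Lemma~\ref{lem:Bernoulli} yields $\mu_N(\eta(N)=1)\geq\rho_R=A/(1+A)$. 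Together with $\beta+\delta\geq 0$ this produces the intermediate bound $J_N\geq(\beta A-\delta)/(1+A)$.

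To close the gap to $(1-q)A/(1+A)^2$, I would establish the algebraic identity $\beta A-\delta=(1-q)A/(1+A)$. Starting from the definition \eqref{def:a} of $A$, I would isolate the radical in the form $2\beta A-(1-q-\beta+\delta)=\sqrt{(1-q-\beta+\delta)^2+4\beta\delta}$, square both sides, and simplify to arrive at $(1+A)(\beta A-\delta)=(1-q)A$. Substituting this back into the intermediate bound immediately gives $J_N\geq(1-q)A/(1+A)^2$, completing the proof of \eqref{eq:CurrentHighLow}.

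For \eqref{eq:CurrentLow} I would invoke the involution $\tilde\eta(x):=1-\eta(N+1-x)$, i.e.\ the composition of a particle--hole exchange with a spatial reflection of the segment. Reading off the generator \eqref{def:openASEP} shows that $(\tilde\eta_t)_{t\geq 0}$ is again an open ASEP with the same bias $q$ but with boundary parameters $(\tilde\alpha,\tilde\beta,\tilde\gamma,\tilde\delta)=(\beta,\alpha,\delta,\gamma)$; in particular $\tilde A=A(\alpha,\gamma,q)=C$ and $\tilde C=A$. A right-hop in $\eta$ at bond $(x,x+1)$ corresponds to a right-hop in $\tilde\eta$ at bond $(N-x,N+1-x)$, so the stationary currents coincide, $\tilde J_N=J_N$. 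Since the low density fan hypothesis for $\eta$ translates into the high density fan hypothesis for $\tilde\eta$, applying the already established \eqref{eq:CurrentHighLow} to the tilded process yields $J_N=\tilde J_N\geq(1-q)\tilde A/(1+\tilde A)^2=(1-q)C/(1+C)^2$. The main subtlety is recognising the algebraic identity coming from the quadratic defining $A$; without it, the Bernoulli domination bound at the right boundary would not be sharp enough to capture the correct prefactor.
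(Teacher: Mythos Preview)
Your proof is correct and takes a genuinely different route from the paper's. The paper argues dynamically: it introduces an auxiliary open ASEP with the same $\beta,\gamma,\delta,q$ but with $\alpha$ decreased to some $\alpha'\leq\alpha$ chosen so that $C'=C(\alpha',\gamma,q)$ satisfies $AC'=1$; by the monotone coupling of Lemma~2.1 in \cite{GNS:MixingOpen} (both processes started from the empty configuration) the currents are ordered for all times, and passing to the ergodic limit together with the explicit product-measure current \eqref{eq:StatCurrent} for the $AC'=1$ system gives \eqref{eq:CurrentHighLow}. The low density case is then dismissed by particle--hole symmetry, exactly as you do.

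Your argument is purely static: you combine the right-boundary stationary current identity $J_N=(\beta+\delta)\mu_N(\eta(N)=1)-\delta$ with the stochastic domination $\mu_N\succeq\mathrm{Ber}_N(\rho_R)$ from Lemma~\ref{lem:Bernoulli} (valid since $AC\leq 1$ forces $\rho_R=\min(\rho_L,\rho_R)$), and then close with the algebraic identity $(1+A)(\beta A-\delta)=(1-q)A$, which is nothing but the quadratic equation satisfied by $A$. This avoids any dynamic coupling or ergodic-theorem passage and is arguably more elementary. The paper's route, on the other hand, does not need to unpack the algebraic definition of $A$ at all --- it only uses the product-form current on the line $AC=1$ --- so it would transfer more readily to models where an analogue of \eqref{eq:StatCurrent} is known but the boundary parameters are less explicit. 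One minor caveat: Lemma~\ref{lem:Bernoulli} as stated in the paper assumes $\alpha,\beta,\gamma,\delta>0$ and $q\in(0,1)$, so your reduction inherits that restriction; this is harmless here but worth noting.
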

\begin{proof} We will consider only the high density phase as the result for the low density phase follows from the standard particle--hole symmetry. 
{Note that the stationary current $J_N$ is monotone increasing in the parameters $\alpha,\beta$. Recall $C=C(\alpha,\gamma,q)$ from \eqref{def:c}, and that $C$ is monotone decreasing in $\alpha$. Hence, since $C<1$ by our assumptions $AC\leq 1$ and $A>\max(1,C)$, we can find some $\alpha^{\prime} \leq \alpha$ such that $C^{\prime}=C^{\prime}(\alpha^{\prime},\gamma,q)$ satisfies $AC^{\prime}=1$, and so that the stationary current $J'_N$ of an open ASEP with respect to parameters $\alpha^{\prime},\beta,\gamma,\delta \geq 0$ satisfies
\begin{equation}
  J_N \geq   J'_N  = (1-q) \frac{A}{(1+A)^{2}} , 
\end{equation}
where the equality follows from observation \eqref{eq:StatCurrent}. }
\end{proof}

We remark that for constant parameters $\alpha,\beta,\gamma,\delta,q$, the lower bound in \eqref{eq:CurrentHighLow} is asymptotically sharp, {i.e., 
\begin{equation}
    \lim_{N \rightarrow \infty} J_N = (1-q)\frac{A}{(1+A)^2} ; 
\end{equation}
  see \cite{DEHP:ASEPCombinatorics,L:ErgodicI}.} We expect that a similar result can be achieved in the weakly high and low density phase using Askey--Wilson signed measures introduced in \cite{WWY:ASEPshock}. However, since we only require a lower bound on the stationary current in the (weakly) high and (weakly) low density phase, we leave this task to future work.

Next, we revisit the current $(\mathcal{J}^{\Z}_{t})_{t \geq 0}$ of the ASEP on $\Z$.  Recall the following recent moderate deviation result on the current of the ASEP on $\Z$ by Landon and Sosoe \cite{LS:Tails}, {which we will use in Section~\ref{sec:ModDevCurrent}. 
\begin{theorem}[c.f.~Theorem 2.4 in \cite{LS:Tails}]\label{thm:CurrentASEPOriginal} 
Let $\mathfrak{a} \in(0,1)$ and consider the current of the ASEP on $\Z$ started from a Bernoulli-$\rho$-product measure, where $\rho \in (\mathfrak{a},1-\mathfrak{a})$. Let $q\in (0,1)$. Then there exist constants $c_0,C_0>0$, which depend only on $\mathfrak{a}$, such that for all $T>0$ 
and $1 \leq w \leq T^{2/3}(1-q)^{2/3}$, 
\begin{equation*}
\P\left( \left| \frac{\Jz_T(0) - T\rho(1-\rho)(1-q)}{(1-q)^{1/3}T^{1/3}} \right| \geq  w \right) \leq C_0 \exp\left(-c_0 \min\left(w^{3/2}, \frac{w^2(1-q)^{2/3}T^{2/3}}{1+|(1-q)T(1-2\rho)|}\right) \right) . 
\end{equation*} 
\end{theorem}
\begin{proof}
This follows from Theorem 2.4 in \cite{LS:Tails} with $L-R=1-q$, $b=\rho$, $u=w$ and $x-x_0=(1-q)T(1-2\rho)$ in their notation.
\end{proof}

We will use Theorem~\ref{thm:CurrentASEPOriginal} in a slightly different parametrization (see Theorem~\ref{thm:CurrentASEP}) in order to compare the ASEP on $\Z$ to an ASEP on $\N$.
}

\begin{remark}\label{rem:ParametersCurrent}
Let us emphasize that we require $\rho \in (\mathfrak{a},1-\mathfrak{a})$ for some constant $\mathfrak{a}>0$ uniformly in $N$, so that the constants $C_0,c_0>0$ in Theorem~\ref{thm:CurrentASEPOriginal} only depend on $\mathfrak{a}$. This assumption is stated in Theorem~2.3 of \cite{LS:Tails} for the stochastic six vertex model and carries over to the ASEP in Theorem 2.4 of \cite{LS:Tails}, when applying the convergence result by Aggarwal from  \cite{A:Convergence}; see Section~7 in \cite{LS:Tails} for more details.
\end{remark}

\section{Current estimates in the maximal current phase}\label{sec:CurrentEstimates}

In this section, we consider the stationary current $J_N$ for the open ASEP  when the boundary rates $\alpha,\beta,\gamma,\delta$ and the bias $q$ depend on the system size $N$. We establish second order asymptotics on the stationary current in the maximal current phase relying on a representation by Uchiyama, Sasamoto and Wadati \cite{USW:PASEPcurrent}.
Recall the functions $A=A_N$ and $C=C_N$ from \eqref{def:a} and \eqref{def:c}, and define the constants $B,D \in [-1,0]$, when $\alpha>0$ as
\begin{equation}\label{def:d}
D=D(\alpha,\gamma,q) := \frac{1}{2 \alpha}\left( 1-q -\alpha+ \gamma - \sqrt{ (1-q -\alpha+ \gamma)^2 +4\alpha\gamma}\right),
\end{equation}
and similarly, when $\beta>0$ as
\begin{equation}\label{def:b}
B=B(\beta,\delta,q) := \frac{1}{2 \beta}\left( 1-q -\beta+ \delta - \sqrt{ (1-q -\beta+ \delta)^2 +4\beta\delta} \right)  .
\end{equation}
The expected current of the system of size $N$ is given by (see (2.11) and (6.1) of~\cite{USW:PASEPcurrent})
\begin{equation}\label{eq:CurrentInt}
J_N=\frac{Z_{N-1}}{Z_N} \quad \text{with} \quad
Z_N=\frac{1}{(1-q)^N}\frac{1}{4\pi\I} \oint_\Gamma \frac{dz}{z} e^{N f(z)} g(z;q) , 
\end{equation}
where we set
\begin{equation}\label{eq:fgDefinition}
\begin{aligned}
f(z)&=\ln(2+z+z^{-1}),\\
g(z;q)&=\frac{(z^2,z^{-2};q)_\infty}{(A z,A z^{-1};q)_\infty(B z,B z^{-1};q)_\infty(C z,C z^{-1};q)_\infty(D z,D z^{-1};q)_\infty}.
\end{aligned}
\end{equation}
The (anticlockwise) integration contour $\Gamma$ in \eqref{eq:CurrentInt} encloses the poles $\{a,a q, a q^2,\ldots\}$, but not the poles $\{a^{-1},a^{-1}q^{-1},a^{-1}q^{-2},\ldots\}$, where $a\in\{A,B,C,D\}$; see also \cite{BW:AskeyWilsonProcess,BW:QuadraticHarnesses} for similar results  using Askey--Wilson processes. In particular, when $A,B,C,D<1$, we can take the contour $\Gamma$ to be the circle of radius $1$. In \eqref{eq:fgDefinition}, we use the notation $(x,y;q)_\infty=(x;q)_\infty (y;q)_\infty$, where
\begin{equation}
(x;q)_\infty:=\prod_{k\geq 0}(1-x q^k)
\end{equation} is the $\mathbf{q}$\textbf{-Pochhammer symbol}.
From \eqref{eq:CurrentInt}, we get that
\begin{equation}\label{eqCurrent}
J_N=\frac{1-q}{4}\left[1+\frac{\oint \frac{dz}{4\pi\I z} e^{N f(z)} g(z;q)\left(\frac{4}{2+z+z^{-1}}-1\right)}{\oint \frac{dz}{4\pi\I z} e^{N f(z)} g(z;q)}\right].
\end{equation}
We will in the following, under assumption \eqref{def:TriplePointScaling} for $q$ with some $\kappa \in [0,\frac{1}{2}]$ and $\psi>0$, assume that the parameters $A,B,C,D$ satisfy
\begin{equation}\label{eq:ABCDscaling}
\begin{aligned}
&C = 1- \Theta(N^{-1/2}),\quad B = -1+\Theta(N^{-\kappa}),\\
&A = 1- \Theta(N^{-1/2}),\quad D= -1+\Theta(N^{-\kappa}),
\end{aligned}
\end{equation}
{where we recall the asymptotic notation from Section~\ref{sec:Notation}.}
\begin{figure}[t]
\begin{center}
\includegraphics[height=4cm]{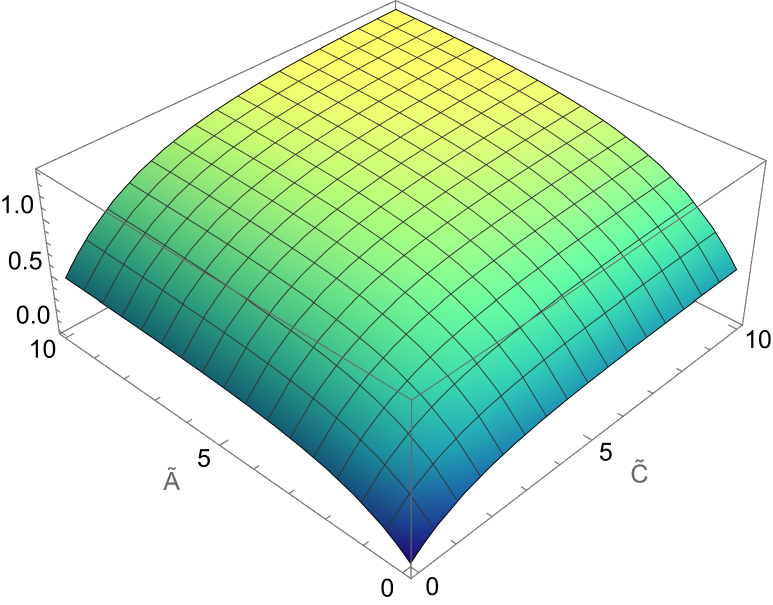}
\caption{Plot of the function $F(\tilde A,\tilde C)$ used in the proof of  Proposition \ref{pro:CurrentMaxCurrent1}.}
\label{FigMonotonicity}
\end{center}
\end{figure}

\subsection{Improved current estimates in the maximal current phase}

In the next two propositions, we get the asymptotics of the current for large $N$ under assumption \eqref{def:TriplePointScaling} for $q$ with some $\kappa \in [0,\frac{1}{2}]$ and $\psi>0$. We start with the case where $0\leq \kappa<\frac{1}{2}$.

\begin{proposition}
\label{pro:CurrentMaxCurrent1}
Consider the scaling $q=e^{-\psi N^{-\kappa}}$ for $\kappa\in [0,\frac{1}{2})$ and some $\psi>0$. Moreover, we assert that $A=e^{-\tilde A N^{-1/2}}$, $C=e^{-\tilde C N^{-1/2}}$, with some constants $\tilde A,\tilde C > 0$. We also set $B=-q e^{-\tilde B N^{-1/2}}$ and $D=-q e^{-\tilde D N^{-1/2}}$ with some $\tilde B,\tilde D \in \R$. Then, as $N\to\infty$,
\begin{equation}
J_N=\frac{1-q}{4} \left[1+\frac{1}{N}F(\tilde A,\tilde C)+o(N^{-1})\right],
\end{equation}
where we set
\begin{equation}\label{def:FunctionF}
F(\tilde A,\tilde C):=\frac{1}{4}\frac{\int_\R dx e^{-x^2/4} \frac{x^4}{(x^2+\tilde A^2)(x^2+\tilde C^2)}}{\int_\R dx e^{-x^2/4} \frac{x^2}{(x^2+\tilde A^2)(x^2+\tilde C^2)}}.
\end{equation}
\end{proposition}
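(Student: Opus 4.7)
The plan is to apply a steepest descent analysis to the contour integral \eqref{eqCurrent}, with the key observation that, under the scaling \eqref{eq:ABCDscaling}, the leading nontrivial behaviour of $g(z;q)$ near $z=1$ is governed entirely by the factors involving $A$ and $C$, while the factors involving $B$ and $D$ contribute only through a common constant that cancels in the ratio.

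\textbf{Step 1 (Contour choice and localization).} Since under \eqref{eq:ABCDscaling} we have $|A|,|B|,|C|,|D|<1$, the contour $\Gamma$ in \eqref{eq:CurrentInt} can be taken to be the unit circle $z=e^{\I\theta}$, $\theta\in(-\pi,\pi]$. On this contour $f(e^{\I\theta})=\ln 4+2\ln|\cos(\theta/2)|$ is maximized at $\theta=0$ with $f(1)=\ln 4$ and $f''(1)=-\tfrac{1}{2}$, giving the expansion $f(e^{\I\theta})=\ln 4-\theta^2/4+\Or(\theta^4)$. A standard Laplace localization argument shows that, for any $\eta\in(0,1/2)$, the contributions to both integrals in \eqref{eqCurrent} coming from $|\theta|>N^{-1/2+\eta}$ are exponentially smaller than $4^N$ times any polynomial in $N$, once we verify polynomial growth bounds for $g(e^{\I\theta};q)$ away from $z=1$; these follow from the product representation of $g$ and the Corwin--Knizel asymptotics for $(x;q)_\infty$ away from $x=1$.

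\textbf{Step 2 (Rescaling and behaviour of $g$ near the saddle).} Substituting $\theta=xN^{-1/2}$, the exponential becomes $4^N e^{-x^2/4}(1+\Or(N^{-1}))$ and $dz/z=\I N^{-1/2}dx$. Near $z=1$ we separate the explicit prefactors of $g$ that become singular: for $u=sN^{-1/2}$ small,
\[
1-e^{-\tilde A N^{-1/2}}e^{\pm\I xN^{-1/2}}=(\tilde A\mp\I x)N^{-1/2}+\Or(N^{-1}),
\]
and analogously for $C$, while $(1-z^{\pm 2})\approx\mp 2\I xN^{-1/2}$. Hence the explicit poles/zeros of $g$ combine into
\[
\frac{(1-z^2)(1-z^{-2})}{(1-Az)(1-Az^{-1})(1-Cz)(1-Cz^{-1})}
=\frac{4x^2}{(x^2+\tilde A^2)(x^2+\tilde C^2)}\bigl(1+\Or(N^{-1/2+\eta})\bigr).
\]
The remaining factors in the numerator and denominator of $g$ are of the form $(Xz;q)_\infty(Xz^{-1};q)_\infty$ with $X\in\{A,B,C,D\}$, but with the singular factors split off (i.e.\ starting at $k\geq 1$ for $A,C$, and at $k\geq 0$ for $B,D$). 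Using the Corwin--Knizel uniform asymptotics for $q$-Pochhammer symbols (together with the scalings $B,D\approx -q$ so $Bz,Dz$ stay near $-1$, bounded away from the singularity of $(\,\cdot\,;q)_\infty$), these factors converge to an $x$-independent constant $G_\star$ (depending on $\tilde A,\tilde B,\tilde C,\tilde D,\psi,\kappa$) uniformly on $|x|\leq N^\eta$, up to relative error $o(1)$.

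\textbf{Step 3 (Assembling both integrals).} Writing $I_N^{(j)}:=\oint\frac{dz}{4\pi\I z}e^{Nf(z)}g(z;q)h_j(z)$ with $h_0\equiv 1$ and $h_1(z):=\frac{4}{2+z+z^{-1}}-1$, the identity $h_1(e^{\I\theta})=\tan^2(\theta/2)=\theta^2/4+\Or(\theta^4)$ gives $h_1=\frac{x^2}{4N}(1+\Or(N^{-1+2\eta}))$ after rescaling. Combining with Step 2,
\[
I_N^{(0)}=\frac{4^N G_\star N^{1/2}}{4\pi}\int_\R e^{-x^2/4}\frac{x^2\,dx}{(x^2+\tilde A^2)(x^2+\tilde C^2)}\bigl(1+o(1)\bigr),
\]
\[
I_N^{(1)}=\frac{4^N G_\star N^{-1/2}}{16\pi}\int_\R e^{-x^2/4}\frac{x^4\,dx}{(x^2+\tilde A^2)(x^2+\tilde C^2)}\bigl(1+o(1)\bigr).
\]
Dividing, the common factor $G_\star$ cancels and we obtain $I_N^{(1)}/I_N^{(0)}=N^{-1}F(\tilde A,\tilde C)+o(N^{-1})$, which inserted into \eqref{eqCurrent} yields the claim.

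\textbf{Main obstacle.} The delicate point is Step 2: justifying, uniformly in $x$ on the rescaled window $|x|\leq N^\eta$, that all the residual $q$-Pochhammer factors (including the shifted tails $(Azq;q)_\infty$, $(Bz;q)_\infty$ etc.) converge to a single $x$-independent constant that is identical for $I_N^{(0)}$ and $I_N^{(1)}$. This requires the full strength of the uniform asymptotics of $(x;q)_\infty$ from \cite{CK:StationaryKPZ}, applied both at arguments converging to $1$ (where one must extract the explicit first factor $(1-Az)$ etc.\ used above) and at arguments converging to $-1$ (for $B,D$, where one must track cancellations between numerator and denominator through the scaling $B,D=-q(1+\Or(N^{-1/2}))$). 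Controlling the resulting error uniformly down to $o(N^{-1})$ after division is the heart of the proof.
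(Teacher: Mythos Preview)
Your overall strategy---steepest descent on the unit circle, rescaling $\theta=xN^{-1/2}$, extracting the singular factors $(1-Az^{\pm1})$, $(1-Cz^{\pm1})$, $(1-z^{\pm2})$, and using Corwin--Knizel asymptotics to show the remaining $q$-Pochhammer factors reduce to an $x$-independent constant that cancels in the ratio---is exactly the paper's approach, and your Step~3 algebra is correct.

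There is, however, one genuine gap in Step~1. Your localization relies on ``polynomial growth bounds for $g(e^{\I\theta};q)$'', but this is false for $\kappa>0$: from the $q$-Pochhammer asymptotics one has $|g(e^{\I\theta};q)|\leq c\,e^{c'N^{\kappa}}$ (stretched exponential, not polynomial), uniformly in $\theta$. This means a single cutoff at $|\theta|=N^{-1/2+\eta}$ runs into competing constraints: you need $2\eta>\kappa$ for the Gaussian $e^{-N\theta^2/4}$ to beat $e^{c'N^{\kappa}}$, but you also need $\eta$ small enough that the CK error $e^{\Or(\phi^2\e^{-7/4})}=e^{\Or(\phi^2N^{7\kappa/4})}$ stays controlled on the window. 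These are simultaneously satisfiable only for $\kappa$ below roughly $4/11$, not all the way to $1/2$.

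The paper resolves this by splitting into two overlapping regimes. For $\kappa\in[0,\tfrac{1}{10})$ a crude localization at $|\phi|=N^{-2/5}$ suffices, since $e^{c'N^{\kappa}}\leq e^{c'N^{1/10}}$ is easily beaten. For $\kappa\in(0,\tfrac12)$ the paper uses a three-region decomposition $\Gamma_0\cup\Gamma_1\cup\Gamma_2$ with inner cutoff $\delta_1=N^{-(2\kappa+1)/4}$: on $\Gamma_2$ ($|\phi|>1$) the crude bound suffices; on the intermediate region $\Gamma_1$ ($\delta_1<|\phi|\leq 1$) one cannot simply bound $|g|$ but must use its Gamma-function \emph{structure}, exploiting that $1/[\Gamma(2\I\phi/\e)\Gamma(-2\I\phi/\e)]\sim e^{2\pi\phi/\e}$ is itself dominated by $e^{-N\phi^2/4}$ for $\phi\geq\delta_1$ when $\kappa<\tfrac12$; only on $\Gamma_0$ does one perform the expansion you describe in Step~2. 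Your identification of Step~2 as the ``main obstacle'' is thus slightly off: the intermediate-region estimate on $\Gamma_1$ is where the structural information is really needed, and it is what allows the argument to reach $\kappa$ arbitrarily close to $\tfrac12$.
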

A visualization of the function $F$ is given in  Figure~\ref{FigMonotonicity}.
In the proof of Proposition \ref{pro:CurrentMaxCurrent1}, we will provide two separate arguments to cover the (overlapping) regimes $\kappa \in [0,\frac{1}{10})$ and $\kappa \in (0,\frac{1}{2})$ separately.
Finally, we also cover the case where $\kappa=1/2$ as follows.

\begin{proposition}\label{pro:CurrentMaxCurrent3}
Let $q=e^{-\psi N^{-1/2}}$, $A=e^{-\tilde A N^{-1/2}}$, $C=e^{-\tilde C N^{-1/2}}$, $B=-q e^{-\tilde B N^{-1/2}}$, $D=-q e^{-\tilde D N^{-1/2}}$ for constants $\tilde{A},\tilde{C}> 0$ and $\tilde B,\tilde D > - \psi$. Then, as $N\to\infty$,
\begin{equation}\label{eq4.5}
J_N=\frac{1-q}{4}\left[1+\frac{1}{4N}\frac{\int_\R dx e^{-x^2/4} x^2 H(\tilde A,\tilde C;x)}{\int_\R dx e^{-x^2/4} H(\tilde A,\tilde C;x)} +\Or(N^{-9/8})\right],
\end{equation}
where we set
\begin{equation}\label{def:FunctionH}
H(\tilde A,\tilde C;x) = \frac{\Gamma((\tilde A+\I x)/\psi)\Gamma((\tilde A-\I x)/\psi)\Gamma((\tilde C+\I x)/\psi)\Gamma((\tilde C-\I x)/\psi)}{\Gamma(2\I x/\psi)\Gamma(-2\I x/\psi)}.
\end{equation}
\end{proposition}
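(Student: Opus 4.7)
I would prove Proposition~\ref{pro:CurrentMaxCurrent3} by a saddle point analysis of \eqref{eqCurrent}. Under the scaling hypotheses, $|A|,|C|<1$ and $|B|,|D|<1$ (the latter using $\tilde B,\tilde D>-\psi$) for all $N$ large, so $\Gamma$ may be deformed to the unit circle $|z|=1$. Setting $z=e^{\I\theta}$ gives $2+z+z^{-1}=4\cos^2(\theta/2)$, hence $f(e^{\I\theta})=\ln 4-\theta^2/4-\theta^4/96+\Or(\theta^6)$ with a unique non-degenerate saddle at $\theta=0$. Rescaling $\theta=x/\sqrt N$, we get $Nf(z)=N\ln 4-x^2/4+\Or(x^4/N)$, $dz/z=\I\,dx/\sqrt N$, and the extra factor inside the bracket becomes $4/(2+z+z^{-1})-1=\tan^2(\theta/2)=x^2/(4N)+\Or(x^4/N^2)$.

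\textbf{Local analysis of $g(z;q)$.} The crucial input is the sharp $q$-Pochhammer asymptotic as $q=e^{-\epsilon}\to 1$,
\begin{equation*}
(q^a;q)_\infty=\frac{(q;q)_\infty}{\Gamma(a)}\,\epsilon^{1-a}\bigl(1+\Or(\epsilon)\bigr),
\end{equation*}
in the quantitative form developed by Corwin--Knizel in \cite{CK:StationaryKPZ}. With $\epsilon=\psi/\sqrt N$ and $z=e^{\I x/\sqrt N}$, one has $Az^{\pm 1}=q^{(\tilde A\mp \I x)/\psi}$, $Cz^{\pm 1}=q^{(\tilde C\mp \I x)/\psi}$, $z^{\pm 2}=q^{\mp 2\I x/\psi}$ up to $\Or(N^{-1})$. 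Substituting these identities into $g(z;q)$, one sees that the $\epsilon$-powers and $(q;q)_\infty$-powers coming from the four $A,C$-factors (downstairs) and the two $z^{\pm 2}$-factors (upstairs) combine into an $x$-independent constant, while the $\Gamma$-factors assemble exactly into $H(\tilde A,\tilde C;x)$. The $B,D$ factors have arguments close to $-q$ and thus stay uniformly bounded away from all singularities on the rescaled window; they contribute a smooth $x$-independent multiplier. Consequently, up to an $x$-independent prefactor that cancels between the numerator and denominator of \eqref{eqCurrent}, the $x$-dependence of $g(1+\I x/\sqrt N;q)$ is precisely $H(\tilde A,\tilde C;x)$.

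\textbf{Laplace method and identification of the limit.} Truncating the $\theta$-integral to $|\theta|\le N^{-a}$ for some $a\in(0,\tfrac{1}{2})$ discards only an $\Or(\exp(-cN^{1-2a}))$ contribution, since $\Re f(e^{\I\theta})<\ln 4$ away from $0$. The surviving rescaled integrals yield the leading correction
\begin{equation*}
\frac{1}{4N}\,\frac{\int_{\R} dx\,e^{-x^2/4}\,x^2 H(\tilde A,\tilde C;x)}{\int_{\R} dx\,e^{-x^2/4}\, H(\tilde A,\tilde C;x)}.
\end{equation*}
Convergence at $x=0$ is ensured by the reflection identity $\Gamma(\I y)\Gamma(-\I y)=\pi/(y\sinh(\pi y))$, which forces $H$ to vanish like $x^2$ near the origin (whereas the four numerator $\Gamma$-factors remain bounded there by $\tilde A,\tilde C>0$); convergence at infinity follows by Stirling, which actually gives super-Gaussian decay.

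\textbf{Main obstacle.} The delicate step is quantifying the error as $\Or(N^{-9/8})$. This requires the Corwin--Knizel second-order Pochhammer expansion uniformly in $x$ over the rescaled window, a careful optimization of the truncation scale $N^{-a}$ balancing the cubic term in $f$ against the $\Or(\epsilon)$ correction in $(q^a;q)_\infty$ and the $\Or(N^{-1})$ correction to the exponents of $q$, and a Taylor estimate of the smooth $B,D$ contribution to the next order. The non-integer exponent $9/8$ is the signature of such a balance between two distinct error sources; assembling these estimates yields the stated asymptotics.
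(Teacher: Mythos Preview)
Your approach is essentially the paper's: parametrize $z=e^{\I\phi}$, apply the Corwin--Knizel expansion (Lemma~\ref{lemCK}) to convert Pochhammer symbols to Gamma functions, rescale $\phi=x/\sqrt N$, and cancel the $x$-independent prefactors in the ratio. The paper merely packages the tail control into a three-region decomposition with cutoff $\delta_1=N^{-1/4}$, referring back to the proof of Proposition~\ref{pro:CurrentMaxCurrent1}.

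Two factual corrections. First, $H(\tilde A,\tilde C;x)$ does \emph{not} decay at infinity: by Stirling the exponential factor $e^{-2\pi|x|/\psi}$ in the numerator exactly cancels that of the denominator, and in fact $H\asymp |x|^{2(\tilde A+\tilde C)/\psi-1}$ grows polynomially; convergence of the integrals is due solely to the Gaussian weight $e^{-x^2/4}$. Second, the $\Or(N^{-1/8})$ relative error does not arise from the cubic term in $f$ (that contributes only $\Or(x^3 N^{-1/2})$ after rescaling, hence $\Or(N^{-1/2})$). It comes from the remainder $E^{\pm}(\e,w)=\Or(\e^{1/2}(1+|w|)^{9/4})$ in Lemma~\ref{lemCK}: with $|w|\sim|\phi|/\e$ and $\e=\psi N^{-1/2}$ this is $\Or(\phi^2\e^{-7/4})=\Or(x^2N^{-1/8})$, and this term is what fixes the exponent $9/8$. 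For the $B,D$ factors you also need the companion expansion for $(-q^w;q)_\infty$ (the $\mathcal A^-$ formula in Lemma~\ref{lemCK}), not the $(q^a;q)_\infty$ version you cite; this is what makes their leading contribution genuinely $\phi$-independent rather than producing additional Gamma factors.
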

{We will discuss strict monotonicity of the function $F$, and its consequences on the current $J_N$ in Section \ref{sec:Strict}.}

\subsection{Identities and expansions of Gamma functions}
In this short section, we collect some basic properties of Gamma and reciprocal Gamma functions. Below a complex number $z$ is decomposed as $z=x+\I y$, $x,y\in\R$. Note that $\Gamma(z)$ is an analytic function on $\C$ except for the points $z=0,-1,-2,-3,-4,\ldots$ where it has poles with residues $0,-1,1/2,-1/3!,1/4!,\ldots$. The reciprocal of the Gamma function, $1/\Gamma(z)$, is an entire function with simple zeroes at the points $\ldots,-3,-2,-1,0$.
The first identity is (see \textbf{6.1.29} in \cite{DAR:Pocketbook})
\begin{equation}\label{eqGammaIdentity}
\frac{1}{\Gamma(\I y)\Gamma(-\I y)}=\frac{y \sinh(\pi y)}{\pi}.
\end{equation}
From equation \textbf{6.1.15} of \cite{DAR:Pocketbook}, $\Gamma(z+1)=z \Gamma(z)$, we have
\begin{equation}\label{eqGamma3}
\Gamma(z)=\frac1z \Gamma(z+1)
\end{equation}
and, since $\Gamma(1)=1$ and $\Gamma$ is analytic and bounded in a neighborhood of $z=1$, for $|z|\to 0$ we get
\begin{equation}\label{eqGamma4}
\Gamma(z)=\frac1z (1+\Or(z)).
\end{equation}
As a consequence, as $\e\to 0$, it holds that
\begin{equation}\label{eqGamma1}
\Gamma((x+\I y) \e)\Gamma((x-\I y) \e)=\frac{1}{(x^2+y^2) \e^2} (1+\Or(\e)).
\end{equation}
Inequality \textbf{6.1.26} of \cite{DAR:Pocketbook} states that
\begin{equation}\label{eqGamma2}
|\Gamma(x+\I y)|\leq |\Gamma(x)|.
\end{equation}
Finally, for any $x>0,y\in\R$,
\begin{equation}\label{eqGamma7}
\Gamma(x+\I y)\Gamma(x-\I y)>0.
\end{equation}
This follows from
\begin{equation*}
\Gamma(x+\I y)\Gamma(x-\I y)=|\Gamma(x+\I y)|^2 e^{\I [\arg(x+\I y)+\arg(x-\I y)]}
\end{equation*}
as using formula \textbf{6.1.27} of \cite{DAR:Pocketbook}, we get that $\arg(x+\I y)+\arg(x-\I y)=0$.

\begin{remark}
Note that for $\tilde A,\tilde C>0$ and $x\in\R$, we see that $H$ from \eqref{def:FunctionH} satisfies $H(\tilde A,\tilde C;x)>0$ by \eqref{eqGammaIdentity} and \eqref{eqGamma7}. This means that the integrals in the numerator and denominator are of positive functions, i.e., there are no cancellations.
\end{remark}

\subsection{Asymptotics for $q$-Pochhammer expressions}
In the following, we parameterize the integration path as $z=e^{\I\phi}$. We collect some lemmas to control the contribution for $\phi$ away from $0$ as well as the expansions for $\phi$ close to $0$.

\begin{lemma}\label{lemF}
The critical point of $f(z)$ is $z=1$. Moreover, 
\begin{equation}\label{eq2.2b}
f(z)=\ln(4)+\frac14(z-1)^2+\Or((z-1)^3)
\end{equation}
and for all $\phi\in [-\pi,\pi)$,
\begin{equation}\label{eq2.1}
f(z=e^{\I\phi})\leq \ln(4)-\frac14 \phi^2.
\end{equation}
\end{lemma}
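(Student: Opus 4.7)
\medskip

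The plan is to handle the three claims in Lemma \ref{lemF} separately, since they are essentially elementary computations with $f(z)=\ln(2+z+z^{-1})$.

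First, for the critical point, I would differentiate to obtain
$f'(z) = (1-z^{-2})/(2+z+z^{-1})$, whose zeros satisfy $z^2=1$. The candidate $z=-1$ must be discarded since at $z=-1$ the argument of the logarithm vanishes, making $f$ singular there. Hence $z=1$ is the unique critical point in the relevant domain.

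Second, for the local expansion, I would set $w = z-1$ and expand
\begin{equation*}
z + z^{-1} = (1+w) + (1+w)^{-1} = 2 + w^2 - w^3 + \Or(w^4),
\end{equation*}
so that $2+z+z^{-1} = 4\bigl(1 + w^2/4 + \Or(w^3)\bigr)$. Taking the logarithm and using $\ln(1+u)=u+\Or(u^2)$ yields \eqref{eq2.2b} directly.

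Third, and the only part requiring genuine (but still routine) work, is the global bound \eqref{eq2.1}. Writing $z=e^{\I\phi}$, I get $z+z^{-1}=2\cos\phi$, so
\begin{equation*}
f(e^{\I\phi}) = \ln\bigl(2 + 2\cos\phi\bigr) = \ln(4) + 2\ln\bigl|\cos(\phi/2)\bigr|.
\end{equation*}
The desired inequality therefore reduces to showing $\ln|\cos\theta| \leq -\theta^2/2$ for all $\theta\in[-\pi/2,\pi/2)$, which I would establish by setting $g(\theta):=\ln(\cos\theta)+\theta^2/2$ on $(-\pi/2,\pi/2)$. Since $g(0)=0$, $g'(\theta) = \theta - \tan\theta$, and $g''(\theta)=-\tan^2\theta\leq 0$, the function $g'$ is decreasing, with $g'(0)=0$, so $g$ attains its maximum at $\theta=0$, giving $g(\theta)\leq 0$ throughout $(-\pi/2,\pi/2)$. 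At $\theta=\pm\pi/2$ the left-hand side is $-\infty$ and the bound is trivial.

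I do not expect any real obstacle here: the first two claims are calculus, and the third is a standard concavity argument for $\ln\cos$. The only point to be slightly careful about is the singularity at $z=-1$ in part one and at $\phi=\pm\pi$ in part three, both of which are handled by the fact that $f\to-\infty$ there, making the asserted bounds trivially valid.
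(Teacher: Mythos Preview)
Your proposal is correct and follows essentially the same approach as the paper. The paper computes $f'(z)=(z-1)/(z(z+1))$ and, for the inequality, defines $g(\phi)=\ln(4)-\phi^2/4-\ln[2(1+\cos\phi)]$ and uses $g'(\phi)=\tan(\phi/2)-\phi/2\geq 0$ directly; your version applies the half-angle identity first and then derives the same inequality $\tan\theta\geq\theta$ via the second derivative, which is the same argument up to cosmetic reorganization.
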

\begin{proof}
We have
\begin{equation*}
\frac{d}{dz} f(z)=\frac{z-1}{z(z+1)}
\end{equation*}
which is equal to $0$ for $z=1$. Moreover,
\begin{equation*}
\frac{d^2}{dz^2} f(z)\big|_{z=1}=\frac12
\end{equation*}
so \eqref{eq2.1} is obtained using the explicit parametrization $f(e^{\I\phi})=\ln[2(1+\cos(\phi))]$. Indeed, setting $g(\phi)=\ln(4)-\phi^2/4-\ln[2(1+\cos(\phi))]$, we have $g(0)=0$ and, for $\phi\in [0,\pi)$,
\begin{equation*}
\frac{d g(\phi)}{d\phi}= \tan(\phi/2)-\phi/2\geq 0,
\end{equation*}
which implies $g(\phi)\geq 0$, and thus \eqref{eq2.1}.
\end{proof}

The next estimates are useful in the case that $1-A$ goes to $1$ much faster than $1-q$ to $0$ as we let $N\to\infty$.

\begin{lemma}\label{lemQpochhammerKappa0Complex}
{For all $q\in (0,1)$ and $w\in\C$ with $|(1-w)/(1-q)|\leq \frac{1}{2}$, we get 
\begin{equation}
(q w;q)_\infty = (q;q)_\infty e^{\Or(|1-w|/(1-q)^2)},
\end{equation} 
i.e., there exists a constant $C>0$ such that 
\begin{equation*}
 (q;q)_\infty \exp(-C(|1-w|/(1-q)^2)\leq (q w;q)_\infty  \leq (q;q)_\infty \exp(C(|1-w|/(1-q)^2) . 
\end{equation*}
}
\end{lemma}
\begin{proof}
We start with
\begin{equation}\label{eq3.12}
\begin{split}
    \ln((q w;q)_\infty)-\ln((q;q)_\infty)&=\sum_{n\geq 0} \ln\left(\frac{1-w q^{n+1}}{1-q^{n+1}}\right) \\
    &=\sum_{n\geq 0} \ln\left(1+\frac{(1-w) q^{n+1}}{1-q^{n+1}}\right).
\end{split}
\end{equation}

Notice that for all $n\geq 0$ and $q\in [0,1)$, we have that
\begin{equation}\label{eqLog}
0\leq \frac{1}{1-q^{n+1}}\leq \frac{1}{1-q},\quad 0\leq q^{n+1}\leq q.
\end{equation}
Denoting $z=1-w$, our assumptions give $|z/(1-q)|\leq \frac{1}{2}$. Then by \eqref{eqLog} for all $n$ we also get $|z q^{n+1}/(1-q^{n+1})|\leq \frac{1}{2}$. Therefore, the series of the logarithm in the summands in \eqref{eq3.12} is convergent for all $n$. This implies
\begin{equation}\label{eq3.14}
\eqref{eq3.12}=\sum_{n\geq 0} \sum_{\ell\geq 1} \frac{(-1)^\ell z^\ell q^{\ell (n+1)}}{(1-q^{n+1})^\ell \ell}.
\end{equation}
Using \eqref{eqLog}, we have the bound
\begin{equation}\label{eq3.16}
\begin{aligned}
|\eqref{eq3.14}|&\leq \sum_{\ell\geq 1}\frac{|z|^\ell}{(1-q)^\ell \ell}\sum_{n\geq 1} q^{\ell n} = \sum_{\ell\geq 1}\frac{|z|^\ell}{(1-q)^\ell \ell} \frac{q^\ell}{1-q^\ell}\\
&\leq \frac{1}{1-q}\sum_{\ell\geq 1}\frac{|z|^\ell q^\ell}{(1-q)^\ell}=\frac{1}{1-q}\frac{|z|q}{1-q}\frac{1}{1-\frac{|z|q}{1-q}}\leq 2 \frac{|z| q}{(1-q)^2},
\end{aligned}
\end{equation}
as our assumptions imply $\frac{|z| q}{1-q}\leq \frac{1}{2}$.
\end{proof}

\begin{lemma}\label{lemQpochhammerKappa0Bis}
{For all $q\in (0,1)$ and $w\in\C$ with $|(1-w)/(1-q)|\leq \frac{1}{2}$, we get
\begin{equation}
    (-qw;q)_\infty = (-q;q)_\infty e^{\mathcal{O}(|1-w|/(1-q)^2)}.
\end{equation}
}
\end{lemma}
\begin{proof}
We have
\begin{equation}\label{eq3.12B}
\begin{split}
\ln((-q w;q)_\infty)-\ln((-q;q)_\infty)&=\sum_{n\geq 0} \ln\left(\frac{1+w q^{n+1}}{1+q^{n+1}}\right) \\
&=\sum_{n\geq 0} \ln\left(1-\frac{(1-w) q^{n+1}}{1+q^{n+1}}\right).
\end{split}
\end{equation}
Notice that for all $n\geq 0$ and $q\in [0,1)$, it holds that
\begin{equation}\label{eqLogB}
\frac{1}{1+q}\leq \frac{1}{1+q^{n+1}}\leq 1,\quad 0\leq q^{n+1}\leq q.
\end{equation}
Denoting $z=1-w$, we see that $|z/(1-q)|\leq \frac{1}{2}$. Then by \eqref{eqLogB} for all $n$, we get $|z q^{n+1}/(1+q^{n+1})|\leq \frac{1}{2}$. Therefore, the series of the logarithm is convergent for all $n$, so
\begin{equation}\label{eq3.14B}
|\eqref{eq3.12B}|\leq \sum_{n\geq 0} \sum_{\ell\geq 1} \frac{|z|^\ell q^{\ell (n+1)}}{(1+q^{n+1})^\ell \ell}
\leq  \sum_{\ell\geq 1} |z|^\ell \sum_{n\geq 0} q^{\ell (n+1)} = \sum_{\ell\geq 1}\frac{|z|^\ell q^\ell}{1-q^\ell}\leq \frac{2 |z| q}{(1-q)^2}
\end{equation}
since our assumptions imply $|z|q\leq \frac{1}{2}$.
\end{proof}

The following estimate will be used when $0<\kappa\leq \frac{1}{2}$. It is a special case of Proposition~2.3 in~\cite{CK:StationaryKPZ} (with $m=1$ and \emph{their} $\e=\frac{1}{4}$, $b=\frac{1}{2}$, $\alpha=2$).
\begin{lemma}[Special case of Proposition~2.3 of~\cite{CK:StationaryKPZ}]\label{lemCK}
Let $q=e^{-\e}$. Define the functions
\begin{equation}
\begin{aligned}
{\mathcal A^+}(\e,w)&=-\frac{\pi^2}{6\epsilon}-\Big(w-\frac{1}{2}\Big) \ln(\e)+\tfrac12\ln(2\pi)-\ln(\Gamma(w)),\\
{\mathcal A^-}(\e,w)&=\frac{\pi^2}{12 \epsilon}-\Big(w-\frac{1}{2}\Big)\ln(2).
\end{aligned}
\end{equation}
For all $w\in\C$ with $|\Im(w)|\leq 2/\e$,
\begin{align}
\ln((q^w;q)_\infty)&={\mathcal A^+}(\e,w)+E^+(\e,w),\label{eqCKplus}\\
\ln((-q^w;q)_\infty)&={\mathcal A^-}(\e,w)+E^-(\e,w),\label{eqCKminus}
\end{align}
with $E^\pm(\e,w)=\Or(\e (1+|w|)^2+\e^{1/2}(1+|w|)^{2+1/4})$.
\end{lemma}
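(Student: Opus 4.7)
The plan is to derive this directly as a specialization of Proposition~2.3 in \cite{CK:StationaryKPZ}. The authors' hint already tells us which parameter values to use: $m=1$, their $\e=\tfrac{1}{4}$, $b=\tfrac{1}{2}$, $\alpha=2$, so the main task is to verify that their general uniform expansion collapses to the compact form stated here, and that the error bound simplifies correctly.

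First I would set up the correspondence with the Corwin--Knizel notation and check admissibility: the hypothesis $|\Im(w)|\leq 2/\e$ matches the range in which their uniform bound is proved once the substitution $q=e^{-\e}$ is made. Second, I would extract the leading-order structure. The classical heuristic is that, writing $q=e^{-\e}$ and applying Euler--Maclaurin to
\begin{equation*}
\ln((q^w;q)_\infty)=\sum_{k\geq 0}\ln\bigl(1-e^{-\e(w+k)}\bigr),
\end{equation*}
one obtains a leading polylogarithmic term $-\mathrm{Li}_2(1)/\e = -\pi^2/(6\e)$, a boundary/Stirling contribution $-(w-\tfrac12)\ln(\e)+\tfrac12\ln(2\pi)-\ln\Gamma(w)$, and subdominant corrections; this reproduces $\mathcal A^+$. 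The analogous expansion for $\ln((-q^w;q)_\infty)$ uses $\mathrm{Li}_2(-1)=-\pi^2/12$ (which is where the sign flip and the factor $1/2$ in front of $\pi^2/\e$ come from) together with the fact that $\ln(1+e^{-\e(w+k)})$ has no pole at $k=0$, which collapses the $\ln\Gamma$ term and produces only the linear-in-$w$ correction $-(w-\tfrac12)\ln 2$; this yields $\mathcal A^-$.

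Third, I would read off the error term from Corwin--Knizel's statement. With their $\alpha=2$ their error is $O(\e(1+|w|)^2 + \e^{1/2}(1+|w|)^{2+\alpha/8})$, and $2+2/8 = 2+1/4$, which is precisely what we claim. The restriction $|\Im w|\leq 2/\e$ is exactly their admissible strip with the chosen $b=\tfrac12$, after rescaling.

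The only genuine obstacle is bookkeeping: Proposition~2.3 of \cite{CK:StationaryKPZ} is stated for a general $m$-fold product with shift parameters, and one must verify that setting $m=1$ with trivial shift truly recovers single $q$-Pochhammer asymptotics with no surviving auxiliary terms. Once the parameter dictionary is fixed, both \eqref{eqCKplus} and \eqref{eqCKminus} follow by direct substitution, and the lemma is proved.
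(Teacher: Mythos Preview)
Your proposal is correct and takes essentially the same approach as the paper: the paper provides no proof whatsoever for this lemma, simply citing it as a special case of Corwin--Knizel's Proposition~2.3 with the parameter choices $m=1$, their $\e=\tfrac14$, $b=\tfrac12$, $\alpha=2$ given in the sentence preceding the statement. Your additional Euler--Maclaurin heuristic for the leading terms is more than the paper offers and is a useful sanity check, but the core argument---specialize the cited result and read off the error exponent $2+\alpha/8=2+1/4$---is exactly what the paper does.
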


\begin{lemma}\label{lemz2}
Let $q=e^{-\e}$ with $0<\e\leq 1$ and $z=e^{\I\phi}$. Then
\begin{equation}
|(z^2,z^{-2};q)_\infty|\leq 4 \sin(\phi)^2 (-q;q)_\infty^2 \leq c_0 e^{\pi^2/(6\e)}
\end{equation}
for some constant $c_0$. Furthermore, for $|\phi|\leq 1$,
\begin{equation}
(z^2,z^{-2};q)_\infty = \frac{e^{-\frac{\pi^2}{3\e} +\ln(\e)+\ln(2\pi)}}{\Gamma(-2\I\phi/\e)\Gamma(2\I\phi/\e)} e^{\Or(\phi^2\e^{-7/4};\sqrt{\e})}.
\end{equation}
{Here, we write $\Or(\phi^2\e^{-7/4};\sqrt{\e})$ as a short-hand when $\Or(\phi^2\e^{-7/4})$ as well as $\Or(\sqrt{\e})$ hold.}
\end{lemma}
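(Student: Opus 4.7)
The plan is to factor the vanishing of the $q$-Pochhammer symbols at $z=\pm 1$ out of $(z^{\pm 2};q)_\infty$ and then reduce the rest to a direct application of Lemma~\ref{lemCK}. First I would write
\begin{equation*}
(z^2;q)_\infty=(1-z^2)(z^2 q;q)_\infty,\qquad (z^{-2};q)_\infty=(1-z^{-2})(z^{-2}q;q)_\infty,
\end{equation*}
and observe that on $|z|=1$ the elementary identity $(1-z^2)(1-z^{-2})=2-z^2-z^{-2}=4\sin(\phi)^2$ gives
\begin{equation*}
(z^2,z^{-2};q)_\infty=4\sin(\phi)^2\cdot(z^2 q,z^{-2}q;q)_\infty.
\end{equation*}
This is the key reduction behind both halves of the lemma.

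For the first estimate I would proceed term by term: since $|z^{\pm 2}|=1$, the triangle inequality yields $|1-z^{\pm 2}q^{k+1}|\le 1+q^{k+1}$ and hence
\begin{equation*}
|(z^2 q;q)_\infty|,\;|(z^{-2}q;q)_\infty|\ \le\ \prod_{k\ge 0}(1+q^{k+1})=(-q;q)_\infty,
\end{equation*}
which combined with the factorisation above gives $|(z^2,z^{-2};q)_\infty|\le 4\sin(\phi)^2(-q;q)_\infty^2$. Lemma~\ref{lemCK} with $w=1$ gives $\ln((-q;q)_\infty)=\pi^2/(12\e)+\Or(1)$ uniformly on $\e\in(0,1]$, so squaring and using $\sin(\phi)^2\le 1$ yields the bound $c_0 e^{\pi^2/(6\e)}$.

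For the asymptotic formula, I would apply Lemma~\ref{lemCK} to both factors, writing $z^2 q=q^{1-2\I\phi/\e}$ and $z^{-2}q=q^{1+2\I\phi/\e}$. The hypothesis $|\phi|\le 1$ gives $|\Im(w)|=2|\phi|/\e\le 2/\e$, so both applications are legitimate. Summing the two copies of $\mathcal{A}^+$ cancels the imaginary part of the $-(w-\tfrac12)\ln(\e)$ term and produces
\begin{equation*}
\ln(z^2 q,z^{-2}q;q)_\infty=-\tfrac{\pi^2}{3\e}-\ln(\e)+\ln(2\pi)-\ln\bigl(\Gamma(1-2\I\phi/\e)\Gamma(1+2\I\phi/\e)\bigr)+E,
\end{equation*}
where $E=E^+(\e,1-2\I\phi/\e)+E^+(\e,1+2\I\phi/\e)$. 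I then use \eqref{eqGamma3} twice in the form $\Gamma(1\pm 2\I\phi/\e)=(\pm 2\I\phi/\e)\Gamma(\pm 2\I\phi/\e)$ to replace the pair of Gamma factors by $(4\phi^2/\e^2)\,\Gamma(-2\I\phi/\e)\Gamma(2\I\phi/\e)$. This $4\phi^2/\e^2$ cancels against the overall prefactor $4\sin(\phi)^2$ up to the multiplicative correction $(\sin(\phi)/\phi)^2=1+\Or(\phi^2)$, and the remaining $\e^{-1}$ combines with $-\ln(\e)+\ln(2\pi)$ to yield precisely the prefactor $e^{-\pi^2/(3\e)+\ln(\e)+\ln(2\pi)}$ claimed.

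The only quantitative task is bookkeeping the error. With $|w|\lesssim 1+|\phi|/\e$ and $\e\le 1$, Lemma~\ref{lemCK} gives
\begin{equation*}
E=\Or\bigl(\e(1+|w|)^2+\e^{1/2}(1+|w|)^{9/4}\bigr)\subseteq\Or\bigl(\e+\phi^2/\e+\sqrt{\e}+\phi^{9/4}\e^{-7/4}\bigr).
\end{equation*}
For $|\phi|\le 1$ and $\e\le 1$ we have $\phi^{9/4}\le\phi^2$ and $\e^{-1}\le\e^{-7/4}$, so every term fits into the envelope $\Or(\phi^2\e^{-7/4}+\sqrt{\e})$; the $\Or(\phi^2)$ correction coming from $(\sin(\phi)/\phi)^2$ is absorbed there as well. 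I do not anticipate a genuine obstacle, as the argument is simply a careful application of the Corwin--Knizel expansion with uniform bookkeeping across $|\phi|\le 1$; the only subtlety is that $|w|$ may be as large as $2/\e$, so one must check that the allowed error bound of Lemma~\ref{lemCK} survives at this maximal value of $|w|$.
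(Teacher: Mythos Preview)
Your proposal is correct and, for the first inequality, essentially identical to the paper's argument: both factor out $(1-z^2)(1-z^{-2})=4\sin^2\phi$, bound each remaining factor by $(-q;q)_\infty$, and invoke Lemma~\ref{lemCK} with $w=1$.

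For the asymptotic expansion there is a minor but genuine difference in route. The paper applies Lemma~\ref{lemCK} \emph{directly} to $(z^{\pm 2};q)_\infty$ with $w=\mp 2\I\phi/\e$, obtaining $\Gamma(\mp 2\I\phi/\e)$ in the denominator immediately and avoiding any auxiliary Gamma manipulation. You instead keep the factorisation $(z^2,z^{-2};q)_\infty=4\sin^2\phi\cdot(z^2q,z^{-2}q;q)_\infty$, apply Lemma~\ref{lemCK} with $w=1\mp 2\I\phi/\e$, and then shift the resulting $\Gamma(1\mp 2\I\phi/\e)$ via \eqref{eqGamma3}, cancelling the emerging $4\phi^2/\e^2$ against $4\sin^2\phi$ up to the harmless $(\sin\phi/\phi)^2=1+\Or(\phi^2)$. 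Both arguments are valid and the error bookkeeping is the same; the paper's version is one step shorter, while yours has the aesthetic advantage of using the same factorisation for both parts of the lemma.
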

\begin{proof}
Taking out the first terms in the $q$-Pochhammer products, we get
\begin{equation*}
\begin{aligned}
(z^2,z^{-2};q)_\infty &= -(z^{-1}-z)^2 (q z^2,q z^{-2};q)_\infty\\
&=4\sin^2(\phi)\prod_{k\geq 0} (1-e^{2\I\phi}q^{k+1})(1-e^{-2\I\phi}q^{k+1}).
\end{aligned}
\end{equation*}
The last product is bounded by $\left((-q,q)_\infty\right)^2$. By Lemma~\ref{lemCK} (Equation~\eqref{eqCKminus} with $w=1$), we get that
\begin{equation*}
\left((-q,q)_\infty\right)^2\leq C_0 e^{\pi^2/(6\e)}
\end{equation*}
for some constant $C_0>0$.
Next, notice that $z^2=e^{2\I\phi}=q^w=e^{-\e w}$ for $w=-2\I \phi/\e$. As $|\phi|\leq 1$ gives $|\Im(w)|\leq 2 /\e$, we can use the expansion of Lemma~\ref{lemCK} to see that
\begin{equation*}
(z^2;q)_\infty = \frac{e^{-\frac{\pi^2}{6\e} +(2\I\phi/\e+1/2)\ln(\e)+\tfrac12 \ln(2\pi)}}{\Gamma(-2\I\phi/\e)} e^{\Or(\phi^2\e^{-7/4};\sqrt{\e})},
\end{equation*}
where $\Or(\phi^2\e^{-7/4})$ comes from the case where $|w|\gg 1$ and $\sqrt{\e}$ from the case where $|w|$ remains bounded. Similarly, by replacing $\phi$ with $-\phi$, we get the expansion for $(z^{-2};q)_\infty$. Multiplying them together we get the claimed result.
\end{proof}
The next result gives us the asymptotics for  $A$ close to the critical point.
\begin{lemma}\label{lemAN}
Let $q=e^{-\e}$ with $0<\e\leq 1$, $A=e^{-\tilde A N^{-1/2}}$ for some $\tilde A\geq 0$ and $z=e^{\I\phi}$. Then
\begin{equation}
|(A z,A z^{-1};q)_\infty|\geq c_0 \tilde A^2 N^{-1}  e^{-\tilde c /\e}
\end{equation}
for some constants $c_0,\tilde c>0$. Furthermore, for $|\phi|\leq 1$, we see that
\begin{equation}
(A z,A z^{-1};q)_\infty = \frac{e^{-\frac{\pi^2}{3\e}+\ln(\e)+\ln(2\pi) -\frac{2\tilde A\ln(\e)}{N^{1/2}\e}}}{\Gamma\left(\frac{\tilde A}{N^{1/2}\e}-\I\frac{\phi}{\e}\right)\Gamma\left(\frac{\tilde A}{N^{1/2}\e}+\I\frac{\phi}{\e}\right)} e^{\Or(\phi^2\e^{-7/4};\sqrt{\e})} .
\end{equation}
\end{lemma}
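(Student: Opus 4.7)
The lemma contains two distinct claims: a lower bound valid for all $\phi$, and a sharp asymptotic expansion valid for $|\phi|\leq 1$. I would tackle them separately, relying on an elementary factorization for the first and on the Corwin--Knizel expansion (Lemma~\ref{lemCK}) for the second, exactly paralleling the structure of the proof of Lemma~\ref{lemz2}.

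For the lower bound, I would use the factorization
\begin{equation*}
(Az,Az^{-1};q)_\infty=\prod_{k\geq 0}(1-Aq^kz)(1-Aq^kz^{-1})=\prod_{k\geq 0}\bigl((1-Aq^k)^2+2Aq^k(1-\cos\phi)\bigr),
\end{equation*}
which, since $A\geq 0$ and $1-\cos\phi\geq 0$, gives $(Az,Az^{-1};q)_\infty\geq ((A;q)_\infty)^2$. Pulling out the $k=0$ factor, $(A;q)_\infty=(1-A)(Aq;q)_\infty$, and observing that $A\leq 1$ forces $(Aq;q)_\infty\geq (q;q)_\infty$, I obtain $(A;q)_\infty\geq (1-A)(q;q)_\infty$. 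Since $1-A=1-e^{-\tilde A N^{-1/2}}\geq c\,\tilde A N^{-1/2}$ for $N$ large and Lemma~\ref{lemCK} with $w=1$ yields $(q;q)_\infty\geq c'\epsilon^{-1/2}e^{-\pi^2/(6\epsilon)}$, squaring produces the claimed bound $c_0\tilde A^2 N^{-1}e^{-\tilde c/\epsilon}$ after absorbing the $\epsilon^{-1}$ into the exponential (valid since $0<\epsilon\leq 1$).

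For the asymptotic expansion, I would write $A=q^{\tilde A/(N^{1/2}\epsilon)}$ and $z^{\pm 1}=e^{\pm\I\phi}=q^{\mp\I\phi/\epsilon}$, so that $Az=q^{w_+}$ and $Az^{-1}=q^{w_-}$ with
\begin{equation*}
w_\pm:=\frac{\tilde A}{N^{1/2}\epsilon}\mp\frac{\I\phi}{\epsilon}.
\end{equation*}
The hypothesis $|\phi|\leq 1$ combined with $\epsilon\leq 1$ guarantees $|\Im(w_\pm)|=|\phi|/\epsilon\leq 1/\epsilon\leq 2/\epsilon$, so Lemma~\ref{lemCK} applies to each of $(q^{w_+};q)_\infty$ and $(q^{w_-};q)_\infty$. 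Adding the two expansions, the $-\pi^2/(6\epsilon)$ terms give $-\pi^2/(3\epsilon)$; the coefficients of $\ln\epsilon$ combine via $w_++w_-=2\tilde A/(N^{1/2}\epsilon)$ into $-2\tilde A\ln(\epsilon)/(N^{1/2}\epsilon)+\ln\epsilon$; the two $\tfrac12\ln(2\pi)$ give $\ln(2\pi)$; and the Gamma factors multiply to $\Gamma(w_+)\Gamma(w_-)=\Gamma(\tilde A/(N^{1/2}\epsilon)-\I\phi/\epsilon)\Gamma(\tilde A/(N^{1/2}\epsilon)+\I\phi/\epsilon)$. Exponentiating gives exactly the stated formula.

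The only delicate point is the error bookkeeping, which I expect to be the main technical obstacle. Lemma~\ref{lemCK} produces an error of order $\epsilon(1+|w_\pm|)^2+\epsilon^{1/2}(1+|w_\pm|)^{9/4}$, and $|w_\pm|\leq \tilde A/(N^{1/2}\epsilon)+|\phi|/\epsilon$ ranges from $O(1/\sqrt{N\epsilon^2}+1)$ (when $|\phi|\lesssim\epsilon$) up to $O(|\phi|/\epsilon)$ (when $|\phi|\gg\epsilon$). In the former regime the error collapses to $O(\sqrt\epsilon)$, while in the latter it becomes $O(\phi^2\epsilon^{-1}+\phi^{9/4}\epsilon^{-7/4})=O(\phi^2\epsilon^{-7/4})$ after using $|\phi|\leq 1$ and $\epsilon\leq 1$. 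Packaging these two regimes gives the composite error $\Or(\phi^2\epsilon^{-7/4};\sqrt\epsilon)$ stated in the lemma, matching the convention used already in Lemma~\ref{lemz2}.
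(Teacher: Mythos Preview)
Your proposal is correct and follows essentially the same approach as the paper: the lower bound via $|(Az,Az^{-1};q)_\infty|\geq ((A;q)_\infty)^2\geq (1-A)^2((q;q)_\infty)^2$ together with the Corwin--Knizel estimate for $(q;q)_\infty$, and the expansion by writing $Az^{\pm 1}=q^{w_\pm}$ and applying Lemma~\ref{lemCK} to each factor. Your treatment of the error bookkeeping (splitting into the regimes $|w_\pm|=O(1)$ and $|w_\pm|\sim|\phi|/\e$) is in fact more explicit than the paper's, which simply records the composite error $\Or(\phi^2\e^{-7/4};\sqrt\e)$ by reference to the analogous computation in Lemma~\ref{lemz2}.
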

\begin{proof}
We have that
\begin{equation*}
\begin{aligned}
|(A z,A z^{-1};q)_\infty| &= \bigg|\prod_{k\geq 0}(1-A z q^{k})(1-A z^{-1} q^{k})\bigg| \\
&\geq \prod_{k\geq 0} (1-A q^k)^2=\left((A,q)_\infty\right)^2 \geq (1-A)^2 \left((q;q)_\infty\right)^2 ,
\end{aligned}
\end{equation*}
where we used $|A|\leq 1$ for the first inequality. Note that $(1-A)^2\simeq \tilde A^2 N^{-1}$ and $(q;q)_\infty\simeq e^{-\pi^2/(6\e)}$.
For the expansion around $\phi=0$, we can use Lemma~\ref{lemCK}. Defining $w$ by $A z = q^w$, we have that $w=\tilde A/(N^{1/2}\e)-\I \phi/\e$. This leads to
\begin{equation*}
(A z;q)_\infty=\frac{e^{-\frac{\pi^2}{6\e} -\left(\frac{\tilde A}{N^{1/2}\e}-\frac{\I\phi}{\e}-\frac12\right) \ln(\e)+\tfrac12\ln(2\pi)}}{\Gamma\left(\frac{\tilde A}{N^{1/2}\e}-\I\frac{\phi}{\e}\right)} e^{\Or(\phi^2\e^{-7/4};\sqrt{\e})} .
\end{equation*}
{Substituting $\phi$ by $-\phi$}, we get the expression for $(A z^{-1};q)_\infty$. Multiplying them leads to
\begin{equation*}
(A z,A z^{-1};q)_\infty=\frac{e^{-\frac{\pi^2}{3\e}+\ln(\e)+\ln(2\pi) -\frac{2\tilde A\ln(\e)}{N^{1/2}\e}}}{\Gamma\left(\frac{\tilde A}{N^{1/2}\e}-\I\frac{\phi}{\e}\right)\Gamma\left(\frac{\tilde A}{N^{1/2}\e}+\I\frac{\phi}{\e}\right)} e^{\Or(\phi^2\e^{-7/4};\sqrt{\e})},
\end{equation*}
which is the claimed result.
\end{proof}

The next result tells us that in the asymptotics, the terms with $B$ (and $D$) can be replaced by constants close to the critical point.
\begin{lemma}\label{lemBN}
Let $q=e^{-\e}$ with $0<\e\leq 1$, $B=-q e^{-\tilde B N^{-1/2}}$ for some $\tilde B >-\e N^{1/2}$ and $z=e^{\I\phi}$. Then
\begin{equation}
|(B z,B z^{-1};q)_\infty|\geq c_0 (\tilde B N^{-1/2}+\e)^2 e^{-\tilde c/\e}
\end{equation}
for some constants $c_0,\tilde c>0$. Furthermore, for $|\phi|\leq 1$,
\begin{equation}
(B z,B z^{-1};q)_\infty = e^{\frac{\pi^2}{6\e}-2\ln(2)-2\frac{\tilde B}{N^{1/2}\e}\ln(2)} e^{\Or(\phi^2\e^{-7/4};\sqrt{\e})} .
\end{equation}
\end{lemma}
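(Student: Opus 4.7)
The proof of Lemma~\ref{lemBN} parallels that of Lemma~\ref{lemAN}, but substitutes the $\mathcal{A}^-$ branch of the Corwin--Knizel expansion from equation~\eqref{eqCKminus} for the $\mathcal{A}^+$ branch, since here the base point is $B = -|B|$ with $|B| = q e^{-\tilde B N^{-1/2}} = e^{-\e - \tilde B N^{-1/2}}$. The hypothesis $\tilde B > -\e N^{1/2}$ is precisely what guarantees $|B| < 1$ and $\e + \tilde B N^{-1/2} > 0$, the latter appearing directly in the claimed lower bound.

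For the crude lower bound, I would estimate each factor in the product uniformly in $\phi$. Since $|z|=1$, for every $k \geq 0$ we have
\begin{equation*}
|1 - B z q^k|\cdot|1 - B z^{-1} q^k| = |1 + |B| q^k e^{\I\phi}|\cdot|1 + |B| q^k e^{-\I\phi}| \geq (1 - |B| q^k)^2,
\end{equation*}
yielding $|(Bz, Bz^{-1};q)_\infty| \geq ((|B|;q)_\infty)^2 \geq (1-|B|)^2 ((q;q)_\infty)^2$ after factoring the $k=0$ term and using the monotonicity $(|B|q;q)_\infty \geq (q;q)_\infty$ valid for $|B|\leq 1$. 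The elementary estimate $1 - e^{-x} \asymp x$ on bounded positive intervals gives $1 - |B| \asymp \e + \tilde B N^{-1/2}$, while Lemma~\ref{lemCK} applied with $w=1$ provides $(q;q)_\infty \geq c\, e^{-\pi^2/(6\e)}$, and together these produce the stated lower bound with $\tilde c = \pi^2/3$.

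For the asymptotic expansion in the regime $|\phi| \leq 1$, I would write $Bz = -q^w$ and $Bz^{-1} = -q^{\bar w}$ with $w = 1 + \tilde B/(N^{1/2}\e) - \I\phi/\e$, so that $|\Im(w)| = |\phi|/\e \leq 2/\e$ and equation~\eqref{eqCKminus} applies. Summing the two $\mathcal{A}^-$ contributions, the imaginary $\I\phi/\e$ parts in the two $(w - 1/2)\ln(2)$ summands cancel, and the surviving real part assembles into the claimed explicit exponent. The aggregate error $E^-(\e, w) + E^-(\e, \bar w)$ is governed by the stated bound $\Or(\e(1+|w|)^2 + \e^{1/2}(1+|w|)^{9/4})$; since $|w| \lesssim 1 + |\tilde B|/(N^{1/2}\e) + |\phi|/\e$, for $|\phi|\leq 1$ this aggregate reduces to $\Or(\phi^2 \e^{-7/4}; \sqrt{\e})$, exactly as in the derivations of Lemmas~\ref{lemz2} and~\ref{lemAN}.

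The main obstacle is algebraic rather than conceptual: careful bookkeeping of the constant and $\ln(2)$ shifts that differ between the $\mathcal{A}^+$ and $\mathcal{A}^-$ branches of Lemma~\ref{lemCK}, together with checking that the coarse triangle-inequality lower bound is tight enough to reveal the sharp $(\e + \tilde B N^{-1/2})^2$ prefactor in the regime where $|B|\to 1^-$, i.e., $\tilde B \to -\e N^{1/2}$, which is the boundary of the admissible parameter range.
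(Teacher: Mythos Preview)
Your proposal is correct and follows essentially the same approach as the paper. The paper dispatches the lower bound in one line by pointing back to Lemma~\ref{lemAN} ``replacing $\phi$ with $\pi-\phi$'', which amounts to exactly the triangle-inequality chain you wrote out, and for the expansion it likewise sets $Bz=-q^w$ with $w=1+\tilde B/(N^{1/2}\e)-\I\phi/\e$, applies \eqref{eqCKminus}, and multiplies the two factors so that the $\I\phi/\e$ contributions cancel.
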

\begin{proof}
The first bound is as in Lemma~\ref{lemAN}, replacing $\phi$ with $\pi-\phi$.
The expansion for small $\phi$ is obtained using the second formula of Lemma~\ref{lemCK}. Setting $w$ through $B z = -q^w$ we get $w=-\I \phi/\e+1+\tilde B/(N^{1/2}\e)$, and hence
\begin{equation*}
(B z;q)_\infty = e^{\frac{\pi^2}{12\e}-\left(1-\frac{\I\phi}{\e}+\frac{\tilde B}{N^{1/2}\e}\right)\ln(2)} e^{\Or(\phi^2\e^{-7/4};\sqrt{\e})} ,
\end{equation*}
and similarly for $(B z^{-1};q)_\infty$. Multiplying the two terms, at first order the dependence on $\phi$ vanishes, and we get the claimed result.
\end{proof}

\begin{remark}\label{rem:ExtraFactor}
We will use the following estimate to bound an extra factor in the numerator in \eqref{def:FunctionF}. For $z=e^{\I\phi}$, we have that
\begin{equation}\label{eq:RHSTan}
\frac{4}{2+z+z^{-1}}-1 = (\tan(\phi/2))^2.
\end{equation}
As $\phi\to \pi$, the right-hand side in \eqref{eq:RHSTan} diverges, but will be compensated by the factor of $e^{N f(z)}$, which converges to $0$ much faster. Furthermore, for $\phi$ small, we have that
\begin{equation}
\frac{4}{2+z+z^{-1}}-1 = \frac14 \phi^2 (1+\Or(\phi^2)).
\end{equation}
\end{remark}

\subsection{Proof of the improved current estimates}

We have now all tools to show the improved estimates on the stationary current in the maximal current phase.

\begin{proof}[Proof of Proposition \ref{pro:CurrentMaxCurrent1} for $\kappa\in [0,\frac{1}{10})$ ]
Consider first the integral in the denominator of \eqref{eqCurrent}. Notice that for any $a\in [-1,1)$ and $q\in [0,1)$, with $z=e^{\I\phi}$,
\begin{equation}\label{eq1.35}
0<(a;q)_\infty^2\leq |(a z,a z^{-1};q)_\infty| \leq (-1;q)_\infty^2<\infty.
\end{equation}
Also, we can write
\begin{equation*}
(z^2,z^{-2};q)=(1-z^2)(1-z^{-2})(qz^2,qz^{-2};q)_\infty
\end{equation*}
as well as
\begin{equation*}
(A z,A z^{-1};q)=(1-A z)(1-A z^{-1})(q A z,q A z^{-1};q)_\infty ,
\end{equation*}
and similarly for $B$, $C$ and $D$. For the choice of $A,B,C,D$, the rough bounds from \eqref{eq1.35} implies that $|g(z,q)|\leq c_1 e^{c_2 N^{1/10}}$ for some constants $c_1,c_2>0$ (if $\kappa=0$ it is even only polynomial in $N$, while for $\kappa>0$ it grows at most as $e^{c_2 N^\kappa}$). This term is dominated by $e^{-N \phi^2/4}$ for $|\phi|>N^{-2/5}$. Therefore, the contribution to $\oint \frac{dz}{4\pi\I z} e^{N f(z)} g(z;q)$ for $|\phi|>N^{-2/5}$ is of order $\Or(e^{-N^{1/5}})$ smaller than the leading term.

Next consider $|\phi|\leq N^{-2/5}$. By using Taylor expansion for the simple factors and Lemma~\ref{lemQpochhammerKappa0Complex} applied to the terms with the $q$-Pochhammer terms we get
\begin{equation*}
\begin{aligned}
(z^2,z^{-2};q)&=(1-z^2)(1-z^{-2})(qz^2,qz^{-2};q)_\infty\\
&=4\phi^2 (1+\Or(N^{-4/5}))((q;q)_\infty)^2 e^{\Or(N^{-(2/5-2\kappa)})},
\end{aligned}
\end{equation*}
as well as
\begin{equation*}
\begin{aligned}
(A z,A z^{-1};q)&=(1-A z)(1-A z^{-1})(q A z,q A z^{-1};q)_\infty\\
&=(\tilde A^2/N+\phi^2)(1+\Or(N^{-2/5})) ((q;q)_\infty)^2 e^{\Or(N^{-(2/5-2\kappa)})} ,
\end{aligned}
\end{equation*}
and similarly for $C$. As a consequence, if we denote $\phi=x N^{-1/2}$, we have that
\begin{equation*}
\frac{1}{(A z,A z^{-1};q)}=\frac{N}{\tilde A^2+x^2}\frac{1}{((q;q)_\infty)^2} (1+\Or(N^{-(2/5-2\kappa)})).
\end{equation*}
Using Lemma~\ref{lemQpochhammerKappa0Bis}, we get that
\begin{equation*}
\begin{aligned}
(B z,B z^{-1};q)&=(1-B z)(1-B z^{-1})(q A z,q A z^{-1};q)_\infty\\
&=((-q;q)_\infty)^2  e^{\Or(N^{-(2/5-2\kappa)})}(1+\Or(N^{-2/5})) .
\end{aligned}
\end{equation*}
Finally, we have
\begin{equation*}
e^{N f(z)}=e^{N \ln(4)} e^{-N \phi^2/4} e^{\Or(N \phi^3)}=e^{N \ln(4)} e^{-N \phi^2/4} e^{\Or(N^{-1/5})}.
\end{equation*}
Collecting all the terms we get, with $\phi=x N^{-1/2}$, that
\begin{equation*}
g(z;q)\sim \frac{4 Nx^2}{(\tilde A^2+x^2)(\tilde C^2+x^2) ((q;q)_\infty)^2 ((-q;q)_\infty)^4} e^{N \ln(4)-x^2/4}
\end{equation*}
up to errors of order $\Or(N^{-(2/5-2\kappa)}; N^{-1/5})=\Or(N^{-1/5})$.
Removing the error terms for the integral over $|x|\leq N^{1/10}$ leads to an error term of at most $\Or(N^{-1/5})$ with respect to the leading term. After this step, extending the integral from $|x|\leq N^{1/10}$ for $x\in\R$ can be made up to an error of order $\Or(e^{-N^{1/5}})$.
Summing up, the denominator is given by
\begin{equation*}
\Xi(N,q) (1+\Or(N^{-1/5})) \int_\R dx e^{-x^2/4} \frac{x^2}{(x^2+\tilde A^2)(x^2+\tilde C^2)}
\end{equation*}
with $\Xi(N,q)=e^{N\ln(4)}\frac{4N}{((q;q)_\infty)^2 ((-q;q)_\infty)^4}$. Similarly, one deals with the numerator, which is given by
\begin{equation*}
\Xi(N,q) (1+\Or(N^{-1/5})) \frac1{N}\int_\R dx e^{-x^2/4} \frac{x^4}{(x^2+\tilde A^2)(x^2+\tilde C^2)}.
\end{equation*}
Taking the ratio, we get the claimed result.
\end{proof}

Next, we consider the case where $\kappa \in (0,\frac{1}{2})$.

\begin{proof}[Proof of Proposition \ref{pro:CurrentMaxCurrent1} for $\kappa\in (0,\frac{1}{2})$ ]
Let us parameterize $z=e^{\I\phi}$ with $\phi\in[-\pi,\pi)$. We decompose the integration into the following subsets:
\begin{equation}\label{def:Set012}
\Gamma_0=\{e^{\I\phi},-\delta_1\leq \phi\leq \delta_1\},\quad  \Gamma_1=\{e^{\I\phi},|\phi|\in(\delta_1,1]\}, \quad\Gamma_2=\{e^{\I\phi},|\phi|\in(1,\pi/2]\}.
\end{equation}
Here, we choose $\delta_1=N^{-(2\kappa+1)/4}$. We also define a small parameter $\e$ by the relation $q=e^{-\e}$, i.e.,
\begin{equation*}
\e=\psi N^{-\kappa}\gg N^{-1/2}.
\end{equation*}
We first consider the integral in the denominator. The analysis for the numerator is almost identical, except for the extra term $\frac{4}{2+z+z^{-1}}-1$; see Remark \ref{rem:ExtraFactor}. We bound the contributions from $\Gamma_2$ and $\Gamma_1$, and then determine the asymptotics from the contribution by~$\Gamma_0$.

\subsubsection{(a) \textit{Bound for $z\in\Gamma_2$}} For $z\in\Gamma_2$, by Lemma~\ref{lemF}, we have that
\begin{equation*}
|e^{N f(z)}|\leq e^{N\ln(4)} e^{-N/4}.
\end{equation*}
By Lemma~\ref{lemz2}, we get that
\begin{equation*}
|(z^2,z^{-2};q)_\infty|\leq c_0 e^{\pi^2N^\kappa/(6\psi)}\leq c_0 e^{\pi^2 N^{1/2}/6},
\end{equation*}
where in the second inequality, we used that $1/\e \gg N^{-1/2}$. Similarly, by Lemma~\ref{lemAN},
\begin{equation*}
\frac{1}{|(A z,A z^{-1};q)_\infty (C z,C z^{-1};q)_\infty|}\leq c_1 \frac{N^2}{\tilde A^2 \tilde C^2} e^{2\tilde c N^{1/2}}
\end{equation*}
for some constants $c_1,\tilde c>0$.
Finally, Lemma~\ref{lemBN} gives that
\begin{equation*}
\frac{1}{|(B z,B z^{-1};q)_\infty (D z,D z^{-1};q)_\infty |}\leq c_1 N^{2} e^{2\tilde c N^{1/2}}
\end{equation*}
for some constant $c_1>0$.
Therefore,
\begin{equation}\label{eq1.16}
e^{-N \ln(4)}\left|\int_{\Gamma_2} \frac{dz}{4\pi\I z} e^{N f(z)} g(z;q)\right|\leq  c_2 e^{c_3 N^{1/2}}e^{-N/4}
\end{equation}
for some constants $c_2,c_3>0$.

\subsubsection{(b) \textit{Bound for $z\in\Gamma_1$}} For $z\in\Gamma_1$, by Lemma~\ref{lemF} we have that
\begin{equation*}
|e^{N f(z)}|\leq e^{N\ln(4)} e^{-N\phi^2/4}.
\end{equation*}
Using that $\kappa\leq \frac{1}{2}$ in the error term, Lemma~\ref{lemz2} yields
\begin{equation*}
(z^2,z^{-2};q)_\infty= \frac{e^{-\frac{\pi^2}{3\e} +\ln(\e)+\ln(2\pi)}}{\Gamma(-2\I\phi/\e)\Gamma(2\I\phi/\e)} e^{\Or(\phi^2 N^{7/8};N^{-\kappa/2})}
\end{equation*}
while Lemma~\ref{lemAN} leads to
\begin{equation*}
\begin{aligned}
&\frac{1}{(A z,A z^{-1};q)_\infty (C z,C z^{-1};q)_\infty}\\
&= \frac{\Gamma\left(\frac{\tilde A}{N^{1/2}\e}-\I\frac{\phi}{\e}\right)
\Gamma\left(\frac{\tilde A}{N^{1/2}\e}+\I\frac{\phi}{\e}\right)\Gamma\left(\frac{\tilde C}{N^{1/2}\e}-\I\frac{\phi}{\e}\right)\Gamma\left(\frac{\tilde C}{N^{1/2}\e}+\I\frac{\phi}{\e}\right)}{e^{-\frac{2}{3}\pi^2\e^{-1}+2\ln(\e)+2\ln(2\pi) -2(\tilde A+\tilde C)\ln(\e)N^{-1/2}\e^{-1}}} e^{\Or(\phi^2 N^{7/8};N^{-\kappa/2})}.
\end{aligned}
\end{equation*}
Finally, by Lemma~\ref{lemBN}, we get that
\begin{equation*}
\frac{1}{(B z,B z^{-1};q)_\infty (D z,D z^{-1};q)_\infty}= e^{-\frac{\pi^2}{3\e}+4\ln(2)+2(\tilde B+\tilde D)N^{-1/2}\e^{-1}\ln(2)} e^{\Or(\phi^2 N^{7/8};N^{-\kappa/2})}.
\end{equation*}
Collecting all the terms which are not dependent on $\phi$, we define
\begin{equation}\label{def:XiN}
\Xi_N=e^{-\ln(\e)+4\ln(2)-\ln(2\pi)}e^{2(\tilde A+\tilde C)\ln(\e)(N^{1/2}\e)^{-1}}e^{2(\tilde B+\tilde D)(N^{1/2}\e)^{-1}\ln(2)}.
\end{equation}
The terms involving the Gamma functions are given by
\begin{equation*}
G_N(\phi)=\frac{\Gamma\left(\frac{\tilde A}{N^{1/2}\e}-\I\frac{\phi}{\e}\right)
\Gamma\left(\frac{\tilde A}{N^{1/2}\e}+\I\frac{\phi}{\e}\right)\Gamma\left(\frac{\tilde C}{N^{1/2}\e}-\I\frac{\phi}{\e}\right)\Gamma\left(\frac{\tilde C}{N^{1/2}\e}+\I\frac{\phi}{\e}\right)}{\Gamma(-2\I\phi/\e)\Gamma(2\I\phi/\e)}.
\end{equation*}
In these terms $\frac{\tilde A}{N^{1/2}\e}\ll \frac{\phi}{\e}$ since $1\geq \phi\geq \delta_1$. By the identity \eqref{eqGammaIdentity} on Gamma functions, we have that $\frac{1}{\Gamma(-2\I\phi/\e)\Gamma(2\I\phi/\e)}\sim e^{2\pi \phi/\e}$, which is much smaller than $e^{-N\phi^2/4}$. Thus,
\begin{equation*}
e^{-N\phi^2/4} e^{2\pi \phi/\e} = e^{-N\phi^2 \left(\frac14-2 \pi \frac{N^{\kappa-1}}{\phi}\right)}
\end{equation*}
and, for $1\geq \phi\geq \delta_1$,
\begin{equation}\label{eq1.24}
2 \pi \frac{N^{\kappa-1}}{\phi} \leq 2\pi N^{\frac34(2\kappa-1)} \to 0\textrm{ as }N\to\infty.
\end{equation}
By \eqref{eqGamma1} and \eqref{eqGamma2}, the terms with the Gamma functions in the numerator grow at most with a power law in $N$,  and thus are much smaller than the contribution of the two Gamma functions in the denominator (these are very rough bounds, but good enough for $\phi$ in $\Gamma_1$). Thus, for all $N$ large enough,
\begin{equation*}
G_N(\phi)\leq e^{N\phi^2/16}.
\end{equation*}
Furthermore, all the error terms $e^{\Or(\phi^2 N^{7/8};N^{-\kappa/2})}$ together, are also bounded by $e^{N\phi^2/16}$ for all $N$ large enough. As a consequence, for all $N$ large enough, all the $\phi$-dependent terms are bounded by $e^{-N\phi^2/8}$. Since we have $|\phi|\geq \delta_1=N^{-(2\kappa+1)/4}$, this yields a contribution of order $e^{-c_4 N^{1/2-\kappa}}$. Thus, altogether we have
\begin{equation}\label{eq1.20}
e^{-N\ln(4)}\left|\int_{\Gamma_1} \frac{dz}{4\pi\I z} e^{N f(z)} g(z;q)\right|\leq \Xi_N e^{-c_5 N^{(1-2\kappa)/2}}
\end{equation}
for some constant $c_5>0$. We remark that the bound in \eqref{eq1.16} is subleading with respect to \eqref{eq1.20}.

\subsubsection{(c) \textit{Expansion for $z\in\Gamma_0$}} Consider $|\phi|\leq N^{-(2\kappa+1)/4}$. All the expansions we have collected for case (b) still holds true for $z\in\Gamma_0$, i.e.,
\begin{equation}\label{eq1.28}
e^{-N\ln(4)}\int_{\Gamma_0} \frac{dz}{4\pi\I z} e^{N f(z)} g(z;q) = \Xi_N \int_{-N^{-(2\kappa+1)/4}}^{N^{-(2\kappa+1)/4}} \frac{d\phi}{4\pi} e^{-N\phi^2/4} G_N(\phi) e^{\Or(N^{7/8} \phi^2;N^{-\kappa/2})},
\end{equation}
where we recall $\Xi_N$ from \eqref{def:XiN}. By the change of variable $\phi=N^{-1/2} x$, we obtain
\begin{equation}\label{eq1.29}
\eqref{eq1.28} =  \Xi_N N^{-1/2} \int_{-N^{(1-2\kappa)/4}}^{N^{(1-2\kappa)/4}} \frac{dx}{4\pi} e^{-x^2/4} G_N(x N^{-1/2}) e^{\Or(x^2 N^{-1/8};N^{-\kappa/2})}.
\end{equation}
Letting $\tilde\e=N^{\kappa-1/2}/\psi$, the terms involving Gamma functions are given by
\begin{equation*}
G_N(x N^{-1/2})=\frac{\Gamma\big((\tilde A-\I x)\tilde\e\big)
\Gamma\big((\tilde A+\I x)\tilde\e\big)\Gamma\big((\tilde C-\I x)\tilde\e\big)\Gamma\big((\tilde C+\I x)\tilde\e\big)}{\Gamma(-2\I x \tilde\e)\Gamma(2\I x \tilde\e)}.
\end{equation*}
The error term in \eqref{eq1.29} can be removed up to an error $\Or(N^{-1/8};N^{-\kappa/2})$ with respect to the leading term (using the usual inequality $|e^y-1|\leq |y| e^{|y|}$ with $y$ replaced by the error term).
Notice that the integration is now over $|x|\leq N^{(1-2\kappa)/4}$, which means that $|\tilde\e x|\leq \frac{1}{\psi} N^{(2\kappa-1)/4}\to 0$ as $N\to\infty$. Therefore, all entries in the Gamma functions are very small. More precisely, using the expansion \eqref{eqGamma1}, we get that
\begin{equation}\label{eq1.30}
G_N(x N^{-1/2}) = \frac{4 x^2}{(\tilde A^2+x^2)(\tilde C^2+x^2) \tilde\e^2} (1+\Or(x \tilde\e)).
\end{equation}
At this point all the error terms can be estimated by $\Or(N^{(2\kappa-1)/4}; N^{-1/8}; N^{-\kappa/2})=o(1)$, and are thus  smaller than the leading term. Removing the error terms and then extending the integration over $x\in\R$, the error term is not larger than the ones we already have. Consequently, for the denominator, we get that
\begin{equation*}
\int_{|z|=1} \frac{dz}{4\pi\I z} e^{N f(z)} g(z;q) = e^{N\ln(4)}\Xi_N N^{-1/2} \tilde\e^{-2}  (1+o(1))\int_\R \frac{dx}{4\pi} e^{-x^2/4}\frac{4 x^2}{(\tilde A^2+x^2)(\tilde C^2+x^2)}
\end{equation*}
with $o(1)=\Or(N^{(2\kappa-1)/4}; N^{-1/8}; N^{-\kappa/2})$. The prefactors will cancels exactly with the ones of the numerator.
The computations for the denominators are essentially the same. The only difference is that we have the additional factor $\frac{4}{2+z+z^{-1}}-1$, which by Remark \ref{rem:ExtraFactor}, under the change of variables $z=e^{\I x N^{-1/2}}$, is given by
\begin{equation*}
\frac{4}{2+z+z^{-1}}-1 = \frac{x^2}{4N} (1+\Or(x^2/N)).
\end{equation*}
Putting everything together we get the claimed result with an error term given by $o(N^{-1})=N^{-1}\Or(N^{(2\kappa-1)/4}; N^{-1/8}; N^{-\kappa/2})$.
\end{proof}

Given the proof of Proposition \ref{pro:CurrentMaxCurrent1} for $\kappa \in (0,\frac{1}{2})$, we now describe the necessary adjustments in order deduce Proposition \ref{pro:CurrentMaxCurrent3} for $\kappa=\frac{1}{2}$.

\begin{proof}[Sketch of proof of Proposition \ref{pro:CurrentMaxCurrent3} ]
The proof in this case is almost identical to the one of Proposition~\ref{pro:CurrentMaxCurrent1}. Since the Gamma functions remain in the final expression, it is even easier. This time we can simply take $\delta_1=N^{-1/4}$. The arguments apply mutatis mutandis,  except that we do not have to expand the Gamma functions, i.e., we do not need the expansion \eqref{eq1.30}. It is in that step that we used the fact that we can take  $\delta_1=N^{-(2\kappa+1)/4}$ to ensure that $|\tilde \e x|\to 0$. Everything else works also for $\delta_1=N^{-1/4}$ and collecting the error terms with the new choice of $\delta_1$, we get an error term of order $\Or(N^{-1/8})$ smaller than the leading term.
\end{proof}

\subsection{Strict mononicity of the second order current estimates}\label{sec:Strict}
For the proof of Theorem~\ref{thm:MixingTimesTriple}, we will require that  the function $F(\tilde A,\tilde C)$ from \eqref{def:FunctionF} is {strictly monotone increasing in $\tilde A$}. This is the content of the following lemma.
\begin{lemma}\label{lemMonotonicity}
The function $F(\tilde A,\tilde C)$ is positive, symmetric in $\tilde A,\tilde C$, and for fixed $\tilde C\geq 0$, it is strictly increasing in $\tilde A$. Moreover, we have that $\lim_{\tilde A,\tilde C\to\infty} F(\tilde A,\tilde C)=\frac32$.
\end{lemma}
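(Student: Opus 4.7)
The plan is to verify the four assertions in turn: positivity, symmetry, the limit, and strict monotonicity, with only the last requiring any real work. Positivity and symmetry in $(\tilde A,\tilde C)$ are immediate from \eqref{def:FunctionF}, since both integrands are strictly positive and invariant under the swap $\tilde A \leftrightarrow \tilde C$. For the joint limit $\tilde A,\tilde C \to \infty$, I would multiply numerator and denominator of \eqref{def:FunctionF} by $\tilde A^2 \tilde C^2$ and note that $\tilde A^2\tilde C^2/[(x^2+\tilde A^2)(x^2+\tilde C^2)] \uparrow 1$ pointwise and is bounded by $1$; dominated convergence then reduces the ratio to the ratio of the fourth to the second moment (times $1/4$) of a centered Gaussian with variance $2$, yielding $\frac{1}{4}\cdot\frac{12}{2} = \frac{3}{2}$.

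For strict monotonicity in $\tilde A$ at fixed $\tilde C \geq 0$, I plan to differentiate under the integral. Set
\begin{equation*}
D(\tilde A) := \int_\R e^{-x^2/4}\,\frac{x^2}{(x^2+\tilde A^2)(x^2+\tilde C^2)}\,dx,
\end{equation*}
and let $\nu$ be the probability measure $d\nu(x) = D(\tilde A)^{-1}\, e^{-x^2/4}\, x^2/[(x^2+\tilde A^2)(x^2+\tilde C^2)]\,dx$, so that $4F(\tilde A,\tilde C) = \E_\nu[x^2]$. Using $\partial_{\tilde A}(x^2+\tilde A^2)^{-1} = -2\tilde A/(x^2+\tilde A^2)^2$, a routine computation of $\partial_{\tilde A}(N/D)$ will yield
\begin{equation*}
\partial_{\tilde A}\bigl(4F(\tilde A,\tilde C)\bigr) \;=\; -\,2\tilde A \,\Cov_\nu\!\left(x^2,\,\frac{1}{x^2+\tilde A^2}\right).
\end{equation*}

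The main (and essentially only) non-routine step is to sign this covariance strictly. The obstacle is that $x^2$ and $(x^2+\tilde A^2)^{-1}$ are not monotone as functions of $x$, so one cannot apply a correlation inequality to $\nu$ directly on $\R$. However, both are monotone in $y := x^2$ (strictly increasing and strictly decreasing, respectively), and the pushforward $\nu \circ (x \mapsto x^2)^{-1}$ is a nondegenerate probability measure on $[0,\infty)$ --- nondegenerate because the Gaussian density makes $\nu$ equivalent to Lebesgue on $\R$. Chebyshev's correlation (FKG on the line) then gives a strictly negative covariance, so $\partial_{\tilde A}(4F)>0$ for every $\tilde A>0$, which is the desired strict monotonicity. (At $\tilde A = 0$ the derivative vanishes, consistent with $F$ being even in $\tilde A$.)
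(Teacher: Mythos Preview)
Your proof is correct and follows essentially the same approach as the paper: differentiate $F$ in $\tilde A$ and show the sign of the derivative via an elementary inequality. The only cosmetic difference is the final step---the paper massages the derivative into a strict Cauchy--Schwarz inequality $\langle 1,x^2\rangle_{L^2(d\mu)}^2 < \|1\|^2\|x^2\|^2$ for a measure $d\mu \propto e^{-x^2/4}x^2/[(x^2+\tilde C^2)(x^2+\tilde A^2)^2]\,dx$, whereas your covariance formulation $\Cov_\nu(x^2,(x^2+\tilde A^2)^{-1})<0$ with Chebyshev's correlation inequality is a slightly more direct route to the same conclusion.
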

\begin{proof}
The only non-trivial property to be verified is the strict monotonicity. Let us define $\rho(x)=\frac{x^2}{x^2+\tilde C^2} e^{-x^2/4}$. Then
\begin{equation}\label{def:F}
F(\tilde A,\tilde C)=\frac14 \frac{\int dx \rho(x) \frac{x^2}{x^2+\tilde A^2}}{\int dx \rho(x) \frac{1}{x^2+\tilde A^2}} ,
\end{equation}
where here and below all the integrals are over $\R$.
Thus
\begin{equation*}
\frac{d F(\tilde A,\tilde C)}{d\tilde A} = \frac{\int dx \rho(x) \frac{x^2(-2\tilde A)}{(x^2+\tilde A^2)^2}\int dy \rho(y) \frac{1}{y^2+\tilde A^2}
-\int dx \rho(x) \frac{(-2\tilde A)}{(x^2+\tilde A^2)^2}\int dy \rho(y) \frac{y^2}{y^2+\tilde A^2}}{\left(\int dx \rho(x) \frac{1}{x^2+\tilde A^2}\right)^2}.
\end{equation*}
Denoting $f(x)=\rho(x)/(x^2+\tilde A^2)^2$, we can rewrite the derivative as
\begin{equation*}
\begin{aligned}
\frac{d F(\tilde A,\tilde C)}{d\tilde A} &= \frac{\int dx f(x) x^2(-2\tilde A)\int dy f(y) (y^2+\tilde A^2)
-\int dx f(x) (-2\tilde A)\int dy f(y) y^2(y^2+\tilde A^2)}{\left(\int dx f(x) (x^2+\tilde A^2)\right)^2}\\
&= 2 \tilde A \frac{\int dx f(x) \int dy f(y) y^2(y^2+\tilde A^2)-\int dx f(x) x^2\int dy f(y) (y^2+\tilde A^2)}{\left(\int dx f(x) (x^2+\tilde A^2)\right)^2}.
\end{aligned}
\end{equation*}
Thus, we need to prove that
\begin{equation*}
\int dx f(x) \int dy f(y) y^2(y^2+\tilde A^2)>\int dx f(x) x^2\int dy f(y) (y^2+\tilde A^2),
\end{equation*}
which is equivalent to showing that
\begin{equation*}
\int dx f(x) \int dy f(y) y^4>\int dx f(x) x^2\int dy f(y) y^2.
\end{equation*}
If we denote by $d\mu(x)=f(x) dx$, this rewrites as
\begin{equation*}
\left(\langle 1 , x^2 \rangle_{L^2(d\mu)}\right)^2 < \langle 1 , 1 \rangle_{L^2(d\mu)} \langle x^2 , x^2 \rangle_{L^2(d\mu)} = \| 1 \|^2_{L^2(d\mu)} \| x^2\|_{L^2(d\mu)}.
\end{equation*}
Notice that this, with $\leq$ is nothing else than the Cauchy-Schwarz inequality. The strict inequality holds true since the functions $1$ and $x^2$ are not collinear.
Finally, the limit as $\tilde A,\tilde C\to\infty$ is given by
\begin{equation*}
\frac14\frac{\int dx e^{-x^2/4} x^4}{\int dx e^{-x^2/4} x^2}=\frac32,
\end{equation*}
which allows us to conclude.
\end{proof}

\begin{remark}
Note that the limit $\lim_{\tilde A,\tilde C\to\infty} F(\tilde A,\tilde C)=\frac32$ in Lemma \ref{lemMonotonicity} agrees with the fact that when $\alpha=\beta=1$ and $q=0$, the stationary current satisfies  $J_N=Z_{N-1}/ Z_{N}=\mathsf{C}_{N}/ \mathsf{C}_{N+1}=\frac{1}{4}+\frac{3}{8N}+o(N^{-1})$, where $\mathsf{C}_N$ denotes the $N^{\text{th}}$ Catalan number; see for example Section 2.3 in \cite{NS:Stationary}.
\end{remark}

Similarly, we require for the proof of Theorem \ref{thm:MixingTimesKPZ} strict monotonicity for all sufficiently large $\tilde{A},\tilde{C}>0$ for the function $\tilde F(\tilde{A},\tilde{C})$ defined in the following.

\begin{lemma}\label{lemMonotonicityBis}
Let us define
\begin{equation}\label{def:Ftilde}
\tilde F(\tilde A,\tilde C):=\frac{1}{4}\frac{\int_\R dx e^{-x^2/4} x^2 H(\tilde A,\tilde C;x)}{\int_\R dx e^{-x^2/4} H(\tilde A,\tilde C;x)}.
\end{equation}
For any given $\tilde C$, for $\tilde A\gg 1$,
\begin{equation}
\frac{d\tilde F(\tilde A,\tilde C)}{d\tilde A}>0.
\end{equation}
\end{lemma}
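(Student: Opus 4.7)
The plan is to express $\tfrac{d\tilde F}{d\tilde A}$ as a covariance under a positive measure and then apply a comonotonicity argument, in the spirit of the proof of Lemma~\ref{lemMonotonicity}. The positivity of $H(\tilde A,\tilde C;x)$ on $\R$ (noted in the remark following \eqref{def:FunctionH}) makes
\[d\mu(x) := e^{-x^2/4}H(\tilde A,\tilde C;x)\,dx\]
a positive, absolutely continuous, symmetric (under $x\mapsto -x$) measure on $\R$. Writing $\tilde\mu := \mu/\mu(\R)$ and
\[h(x) := \partial_{\tilde A}\log H(\tilde A,\tilde C;x) = \tfrac{1}{\psi}\bigl[\psi_0((\tilde A+\I x)/\psi)+\psi_0((\tilde A-\I x)/\psi)\bigr],\]
with $\psi_0=\Gamma'/\Gamma$ the digamma function, differentiating \eqref{def:Ftilde} in $\tilde A$ yields
\[\frac{d\tilde F}{d\tilde A} = \frac{1}{4}\,\Cov_{\tilde\mu}(x^2,h).\]

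The central step is to show that $h$ is strictly increasing as a function of $x^2$. By evenness of $h$ in $x$, this reduces to $h'(x)>0$ for $x>0$. Using $\overline{\psi_1(z)}=\psi_1(\bar z)$,
\[h'(x) = -\tfrac{2}{\psi^2}\,\Im\bigl[\psi_1((\tilde A+\I x)/\psi)\bigr],\]
where $\psi_1$ is the trigamma function. Inserting the standard series representation $\psi_1(z)=\sum_{n\geq 0}(n+z)^{-2}$ and taking imaginary parts term by term gives
\[\Im\bigl[\psi_1((\tilde A+\I x)/\psi)\bigr] = -\tfrac{2x}{\psi}\sum_{n\geq 0}\frac{n+\tilde A/\psi}{\bigl((n+\tilde A/\psi)^2+x^2/\psi^2\bigr)^2},\]
which is strictly negative for $x>0$ and $\tilde A>0$, so $h'(x)>0$.

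Since $x^2$ and $h$ are both even in $x$ and strictly increasing as functions of $x^2$, pushing $\tilde\mu$ forward under $x\mapsto x^2$ to an absolutely continuous probability measure on $[0,\infty)$ converts the covariance into one between two strictly increasing functions there; Chebyshev's sum inequality then yields $\Cov_{\tilde\mu}(x^2,h)>0$, and hence $\tfrac{d\tilde F}{d\tilde A}>0$. In fact this argument works for every $\tilde A>0$, so the hypothesis $\tilde A\gg 1$ in the lemma is stronger than strictly necessary. The only nontrivial obstacle is pinning down the sign of $\Im[\psi_1((\tilde A+\I x)/\psi)]$; the series representation of $\psi_1$ settles this at once and uniformly in $x$, whereas an alternative route via the asymptotic expansion $\psi_1(z)=1/z+O(|z|^{-2})$ recovers only the weaker $\tilde A\gg 1$ statement and demands careful uniform control of the remainder against the tails of $\tilde\mu$.
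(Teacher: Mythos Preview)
Your proof is correct and actually establishes more than the lemma asks: strict monotonicity for every $\tilde A>0$, not only $\tilde A\gg 1$. Both your argument and the paper's begin by writing $d\tilde F/d\tilde A$ as (a constant times) $\Cov_{\tilde\mu}(x^2,h)$ with $h=\partial_{\tilde A}\log H$. The paper then invokes the large-argument expansion $\psi_0(z)=\log z-\tfrac{1}{2z}-\tfrac{1}{12z^2}+O(z^{-4})$, which forces a truncation to $|x|\le \tilde A^{1/4}$ (the Gaussian tail handling $|x|>\tilde A^{1/4}$) and only recovers $h(\tilde A,x)=\text{const}+x^2/(\psi\tilde A^2)+\text{error}$; the constant drops out of the covariance and the remaining $x^2$ piece is handled by Cauchy--Schwarz. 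This is exactly the ``alternative route'' you sketch in your last sentence, and it genuinely needs $\tilde A\gg 1$. Your route is cleaner: the exact series $\psi_1(z)=\sum_{n\ge 0}(n+z)^{-2}$ gives the sign of $\Im\psi_1((\tilde A+\I x)/\psi)$ immediately and uniformly, so $h$ is strictly increasing in $x^2$ for every $\tilde A>0$, and the Chebyshev/comonotonicity argument closes the proof without any asymptotics or truncation. The only implicit step you might make explicit is that $H$ grows at most polynomially in $x$ (so $e^{-x^2/4}H$ and $e^{-x^2/4}hH$ have Gaussian tails), which justifies differentiating under the integral; this is routine from the Stirling asymptotics already collected in the paper.
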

\begin{proof}
Let us rewrite $\tilde F(\tilde A,\tilde C)$ as
\begin{equation*}
\tilde F(\tilde A,\tilde C)=\frac14\frac{\int_\R d x  x^2 \rho( x)g(\tilde A, x)}{\int_\R d x \rho( x)g(\tilde A, x)}
\end{equation*}
with
\begin{equation*}
\begin{aligned}
\rho( x)=\frac{e^{- x^2/4}\Gamma((\tilde C+\I x)/\psi)\Gamma((\tilde C-\I x)/\psi)}{\Gamma(2\I x/\psi)\Gamma(-2\I x/\psi)}, \quad 
g(\tilde A, x)=\Gamma((\tilde A+\I x)/\psi)\Gamma((\tilde A-\I x)/\psi).
\end{aligned}
\end{equation*}
We have that
\begin{equation*}
\frac{d \tilde F(\tilde A,\tilde C) }{d\tilde A} = \frac14\frac{\int d x  x^2 \rho( x) \frac{d g(\tilde A, x)}{d\tilde A} \int d\tilde x \rho(\tilde x) g(\tilde A,\tilde x)-\int d x  x^2 \rho( x) g(\tilde A, x) \int d\tilde x \rho(\tilde x) \frac{d g(\tilde A,\tilde x)}{d\tilde A}}{\left(\int d x \rho( x) g(\tilde A, x)\right)^2}
\end{equation*}
and
\begin{equation*}
\frac{d  g(\tilde A, x)}{d\tilde A}=g(\tilde A, x) h(\tilde A, x),
\end{equation*}
where $h(\tilde A, x)=c^{-1}[\Gamma(0,(\tilde A+\I x)/\psi)+\Gamma(0,(\tilde A-\I x)/\psi)]$. Here, $\Gamma(0,z)=\frac{\Gamma'(z)}{\Gamma(z)}$ is the \mbox{$0$-polygamma} function.
Denoting $f(x)=\rho(x) g(\tilde A,x)$, we get that
\begin{equation}\label{eq4.12}
\frac{d \tilde F(\tilde A,\tilde C) }{d\tilde A} = \frac14\frac{\int d x  x^2 f( x) h(\tilde A, x)\int d\tilde x f(\tilde x)-\int d x  x^2 f( x)  \int d\tilde x f(\tilde x) h(\tilde A,\tilde x)}{\left(\int d x f( x) \right)^2}.
\end{equation}
Note that by \eqref{eqGammaIdentity} and \eqref{eqGamma1}, we have that for all $x$ large enough (independent of $\tilde A$), $f(x)\leq e^{-x^2/8}$. Thus, the contribution to the integrals in \eqref{eq4.12} for $|x|,|\tilde x|\geq \tilde A^{1/4}$ is at least $e^{-\tilde A^{1/2}/16}$ smaller than the leading term.
For this reason, we can restrict the integrals in \eqref{eq4.12} to $|x|,|\tilde x|\leq \tilde A^{1/4}$. Next, we use the asymptotic expansion of the $0$-polygamma function for large $z$, which is given in \textbf{6.3.18} of \cite{DAR:Pocketbook} as
\begin{equation*}
\Gamma(0,z)=\ln(z)-\frac{1}{2z}-\frac1{12 z^2}+\Or(z^{-4}),\quad \text{ for } z\to\infty.
\end{equation*}
This leads to
\begin{equation*}
h(\tilde A, x) = \left(\left[\frac{2}{\psi} \ln(\tilde A/\psi)-\frac1{\tilde A}-\frac{\psi}{6 \tilde A^2}\right]+\frac{x^2}{\psi \tilde A^2}\right)(1+\Or(\tilde A^{-1/2}))
\end{equation*}
for $|x|\leq \tilde A^{1/4}$.
Inserting this expansion into \eqref{eq4.12}, the $x$-independent term cancels exactly.  The remaining error terms as well as the terms in $x^2$ yield
\begin{equation*}
\eqref{eq4.12} = \frac{1}{4 c\tilde A^2}\frac{\int d x x^4 f( x) \int d\tilde x f(\tilde x)-\int d x  x^2 f( x)  \int d\tilde x x^2 f(x)}{\left(\int d x f( x) \right)^2} (1+\Or(\tilde A^{-5/2})).
\end{equation*}
By the Cauchy-Schwarz argument as in the proof of Lemma~\ref{lemMonotonicity}, we get that the numerator is strictly positive for all $\tilde{A}$ large enough.
\end{proof}

\section{Moderate deviations for the current of the open ASEP}\label{sec:ModDevCurrent}

In this section, we establish moderate deviation estimates for the current $(\Jo^N_t)_{t \geq 0}$ of the open ASEP. Our goal is to compare the current of the ASEP on $\Z$ {to the current of the open ASEP on a segment of length $N$. Both ASEPs are started from Bernoulli-$\rho$-product measures (on $\{0,1\}^{\Z}$ and on $\{0,1\}^N$,  respectively), and we evaluate the currents at some time $T>0$. We consider two different regimes of the densities $\rho$ and times $T$: 
\begin{itemize}
    \item[(a)] For fixed $\mathfrak{a}\in (0,\frac{1}{2})$, let $\rho \in \left[\frac{1}{2}+N^{-1/2},1-\mathfrak{a}\right]$ and $T=(1-q)^{-1}(1-2\rho)^{-1}N$.
    \item[(b)] Let $\rho=\frac{1}{2}$ and $T=(1-q)^{-1}N^{3/2}$.
\end{itemize}
Under both assumptions, the fluctuations of a second class particle in the ASEP on $\Z$, initially placed at the origin in a Bernoulli-$\rho$-product measure, are at  time $T$ of order at most $N$.}
This suggests that under the basic coupling and a common initial distribution, the ASEP on $\Z$ should differ from the open ASEP until time $T$ in at most order $N$ many sites, and hence their currents at order at most $\sqrt{N}$. We formalize this intuition in Propositions~\ref{pro:CurrentZtoN} and~\ref{pro:CurrentNtoO}.

\subsection{Strategy for the proof}\label{sec:Strategy}

We start by comparing the ASEP on $\Z$ to an ASEP on $\N$ under the basic coupling. Suppose that both processes are started from the same configuration on $\N$ chosen according to a Bernoulli-$\rho$-product measure for some $\rho \in (0,1)$. Recall Definition~\ref{def:BasicCoupling} and note that under the basic coupling, second class particles of types $\Aup$ and $\Bup$ can only enter at site $1$ in the disagreement process. In order to follow the second class particles, we define in Section~\ref{sec:ExtendedBasicCoupling} an extended disagreement process, where instead of annihilation, an update of a type $\Aup / \Bup$ second class particle pair results in a change to types $\onep$ and $\zerop$, respectively. This construction has the advantage that type $\Bup$ and $\zerop$ second class particles can be seen as a coloring of empty sites in an ASEP on $\Z$ within a Bernoulli-$\rho$-product measure.

In Section \ref{sec:ModerateSecondClass}, we establish moderate deviations on the maximal displacement for a collection of second class particles in the ASEP on $\Z$. This is achieved by combining moderate deviations for a single second class particle and the censoring inequality. We get in Section~\ref{sec:ModerateSecondClassMulti} moderate deviations on the rightmost type $\Bup$ or type $\zerop$ second class particle in the extended disagreement process { until time $T \asymp (2\rho-1)^{-1}N(1-q)^{-1}$ when $\rho \geq \frac{1}{2}+N^{-1/2}$,} and $T \asymp N^{3/2}(1-q)^{-1}$ for $\rho=\frac{1}{2}$. Similarly, whenever the disagreement process contains only type $\Aup$ and $\onep$ second class particles beyond a certain site, we can see these type $\Aup$ and type $\onep$ second class particles as a coloring of first class particles in an ASEP on $\Z$ in a Bernoulli-$\rho$-product measure. With a similar argument as for the type $\Bup$ and $\zerop$ second class particles, we obtain moderate deviations for the location of the rightmost type $\Aup$ and $\onep$ second class particles, respectively.

In Section \ref{sec:IntergersToN}, we convert these moderate deviations on the location of the rightmost second class particle in the extended disagreement process into moderate deviations of the respective currents, expressing the difference in the currents as the difference in the number of type $\Aup$ and type $\Bup$ second class particles in the extended disagreement process at a given time.  Controlling the location of the rightmost second class particle in a disagreement process between the ASEP on $\N$ and the open ASEP in a similar way, this allows us to achieve moderate deviations for the current of the open ASEP in Section \ref{sec:NToOpen}.  Together with the results from Section \ref{sec:Prelim} and  \ref{sec:CurrentEstimates} on the stationary current of the open ASEP, we obtain in Section \ref{sec:CurrentBoundsSecondMoment} lower bounds on the probability that the currents of two open ASEPs with different boundary conditions deviate by the order of their stationary currents.

\subsection{The extended basic coupling}\label{sec:ExtendedBasicCoupling}

Recall the basic coupling defined in Section~\ref{sec:CanonicalCoupling} for the ASEPs on $\N$ and $\Z$.
In the following, we introduce two additional types of second class particles: type $\onep$ and type $\zerop$ to which we refer as \textbf{marked second class particles} for type $\Aup$ and type $\Bup$, respectively.
We consider the ordering $\succeq$ given by
\begin{equation}\label{def:ExtendedPartialOrder}
\one \succeq \onep \succeq \Aup \succeq \Bup \succeq \zerop \succeq \zero ,
\end{equation}
where we identify $\one$ with first class particles and $\zero$ with empty sites in the disagreement process.
Note that the ordering $\Aup \succeq \Bup$ is not consistent with the updates of the underlying exclusion process, as $\Aup$ and $\Bup$ are incomparable. We remedy this issue by an extension to the basic coupling, where every time a pair of unmarked second class particles of different types is updated, they receive a mark, that is, we turn a type $\Aup$ and $\Bup$ pair into $\onep$ and $\zerop$. Once a particle becomes marked it remains marked forever.


\begin{definition}[Extended disagreement process]\label{def:BasicCouplingExtended}
We define the extended disagreement process $(\xi^{\modi}_t)_{t \geq 0}$ as a Markov process on the state space
\begin{equation}
\{ \one , \onep , \Aup , \Bup , \zerop, \zero \}^{\Z} .
\end{equation}
For all edges $e=\{ x,x+1\}$, assign a rate $1+q$ clock. Whenever a clock rings at time $t$, we sample a Uniform-$[0, 1]$-random variable $U$ independently of all previous samples. For $x\neq 0$, we distinguish four cases:
\begin{itemize}
\item[(1)] If $U\leq (1+q)^{-1}$ and $\{ \xi^{\modi}_{t_-}(x), \xi^{\modi}_{t_-}(x+1) \}  \neq \{\Aup,\Bup\}$, then sort the endpoints in $\xi^{\modi}_t$ with respect to  \eqref{def:ExtendedPartialOrder} in increasing order,
\item[(2)] if $U> (1+q)^{-1}$ and $\{ \xi^{\modi}_{t_-}(x), \xi^{\modi}_{t_-}(x+1) \}  \neq \{\Aup,\Bup\}$, then sort the endpoints in $\xi^{\modi}_t$ with respect to  \eqref{def:ExtendedPartialOrder} in decreasing order,
\item[(3)] if $U\leq (1+q)^{-1}$ and $\{ \xi^{\modi}_{t_-}(x), \xi^{\modi}_{t_-}(x+1) \}  = \{\Aup,\Bup\}$, then set $\xi^{\modi}_t(x+1)=\onep$ and $\xi^{\modi}_t(x)=\zerop$,
\item[(4)] if $U> (1+q)^{-1}$ and $\{ \xi^{\modi}_{t_-}(x), \xi^{\modi}_{t_-}(x+1) \}  = \{\Aup,\Bup\}$, then set $\xi^{\modi}_t(x)=\onep$ and $\xi^{\modi}_t(x+1)=\zerop$.
\end{itemize}
The rules for updates at site $1$ and along the edge $\{0,1\}$ remain as under the basic coupling, treating $\onep$ as a particle and $\zerop$ as an empty site, i.e.,
\begin{itemize}
\item[(5)] if $U\leq (1+q)^{-1}$ and $\xi^{\modi}_{t_-}(0) = \Aup$ as well as $ \xi^{\modi}_{t_-}(1) = \zero$,  then set $\xi^{\modi}_t(1)= \Aup$  and $\xi^{\modi}_t(0)= \zero$,
\item[(6)] if $U\leq (1+q)^{-1}$ and $\xi^{\modi}_{t_-}(0) = \Aup$ as well as $ \xi^{\modi}_{t_-}(1) \in \{ \Bup ,\zerop\}$, then set $\xi^{\modi}_t(1)=\one$  and $\xi^{\modi}_t(0)= \zero$,
\item[(7)] if $U > (1+q)^{-1}$ and $\xi^{\modi}_{t_-}(0) = \zero$ as well as $ \xi^{\modi}_{t_-}(1) \in \{\onep,1\}$,  then set $\xi^{\modi}_t(1)= \Bup$  and $\xi^{\modi}_t(0)= \Aup$,
\item[(8)] if $U > (1+q)^{-1}$ and $\xi^{\modi}_{t_-}(0) = \zero$ as well as $ \xi^{\modi}_{t_-}(1) = \Aup$,  then set $\xi^{\modi}_t(1)=\zero$  and $\xi^{\modi}_t(0)= \Aup$.
\end{itemize}
Moreover, at rate $\alpha$, we update site $1$ as follows:
\begin{itemize}
\item[(9)] If $\xi^{\modi}_{t_-}(1) \in \{\zero,\zerop\}$, we set $\xi^{\modi}_t(1)=\Bup$,
\item[(10)] if $\xi^{\modi}_{t_-}(1) \in \{\Aup,\onep\}$, we set $\xi^{\modi}_t(1)=\one$.
\end{itemize}
At rate $\gamma$, we perform the following update:
\begin{itemize}
\item[(11)] If $\xi^{\modi}_{t_-}(1) \in \{\zerop,\Bup\}$, we set $\xi^{\modi}_t(1)=\zero$,
\item[(12)] if $\xi^{\modi}_{t_-}(1) \in \{\onep,\one\}$, we set $\xi^{\modi}_t(1)=\Aup$.
\end{itemize}
\end{definition}

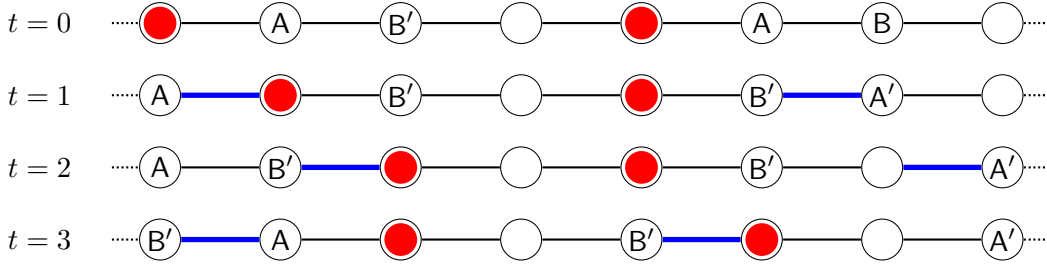
\begin{figure}
\centering
\begin{tikzpicture}[scale=0.8]

\def\x{2};
\def\y{1};
\def\z{-0.2};
\def\a{-1.4};
\def\b{-2.6};

 	\node[shape=circle,scale=1.5,draw] (C1) at (0,\y){} ;
 	\node[shape=circle,scale=1.5,draw] (C2) at (\x,\y){} ;
 	\node[shape=circle,scale=1.5,draw] (C3) at (2*\x,\y){} ;
 	\node[shape=circle,scale=1.5,draw] (C4) at (3*\x,\y){} ;
 	\node[shape=circle,scale=1.5,draw] (C5) at (4*\x,\y){} ;
 	\node[shape=circle,scale=1.5,draw] (C6) at (5*\x,\y){} ;
 	\node[shape=circle,scale=1.5,draw] (C7) at (6*\x,\y){} ;
 	\node[shape=circle,scale=1.5,draw] (C8) at (7*\x,\y){} ;

	\node[shape=circle,scale=1.2,fill=red] (D1) at (C1) {};
	\node (F1) at (C2) {$\Aup$};
	\node (F1) at (C3) {$\zerop$};	
	\node[shape=circle,scale=1.2,fill=red] (D5) at (C5) {};
	\node (F1) at (C6) {$\Aup$};
	\node (F1) at (C7) {$\Bup$};		
	
 	\draw[thick,densely dotted] (-0.8,\y) -- (C1);
 	\draw[thick] (C1) -- (C2);
 	\draw[thick] (C2) -- (C3);
 	\draw[thick] (C3) -- (C4);
 	\draw[thick] (C4) -- (C5);
 	\draw[thick] (C5) -- (C6);
 	\draw[thick] (C6) -- (C7);
 	\draw[thick] (C7) -- (C8);
 	\draw[thick,densely dotted] (C8) -- (0.8+7*\x,\y);

 	\node (H1) at (-2,\y) {$t=0$};
 	\node (H2) at (-2,\z) {$t=1$};	
	\node (H3) at (-2,\a) {$t=2$};	
	\node (H3) at (-2,\b) {$t=3$};

 	\node[shape=circle,scale=1.5,draw] (C1) at (0,\z){} ;
 	\node[shape=circle,scale=1.5,draw] (C2) at (\x,\z){} ;
 	\node[shape=circle,scale=1.5,draw] (C3) at (2*\x,\z){} ;
 	\node[shape=circle,scale=1.5,draw] (C4) at (3*\x,\z){} ;
 	\node[shape=circle,scale=1.5,draw] (C5) at (4*\x,\z){} ;
 	\node[shape=circle,scale=1.5,draw] (C6) at (5*\x,\z){} ;
 	\node[shape=circle,scale=1.5,draw] (C7) at (6*\x,\z){} ;
 	\node[shape=circle,scale=1.5,draw] (C8) at (7*\x,\z){} ;

	\node[shape=circle,scale=1.2,fill=red] (D1) at (C2) {};
	\node (F1) at (C1) {$\Aup$};
	\node (F1) at (C3) {$\zerop$};	
	\node[shape=circle,scale=1.2,fill=red] (D5) at (C5) {};
	\node (F1) at (C7) {$\onep$};
	\node (F1) at (C6) {$\zerop$};		
	
 	\draw[thick,densely dotted] (-0.8,\z) -- (C1);
 	\draw[line width=2pt,color=blue] (C1) -- (C2);
 	\draw[thick] (C2) -- (C3);
 	\draw[thick] (C3) -- (C4);
 	\draw[thick] (C4) -- (C5);
 	\draw[thick] (C5) -- (C6);
 	\draw[line width=2pt,color=blue] (C6) -- (C7);
 	\draw[thick] (C7) -- (C8);
 	\draw[thick,densely dotted] (C8) -- (0.8+7*\x,\z);

 	\node[shape=circle,scale=1.5,draw] (C1) at (0,\a){} ;
 	\node[shape=circle,scale=1.5,draw] (C2) at (\x,\a){} ;
 	\node[shape=circle,scale=1.5,draw] (C3) at (2*\x,\a){} ;
 	\node[shape=circle,scale=1.5,draw] (C4) at (3*\x,\a){} ;
 	\node[shape=circle,scale=1.5,draw] (C5) at (4*\x,\a){} ;
 	\node[shape=circle,scale=1.5,draw] (C6) at (5*\x,\a){} ;
 	\node[shape=circle,scale=1.5,draw] (C7) at (6*\x,\a){} ;
 	\node[shape=circle,scale=1.5,draw] (C8) at (7*\x,\a){} ;

	\node[shape=circle,scale=1.2,fill=red] (D1) at (C3) {};
	\node (F1) at (C1) {$\Aup$};
	\node (F1) at (C2) {$\zerop$};	
	\node[shape=circle,scale=1.2,fill=red] (D5) at (C5) {};
	\node (F1) at (C8) {$\onep$};
	\node (F1) at (C6) {$\zerop$};			
	
 	\draw[thick,densely dotted] (-0.8,\a) -- (C1);
 	\draw[thick] (C1) -- (C2);
 	\draw[line width=2pt,color=blue] (C2) -- (C3);
 	\draw[thick] (C3) -- (C4);
 	\draw[thick] (C4) -- (C5);
 	\draw[thick] (C5) -- (C6);
 	\draw[thick] (C6) -- (C7);
 	\draw[line width=2pt,color=blue] (C7) -- (C8);
 	\draw[thick,densely dotted] (C8) -- (0.8+7*\x,\a);

 	\node[shape=circle,scale=1.5,draw] (C1) at (0,\b){} ;
 	\node[shape=circle,scale=1.5,draw] (C2) at (\x,\b){} ;
 	\node[shape=circle,scale=1.5,draw] (C3) at (2*\x,\b){} ;
 	\node[shape=circle,scale=1.5,draw] (C4) at (3*\x,\b){} ;
 	\node[shape=circle,scale=1.5,draw] (C5) at (4*\x,\b){} ;
 	\node[shape=circle,scale=1.5,draw] (C6) at (5*\x,\b){} ;
 	\node[shape=circle,scale=1.5,draw] (C7) at (6*\x,\b){} ;
 	\node[shape=circle,scale=1.5,draw] (C8) at (7*\x,\b){} ;

	\node[shape=circle,scale=1.2,fill=red] (D1) at (C3) {};
	\node (F1) at (C2) {$\Aup$};
	\node (F1) at (C1) {$\zerop$};	
	\node[shape=circle,scale=1.2,fill=red] (D5) at (C6) {};
	\node (F1) at (C8) {$\onep$};
	\node (F1) at (C5) {$\zerop$};			
	
 	\draw[thick,densely dotted] (-0.8,\b) -- (C1);
 	\draw[line width=2pt,color=blue] (C1) -- (C2);
 	\draw[thick] (C2) -- (C3);
 	\draw[thick] (C3) -- (C4);
 	\draw[thick] (C4) -- (C5);
 	\draw[line width=2pt,color=blue] (C5) -- (C6);
 	\draw[thick] (C6) -- (C7);
 	\draw[thick] (C7) -- (C8);
 	\draw[thick,densely dotted] (C8) -- (0.8+7*\x,\b);

	\end{tikzpicture}	
\caption{\label{fig:Extended}Visualization of a possible evolution of the extended disagreement process for $q=0$. Edges which received an update since the previous step are marked in blue. First class particles are red, second class particles are given by their types. Note that the type of second class particles changes if and only if a type $\Aup/\Bup$ pair receives an update (as from $t=0$ to $t=1$). }
 \end{figure}

A visualization of this process is given in Figure \ref{fig:Extended}.
 In words, the process $(\xi^{\modi}_t)_{t \geq 0}$ has the same transition rules as a disagreement process under the basic coupling (identifying $\Aup \widehat{=} (1,0)$ and $\Bup\widehat{=}(0,1)$ as well as $\one\widehat{=}(1,1)$ and $\zero\widehat{=}(0,0)$), but every time a pair of unmarked second class particles of different types receives an update, we mark them.
Let $(\eta^{\Aup}_t)_{t \geq 0}$ onto $\{0,1\}^{\Z}$ and $(\eta^{\Bup}_t)_{t \geq 0}$ onto $\{0,1\}^{\N}$ be two projections of the extended disagreement process $(\xi^{\modi}_t)_{t \geq 0}$ given by
\begin{equation}\label{def:ProjectionEta}
 \begin{split}
  \eta^{\Aup}_t(x) &:= \mathds{1}_{ \xi^{\modi}_t(x) \in \{ \Aup,\onep,\one \} }   \\
 \eta^{\Bup}_t(y) &:= \mathds{1}_{ \xi^{\modi}_t(y) \in \{ \Bup, \onep,\one \} }    
 \end{split}
\end{equation} for all $x\in\Z$ and $y \in \N$. Moreover, we define the process $(\xi^{\Aup\Bup}_t)_{t \geq 0}$ as the projection of $(\xi^{\modi}_t)_{t \geq 0}$ onto $\{ (1,1),(1,0),(0,1),(0,0)\}^{\Z}$, which is given for all $x\in \Z$ by
\begin{equation}
\xi^{\Aup\Bup}_t(x) := \begin{cases} (0,1) & \text{ if } \xi^{\modi}_t(x)= \Aup \\
(1,0) & \text{ if } \xi^{\modi}_t(x)= \Bup \\
(1,1) & \text{ if } \xi^{\modi}_t(x) \in \{ \onep,\one \} \\
(0,0) & \text{ if } \xi^{\modi}_t(x) \in \{ \zerop,\zero \} .
\end{cases}
\end{equation}
The next lemma states that the above three projections have a simple interpretation as exclusion and disagreement processes.
\begin{lemma}\label{lem:SecondClassProjection}
The process $(\eta^{\Aup}_t)_{t \geq 0}$ has the law of an ASEP on $\Z$ with respect to drift $q$. The process $(\eta^{\Bup}_t)_{t \geq 0}$ has the law of an ASEP on $\N$ with respect to drift $q$ and entry and exit rates $\alpha$ and $\gamma$, respectively. The process $(\xi^{\Aup\Bup}_t)_{t \geq 0}$ has the law of a disagreement process between $(\eta^{\Aup}_t)_{t \geq 0}$ and $(\eta^{\Bup}_t)_{t \geq 0}$, with the convention that sites on $(-\infty,0]$ in the ASEP on $\Z$ are always either $\Aup$ or $0$ (since $(\eta^{\Bup}_t)_{t \geq 0}$ is only defined on $\{0,1\}^{\N}$.)
\end{lemma}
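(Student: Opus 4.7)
\smallskip
\noindent\emph{Proof plan.} My plan is to verify each of the three claims by reading off the infinitesimal generator induced by $(\xi^{\modi}_t)_{t \geq 0}$ on the relevant projection and matching it against the generator of the target process. The structural fact I want to exploit is that the partial order \eqref{def:ExtendedPartialOrder} is engineered so that each projection partitions the six types into an upper block (value $1$) and a lower block (value $0$): for $\eta^{\Aup}$ these are $\{\Aup,\Aup^{\prime},\one\}$ and $\{\Bup,\Bup^{\prime},\zero\}$, with the upper block $\succeq$-dominating the lower; for $\eta^{\Bup}$ they are $\{\Bup,\Aup^{\prime},\one\}$ and $\{\Bup^{\prime},\Aup,\zero\}$.

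For the first claim, I would argue that along any bulk edge $\{x,x+1\}$ with $x \neq 0$ the sort rule of Definition~\ref{def:BasicCouplingExtended} reduces on $\eta^{\Aup}$ to the standard ASEP swap---a $(1,0)$ pair becomes $(0,1)$ at rate $1$, and the reverse at rate $q$---and the special marking rule for incomparable $\{\Aup,\Bup\}$ pairs projects to the same swap, since $\Aup \mapsto \Aup^{\prime}$ and $\Bup \mapsto \Bup^{\prime}$ preserve the $\eta^{\Aup}$-value. The $\alpha$- and $\gamma$-clocks at site $1$ each carry $\xi^{\modi}(1)$ to an element of the same $\eta^{\Aup}$-block and hence leave $\eta^{\Aup}$ unchanged, while the four rules along the edge $\{0,1\}$ reproduce by direct inspection the integer-lattice ASEP swap between sites $0$ and $1$ at the correct rate split.

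For the second claim the analogous analysis handles the bulk edges $\{x,x+1\}$ with $x \geq 1$ and the $\alpha,\gamma$-clocks at site $1$, which yield the desired entry rate $\alpha$ and exit rate $\gamma$ of the half-line ASEP. The step I expect to be the main technical obstacle is the verification that the four rules along the edge $\{0,1\}$ all preserve $\eta^{\Bup}(1)$, as required by the fact that this edge does not belong to the graph $\N$; here I would perform a finite case check on the pairs $(\xi^{\modi}_{t_-}(0),\xi^{\modi}_{t_-}(1))$ appearing in the rules, using the invariant that on $(-\infty,0]$ the process $\xi^{\modi}$ only takes values in $\{\zero,\Aup\}$, so that only a small number of joint configurations has to be inspected.

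The third claim is then immediate from the construction: by definition $\xi^{\Aup\Bup}_t(x) = (\eta^{\Aup}_t(x),\eta^{\Bup}_t(x))$ for $x \in \Z$ with the stated convention on $(-\infty,0]$, so together with the first two claims and the coupling structure of Definition~\ref{def:BasicCouplingExtended}, this identifies $(\xi^{\Aup\Bup}_t)_{t \geq 0}$ as the basic-coupling disagreement process of $(\eta^{\Aup}_t)_{t \geq 0}$ and $(\eta^{\Bup}_t)_{t \geq 0}$.
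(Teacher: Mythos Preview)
Your approach is correct and is essentially what the paper does: its entire proof reads ``This is immediate from verifying the transition rates in the construction of the extended disagreement process,'' and your proposal is precisely a blueprint for carrying out that verification, with the right structural observation that the sort rule respects each projection except on the incomparable pair $\{\Aup,\Bup\}$, which is handled by the marking rule. One small slip in your third paragraph: from the paper's definition of $\xi^{\Aup\Bup}$ one actually has $\xi^{\Aup\Bup}_t(x) = (\eta^{\Bup}_t(x),\eta^{\Aup}_t(x))$ rather than $(\eta^{\Aup}_t(x),\eta^{\Bup}_t(x))$ (check the case $\xi^{\modi}_t(x)=\Aup$), but this is immaterial for the conclusion since the disagreement process between two coupled chains is symmetric in its marginals up to relabelling the $\Aup/\Bup$ types.
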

\begin{proof}
This is follows from verifying the transition rates in the construction of the extended disagreement process. {More precisely, for the process $(\eta^{\Aup}_t)_{t \geq 0}$, note that the rules $(1)$ to $(4)$ in Definition~\ref{def:BasicCouplingExtended} ensure that along all edges $\{x,x+1\}$ for $x \neq 0$, we sort the endpoints in increasing order at rate $1$, and in decreasing order at rate $q$, respectively. Here, we stress that the change from $\Aup,\Bup$ to $\onep,\zerop$ in rules $(3)$ and $(4)$ does not change the projection  $(\eta^{\Aup}_t)_{t \geq 0}$. Similarly, rules $(5)$ and $(6)$ ensure that we sort the endpoints along the edge $\{0,1\}$ in increasing order at rate $1$, while rules $(7)$ and $(8)$ ensure that we sort the endpoints along the edge $\{0,1\}$ in decreasing order at rate $q$. Rules $(9)$ to $(12)$ do not affect the projection $(\eta^{\Aup}_t)_{t \geq 0}$. The processes $(\eta^{\Bup}_t)_{t \geq 0}$ and $(\xi^{\Aup\Bup}_t)_{t \geq 0}$ are treated similarly.}
\end{proof}
Furthermore, we define the two projections $(\xi^{\Aup}_t)_{t \geq 0}$ and $(\xi^{\Bup}_t)_{t \geq 0}$ as follows. We obtain $(\xi^{\Aup}_t)_{t \geq 0}$ by setting
 \begin{equation}\label{def:ASEPaux1}
 \xi^{\Aup}_t(x) :=
\begin{cases} \one &\text{ if }
\xi^{\modi}_t(x) =  \one  \\
\Aup &\text{ if } \xi^{\modi}_t(x) \in \{   \onep , \Aup \} \\
\zero &\text{ if } \xi^{\modi}_t(x) \in \{ \zero , \zerop , \Bup \}
\end{cases}
\end{equation}
for all $t \geq 0$ and $x\in \Z$. Intuitively, $(\xi^{\Aup}_t)_{t \geq 0}$ acts on sites $\geq 2$ like a multi-species ASEP, but where some of the first class particles in $(\eta_t^{\Aup})_{t \geq 0}$ are turned into second class particles. Similarly, let

\begin{equation}\label{def:ASEPaux2}
\xi^{\Bup}_t(x) :=
\begin{cases} \one &\text{ if }
\xi^{\modi}_t(x) \in \{  \one , \onep , \Aup \} \\
\Bup &\text{ if } \xi^{\modi}_t(x) \in \{   \zerop , \Bup \} \\
\zero &\text{ if } \xi^{\modi}_t(x) = \zero
\end{cases}
\end{equation} for all $t \geq 0$ and $x\in \Z$, and note that $(\xi^{\Bup}_t)_{t \geq 0}$ has the same law as $(\eta_t^{\Aup})_{t \geq 0}$, but where some of the empty sites in $(\eta_t^{\Aup})_{t \geq 0}$ are turned into second class particles. Let us stress that in all of the above processes, second class particles can enter and exit only at site $1$.

\begin{remark}\label{rem:ReverseOrder}
Note that instead of \eqref{def:ExtendedPartialOrder}, we can also define the ordering $\succeq_{\rev}$ given by
\begin{equation}\label{def:ExtendedPartialOrderReversed}
\one \succeq_{\rev} \onep \succeq_{\rev} \Bup \succeq_{\rev} \Aup \succeq_{\rev} \zerop \succeq_{\rev} \zero .
\end{equation} where we reverse the priority of the second class particles of types $\onep,\Aup,\Bup$ and $\zerop$. The extended disagreement process is defined accordingly, that is, we have the same transition rules as in a disagreement process between an ASEP on $\Z$ and an ASEP on $\N$. However, every adjacent pair of $\Aup$ and $\Bup$ is replaced by $(\onep,\zerop)$ at rate $1$ and $(\zerop,\onep)$ at rate $q$.
\end{remark}

As we will see in Sections~\ref{sec:ModerateSecondClassMulti} to \ref{sec:NToOpen}, this interpretation of the extended disagreement process as a multi-type exclusion process (with a marking procedure and special rules along the edge $\{0,1\}$ and at site $1$) allows us to study the position of type $\Aup$ and $\Bup$ second class particles in the vein of \cite{BS:OrderCurrent} for the fluctuations of a second class particle in the ASEP on $\Z$.

\subsection{Moderate deviations for second class particles}\label{sec:ModerateSecondClass}

In this section, we establish moderate deviation results for second class particles in the ASEP on $\Z$. Recall the multi-species ASEP $(\zeta_t)_{t \geq 0}$ defined in Section \ref{sec:ASEPs}. We will restrict ourselves to three particle types, $1,2$ and $\infty$, to which we refer as first class particles, second class particles, and empty sites, respectively. We start by considering  $(\zeta_t)_{t \geq 0}$ with initial configuration $\zeta$ such that $\zeta$ contains a single second class particle at the origin and is chosen according to a Bernoulli-$\rho$-product measure for some $\rho \in (0,1)$ on all other sites. We denote by $(Z_t)_{t \geq 0}$ the location of the unique second class particle in $(\zeta_t)_{t \geq 0}$.
We define the following interval of size $2y$ around the expected position of a second class particle at time $t$ starting from $z$, namely
\begin{equation}
\mathcal{I}_{q,\rho}(t,y,z) := \left[ z + (1-q)(1-2\rho)t - y -1, z + (1-q)(1-2\rho)t + y + 1\right],
\end{equation}
where $(1-q)(1-2\rho)$ is the stationary speed of a second class particle, i.e.,
\begin{equation*}
\E[Z_t] = t (1-q)(1-2\rho) 
\end{equation*}
for all $t \geq 0$; see Theorem 2.28 in Part III of \cite{L:Book2}. The following  moderate deviation result for $(Z_t)_{t \geq 0}$ is due to Landon and Sosoe \cite{LS:Tails}.

\begin{theorem}[c.f.~Theorem~2.5 in \cite{LS:Tails}]\label{thm:ModDeviationSingleSecondClass}
Suppose that there exists some $\mathfrak{a}\in (0,\frac{1}{2})$ such that $\rho \in [\mathfrak{a},1-\mathfrak{a}]$.
Then there exist constants $c_0,C_0$, depending only on $\mathfrak{a}$, such that for all $q \in (0,1)$, and all $1 \leq w \leq (1-q)s^{1/3}$
\begin{equation}
\P\big( Z_s \notin \mathcal{I}_{q,\rho}(s,w s^{2/3},0) \big) \leq C_0 \exp(-c_0 w^3) .
\end{equation}
\end{theorem}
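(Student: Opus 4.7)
The plan is to deduce the moderate deviation bound for the second class particle $(Z_s)$ from the moderate deviation bound for the current of the ASEP on $\Z$, namely Theorem~\ref{thm:CurrentASEP} (or more precisely, its extension to general Bernoulli density $\rho\in[\mathfrak{a},1-\mathfrak{a}]$ established by Landon and Sosoe in \cite{LS:Tails}). The key bridge between the two is the coupling identity of Pr\"ahofer--Spohn, which expresses the distribution of $Z_s$ as a difference of currents of two coupled stationary ASEPs.

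First I would set up the coupling. Let $(\eta^+_t)_{t \geq 0}$ and $(\eta^-_t)_{t \geq 0}$ be two ASEPs on $\Z$ built under the basic coupling of Definition~\ref{def:BasicCoupling}, sharing a common Bernoulli-$\rho$ initial condition on $\Z\setminus\{0\}$, and with $\eta^+_0(0)=1$ and $\eta^-_0(0)=0$. Since the basic coupling preserves the number of discrepancies, $\eta^+_t$ and $\eta^-_t$ differ at exactly one site, namely $Z_t$. Subtracting the conservation law $\Jz_{\pm,s}(y)=\sum_{x \geq y}(\eta^\pm_s(x)-\eta^\pm_0(x))$ for the two coupled processes yields
\begin{equation*}
\mathds{1}_{Z_s \geq y}-\mathds{1}_{0 \geq y} = \Jz_{+,s}(y)-\Jz_{-,s}(y) \qquad \text{for all } y\in\Z.
\end{equation*}
In particular, for $y>0$, the right tail $\P(Z_s \geq y)$ equals the excess mean current $\mathbb{E}[\Jz_{+,s}(y)]-\mathbb{E}[\Jz_{-,s}(y)]$, which is further controlled by the events that either current deviates from the stationary Bernoulli-$\rho$ mean $\rho(1-\rho)(1-q)s$ by at least a constant times $ws^{2/3}$.

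Next I would apply the current moderate deviation bound. Each $(\eta^\pm_t)$ differs from a stationary Bernoulli-$\rho$ ASEP only at the single site $0$ at $t=0$, which perturbs the current at any fixed site by $O(1)$. The site $y=\mathbb{E}[Z_s]+ws^{2/3}+1$ sits a distance $ws^{2/3}$ off the characteristic line $x=(1-q)(1-2\rho)s$ of the Bernoulli-$\rho$ measure. In the parameterization of Theorem~\ref{thm:CurrentASEP}, this off-characteristic shift corresponds to an effective KPZ parameter $\theta\asymp s^{1/3}/w$, and the required current deviation of order $ws^{2/3}$ translates to a parameter $y_\ast s^{1/2}$ with $y_\ast\asymp w^{3/2}\theta^{-1/2}$. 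Both branches of the minimum in the tail bound $\exp(-c\min(y_\ast^{3/2}\sqrt{\theta},y_\ast^2\theta))$ then produce the claimed $\exp(-c_0 w^3)$, provided $1\leq w\leq (1-q)s^{1/3}$, which is exactly the range in which the moderate deviation estimate of Theorem~\ref{thm:CurrentASEP} is valid. The lower tail $\P(Z_s\leq \mathbb{E}[Z_s]-ws^{2/3}-1)$ follows by the symmetric argument.

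The main technical obstacle is the extension of Theorem~\ref{thm:CurrentASEP} from the density $\rho_N=\tfrac12+2^{-n}$ to general $\rho\in[\mathfrak{a},1-\mathfrak{a}]$ with constants depending only on $\mathfrak{a}$; this is carried out in Landon--Sosoe \cite{LS:Tails} via the Rains--EJS identity for stationary exponential last passage percolation, combined with Aggarwal's \cite{A:Convergence} convergence of the stochastic six-vertex model to the ASEP. Once this input is granted, the coupling identity above mechanically converts the current tail bound into the desired bound on $Z_s$; the resulting constants depend on $\rho$ only through $\rho(1-\rho)$, which is bounded below by $\mathfrak{a}(1-\mathfrak{a})$ uniformly in the parameters, yielding the stated uniformity.
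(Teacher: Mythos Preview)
The paper does not prove this statement at all: it is quoted verbatim from Landon--Sosoe \cite{LS:Tails} (their Theorem~2.5), with Remark~\ref{rem:ParametersSecondClass} noting that the dependence on $\mathfrak{a}$ is inherited from Proposition~5.3 of \cite{LS:Tails} for the stochastic six-vertex model, transferred to ASEP via Aggarwal's convergence result~\cite{A:Convergence}. The actual proof in \cite{LS:Tails} proceeds through exact moment-generating-function identities (the Rains--EJS formula) for the stationary height function of the six-vertex model, and is not a corollary of the current moderate-deviation bound.

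Your proposed reduction contains a genuine gap. The coupling identity you write down is correct: for $y>0$,
\[
\P(Z_s \geq y)=\E\big[\Jz_{+,s}(y)\big]-\E\big[\Jz_{-,s}(y)\big].
\]
But this expresses the tail probability as a \emph{difference of expected currents}, whereas Theorem~\ref{thm:CurrentASEP} is a bound on \emph{tail events} of the random current around its mean. Your claim that the difference ``is further controlled by the events that either current deviates from the stationary mean by a constant times $ws^{2/3}$'' is where the argument breaks. Under the basic coupling with a genuine stationary Bernoulli-$\rho$ ASEP, each of $\Jz_{\pm,s}(y)$ differs almost surely by at most $1$ from the stationary current, so $\E[\Jz_{\pm,s}(y)]=\rho(1-\rho)(1-q)s+O(1)$ at \emph{every} site $y$; the right-hand side above is therefore $O(1)$ trivially, and no decay in $w$ can be extracted this way. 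The off-characteristic distance $ws^{2/3}$ and the parameter $\theta$ in Theorem~\ref{thm:CurrentASEP} govern the size of typical \emph{fluctuations} of $\Jz_s(y)$, not its mean, so translating those tail bounds into a bound on the expectation difference does not work. What is actually needed is fine control of how the mean current (equivalently, the two-point function) depends on the observation point relative to the characteristic --- precisely the information supplied by the Rains--EJS identity, and precisely why \cite{LS:Tails} prove Theorem~2.5 in parallel with, rather than as a consequence of, their Theorem~2.4.
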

\begin{remark}\label{rem:ParametersSecondClass}
As in Remark \ref{rem:ParametersCurrent}, let us stress that the dependence of the constants $c_0,C_0$ on $\mathfrak{a}$ can be found as Proposition 5.3 of \cite{LS:Tails} for the stochastic six vertex model, which transfers to the ASEP using the results by Aggarwal from \cite{A:Convergence}; see also Section 7 in \cite{LS:Tails}.
\end{remark}

Next, we prove a moderate deviation result for the maximal displacement of a second class particle in the ASEP on $\Z$. This will then be further extended to a bound on the maximal displacement of multiple second class particles. 
{To this end, we consider a parameter $N$ and choose the time $T$, depending also on $\rho$ and $q$, in such a way that a second class particle in the ASEP on $\Z$ from a Bernoulli-$\rho$-product measure will have fluctuations of order $N$ at time $T$.}
The next lemma is similar to Theorem 4.7 in  \cite{SY:OpenLight}, which states moderate deviations for the maximal displacement of a single second class particles when $w \geq \log(s)$ above. However, we require refined bounds, which we obtain by a multi-scale argument similar to the chaining argument of Theorem~11.1 in \cite{BSV:SlowBond} for the transversal fluctuations of geodesics in last passage percolation.

{
\begin{lemma}\label{lem:ModDeviationMaxSecondClass} Fix some $\mathfrak{a}\in (0,\frac{1}{2})$. Let $N \in \N$ and $\rho \in [\frac{1}{2}+N^{-1/2},1-\mathfrak{a}]$, and set
\begin{equation}
T=\theta^{-1} (2\rho-1)^{-1}(1-q)^{-1} N
\end{equation}
for some $\theta \geq 1$, allowed to depend on $N$. Let $(Z_t)_{t \geq 0}$ denote the position of a second class particle started from the origin in an ASEP $(\zeta_{t})_{t \geq 0}$ on $ \Z$ in a Bernoulli-$\rho$-product measure. Then there exist  constants $c_0,C_0,c_1>0$, such that for all $q$ from \eqref{def:TriplePointScaling} with $\kappa \in [0,\frac{1}{2}]$ and some $\psi>0$, and all $y$ with
\begin{equation}\label{eq:yAssumptions}
1 \leq y \leq c_1\theta^{-1/3}N^{(1-2\kappa)/3} \log^{-1}(N)(2\rho-1)^{-1/3} ,
\end{equation}
we get that for all $N$ large enough
\begin{equation}\label{eq:MaxDisplacement}
\P\big( \exists s \in [0,T] \colon Z_s \notin \mathcal{I}_{q,\rho}(s,y(\theta^{-1} N(2\rho-1)^{-1})^{2/3},0) \big) \leq C_0 \exp(-c_0 y^{3} ) .
\end{equation} Moreover, when $\rho=\frac{1}{2}$, we obtain with  $T= \theta^{-1} N^{3/2}(1-q)^{-1}$ that
\begin{equation}\label{eq:MaxDisplacementHalf}
\P\big( \exists s \in [0,T] \colon Z_s \notin \mathcal{I}_{q,\rho}(s,y\theta^{-2/3} N,0) \big) \leq C_0 \exp(-c_0 y^{3} )
\end{equation}
for all $1 \leq y \leq c_1\theta^{-1/3}N^{1/2-2\kappa/3}\log^{-1}(N)$, and all $N$ large enough.
\end{lemma}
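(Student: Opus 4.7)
My plan is to reduce the uniform-in-time estimate \eqref{eq:MaxDisplacement} to the single-time bound of Theorem~\ref{thm:ModDeviationSingleSecondClass} via a discretization and union-bound argument. I would partition $[0,T]$ into sub-intervals of unit length, yielding $M:=\lceil T\rceil$ endpoints $t_j=j$. Writing $y_\ast:=y(\theta^{-1}N2^n)^{2/3}$ and $m(s):=(1-q)(1-2\rho_N)s$, the elementary bound
\begin{equation*}
\sup_{s \in [t_j, t_{j+1}]} |Z_s - m(s)| \leq |Z_{t_{j+1}} - m(t_{j+1})| + \sup_{s \in [t_j, t_{j+1}]} |Z_s - Z_{t_{j+1}}| + (1-q)
\end{equation*}
reduces the task to controlling, for each $j$, a grid-point deviation event and an intra-interval oscillation event.

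The intra-interval oscillation is handled by a Chernoff bound: the number of clock rings along the two edges adjacent to the second class particle during a unit time interval is stochastically dominated by a Poisson random variable with mean $1+q\leq 2$, so that $\P(\sup_{s\in[t_j,t_{j+1}]}|Z_s-Z_{t_{j+1}}|>y_\ast/3)\leq C\exp(-c\,y_\ast)$. Under \eqref{eq:yAssumptions} one has $y_\ast\gtrsim y^3\log N$, so this estimate remains negligible even after a union bound over the $M\lesssim N^{3/2+\kappa}$ sub-intervals.

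For the grid-point event $|Z_{t_{j+1}}-m(t_{j+1})|>y_\ast/3$, I would apply Theorem~\ref{thm:ModDeviationSingleSecondClass} at time $s=t_{j+1}$ with parameter $w_j:=y_\ast/(3t_{j+1}^{2/3})$. At the characteristic scale $s^\ast\asymp(1-q)T=\theta^{-1}N2^n$, one has $w_j\asymp y$ lying in the admissible range $[1,(1-q)(s^\ast)^{1/3}]$, giving precisely the target exponent $\exp(-c_0 y^3)$. For grid points $t_{j+1}\ll s^\ast$ the event is improbable for a direct kinematic reason (the particle cannot travel by $y_\ast$ in a shorter time), and is handled by the same Chernoff estimate as above. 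For $t_{j+1}\in(s^\ast,T]$ the parameter $w_j$ drops below $1$, so Theorem~\ref{thm:ModDeviationSingleSecondClass} is no longer directly applicable; here I would exploit the stationarity of the environment seen from the second class particle under the Bernoulli-$\rho_N$ initial law: conditionally on the state at time $s^\ast$, the increment $(Z_{s^\ast+t}-Z_{s^\ast})_{t\geq 0}$ is distributed as a fresh second class particle, so a second application of Theorem~\ref{thm:ModDeviationSingleSecondClass} to the increment at time $t=t_{j+1}-s^\ast$ closes the remaining range.

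The main obstacle is the interaction of the union bound with the LS validity range. Summing over $M\lesssim N^{3/2+\kappa}$ grid points costs a factor polynomial in $N$ which must be absorbed into $\exp(-c_0 y^3)$; this forces $y^3\gtrsim\log N$ and produces the $\log^{-1}(N)$ factor in \eqref{eq:yAssumptions}. The remaining upper bound $y\leq c_1\theta^{-1/3}N^{(1-2\kappa)/3}2^{n/3}$ arises from the requirement $w_j\leq (1-q)(s^\ast)^{1/3}$ at the characteristic scale, i.e., from the upper endpoint of the admissible range in Theorem~\ref{thm:ModDeviationSingleSecondClass}. Finally, the case $\rho_N=\tfrac{1}{2}$ with $T=N^{3/2}(1-q)^{-1}$, giving \eqref{eq:MaxDisplacementHalf}, follows by the same argument formally setting $2^n=1$ and using that the drift $m(s)$ vanishes.
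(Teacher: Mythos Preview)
Your approach has two genuine gaps.

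\textbf{The union bound loses the constant-$y$ range.} Summing $M\asymp T\asymp N^{3/2+\kappa}$ copies of $C_0\exp(-c_0 y^3)$ gives a useful bound only when $y^3\gtrsim\log N$. This is a \emph{lower} constraint on $y$; the $\log^{-1}(N)$ in \eqref{eq:yAssumptions} is an \emph{upper} bound and has nothing to do with it. The lemma explicitly covers $y=1$, where your union bound produces a right-hand side of order $N^{3/2+\kappa}$, which is vacuous. The paper avoids this loss by a multi-scale chaining in the spirit of Theorem~11.1 of \cite{BSV:SlowBond}: one shows inductively that $Z$ lies in a window of half-width $h_i y_\ast$ at all $2^i$ dyadic times in $[0,T]$, with $h_i\uparrow\tfrac12$ and increments $h_{i+1}-h_i\asymp 2^{-i/4}$. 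Passing from scale $i$ to $i+1$ costs $2^i$ applications of Theorem~\ref{thm:ModDeviationSingleSecondClass} over intervals of length $2^{-i}T$, with effective parameter $w\asymp y\,2^{i/12}$, and the resulting failure probability $\exp(-c\,y^3 2^{i/4})$ is summable in $i$ without any polynomial prefactor. Only at the terminal scale $i\asymp\log_2\log T$ does one revert to a crude union-bound input (Theorem~4.7 of \cite{SY:OpenLight}, valid for $y\ge\log N$), where the accumulated gain absorbs the loss.

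\textbf{The environment seen from the second class particle is not stationary.} Under your initial law (Bernoulli-$\rho_N$ off the origin, second class at the origin) the configuration around $Z_{s^\ast}$ at time $s^\ast$ is \emph{not} Bernoulli-$\rho_N$: it carries correlations from the particle's history, so Theorem~\ref{thm:ModDeviationSingleSecondClass} cannot be applied to the increment $(Z_{s^\ast+t}-Z_{s^\ast})_{t\ge0}$. The Markov property gives you a restart, but not in the required product environment. The paper handles this at every chaining step with a fresh-particle device: just outside the current window one locates sites $v^1_\pm,v^0_\pm$ carrying a genuine particle and a genuine hole of the ambient Bernoulli field (at the cost of inspecting $O(y^3 2^{i/5})$ sites, compensated by a change of measure of the same exponential order), places new second class particles there, and sandwiches $Z$ between them under the basic coupling; see \eqref{eq:Key1B}. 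This restores the Bernoulli hypothesis needed for Theorem~\ref{thm:ModDeviationSingleSecondClass} and is precisely the mechanism that makes any restart-type argument work here.
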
}

\begin{proof} To simplify notation, we will in the following write
\begin{equation}
\mathcal{I}(y,k):=\mathcal{I}_{q,\rho}(k2^{-i}, y(\theta^{-1} N(2\rho-1)^{-1}))^{2/3} ,0) ,
\end{equation} and consider only the case \eqref{eq:MaxDisplacement} as the same arguments as for $\rho =  \frac{1}{2}+\frac{1}{\sqrt{N}}$ will give~\eqref{eq:MaxDisplacementHalf}. We define $(h_i)_{i\in \N}$ as
\begin{equation*}
h_i := \frac{1}{2}\Big(\prod_{m=1}^{\infty}(1+2^{-m/4})\Big)^{-1} \prod_{j=1}^{i}(1+2^{-j/4}) ,
\end{equation*} where we note that $h_{1}>0$ as well as $h_i<\frac{1}{2}$ for all $i\in \N$.
For all $i\in \N$, we define the events
\begin{equation*}
\mathcal{A}_{i} := \left\{ Z_{k2^{-i}T} \in \mathcal{I}(h_i y,k) \, \forall k \in  \lbr 2^{i} \rbr \right\} .
\end{equation*} We will in the following argue that there exist some constants $c_2,C_2>0$ such that for all $i = \mathcal{O}(\log_2 \log(T))$, we get that
\begin{equation}\label{eq:RecursionMaxDisplacement}
\P ( \mathcal{A}_{i+1} \, | \, \mathcal{A}_i ) \geq  1- C_2\exp(-c_2 y^{3} 2^{i/4}) .
\end{equation}  Since we get from Theorem~\ref{thm:ModDeviationSingleSecondClass} that for some  constants $c_3,C_3>0$,
\begin{equation*}
\P (  \mathcal{A}_1 ) \geq 1- C_3 \exp(-c_3 y^{3}) ,
\end{equation*} this yields that by choosing $c_1>0$ in \eqref{eq:yAssumptions} small enough,
\begin{equation}\label{eq:AllMaxDisplacement}
\P \Bigg( {\bigcap_{i \leq \log_2 \log(T)}\mathcal{A}_{i}}  \Bigg) \geq 1 - C_4\exp(-c_4 y^{3})
\end{equation} for some $c_4,C_4>0$ and all $y$ from \eqref{eq:yAssumptions}. In order to show \eqref{eq:RecursionMaxDisplacement}, we define the sets
\begin{align*}
\mathcal{I}^{+}_{k,i} &:=  \mathcal{I}_{q,\rho}\big(k2^{-i}T,h_i y (\theta^{-1} N(2\rho-1)^{-1})^{\frac{2}{3}},\tfrac{1}{4}y^3 2^{\frac{i}{5}}\big) \setminus  \mathcal{I}_{q,\rho}(k2^{-i}T,h_i y (\theta^{-1} N(2\rho-1)^{-1})^{\frac{2}{3}},0) \ \\
\mathcal{I}^{-}_{k,i} &:= \mathcal{I}_{q,\rho}\big(k2^{-i}T,h_i y (\theta^{-1} N(2\rho-1)^{-1})^{\frac{2}{3}},-\tfrac{1}{4}y^3 2^{\frac{i}{5}}\big) \setminus  \mathcal{I}_{q,\rho}(k2^{-i}T,h_i y (\theta^{-1} N(2\rho-1)^{-1})^{\frac{2}{3}},0) ,
\end{align*}
and consider the events
\begin{align*}
\mathcal{B}^{k,+}_i &:= \big\{ \exists v^1_{+},v^{0}_{+} \in \mathcal{I}^{+}_{k,i} , \colon \,  \eta_{k2^{-i}T}(v^1_+)=1 , \ \eta_{k2^{-i}T}(v^{0}_+)=0   \big\} , \\
\mathcal{B}^{k,-}_i &:= \big\{ \exists v^1_{-},v^{0}_{-} \in \mathcal{I}^{-}_{k,i} \, \colon \,  \eta_{k2^{-i}T}(v^{1}_-)=1 , \  \eta_{k2^{-i}T}(v^{0}_-)=0   \big\} .
\end{align*}
We set in the following
\begin{equation*}
\mathcal{B}_{i} := \bigcap_{k=1}^{2^{i}-1} \big( \mathcal{B}^{k,+}_i \cap \mathcal{B}^{k,-}_i \big)
\end{equation*}
and note that for some constants $c_5,C_5>0$,
\begin{equation}\label{eq:Key1A}
\P( \mathcal{B}_i) \geq 1 - C_5 \exp(-c_5  y^3 2^{i/5} ) .
\end{equation}
This follows from the observation that the ASEP at time $k2^{-i}T$ has a Bernoulli-$\rho$-product law on $\mathcal{I}^{+}_{k,i}$ and $\mathcal{I}^{-}_{k,i}$, together with a { standard Chernoff estimate for the events $(\mathcal{B}^{k,+}_i)_{k \in \lbr 2^{i}-1 \rbr}$ and $(\mathcal{B}^{k,+}_i)_{k \in \lbr 2^{i}-1 \rbr}$, and a union bound.} Fix $i \in \N$ and $k \in  \lbr 2^{i}-1 \rbr$. Then on the event $\mathcal{B}^{k,-}_i \cap \mathcal{B}^{k,+}_i \cap \mathcal{A}_i$, we consider four ASEPs on the integers, started from $\eta_{k2^{-i}T}$, where we place second class particles at sites $v^{1}_+,v^{0}_+,v^{1}_-,v^{0}_-$, respectively, and where we replace $\eta_{k2^{-i}T}(Z_{k2^{-i}T})$ by an independent Bernoulli-$\rho$-distributed random variable (the same for all four processes).
Let $Z_{t}^{1,+},Z_{t}^{0,+},Z_{t}^{1,-},Z_{t}^{0,-}$ denote the positions of the second class particles at time $t$ in the respective exclusion processes. Observe that under the basic coupling $\mathbf{P}$ for all five processes, we get that
\begin{equation}\label{eq:Key1B}
 \mathbf{P}\Big(\min(Z_{t}^{1,-},Z_{t}^{0,-}) \leq Z_{k2^{-i}T+t} \leq \max(Z_{t}^{1,+},Z_{t}^{0,+}) \, \Big| \, \mathcal{B}^{k,-}_i \cap \mathcal{B}^{k,+}_i \cap \mathcal{A}_{i} \Big) = 1  .
\end{equation}
To see this, distinguish whether the second class particle in $\eta_{k2^{-i}T}$ is replaced by a first class particle or an empty site. In the first case, observe that $(Z_{t+k2^{-i}T})_{t\geq 0}$ is dominated by $(Z_{t}^{1,+})_{t \geq 0}$ and $(Z_{t}^{1,-})_{t \geq 0}$, in the second case it is dominated by  $(Z_{t}^{0,+})_{t \geq 0}$ and $(Z_{t}^{0,-})_{t \geq 0}$, respectively. For $i$ and $k \in \lbr 2^{i}-1 \rbr$, we define the events $\mathcal{C}_{i}^{k}$ as
\begin{equation}\label{eq:AllSecondMix}
\mathcal{C}_{i}^{k} := \left\{ Z_{2^{-i}T}^{1,+},Z_{2^{-i}T}^{0,+},Z_{2^{-i}T}^{1,-},Z_{2^{-i}T}^{0,-} \in \mathcal{I}(h_{i+1}y,k) \right\} .
\end{equation}
We only show that for some constants $c_6,C_6>0$
\begin{equation}\label{eq:OneSecondClassOut}
\P\Big( Z_{2^{-i}T}^{1,+} \in \mathcal{I}(h_{i+1}y,k)  \,  \Big| \,  \mathcal{A}_{i} {\cap} \mathcal{B}_i \Big) \geq  1- C_6\exp( -c_6 y^{3}2^{i/5}) .
\end{equation} A similar argument applies by symmetry for the other three processes in the events $\mathcal{C}_{i}^{k}$. Note that the law of $\eta_{k2^{-i}T}$ around the particle $Z_{0}^{1,+}$ differs by construction from a Bernoulli-$\rho$-product measure in at most $\frac{1}{4}y^3 2^{i/5}$ many sites (corresponding to the sites inspected to the right of $Z^{1,+}_{0}$). Since $\rho \in [\frac{1}{2}+N^{-1/2},1-\mathfrak{a}]$, by a change of measure to an ASEP on $\Z$ in a Bernoulli-$\rho$-product measure with a single second class particle initially at a fixed site, we see that for some constants $C_7,c_8,C_8>0$, depending only on $\mathfrak{a}$,  and all $y$ from \eqref{eq:yAssumptions}
\begin{align*}
&\sup_{x\in \mathcal{I}_{k,i}^{+}} \P\Big( Z_{k2^{-i}T}^{1,+} \notin \mathcal{I}(h_{i+1}y,k)  \,  \Big| \, Z_{0}^{1,+}=x\Big) \\
&\leq \exp(C_7 y^{3} 2^{i/5}) \sup_{x\in \mathcal{I}_{k,i}^{+}} \P\Big( Z_{k2^{-i}T} \notin \mathcal{I}(h_{i+1}y,k)  \,  \Big| \, Z_{0}=x\Big) \\
 &\leq C_8\exp(-c_8 y^{3}2^{i/4}) .
\end{align*}
Here, we applied Theorem \ref{thm:ModDeviationSingleSecondClass} with $s=2^{-i}T$
and $w=y2^{i/12}$ for the last step, using that $(h_{i+1}-h_i)/h_i = 2^{-i/4}$ for all $i\in \N$. This gives \eqref{eq:OneSecondClassOut} for all four processes in $\mathcal{C}_{i}^{k}$, which by using \eqref{eq:Key1A} and \eqref{eq:Key1B} implies \eqref{eq:RecursionMaxDisplacement}. Next, we define the event
\begin{equation*}
\tilde{\mathcal{A}} := \left\{ Z_s \notin \mathcal{I}_{q,\rho}(s,y(\theta^{-1} N2^{n})^{2/3},0) \big) \, \forall s \in [0,T]\right\} .
\end{equation*}
Condition on the events $\mathcal{A}_i$ for all $i < j= \lceil \log_2(\log(T)) \rceil$ and $\mathcal{B}_{j}$. Then there exist some constants $C_9,c_{10},C_{10},c_{11},C_{11}>0$, such that by a change of measure and a union bound for the first inequality, and Theorem~4.7 of \cite{SY:OpenLight} for the second inequality (which states Lemma~\ref{lem:ModDeviationMaxSecondClass} for $y \geq \log(N)$),
\begin{equation*}\label{eq:RecursionLastStep}
\begin{split}
\P \big( \tilde{\mathcal{A}} \, | \,  \cap_{i=1}^{j-1} (\mathcal{A}_j \cap \mathcal{B}_{j+1})  \big) &\geq 1- 2^{j-1} \exp(C_9 y^{3} 2^{j/5})  \P\big( \exists s\in [0,2^{-j}T] \colon  Z_{s} \notin \mathcal{I}(y/2,0)  \,  \big| \, Z_{0}=0\big) \\
& \geq 1- C_{10} \exp(C_{9} y^{3} 2^{j/5}) 2^{j}\exp(-c_{10} y^3 2^{3j}) \\ &\geq 1-C_{11}\exp(-c_{11} y^3) ,
\end{split}
\end{equation*} for all $y$ from \eqref{eq:yAssumptions} and $N$ large enough.
Together with \eqref{eq:AllMaxDisplacement}, this finishes the proof.
\end{proof}

Next, we recall the microscopic concavity coupling of two ASEPs with second class particles, introduced by Balázs and Seppäläinen in \cite{BS:FluctuationBounds}, which allows us to compare the location of second class particles for different initial conditions.

\begin{theorem}[c.f.\ Theorem 3.1 in \cite{BS:FluctuationBounds}]\label{thm:BSCoupling}
 Let $(\zeta^1_t)_{t \geq 0}$ and $(\zeta^{2}_t)_{t \geq 0}$ be two ASEPs on $\Z$ with initial configurations $\zeta^1_0=\zeta^1$ and $\zeta^2_0=\zeta^{2}$ such that
  \begin{equation}
      \zeta^1(x) \geq \zeta^{2}(x)
  \end{equation} with $\zeta^1(x), \zeta^{2}(x) \in \{0,1\}$ for all $x \neq 0$, and $ \zeta^1(0) = \zeta^{2}(0)=2$.
 Let $(Z_t^{1})_{t \geq 0}$ and $(Z_t^{2})_{t \geq 0}$ be the positions of the respective second class particles. There exists a coupling $\bar{\mathbf{P}}$ such that
\begin{equation}
     \bar{\mathbf{P}} \left(  Z^{2}_t \geq Z^{1}_t  \, \forall  t\geq 0 \right) =1 .
\end{equation}
\end{theorem}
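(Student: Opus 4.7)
The plan is to construct a coupling that, by design, preserves $Z^1_t\leq Z^2_t$. The natural attempt via the basic coupling of Definition~\ref{def:BasicCoupling} will not work: on the joint local configuration $(\zeta^1(0),\zeta^1(1);\zeta^2(0),\zeta^2(1))=(2,1;2,0)$, a ring of the rate-$q$ clock at edge $\{0,1\}$ produces the swap $(2,1)\to(1,2)$ in $\zeta^1$ (so $Z^1$ jumps to $1$) while $\zeta^2$, whose local pair $(2,0)$ has zero rate-$q$ transition, does nothing, so $Z^1>Z^2$. The remedy will be to replace the two independent rate-$1$ and rate-$q$ clocks per edge by a single rate-$(1+q)$ clock and to prescribe a \emph{correlated} joint update upon each ring.

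First I would attach to each edge $\{x,x+1\}$ a single rate-$(1+q)$ Poisson clock. Upon a ring at time $t$, I would inspect the joint local state $\bigl(\zeta^1_{t-}(x),\zeta^1_{t-}(x+1);\zeta^2_{t-}(x),\zeta^2_{t-}(x+1)\bigr)$ and, based on an independent uniform random variable, select one of four joint actions: swap in both marginals, swap only in $\zeta^1$, swap only in $\zeta^2$, or do nothing. The splits will be calibrated so that (a) the projection onto each coordinate reproduces the generator of the ASEP on $\Z$ with one second-class particle, and (b) no single transition places $Z^1$ strictly to the right of $Z^2$. In the critical configuration above, I would allocate rate $q$ to ``both swap'', rate $1-q$ to ``only $\zeta^2$ swaps'', rate $q$ to ``neither swaps'', and rate $0$ to ``only $\zeta^1$ swaps''; this gives marginal swap rates $q$ and $1$ respectively (matching ASEP) while ensuring $Z^1$ can only move right simultaneously with $Z^2$. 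In the mirror configuration $(1,2;0,2)$ at edge $\{x-1,x\}$, where a rate-$q$ update of $\zeta^2$ alone would push $Z^2$ left past $Z^1$, the symmetric splitting applies, coupling $\zeta^2$'s rate-$q$ swap inside $\zeta^1$'s rate-$1$ swap. In every remaining local configuration compatible with the dynamics, the basic coupling already satisfies (a) and (b) and is retained.

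With the local rules in hand, the standard generator construction for interacting particle systems (bounded local rates, countable product state space) assembles them into a well-defined Markov process on $(\{0,1,2\}^\Z)^2$, and inspection of each coordinate's effective rates confirms that the two marginals are the desired ASEPs with one second-class particle each. The claim $Z^1_t\leq Z^2_t$ then follows because the property holds at $t=0$ by hypothesis and is preserved by every transition in the construction. The hard part will be the systematic case analysis: I would need to enumerate the finitely many joint local configurations $(\zeta^1(x),\zeta^1(x+1);\zeta^2(x),\zeta^2(x+1))$ that can arise, pinpoint those in which the basic coupling would violate monotonicity of $Z^2-Z^1$, and verify in each that a correlated splitting satisfying (a) and (b) exists. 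Since $q\leq 1$, in every problematic case the two marginal swap rates are $1$ and $q$ with the slower process being the one whose second-class particle would otherwise break the ordering, so the required stochastic domination of the slower swap by the faster is always available.
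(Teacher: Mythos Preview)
The paper does not prove this result; it is quoted from Bal\'azs--Sepp\"al\"ainen. Your overall plan---keep the basic coupling except at a few dangerous local configurations, where a correlated joint update is substituted---is indeed the mechanism behind their microscopic concavity coupling. But the sketch contains a concrete error and a structural omission.

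The error: you assert that, apart from the two cases $(2,1;2,0)$ and $(1,2;0,2)$ at the edge through $Z^1=Z^2$, the basic coupling already preserves $Z^1\le Z^2$. This is false, and your own construction manufactures a counterexample. Starting from $(2,1;2,0)$ and applying your rule ``only $\zeta^2$ swaps'' yields $(2,1;0,2)$ with $Z^1=x$ and $Z^2=x+1$. At the edge $\{x,x+1\}$ both marginals now have swap rate exactly $q$ (in $\zeta^1$ the second-class particle exchanges with a first-class neighbour on its right; in $\zeta^2$ it exchanges with a hole on its left), and under the shared rate-$q$ clock both swap, giving $Z^1=x+1>x=Z^2$. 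A third modified rule is therefore required, and it cannot be the ``nest the slower inside the faster'' recipe of your final paragraph, because both rates equal $q$; one must instead \emph{anti-correlate} the two swaps (rate $q$ to ``only $\zeta^1$'', rate $q$ to ``only $\zeta^2$'', rate $1-q$ to ``neither''), which is feasible precisely because $2q\le 1+q$. So your closing claim that the problematic rates are always $1$ versus $q$ is also wrong.

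The omission: you never name an invariant that delimits ``the configurations that can arise''. The natural one is the pair of orderings $\zeta^{1,+}\ge\zeta^{2,+}$ and $\zeta^{1,-}\ge\zeta^{2,-}$, where $\zeta^{i,+}$ and $\zeta^{i,-}$ are obtained by projecting the second-class particle in $\zeta^i$ to a particle and to a hole, respectively. These hold at time $0$, are preserved by the basic coupling at edges not touching both second-class particles (each projection is a bona fide ASEP under shared clocks), and---this has to be checked---survive each of the three modified rules. They are exactly what forces the adjacent configuration $(2,a;b,2)$ to have $a=1$, $b=0$ and closes the enumeration. Without stating and verifying such an invariant alongside $Z^1\le Z^2$, the transition-by-transition argument does not go through.
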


{Let us stress that the coupling $\bar{\mathbf{P}}$ is \emph{not} the basic coupling from Definition~\ref{def:BasicCoupling} or a simple variant of the extend basic coupling from Definition~\ref{def:BasicCouplingExtended}. However, the exact form of this coupling $\bar{\mathbf{P}}$ is not relevant for the following arguments.}

Our goal is to convert moderate deviations on the maximal displacement of a single second class particle to the maximal displacement of order $N^{\kappa}$ many second class particles for some $\kappa \in [0,\frac{1}{2}]$. To achieve this, the second class particles have to be initially placed sufficiently distant from each other. To this end, let us set up some notation. Fix $\mathfrak{a} \in (0,\frac{1}{2})$ and recall $q$ from \eqref{def:TriplePointScaling} with some $\kappa \in [0, \frac{1}{2}]$.  In the following, we set
\begin{equation}\label{def:KappaEpsilon}
\expon=\frac{1}{4}(1-2\kappa) \geq 0  \quad \text{and} \quad \expont=\frac{\expon}{3} \geq 0 .
\end{equation} Note that $\kappa+\expon \leq \frac{1}{2}$. { Let $\rho \in [\frac{1}{2}+N^{-1/2},1-\mathfrak{a}]$. } We define a multi-species ASEP $(\tilde{\zeta}_t)_{t \geq 0}$ on $\Z$, where the initial condition is chosen according to the product measure { $\tilde{\pi}_{N,\rho}$ on $\{1,2,\infty\}^{\Z}$ so that 
\begin{equation}\label{eq:InitialSecondClass1}
\begin{split}
    \tilde{\pi}_{N,\rho}(\eta(x)=1) &= \rho - N^{-1/2}, \\
    \tilde{\pi}_{N,\rho}(\eta(x)=2) &= N^{-1/2}, \\
    \tilde{\pi}_{N,\rho}(\eta(x)=\infty) &=1-\rho ,
\end{split}
\end{equation}
 for all $x\in \lbr N^{\kappa+\expon+1/2} \rbr$, and 
\begin{equation}\label{eq:InitialSecondClass2}
\tilde{\pi}_{N,\rho}( \eta(x)=1) = 1- \tilde{\pi}_{N,\rho}( \eta(x)=\infty) = \rho 
\end{equation}
for all $x \notin \lbr N^{\kappa+\expon+1/2} \rbr$. Moreover, we set $\tilde{\pi}_{N,\frac{1}{2}} :=  \tilde{\pi}_{N,\frac{1}{2}+N^{-1/2}}$.
}
Let $(L_t)_{t \ge 0}$ and $(R_t)_{t \ge 0}$ denote the position of the left-most and right-most second class particle in the multi-species ASEP $(\tilde{\zeta}_t)_{t \geq 0}$ started from $\tilde{\pi}_{N,\rho}$, respectively. We have the following result on the location of second class particles in $(\tilde{\zeta}_t)_{t \geq 0}$ when $\kappa<\frac{1}{2}$.

\begin{lemma}\label{lem:MaxSecondClassOrder}
{Assume that $\kappa<\frac{1}{2}$ and fix some $\mathfrak{a}\in (0,\frac{1}{2})$. Let $N \in \N$ and $\rho \in [\frac{1}{2}+N^{-1/2},1-\mathfrak{a}]$, and set
\begin{equation}
T=\theta^{-1} (2\rho-1)^{-1}(1-q)^{-1} N
\end{equation}
for some $\theta \geq 1$, allowed to depend on $N$. Then there exist $c_0,C_0>0$, depending only on $\mathfrak{a}$,  such that for all $1 \leq y,\theta \leq N^{\expont}$ with the constant $\expont$ from \eqref{def:KappaEpsilon}
\begin{equation}\label{eq:MaxDisplacementOrder}
\P_{\tilde{\pi}_{N,\rho}}\Big(  L_s,R_s \in \mathcal{I}_{q,\rho}(s,y(\theta^{-1}(2\rho-1)^{-1}N)^{2/3}+2\theta^{-1}N,0) \, \forall  s\in [0,T]\Big) \geq 1- C_0 \exp(-c_0 y^{3} )
\end{equation} for all $N$ sufficiently large. Similarly, for $\rho=\frac{1}{2}$ and $T= \theta^{-1}N^{3/2}(1-q)^{-1}$, we get that }
\begin{equation}\label{eq:MaxDisplacementOrderHalf} {
\P_{\tilde{\pi}_{N,\rho}}\Big(  L_s,R_s \in \mathcal{I}_{q,\rho}(s,y\theta^{-2/3}N+2\theta^{-1}N,0) \, \forall s\in [0,T]\Big) \geq 1- C_0 \exp(-c_0 y^{3} ) . }
\end{equation}
\end{lemma}

Let us stress that $\expont$ is not optimal, but sufficient for our purposes. 
Intuitively, Lemma~\ref{lem:MaxSecondClassOrder} ensures that all second class particles in $(\tilde{\zeta}_t)_{t \geq 0}$ started according to $\tilde{\pi}_{N,\rho}$ satisfy similar moderate deviation bounds as a single second class particle.
Before giving the proof of Lemma~\ref{lem:MaxSecondClassOrder}, we provide a corresponding  statement when $\kappa=\frac{1}{2}$. We let the initial distribution $\hat{\pi}_{N,\rho}$ of $(\tilde{\zeta}_t)_{t \geq 0}$ be defined similar to the measure $\tilde{\pi}_{N,\rho}$ in \eqref{eq:InitialSecondClass1} and \eqref{eq:InitialSecondClass2}, i.e.,
\begin{equation}\label{eq:InitialSecondClass1Hat}
\begin{split}
    \hat{\pi}_{N,\rho}(\eta(x)=1) &= \rho - a^{-2}N^{-1/2}\log(N), \\
    \hat{\pi}_{N,\rho}(\eta(x)=2) &= a^{-2} N^{-1/2}\log(N), \\
    \tilde{\pi}_{N,\rho}(\eta(x)=\infty) &=1-\rho ,
\end{split}
\end{equation}
 for all $x\in \lbr a N\rbr$, and 
\begin{equation}\label{eq:InitialSecondClass2Hat}
\tilde{\pi}_{N,\rho}( \eta(x)=1) = 1- \tilde{\pi}_{N,\rho}( \eta(x)=\infty) = \rho 
\end{equation}
for all $x \notin \lbr a N \rbr$. 
Here, $a > 0$ is a sufficiently small constant chosen later on at the end of the proof of Lemma~\ref{lem:MaxSecondClassOrderCritical}.
Again, for $\rho=\frac{1}{2}$, we have the convention that
\begin{equation*}
    \hat{\pi}_{N,\frac{1}{2}} := \hat{\pi}_{N,\frac{1}{2}+N^{-1/2}} . 
\end{equation*} 
 The following lemma is the analogue of Lemma~\ref{lem:MaxSecondClassOrder} when $\kappa=\frac{1}{2}$.

\begin{lemma}\label{lem:MaxSecondClassOrderCritical}
Let $\kappa=\frac{1}{2}$. Let $N \in \N$ and assume that
\begin{equation}\label{eq:FirstSetrhoCrit}
    \rho \in \left[\frac{1}{2}+N^{-1/2},\frac{1}{2}+c_1\log^{-1}(N)\right] , 
\end{equation}
where we recall the constant $c_1>0$ from Lemma~\ref{lem:ModDeviationMaxSecondClass}. Set
\begin{equation}
T=\theta^{-1} (2\rho-1)^{-1} N^{3/2} \log(N)^{-1}
\end{equation}
for $\theta>0$, and consider an ASEP on $\Z$ with initial distribution $\hat{\pi}_{N,\rho}$. Write $(L_t)_{t\geq 0}$ and $(R_t)_{t\geq 0}$ for the location of the left-most and right-most second class particle, respectively. Then for every $\phi>0$, we find some $a>0$ in the definition of $\hat{\pi}_{N,\rho}$ and some $\theta>0$, both depending only on $\phi$ and $c_1$, such that
\begin{equation}\label{eq:MaxDisplacementOrderHalfCritical}
\P\big( L_s,R_s \in [-\phi N , 2 \phi N] \, \forall s\in [0,T] \big) \geq 1- N^{-20}
\end{equation}  for all $N$ large enough. 
Similarly, when $\rho=\frac{1}{2}$, we find some  $c_2>0$ such that
\eqref{eq:MaxDisplacementOrderHalfCritical} holds for
\begin{equation}
T= \frac{c_2}{\log(N)} N^{2}  .
\end{equation} 
\end{lemma}
We start by proving Lemma \ref{lem:MaxSecondClassOrder} on the maximal displacement of the second class particles with $\kappa< \frac{1}{2}$, starting according to $\tilde{\pi}_{N,\rho}$. The proof of Lemma \ref{lem:MaxSecondClassOrderCritical} will be similar, and we will only provide the required adjustments from the proof of Lemma \ref{lem:MaxSecondClassOrder} instead of full details.
\begin{proof}[Proof of Lemma \ref{lem:MaxSecondClassOrder}]
{The line $\{(s, (1-q)(1-2(\rho-N^{-1/2}))T - y N^{1-\expont})\}_{s\in[0,T]}$ is to the right of the left-side of the cylinder $\{(s,\mathcal{I}_{q,\rho}(s,y\theta^{-2/3}N+2\theta^{-1}N,0))\}_{s\in [0,T]}$ for all $N$ large enough. Thus, it suffices for \eqref{eq:MaxDisplacementOrder} to show that there exist $c_0,C_0>0$ with 
\begin{equation}\label{eq:LowerBoundLeft} 
\P(  L_s  >   (1-q)(1-2(\rho-N^{-1/2}))T - y N^{1-\expont}  \, \forall s\in [0,T]) \geq 1- C_0 \exp(-c_0 y^{3} ) 
\end{equation} for all $1 \leq y,\theta  \leq N^{\expont}$ and $N$ large enough.} The upper bound on $(R_t)_{t \geq 0 }$ follows analogously, {swapping the roles of first particles and empty sites in the proof by the particle-hole duality}.
{For a given initial configuration $\tilde{\zeta}_0 \sim \tilde{\pi}_{N,\rho}$ of the multi-species ASEP $(\tilde{\zeta}_t)_{t \geq 0}$ on $\Z$, we
define another multi-species configuration $\bar{\zeta}_0$ as follows. 
Let $(B_i)_{i \in \lbr N\rbr}$ be a family of independent Bernoulli-$N^{-1/2}$-random variables, and set
\begin{equation}
    x_{\ast} := \inf\left\{ x\geq -N^{\kappa+\expon+1/2} \, \colon \zeta_0(x)=\infty \right\} . 
\end{equation}
We define 
\begin{equation}\label{eq:xiProcess2}
\bar{\zeta}_0(x) := \begin{cases}
3 & \text{ if } x\in \big[-N^{\kappa+\expon+1/2},0\big] \setminus \{x_{\ast}\} \text{ and }B_x=1 \text{ and } \zeta_0(x) = \infty  , \\
4 & \text{ if } x=x_{\ast}  ,  \\
\zeta_0 & \text{ otherwise. }
\end{cases}
\end{equation} 
In words, we flip every empty site in $\tilde{\zeta}_0$ within the interval $[-N^{\kappa+\expon+1/2},0]$ with probability $N^{-1/2}$ into a type $3$ particle, apart from the left-most empty site, which is assigned type $4$.  
Let $(\bar{\zeta}_t)_{t \geq 0}$ denote the corresponding multi-species ASEP started from $\bar{\zeta}_0$. We write $(\bar{L}_t)_{t\geq 0}$ and $(\bar{L}_t)_{t\geq 0}$ for the location of the left-most and right-most second class particle in $(\bar{\zeta}_t)_{t \geq 0}$, respectively, and observe that
\begin{equation}\label{eq:CoupleTildeBar}
    L_t =\bar{L}_t \quad \text{ and } \quad     R_t =\bar{R}_t
\end{equation} for all $t\geq 0$ when evolving $(\tilde{\zeta}_t)_{t \geq 0}$ and $(\bar{\zeta}_t)_{t \geq 0}$ together under the basic coupling. }
We enumerate the locations of the type $2$ particles in $(\bar{\zeta}_t)_{t \geq 0}$ from left to right as $(X_t^{1})_{t \geq 0}, (X_t^{2})_{t \geq 0},\dots$, and the locations of the type $3$ particles from left to right as $(Y_t^{1})_{t \geq 0}, (Y_t^{2})_{t \geq 0},\dots$. The location of the type $4$ particle is denoted by $(Z_t)_{t \geq 0}$.
A visualization is given in Step $0$ of Figure~\ref{fig:Projection}.
We define the events
\begin{align}\label{def:A123Events}
\begin{split}
\mathcal{A}_1 &:= \Big\{ \sum_{x\in \Z} \mathds{1}_{ \{ \bar{\zeta}_0(x)=3 \} } \geq \frac{1}{5} N^{\kappa+\expon} \Big\} \\
\mathcal{A}_2 &:= \Big\{ Y_t^{\frac{1}{10} N^{\kappa+\expon}} < X_t^{1} \, \forall t \in [0,N^3] \Big\} \\
\mathcal{A}_3 &:= \Big\{ Z_t < Y_t^{\frac{1}{10} N^{\kappa+\expon}} \, \forall  t \in [0,N^3] \Big\}
\end{split}
\end{align}
on the number and location of the particles of types $2,3$ and $4$ in the multi-species ASEP~$(\bar{\zeta}_t)_{t \geq 0}$.
We will now argue that for our choice of $\expon$ and $\expont$ in \eqref{def:KappaEpsilon},
\begin{equation}\label{eq:LowerEventBound}
\P(\mathcal{A}_1 \cap \mathcal{A}_2 \cap \mathcal{A}_3) \geq 1- C_3\exp( - c_3 N^{3\expont}  )
\end{equation} for some constants $c_3,C_3>0$ and all $N$ large enough. This follows by showing that there exist some constants $c_4,C_4>0$ such that
\begin{equation}\label{eq:BoundonEvents123}
\P(\mathcal{A}_i) \geq 1- C_4 \exp(-c_4 N^{3\expont}) .
\end{equation} for all $i\in \lbr 3 \rbr$, and all $N$ large enough, and a union bound. For $i=1$, \eqref{eq:BoundonEvents123} follows by  a standard tail bound for the sum of independent Bernoulli random variables, {noting that every site on $[-N^{\kappa+\expon+1/2},0]$ is with probability at least $N^{-1/2}/4$ occupied by a type $3$ particle.} For the lower bound on the events $\mathcal{A}_2$ and $\mathcal{A}_3$, we will in the following only consider $\mathcal{A}_2$ as the argument for $\mathcal{A}_3$ is similar. Let $M_2$ and $M_3$ denote the number of type $2$ and type $3$ particles in $(\bar{\zeta}_t)_{t \geq 0}$, respectively. Then for every $t\geq 0$, we define a configuration $\eta_t^{\ast} \in \Omega_{M_2+M_3,M_2}$, where we set
\begin{equation}\label{eq:ClosedStateSpace}
\Omega_{m,k} := \Big\{ \eta \in \{0,1\}^{m} \, \colon  \, \sum_{i=1}^{m} \eta(i) = k \Big\}
\end{equation} for all $m\in \N$ and $k\in \lbr m \rbr$,
by first deleting all particles and empty sites in $\bar{\zeta}_t$ (as well as removing their sites and merging the resulting edges), which are not of type $2$ or $3$. We then  map all type $3$ particles to empty sites and type $2$ particles to first particles to obtain a configuration  $\eta_{t}^{\ast} \in \Omega_{M_2+M_3,M_2}$. Observe that the resulting process $(\eta_t^{\ast})_{t \geq 0}$ can be interpreted as an asymmetric simple exclusion process on $\Omega_{M_2+M_3,M_2}$ with censoring, i.e., an edge $\{x,x+1\}$ is contained in the censoring scheme at time $t$ if and only if in the construction of $\bar{\zeta}_t$ from $\eta_t^{\ast}$, we erased a particle (and its site) between the two sites which get mapped to $x$ and $x+1$, respectively. A visualization of this projection is provided in Figure \ref{fig:Projection}.
 \begin{figure}
\centering
\begin{tikzpicture}[scale=0.8]

\def\x{2};
\def\y{1.3};
\def\z{-0.4};
\def\a{-1.9};

 	\node[shape=circle,scale=1.5,draw] (C1) at (0,\y){} ;
 	\node[shape=circle,scale=1.5,draw] (C2) at (\x,\y){} ;
 	\node[shape=circle,scale=1.5,draw] (C3) at (2*\x,\y){} ;
 	\node[shape=circle,scale=1.5,draw] (C4) at (3*\x,\y){} ;
 	\node[shape=circle,scale=1.5,draw] (C5) at (4*\x,\y){} ;
 	\node[shape=circle,scale=1.5,draw] (C6) at (5*\x,\y){} ;
 	\node[shape=circle,scale=1.5,draw] (C7) at (6*\x,\y){} ;
 	\node[shape=circle,scale=1.5,draw] (C8) at (7*\x,\y){} ;

	\node[scale=0.8] (G1) at (2,0.6) {$Z_t$};
	\node[scale=0.8] (G2) at (4,0.6) {$Y^{1}_t$};
	\node[scale=0.8] (G3) at (8,0.6) {$Y^{2}_t$};
	\node[scale=0.8] (G4) at (12,0.6) {$X^{1}_t$};
	\node[scale=0.8] (G5) at (14,0.6) {$X^{2}_t$};
 	
	\node[shape=circle,scale=1.2,fill=red] (D1) at (C1) {};
	\node (F1) at (C2) {$4$};
	\node (F1) at (C3) {$3$};	
	\node[shape=circle,scale=1.2,fill=red] (D5) at (C6) {};
	\node (F1) at (C5) {$3$};
	\node (F1) at (C7) {$2$};		
	\node (F1) at (C8) {$2$};	
	
 	\draw[thick,densely dotted] (-0.8,\y) -- (C1);
 	\draw[thick] (C1) -- (C2);
 	\draw[thick] (C2) -- (C3);
 	\draw[thick] (C3) -- (C4);
 	\draw[thick] (C4) -- (C5);
 	\draw[thick] (C5) -- (C6);
 	\draw[thick] (C6) -- (C7);
 	\draw[thick] (C7) -- (C8);
 	\draw[thick,densely dotted] (C8) -- (0.8+7*\x,\y);

 	\node (H1) at (-2.5,\y) {Step 0: $\bar{\zeta}_t$};
 	\node (H2) at (-2.5,\z) {Step 1};	
	\node (H3) at (-2.5,\a) {Step 2: $\eta_t^{\ast}$};

 	\node[shape=circle,scale=1.5,draw] (C3) at (2*\x,\z){} ;
 	\node[shape=circle,scale=1.5,draw] (C4) at (3*\x,\z){} ;
 	\node[shape=circle,scale=1.5,draw] (C5) at (4*\x,\z){} ;
 	\node[shape=circle,scale=1.5,draw] (C6) at (5*\x,\z){} ;

	\node (F1) at (C3) {$3$};
	\node (F1) at (C4) {$3$};	
	\node (F1) at (C5) {$2$};
	\node (F1) at (C6) {$2$};

 	\draw[thick,densely dotted] (C3) -- (C4);
 	\draw[thick,densely dotted] (C4) -- (C5);
 	\draw[thick] (C5) -- (C6);

 	\node[shape=circle,scale=1.5,draw] (C3) at (2*\x,\a){} ;
 	\node[shape=circle,scale=1.5,draw] (C4) at (3*\x,\a){} ;
 	\node[shape=circle,scale=1.5,draw] (C5) at (4*\x,\a){} ;
 	\node[shape=circle,scale=1.5,draw] (C6) at (5*\x,\a){} ;

	\node[shape=circle,scale=1.2,fill=red] (D1) at (C5) {};
	\node[shape=circle,scale=1.2,fill=red] (D1) at (C6) {};

 	\draw[thick,densely dotted] (C3) -- (C4);
 	\draw[thick,densely dotted] (C4) -- (C5);
 	\draw[thick] (C5) -- (C6);

	\end{tikzpicture}
\caption{\label{fig:Projection}Construction of a configuration $\eta^{\ast}_t$ from $\bar{\zeta}_t$ for some $t\geq 0$ and $M_2=M_3=2$. First class particles are depicted in red, while second class particles are drawn with their types. In Step 1, we erase all sites apart from those which contain second class particles of types $2$ and $3$. In Step 2, we convert the second class particles of types $2$ and $3$ to first class particles and empty sites, respectively. Censored edges are drawn dashed.}
 \end{figure}
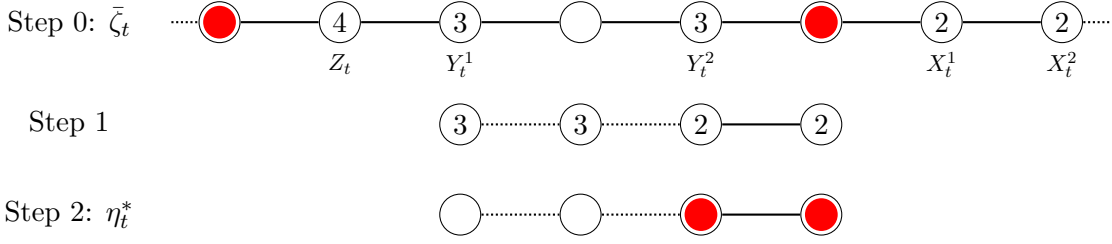
The bound on $\mathcal{A}_2$ in \eqref{eq:BoundonEvents123} is now immediate from Lemma~\ref{lem:BlockingMeasureMaximum} and Remark~\ref{rem:ASEPRestricted} applied to the process $(\eta_t^{\ast})_{t \geq 0}$. {The argument for the event $\mathcal{A}_3$ is similar considering the relative position of type $3$ and type $4$ particles in $(\bar{\zeta}_t)_{t \geq 0}$.}
Observe that 
\begin{equation}\label{eq:DominationBar}
\P \big(Z_s \leq { \bar{L}_s } \, \forall s\in[0,N^3] \, \big| \, \mathcal{A}_1 \cap \mathcal{A}_2 \cap \mathcal{A}_3 \big) =1 .
\end{equation}
By Theorem~\ref{thm:BSCoupling}, $(Z_t)_{t \geq 0}$ dominates the location of a second class particle $(Z^{\prime}_t)_{t \geq 0}$ started from $Z_0$ under a Bernoulli-$(\rho-N^{-1/2})$-product measure. Hence, the desired lower bound \eqref{eq:LowerBoundLeft} on the maximal displacement of the left-most second class particle in $(\bar{\zeta}_t)_{t \geq 0}$, {and hence by \eqref{eq:CoupleTildeBar} also for the left-most second class particle in $(\tilde{\zeta}_t)_{t \geq 0}$},  follows from Lemma~\ref{lem:ModDeviationMaxSecondClass} for $(Z^{\prime}_t)_{t \geq 0}$ and all $1 \leq y,\theta \leq N^{\expont}$. The same arguments also apply in the case $\rho=\frac{1}{2}$, which yields \eqref{eq:MaxDisplacementOrderHalf}.
\end{proof}
\begin{remark}\label{rem:RelaxDomination}
 {   We consider in Lemma~\ref{lem:MaxSecondClassOrder} the initial distribution $\tilde{\pi}_{N,\rho}$ for the multi-species ASEP on $\Z$, which is a product measure. However, a closer inspection of the above arguments shows that this assumption can be relaxed to only requiring that the single-species projections in $\tilde{\pi}_{N,\rho}$ are stochastically dominated from above and below by  Bernoulli-$\big( \rho \pm C^{\prime}N^{-1/2}\big)$-product measures for some constant $C^{\prime}>0$. }
\end{remark}

Next, we describe the necessary changes in the proof of Lemma~\ref{lem:MaxSecondClassOrder} in order to obtain Lemma~\ref{lem:MaxSecondClassOrderCritical} on the maximal displacement of second class particles when $\kappa=\frac{1}{2}$.

\begin{proof}[Proof of Lemma \ref{lem:MaxSecondClassOrderCritical}]
Similar to Lemma \ref{lem:MaxSecondClassOrder}, we argue that there exists some $a=a(\phi)>0$ and some $\theta=\theta(\phi,a)>0$ {in the definition of $T$} so that
\begin{equation}\label{eq:LowerBoundLeftCrit}
\P(  L_s  >  - \phi N  \, \forall s\in [0,T]) \geq 1- N^{-20}/2
\end{equation} for all $N$ large enough. The corresponding bound on $(R_t)_{t \geq 0}$ is similar. Choose an initial configuration $\tilde{\zeta}_0 \sim \hat{\pi}_{N,\rho}$ and define 
\begin{equation*}
\bar{\zeta}_0(x) := \begin{cases}
3 & \text{ if } x\in [-aN,0] \setminus \{x_{\ast}\} \text{ and }B_x=1 \text{ and } \zeta_0(x) = 1  , \\
4 & \text{ if } x=x_{\ast}  ,  \\
\zeta_0 & \text{ otherwise, }
\end{cases}
\end{equation*} 
where we set
\begin{equation}
    x_{\ast} := \inf\left\{ x\geq -aN \, \colon \zeta_0(x)=\infty \right\} , 
\end{equation}
and $(B_i)_{i \in \Z}$ are independent Bernoulli-$(a^{-3}N^{-1/2})$-random variables. 
We define a multi-species ASEP $(\bar{\zeta}_t)_{t \geq 0}$  on $\Z$ with initial configuration $\bar{\zeta}_0$, evolving together with $(\tilde{\zeta}_t)_{t \geq 0}$ according to the basic coupling. 
Enumerating the particles as in Lemma \ref{lem:MaxSecondClassOrder}, let
\begin{align}\label{def:A123EventsPrime}
\begin{split}
\mathcal{A}^{\prime}_1 &:= \Big\{ \sum_{x\in \Z } \mathds{1}_{\{ \bar{\zeta}_0(x)=3 \} } \geq \frac{1}{ a} N^{1/2}\log(N) \Big\} ,\\
\mathcal{A}^{\prime}_2 &:= \Big\{ Y_t^{\frac{1}{2a}N^{1/2}\log(N)} < X_t^{1} \, \forall  t \in [0,N^3] \Big\} , \\
\mathcal{A}^{\prime}_3 &:= \Big\{ Z_t < Y_t^{\frac{1}{2 a}N^{1/2}\log(N)} \, \forall   t \in [0,N^3]  \Big\}  .
\end{split}
\end{align}
The remainder follows from the same arguments as Lemma \ref{lem:MaxSecondClassOrder}, first showing that the event $\mathcal{A}^{\prime}_1 \cap \mathcal{A}^{\prime}_2 \cap \mathcal{A}^{\prime}_3$ holds with probability at least $1-N^{-20}/4$ for all $N$ large enough. Now for $a$ sufficiently small, a standard Chernoff bound for order $a N$ many Bernoulli-$(a^{-3}N^{-1/2}\log(N))$-random variables ensures that $\mathcal{A}_1'$ occurs with probability at least  $1-N^{-20}/6$ for all $N$ large enough. 
Similarly, we use Lemma~\ref{lem:BlockingMeasureMaximum} with $x=\frac{1}{2a}\log(N)$ and Remark~\ref{rem:ASEPRestricted}, to bound the relative position of the type $2$ and type $3$ particles among each other for the event $\mathcal{A}^{\prime}_2$, as well as to bound the relative position of  type $3$ and type $4$ particles for the event $\mathcal{A}^{\prime}_3$. Letting $(\bar{L}_t)_{t \geq 0}$ denote the left-most second class particle in $(\tilde{\zeta}_t)_{t \geq 0}$, we get that
\begin{equation*}
\P \big(Z_s \leq \bar{L}_s \ \forall s\in[0,N^3] \, \big| \, \mathcal{A}^{\prime}_1 \cap \mathcal{A}^{\prime}_2 \cap \mathcal{A}^{\prime}_3 \big) =1 .
\end{equation*} Now choose $a$ in the definition of $\hat{\pi}_{N,\rho}$ small enough so that $\mathcal{A}^{\prime}_1 \cap \mathcal{A}^{\prime}_2 \cap \mathcal{A}^{\prime}_3$ holds with probability at least $1-N^{-20}/4$ for all $N$ large enough. {By the microscopic concavity coupling from Theorem~\ref{thm:BSCoupling} together with Lemma~\ref{lem:ModDeviationMaxSecondClass} for moderate deviations on the maximal displacement of a single second class particle in a Bernoulli-$(\rho-N^{-1/2}\log(N)a^{-3})$-product measure, we find some $\theta>0$ in the definition of $T$ so that  
\begin{equation}
\P(  \bar{L}_s  >  - \phi N  \, \forall s\in [0,T]) \geq 1- N^{-20}/2
\end{equation} for all $N$ large enough. The desired lower bound in \eqref{eq:LowerBoundLeftCrit} follows by evolving $(\zeta_t)_{t \geq 0}$ and $(\bar{\zeta}_t)_{t \geq 0}$ according to the basic coupling. }
\end{proof}

\subsection{Moderate deviations for the extended disagreement process}\label{sec:ModerateSecondClassMulti}

Next, our goal is to compare the number of type $\Aup$ and $\Bup$ second class particles in the extended disagreement process $(\xi^{\modi}_t)_{t \geq 0}$ between an ASEP $(\eta_t)_{t \geq 0}$ on $\Z$, and an ASEP $(\eta^{\N}_t)_{t \geq 0}$ on $\N$. {Recall that $\rho_{\Lup}$ denotes the effective density of the reservoir of the ASEP on $\N$ from \eqref{def:EffectiveDensities}. We assume in the following that both processes are started from a Bernoulli-$\rho$-product measure for some $\rho \in (0,1)$ (on $\{0,1\}^{\N}$ for the ASEP on $\N$, and on $\{0,1\}^{\Z}$ for the ASEP on $\Z$) so that they have the same initial configuration on sites $\N$, i.e.,
\begin{equation}\label{eq:Agreement}
\eta_0(x)=\eta^{\N}_0(x)
\end{equation} for all $x\in \N$, and so that they are coupled according to the extended disagreement process $(\xi_t^{\textup{mod}})_{t \geq 0}$ from Definition \ref{def:BasicCouplingExtended}. }
Let $(Z^{\Aup}_t)_{t \geq 0}$ and $(Z^{\Bup}_t)_{t \geq 0}$ denote the position of the rightmost particle of type $\Aup$ and of type $\Bup$ in $(\xi_t^{\textup{mod}})_{t \geq 0}$, respectively (with the convention that the location is $-\infty$ if there is no such particle). The following proposition bounds the speed of propagation of second class particles in the extended disagreement process.

\begin{proposition}\label{pro:SpeedOfDisagreement}
Let $N \in\N$ and consider $q$ from \eqref{def:TriplePointScaling} with $\kappa<\frac{1}{2}$. { Fix $\mathfrak{a} \in (0,\frac{1}{2})$, and let $\rho \in [\frac{1}{2}+N^{-1/2},1-\mathfrak{a}]$. Recall $\expont>0$ from \eqref{def:KappaEpsilon}. Then there exist some constants $c_0,C_0>0$, depending only on $\mathfrak{a}$ and $\kappa$, }such that for all $1 \leq y,\theta \leq N^{\expont}$, and all $N$ large enough
\begin{equation}\label{eq:SpeedPropagation1}
\P\left( \sup_{t \in [0,\theta^{-1}N^{1+\kappa}(2\rho-1)^{-1}]} \max(Z^{\Aup}_t,Z^{\Bup}_t) \geq \theta^{-1} y N \right) \leq C_0 \exp(-c_0 y^3\theta^{-1}) .
\end{equation} Similarly, when $\rho=\frac{1}{2}$, we get that for all $N$ large enough
\begin{equation}\label{eq:SpeedPropagation2}
\P\left( \sup_{t \in [0,\theta^{-1}N^{3/2+\kappa}]} \max(Z^{\Aup}_t,Z^{\Bup}_t) \geq \theta^{-1} y N \right) \leq C_0 \exp(-c_0 y^3\theta^{-1}) .
\end{equation} {When $\kappa=\frac{1}{2}$, assume that
\begin{equation}\label{eq:FirstSetrho}
    \rho \in \left[\frac{1}{2}+N^{-1/2},\frac{1}{2}+c_1\log^{-1}(N)\right] , 
\end{equation}
where we recall the constant $c_1>0$ from Lemma~\ref{lem:ModDeviationMaxSecondClass}. }
Then for every $\phi>0$, we find some $z>0$ such that for all $N$ large enough
\begin{equation}\label{eq:SpeedPropagation1Critical}
\P\left( \sup_{t \in [0,zN^{3/2}(2\rho-1)^{-1}\log^{-1}(N)]} \max(Z^{\Aup}_t,Z^{\Bup}_t) \geq \phi N \right) \leq \frac{1}{N^{12}} .
\end{equation} Similarly, when $\kappa=\frac{1}{2}$ and $\rho=\frac{1}{2}$, we get that for some $z>0$, and $N$ large enough
\begin{equation}\label{eq:SpeedPropagation2Critical}
\P\left( \sup_{t \in [0,zN^{2}\log^{-1}(N)]} \max(Z^{\Aup}_t,Z^{\Bup}_t) \geq \phi N \right) \leq \frac{1}{N^{12}} .
\end{equation}
\end{proposition}
In words, Proposition~\ref{pro:SpeedOfDisagreement} states that the speed of propagation of a perturbation introduced at the boundary in the extended disagreement process has at most the speed of a second class particle on the integers (with a logarithmic correction for the case $\kappa=\frac{1}{2}$). \\

In order to show Proposition \ref{pro:SpeedOfDisagreement}, we require some setup.
Recall the projection $(\xi^{\Bup}_t)_{t \geq 0}$ from \eqref{def:ASEPaux2} for the extended disagreement process, and let
$(U^{\Bup}_t)_{t \geq 0}$ denote the position of the rightmost type $\Bup$ second class particle in $(\xi^{\Bup}_t)_{t \geq 0}$ (corresponding to the rightmost type $\Bup$ or type $\zerop$ second class particle in $(\xi^{\modi}_t)_{t \geq 0}$), with the convention that $U^{\Bup}_t=-\infty$ if $\xi^{\Bup}_t$ contains no type $\Bup$ second class particles at time $t$. Note that $Z^{\Bup}_t=U^{\Bup}_t$ almost surely for all $t \geq 0$.
Recall that the process $(\xi^{\Bup}_t)_{t \geq 0}$ has the law of a multi-species ASEP on $\Z$ (with special rules for the updates at site $0$ and $1$) and the law of a standard ASEP on $\Z$ with a Bernoulli-$\rho$-product law when projecting type $\Bup$ second class particles to empty sites. 
In order to study $(U^{\Bup}_t)_{t \geq 0}$, it will be convenient to add a new second class particle type {$\Cup$ to the hierarchy  as
\begin{equation}\label{def:ModifiedExtendedPartialOrder}
\one \succeq \onep \succeq \Aup \succeq \Cup \succeq \Bup \succeq \zerop \succeq \zero .
\end{equation} 
We will now define an extension of the process $(\xi^{\Bup}_t)_{t \geq 0}$ on the state space $\{\zero,\one,\Bup,\Cup\}^{\Z}$, which will enable us to bound the location of the right-most type $\Bup$ particle in $(\xi^{\Bup}_t)_{t \geq 0}$. More precisely, define a configuration $\xi_0^{\Bup\Cup} \in \{\zero,\one,\Bup,\Cup\}^{\Z}$  as follows.  
Assume that $\kappa<\frac{1}{2}$, and recall $\expon$ and $\expont>0$ from \eqref{def:KappaEpsilon}, as well as the projection $(\xi^{\Aup}_t)_{t \geq 0}$ from \eqref{def:ASEPaux1}. Let $1 \leq y,\theta \leq N^{\expont}$ be fixed, and let $(\mathcal{U}_x)_{x\in \Z}$ be a family of independent Bernoulli-$N^{-1/2}$-distributed random variables. We obtain the initial configuration $\xi^{\Bup\Cup}_0$ from $\xi^{\Bup}_0$ by setting
 \begin{equation}\label{eq:ErasedDisagProcess}
 \xi_0^{\Bup\Cup}(x) := \begin{cases} \Cup &
  \text{ for }  \lfloor \theta^{-1} y N/4 \rfloor \leq x \leq  \lfloor \theta^{-1} y N/4 \rfloor + N^{\kappa+\expon+\frac{1}{2}}\ ,  \mathcal{U}_x=\xi_0^{\Aup}=1 , \\
  \xi_0^{\Bup}(x) & \text{ otherwise.}
 \end{cases}
 \end{equation} 
 In words, we obtain the initial configuration  $\xi_0^{\Bup\Cup}$ from $\xi_0^{\Bup}$ by changing some of the first class particles on the interval $[\lfloor \theta^{-1} y N/4 \rfloor , \lfloor \theta^{-1} y N/4 \rfloor + N^{\kappa+\expon+\frac{1}{2}}]$ into type $\Cup$ particles. Similarly, when $\kappa=\frac{1}{2}$, we set
 \begin{equation}\label{eq:ErasedDisagProcess3}
 \xi_0^{\Bup\Cup}(x) := \begin{cases} \Cup &
  \text{ for }  \lfloor \phi N/4 \rfloor \leq x \leq  \lfloor \phi N/4 \rfloor + aN\ ,  \mathcal{U}'_x=\xi_0^{\Aup}=1 , \\
  \xi_0^{\Bup}(x) & \text{ otherwise,}
 \end{cases}
\end{equation}
where $(\mathcal{U}'(x))_{x \in \Z}$ are independent Bernoulli-$(N^{-1/2}\log(N)a^{-3})$-random variables, and where 
we recall the constant $a$ from Lemma~\ref{lem:MaxSecondClassOrderCritical}.
Intuitively, we draw the type $\Cup$ particles in the  configuration $\xi^{\Bup\Cup}_0$ according to the second class particles in the measures $\tilde{\pi}_{N,\rho}$ from \eqref{eq:InitialSecondClass1} and \eqref{eq:InitialSecondClass2} when $\kappa<\frac{1}{2}$, shifted by $\lfloor \theta^{-1} y N/4 \rfloor$, as well as the measures $\hat{\pi}_{N,\rho}$ from \eqref{eq:InitialSecondClass1Hat} and \eqref{eq:InitialSecondClass2Hat} for $\kappa=\frac{1}{2}$, shifted by $\lfloor \phi N / 4 \rfloor$. When $\kappa=\frac{1}{2}$ and $\rho=\frac{1}{2}$, then use \eqref{eq:ErasedDisagProcess3} with respect to the density$\rho=\frac{1}{2}+N^{-1/2}$.
}
 
 We write $\pi_{N,\rho}^{\Bup\Cup}$ for the law of $\xi_0^{\Bup\Cup}$, and
define $(\xi^{\Bup\Cup}_t)_{t \geq 0}$ as the corresponding multi-species ASEP on $\{\zero,\one,\Bup,\Cup\}^{\Z}$ started from $\xi_0^{\Bup\Cup}(x)$, treating the type $\Cup$ particles in $(\xi^{\Bup\Cup}_t)_{t \geq 0}$ in the same way as type $\one$ particles at site $1$. 
Let $\tau_{\Cup}$ with
\begin{equation*}
\tau_{\Cup} := \inf\left\{ t \geq 0 \, \colon \, \xi^{\Bup\Cup}_t(1)=\Cup \right\}
\end{equation*} be the first time at which a type $\Cup$ second class particle reached site $1$, and let $(U^{\Bup\Cup}_t)_{t \geq 0}$ denote the location of the rightmost type $\Bup$ second class particle in $(\xi^{\Bup\Cup}_t)_{t \geq 0}$. 

{Note that up to time $\tau_{\Cup}$, when projecting the type $\Cup$ particles in $(\xi^{\Bup\Cup}_t)_{t \geq 0}$ onto type $\one$, we retrieve the process $(\xi^{\Bup}_t)_{t \geq 0}$. 
Moreover, observe that until time $\tau_{\Cup}$, the type $\Cup$ particles have the same law as second class particles in an ASEP on $\Z$, started from $\tilde{\pi}_{N,\rho}$ when $\kappa<\frac{1}{2}$, and from $\hat{\pi}_{N,\rho}$ for $\kappa=\frac{1}{2}$ (shifted by $\lfloor \phi N/4 \rfloor$). In particular, we get that almost surely under the basic coupling
\begin{equation}\label{eq:keyIdent}
U^{\Bup\Cup}_t = U^{\Bup}_t \ \forall 0 \leq t \leq \tau_{\Cup},
\end{equation}
since $\Cup$ and $\one$ have the same priority with respect to $\Bup$. }
Let $M_{\Cup}$ denote the number of type $\Cup$ particles in the configuration $\xi^{\Bup\Cup}_0$.  We denote the locations of the type $\Cup$ particles in $(\xi^{\Bup\Cup}_t)_{t \geq 0}$ from left to right by $(Z^{\Cup,i}_t)_{t \in [0,\tau_{\Cup}]}$ for $i\in \lbr M_{\Cup} \rbr$. We now control the rightmost particle $(U^{\Bup\Cup}_t)_{t \geq 0}$ of type $\Bup$ in $(\xi^{\Bup\Cup}_t)_{t \geq 0}$.

\begin{lemma}\label{lem:TypeBRightofSecond} Fix some $\mathfrak{a} \in (0,\frac{1}{2})$ and let $N \in \N$. Consider the process $(\xi^{\Bup\Cup}_t)_{t \geq 0}$ {with initial distribution $\pi_{N,\rho}^{\Bup\Cup}$. For $\kappa<\frac{1}{2}$ and $\rho\in [\frac{1}{2}+N^{-1/2},1-\mathfrak{a}]$, set
\begin{equation}
    T:= \theta^{-1}N^{1+\kappa}(2\rho-1)^{-1} .
\end{equation}
There exist constants $c_1,C_1>0$ such that for all $1 \leq \theta,y \leq N^{\expont}$,
\begin{equation}\label{eq:FirstCensor}
\mathbf{P}\left( U^{\Bup\Cup}_t \leq Z^{\Cup,M_{\Cup}}_t  \, \forall t \in [0,\min(\tau_{\Cup},T)] \right)  \geq 1 - C_0 \exp( - c_0 N^{\expont})
\end{equation} for all $N$ large enough. When $\kappa<\frac{1}{2}$  and $\rho=\frac{1}{2}$, then \eqref{eq:FirstCensor} holds with $T=\theta^{-1}N^{3/2+\kappa}$. Similarly, when $\kappa=\frac{1}{2}$ and $\rho\in [\frac{1}{2}+N^{-1/2},1-\mathfrak{a}]$, then there exists some $z>0$ such that for $T=zN^{3/2}(2\rho-1)^{-1}\log^{-1}(N)$,
\begin{equation}\label{eq:SecondCensor}
\mathbf{P}\left( U^{\Bup\Cup}_t \leq Z^{\Cup,M_{\Cup}}_t  \, \forall t \in [0,\min(\tau_{\Cup},T)] \right) \geq 1-\frac{1}{4} N^{-12}.
\end{equation} for all $N$ large enough. When $\rho=\frac{1}{2}$, \eqref{eq:SecondCensor} holds with $T=zN^{2}\log^{-1}(N)$. }
\end{lemma}
\begin{proof}
Note that for every configuration $\xi_t^{\Bup\Cup}$ with some $0 \leq t \leq \tau_{\Cup}$, we can associate a configuration $\tilde{\eta}_t^{\Cup} \in \mathfrak{A}_0$ with $\mathfrak{A}_0$ from \eqref{def:BlockingStateSpace} as follows.
Let $M_{\Bup}(t)$ be the number of type $\Bup$ second class particles in $(\xi_t^{\Bup\Cup} )_{t \geq 0}$ at time $t \geq 0$, and denote by $\sigma_t \in \Omega_{M_{\Cup}+M_{\Bup}(t),M_{\Cup}}$ the configuration which we obtain by first deleting all first class particles and empty sites in $\xi_t^{\Cup}$ (as well as removing their sites and merging the resulting edges), and then mapping all type $\Cup$ particles to first class particles and type $\Bup$ second class particles to empty sites; see also Figure~\ref{fig:Projection}.
Note that we can extend each configuration in $\Omega_{M_{\Cup}+M_{\Bup}(t),M_{\Cup}}$ uniquely to an element $\tilde{\eta}_t^{\Cup}$ of $\mathfrak{A}_0$ by adding particles to the right and empty sites to the left of $\sigma_t$. In particular, we have that $\sigma_0=\vartheta_0$ for $\vartheta_0$ defined in \eqref{def:GroundStates}.
As for Lemma \ref{lem:MaxSecondClassOrder}, observe that we can interpret the process $(\tilde{\eta}_t^{\Cup})_{t \geq 0}$ until time $\tau_{\Cup}$ as an asymmetric simple exclusion process on $\mathfrak{A}_0$ with censoring. More precisely, an edge $\{x,x+1\}$ is censored at time $t$ if and only if in the construction of $\tilde{\eta}^{\Cup}_t$ from $\xi_t^{\Bup\Cup}$, we erased a particle (and its site) between the two sites which get mapped to $x$ and $x+1$, respectively, or  we added either $x$ or $x+1$ in the construction of $\tilde{\eta}^{\Cup}_t$ from $\sigma_t$.
To see that \eqref{eq:FirstCensor} holds, note that there exists some constants $c_0,C_0>0$ such that for all $N$ large enough
\begin{equation}\label{eq:AuxEvent}
\P\Big( M_{\Cup}  \geq  \tfrac{1}{8}N^{\kappa+\expont}\Big) \geq 1- C_0\exp(- c_0 N^{\expont})
\end{equation} by the choice of $\xi_0^{\Bup\Cup}$. 
The result now follows by Lemma \ref{lem:BlockingMeasureMaximum} for $(\tilde{\eta}_t^{\Cup})_{t \geq 0}$, noting that whenever the event in \eqref{eq:AuxEvent} holds, we get that $\{ U^{\Bup\Cup}_t \leq Z^{\Cup,M_{\Cup}}_t \}$ occurs at time $t\geq 0$ if the rightmost empty site in $\tilde{\eta}_t^{\Cup}$ is to the left of position $\frac{1}{8}N^{\kappa+\expont}$.
The argument for \eqref{eq:SecondCensor} is analogous.
\end{proof}

{
We are now ready to bound the displacement of the type $\Bup$ particles in $(\xi_t^{\textup{mod}})_{t \geq 0}$.

\begin{lemma}\label{lem:SpeedOfDisagreementAuxi}
Let $N \in\N$ and consider $q$ from \eqref{def:TriplePointScaling} with $\kappa<\frac{1}{2}$. { Fix $\mathfrak{a} \in (0,\frac{1}{2})$, and let $\rho \in [\frac{1}{2}+N^{-1/2},1-\mathfrak{a}]$. Recall $\expont>0$ from \eqref{def:KappaEpsilon}. Then there exists some constants $c_0,C_0>0$, depending only on $\mathfrak{a}$ and $\kappa$, }such that for all $1 \leq y,\theta \leq N^{\expont}$, and all $N$ large enough
\begin{equation}\label{eq:SpeedPropagation1B}
\P\left( \sup_{t \in [0,\theta^{-1}N^{1+\kappa}(2\rho-1)^{-1}]} Z^{\Bup}_t \geq \frac{1}{2}\theta^{-1} y N \right) \leq C_0 \exp(-c_0 y^3) .
\end{equation} Similarly, when $\rho=\frac{1}{2}$, we get that for  all $N$ large enough
\begin{equation}\label{eq:SpeedPropagation2B}
\P\left( \sup_{t \in [0,\theta^{-1}N^{3/2+\kappa}]} Z^{\Bup}_t \geq \frac{1}{2}\theta^{-1} y N \right) \leq C_0 \exp(-c_0 y^3) .
\end{equation} {When $\kappa=\frac{1}{2}$, assume that
\begin{equation}\label{eq:FirstSetrhoB}
    \rho \in \left[\frac{1}{2}+N^{-1/4},\frac{1}{2}+c_1\log^{-1}(N)\right] , 
\end{equation}
with $c_1>0$ from Lemma~\ref{lem:ModDeviationMaxSecondClass}. }
Then for every $\phi>0$, we find some $z>0$ such that
\begin{equation}\label{eq:SpeedPropagation1CriticalB}
\P\left( \sup_{t \in [0,zN^{3/2}(2\rho-1)^{-1}\log^{-1}(N)]} Z^{\Bup}_t \geq \frac{1}{2} \phi N \right) \leq \frac{1}{2}N^{-12}
\end{equation}  for all $N$ large enough. When $\kappa=\frac{1}{2}$ and $\rho=\frac{1}{2}$, we get that for $z>0$, and $N$ large enough
\begin{equation}\label{eq:SpeedPropagation2CriticalB}
\P\left( \sup_{t \in [0,zN^{2}\log^{-1}(N)]} Z^{\Bup}_t \geq \frac{1}{2}\phi N \right) \leq \frac{1}{2}N^{-12} .
\end{equation}
\end{lemma}
}
\begin{proof}
We will only give the proof for  \eqref{eq:SpeedPropagation1B} when $\kappa<\frac{1}{2}$ and $\rho \in [\frac{1}{2}+N^{-1/2},1-\mathfrak{a}]$. The remaining cases follow by the same arguments, using Lemma~\ref{lem:MaxSecondClassOrderCritical} in place of Lemma~\ref{lem:MaxSecondClassOrder} when $\kappa=\frac{1}{2}$.  Define the event
\begin{equation*}
\mathcal{D}^{y,\theta}_N := \left\{  1 < Z^{\Cup,1}_t \leq Z^{\Cup,M_{\Cup}}_t \leq \lfloor \theta^{-1} y N/2 \rfloor  \, \forall t \in [0,\min(\tau_{\Cup},\theta^{-1}N^{1+\kappa}(2\rho-1)^{-1})]  \right\} .
\end{equation*} Observe that the type $\Cup$ particles $((Z^{\Cup,i}_t)_{t \in [0,\tau_{\Cup}]})_{i\in \lbr M_{\Cup} \rbr}$ can until time $\tau_{\Cup}$ be coupled with the second class particles in an ASEP on $\Z$ started from $\tilde{\pi}_{N,\rho}$, which is shifted by $\lfloor \theta^{-1} y N/4 \rfloor$. Then we get from Lemma~\ref{lem:MaxSecondClassOrder} that for all $1\leq y,\theta \leq N^{\expont}$ and some $c_1,C_1>0$,
\begin{equation}\label{eq:DeventBound}
\P( \mathcal{D}^{y,\theta}_N \cap \{ \tau_{\Cup} \geq \theta^{-1}N^{1+\kappa}(2\rho-1)^{-1} \} ) \geq 1-C_1 \exp( - c_1 y^3 )
\end{equation} for all $N$ large enough. Next, consider the time 
\begin{equation*}
    \tau_{\Bup} := \inf\{ t \geq 0 \colon U^{\Bup}_t =  \lfloor \theta^{-1} y N/2 \rfloor \}
\end{equation*}
and define the event 
\begin{equation}\label{def:EventENew}
\mathcal{E}_N := \left\{ \tau_{\Bup} >\theta^{-1}N^{1+\kappa}(2\rho-1)^{-1}  \right\} . 
\end{equation} 
In words, the event $\mathcal{E}_N$ ensures that the rightmost type $\Bup$ particle in $(\xi^{\Bup}_t)_{t \geq 0}$, and hence all type $\Bup$ or type $\zerop$  particles in the extended disagreement process $(\xi^{\modi}_t)_{t \geq 0}$,  will not reach location $\theta^{-1} y N/2$ by time $\theta^{-1}N^{1+\kappa}(2\rho-1)^{-1}$. From \eqref{eq:DeventBound}, together with \eqref{eq:keyIdent} to couple the location of right-most type $\Bup$ particle in $(\xi^{\Bup}_t)_{t \geq 0}$ and $(\xi^{\Bup\Cup}_t)_{t \geq 0}$ and  Lemma~\ref{lem:TypeBRightofSecond} to compare the location of the type $\Bup$ and type $\Cup$ particles in $(\xi^{\Bup\Cup}_t)_{t \geq 0}$,  we find  $c_2,C_2>0$ such that 
\begin{equation}\label{eq:EnLowerBound}
\P( \mathcal{E}_N ) \geq 1 - C_2\exp(-c_2 y^3) ,
\end{equation} for  all $1 \leq y,\theta \leq N^{\expont}$ and all $N$ large enough, allowing us to conclude. 
\end{proof}

We have now all tools to show Proposition \ref{pro:SpeedOfDisagreement} on the location of second class particles in the extended disagreement process $(\xi_t^{\textup{mod}})_{t \geq 0}$.

\begin{proof}[Proof of Proposition \ref{pro:SpeedOfDisagreement}]

We will in the following focus on the proof of \eqref{eq:SpeedPropagation1} when $\kappa<\frac{1}{2}$ and $\rho \in [\frac{1}{2}+N^{-1/2},1-\mathfrak{a}]$, and describe the necessary adjustments for the remaining cases at the end. Due to Lemma~\ref{lem:SpeedOfDisagreementAuxi}, it remains to bound the location of the type $\Aup$ second class particles in  $(\xi_t^{\textup{mod}})_{t \geq 0}$. Recall that $(Z^{\Aup}_t)_{t \geq 0}$ denotes the position of the rightmost particle of type $\Aup$ in  $(\xi_t^{\textup{mod}})_{t \geq 0}$. 

Note that on the event in \eqref{eq:SpeedPropagation1B}, no type $\Bup$ particle will reach location $\frac{1}{2}\theta^{-1}yN$ by time $T=\theta^{-1}N^{1+\kappa}(2\rho-1)^{-1}$. Recall from \eqref{def:ProjectionEta} the projections $(\eta^{\Aup}_t)_{t \geq 0}$ and $(\eta^{\Bup}_t)_{t \geq 0}$. By Lemma~\ref{lem:SecondClassProjection},  $(\eta^{\Aup}_t)_{t \geq 0}$ is an ASEP on $\Z$ and $(\eta^{\Bup}_t)_{t \geq 0}$ an ASEP on $\N$, and both evolve according to the basic coupling. We define 
\begin{equation}
    \zeta^{\star}_{0}(x) := \begin{cases} 2 &\text{ if }  x\in [\frac{3}{8}\theta^{-1}yN,\frac{7}{8}\theta^{-1}yN] \text{ and } \eta^{\Aup}_0(x)=U_x=1   \\
        \infty &\text{ if }  \eta^{\Aup}_0(x)=0 , \\
        1 &\text{ otherwise, } 
    \end{cases}
\end{equation} for all $x\in \Z$, where $(U_x)_{x \in \Z}$ are independent Bernoulli-$(\frac{1}{32}yN^{-1/2})$-random variables. Let $M^{\star}_2$ denote the number of type $2$ particles in $\zeta^{\star}_0$. We let $(\zeta^{\star}_{t})_{t \geq 0}$ denote a multi-species ASEP on $\Z$, started from $\zeta^{\star}_{0}$, and evolving together with $(\eta^{\Aup}_t)_{t \geq 0}$ under the basic coupling. Note that we obtain $(\eta^{\Aup}_t)_{t \geq 0}$ from $(\zeta^{\star}_{t})_{t \geq 0}$  by projecting all second class particles in $(\zeta^{\star}_{t})_{t \geq 0}$ to first class particles. Define the events
\begin{equation}\label{eq:EtildeN}
\begin{split}
     \tilde{\mathcal{E}}^{1}_N &:= \left\{ M^{\star}_2 \geq \frac{1}{100}y^2\theta^{-1}N^{1/2}\right\}, \\
     \tilde{\mathcal{E}}^{2}_N &:=\left\{ \zeta^{\star}_{t}(x) \in \{0,1\} \forall t\in [0,\theta^{-1}N^{1+\kappa}(2\rho-1)^{-1}] , x \notin   \left[\frac{1}{2}\theta^{-1}yN,\theta^{-1}yN\right] \right\} , 
\end{split}
\end{equation} and set 
\begin{equation*}
    \tilde{\mathcal{E}}_N := \tilde{\mathcal{E}}^{1}_N \cap \tilde{\mathcal{E}}^{2}_N . 
\end{equation*}
We claim that there exist constants $c_1,C_1>0$, depending only on $\mathfrak{a}$ and $\kappa$, such that
\begin{equation}
    \P( \tilde{\mathcal{E}}_N ) \geq 1- C_1\exp(-c_1y^3)
\end{equation} for all $1\leq y,\theta \leq N^{\expont}$. To see this, note that we obtain a lower on the probability of $\tilde{\mathcal{E}}^{1}_N$ by a standard Chernoff estimate,
using the fact that $\rho \in [\frac{1}{2},1-\mathfrak
{a}]$. For a lower bound on the probability of $\tilde{\mathcal{E}}^{2}_N$ we apply the same arguments as for Lemma~\ref{lem:MaxSecondClassOrder}, but with second class particle density $yN^{-1/2}$ instead of $N^{-1/2}$,  noting that
\begin{equation}
 T (1-q) \left( \frac{1}{32}yN^{-1/2}\right) \leq  \frac{1}{16} \theta^{-1}y N
\end{equation} for all $\rho \in [\frac{1}{2}+N^{-1/2},1-\mathfrak{a}]$ in order to compare the expected displacement of a second class particle in a Bernoulli-$\rho$-product measure to a Bernoulli-$(\rho-\frac{1}{32}yN^{-1/2})$-product measure.
%
%
Recall that $\eta^{\Aup}_0(x)=\eta^{\Bup}_0(x)$ for all $x\in \N$ and  let the processes $(\eta^{\Aup}_t)_{t \geq 0}$, $(\eta^{\Bup}_t)_{t \geq 0}$ and $(\zeta^{\star}_{t})_{t \geq 0}$ evolve together under the basic coupling. Recall the event $\mathcal{E}_N$ from \eqref{def:EventENew}, and note that  $\mathcal{E}_N$ is measurable with respect to $(\eta^{\Aup}_t,\eta^{\Bup}_t)_{t \geq 0}$. Define
\begin{equation}\label{eq:AtildeN}
    \tilde{\mathcal{A}}_N := \big\{ \exists t \in [0,\theta^{-1}N^{1+\kappa}(2\rho-1)^{-1}] \colon Z_t^{\Aup} = \lfloor \theta^{-1}y N \rfloor \big\} 
\end{equation} to be the event that a type $\Aup$ particle reaches position $\lfloor \theta^{-1}y N \rfloor$ by time $\theta^{-1}N^{1+\kappa}(2\rho-1)^{-1}$.
We claim that when $\tilde{\mathcal{E}}_N$ holds for $(\zeta^{\star}_{t})_{t \geq 0}$, and the events $\mathcal{E}_N$ and $\tilde{\mathcal{A}}_N$  hold for $(\eta^{\Aup}_t,\eta^{\Bup}_t)_{t \geq 0}$, then the event 
\begin{equation}
   \mathcal{G}_N := \left\{ \sum_{x\in [\frac{1}{2}\theta^{-1}yN,\theta^{-1}yN]} \eta_{T}^{\Aup}-\eta_{T}^{\Bup} \geq \frac{1}{100}y^2\theta^{-1}N^{1/2} \right\}  
\end{equation}
occurs. To see this, note that whenever an edge $\{x,x+1\}$ containing a type $\Aup$ particle (i.e., $\zeta^{\star}_{t_-}(x)=\eta^{\Aup}_{t_-}(x)=1$ and $\eta^{\Bup}_{t_-}(x)=0$) at site $x$ and a second class particle at site $x+1$ (i.e., $\zeta^{\star}_{t_-}(x+1)=2$ and $\eta^{\Aup}_t(x+1)=\eta^{\Bup}_{t_-}(x+1)=1$) is updated at time $t$, then this results with probability $(1+q)^{-1}$ in 
\begin{equation*}
    (\zeta^{\star}_{t}(x),\eta^{\Aup}_{t}(x),\eta^{\Bup}_{t}(x))= (2,1,0) \quad \text{ and } \quad (\zeta^{\star}_{t}(x+1),\eta^{\Aup}_{t}(x+1),\eta^{\Bup}_{t}(x+1))= (1,1,1) , 
\end{equation*} and with probability $q(1+q)^{-1}$ in 
\begin{equation*}
 (\zeta^{\star}_{t}(x),\eta^{\Aup}_{t}(x),\eta^{\Bup}_{t}(x))= (1,1,1)    \quad \text{ and } \quad (\zeta^{\star}_{t}(x+1),\eta^{\Aup}_{t}(x+1),\eta^{\Bup}_{t}(x+1))= (2,1,0).  
\end{equation*}
In particular, the type $\Aup$ particle is absorbed and the second class particle in $(\zeta^{\star}_{t})_{t \geq 0}$ is paired with an empty site. Thus, under the above events $\tilde{\mathcal{E}}_N$, $\mathcal{E}_N$ and $\tilde{\mathcal{A}}_N$, every second class particle in $\zeta^{\star}_{T}$ must indeed be paired with an empty site in $\eta^{\Bup}_{T}$. In particular, this ensures that $\mathcal{G}_N$ occurs (as due to the event $\mathcal{E}_N$ there are no type $\Bup$ particles in $[\frac{1}{2}\theta^{-1}yN,\theta^{-1}yN]$ by time $T$). 

Since by our assumptions, $\eta^{\Aup}_{T}$ and $\eta^{\Bup}_{T}$ are both distributed according to a Bernoulli-$\rho$-product measure, a standard moderate deviation estimate on the sum of $\lfloor \frac{1}{2}\theta^{-1}yN \rfloor$ many independent Bernoulli-$\rho$-random variables yields
\begin{equation}
    \P(\mathcal{G}_N) \leq  C_2\exp(-c_2 y^3 \theta^{-1} )
\end{equation}
for some $c_2,C_2>0$ and all $1 \leq y,\theta \leq N^{\expont}$. In total, we get that
\begin{equation}
\begin{split}
     \P( \tilde{\mathcal{A}}_N) &\leq \P(\tilde{\mathcal{A}}_N\cap \mathcal{E}_N \cap \tilde{\mathcal{E}}_N) + \P((\mathcal{E}_N)^{\complement})  +  \P((\tilde{\mathcal{E}}_N)^{\complement}) \\
     &\leq \P(\mathcal{G}_N) + \P((\mathcal{E}_N)^{\complement})  +  \P((\tilde{\mathcal{E}}_N)^{\complement}) \\
     &\leq C_3\exp(-c_3 y^3 \theta^{-1}  )
\end{split}
\end{equation}
for some  $c_3,C_3>0$, allowing us to conclude when $\kappa<\frac{1}{2}$ and $\rho \in [\frac{1}{2}+N^{-1/2},1-\mathfrak{a}]$. Note that for $\rho=\frac{1}{2}$, the same arguments apply, substituting $(2\rho-1)^{-1}$ by $N^{1/2}$ in the definition of $T$ as well as in the definition of the events $\tilde{\mathcal{E}}_N^2$ in \eqref{eq:EtildeN} and $\tilde{\mathcal{A}}_N$ in \eqref{eq:AtildeN}. When $\kappa=\frac{1}{2}$, we again apply the same arguments (with Lemma~\ref{lem:MaxSecondClassOrderCritical} in place of Lemma~\ref{lem:MaxSecondClassOrder}) for $y=C_{\star}\log(N)$ and $\theta=2y/\phi$ with a sufficiently large constant $C_{\star}>0$.
\end{proof}

\subsection{From the ASEP on $\Z$ to the ASEP on $\N$}\label{sec:IntergersToN}

In the following, we aim to show that under the basic coupling, the current of the ASEP on $\Z$ and the ASEP on $\N$ started from a common initial configuration chosen according to a Bernoulli-$\rho$-product measure differ after a time of order $T=(1-q)^{-1}(2\rho-1)^{-1}N$ at most by order $\sqrt{N}$. {Here, we assume that the effective density $\rho_{\Lup}$ from \eqref{def:EffectiveDensities} agrees with the parameter $\rho$.}
Recall that we denote by $(\Jz_t)_{t \geq 0}$ and $(\Jn_t)_{t \geq 0}$ the current of the ASEPs on $\Z$ and $\N$ through site $1$, respectively.

\begin{proposition}\label{pro:CurrentZtoN} Let $q$ satisfy \eqref{def:TriplePointScaling} for some $\kappa<\frac{1}{2}$ and $\psi>0$, and recall $\expont>0$ from \eqref{def:KappaEpsilon}. Fix some $\mathfrak{a}\in (0,1\frac{1}{2}$, and let $N\in \N$. Assume that $\rho \in [\frac{1}{2}+N^{-1/2},1-\mathfrak{a}]$, and set
\begin{equation}
T =  (1-q)^{-1}(2\rho-1)^{-1}N.
\end{equation} Consider the ASEP on $\N$ started from a Bernoulli-$\rho$-product measure. Then there exist constants $c_0,C_0>0$ such that for all $1 \leq x \leq N^{\expont} $, and $N$ large enough
\begin{equation}\label{eq:CurrentCompareZN}
 \P\left( | \Jn_T - T \rho(1-\rho)(1-q) | > x \sqrt{N} \right) \leq C_0 \exp( -  c_0 x^{1/2} ) .
\end{equation} For $\rho=\frac{1}{2}$, the statement \eqref{eq:CurrentCompareZN} holds with respect to $T=N^{3/2}(1-q)^{-1}$. 
When $\kappa=\frac{1}{2}$ and 
\begin{equation}
    \rho \in \left[ \frac{1}{2} + N^{-1/2}, \frac12 +c_1 \log^{-1}(N) \right]
\end{equation}
for the constant $c_1>0$ from Lemma \ref{lem:ModDeviationMaxSecondClass}, there exists some constant $C_1>0$ such that for all $N$ large enough
\begin{equation}\label{eq:CurrentCompareZNCritical}
 \P\left( | \Jn_T - T \rho(1-\rho)(1-q) | > C_1 \log(N)\sqrt{N} \right) \leq N^{-11} ,
\end{equation} where we set $T = (2\rho-1)^{-1} N^{3/2}$. For $\rho=\frac{1}{2}$, we get that \eqref{eq:CurrentCompareZNCritical} holds with $T=N^2$.
\end{proposition}

{In preparation for the proof of Proposition~\ref{pro:CurrentZtoN}, we recall the following version of Theorem~\ref{thm:CurrentASEPOriginal}, adapted to our setup.

\begin{theorem}[c.f. Theorem~2.4 in \cite{LS:Tails}]\label{thm:CurrentASEP} Fix some $\mathfrak{a}\in (0,\frac{1}{2})$ 
Let $q$ satisfy \eqref{def:TriplePointScaling} for some $\kappa \in [0,\frac{1}{2}]$ and $\psi>0$.  Let $\rho\in [\frac{1}{2}+N^{-1/2},1-\mathfrak{a}]$. Set
\begin{equation}
T=\theta^{-1} (1-2\rho)^{-1}  N(1-q)^{-1}
\end{equation}
 for some $\theta \geq 1$, allowed to depend on $N$. Then there exist constants $c_0,C_0>0$, depending only on $\mathfrak{a}$, such that for all $\theta^{-1/3} \leq y \leq (1-2\rho)^{-1} N^{1/2}\theta^{-1}$ and any fixed $m\in \Z$, we get 
\begin{equation}\label{eq:CurrentModerate}
\P( | \Jz_T(m) - T \rho(1-\rho)(1-q) | \geq y N^{1/2} ) \leq C_0 \exp\left(-c_0 \min(y^{3/2} \sqrt{\theta}, y^2 \theta ) \right)
\end{equation} for all $N$ large enough. The same result holds when $\rho=\frac{1}{2}$ and $T=N^{3/2}(1-q)^{-1}$.
\end{theorem}
\begin{proof}
This follows from Theorem~\ref{thm:CurrentASEPOriginal} with a suitable change of notation. More precisely, set $w = y  \theta^{1/3} N^{1/6}(2\rho-1)^{1/3} \geq y \theta^{1/3}$, and note that we need to ensure $w \geq 1$ and 
\begin{equation*}
w= y  \theta^{1/3} N^{1/6}(2\rho-1)^{1/3} \leq  ((1-q)T)^{2/3} =\theta^{-2/3}(2\rho-1)^{-2/3} N^{2/3}  
\end{equation*} which is the case for all $\theta^{-1/3} \leq y \leq (2\rho-1)^{-1} N^{1/2}\theta^{-1}$, and the above choices for $\rho$.
\end{proof}

We have now all tools to compare the current of the ASEP on $\Z$ and the ASEP on $\N$. }

\begin{proof}[Proof of Proposition~\ref{pro:CurrentZtoN}] We will in the following only give the arguments when $\kappa<\frac{1}{2}$. The case $\kappa=\frac{1}{2}$ is similar.
We aim to show that there exist constants $c_0,C_0>0$ such that for all $1 \leq x \leq N^{\expont}$
\begin{equation}\label{eq:UpperBoundCurrentN}
\P\left( \Big| \Jn_T -  T\rho(1-\rho)(1-q) \Big| \leq x\sqrt{N} \right) \geq 1- C_0\exp(-c_0 x^{1/2} )
\end{equation} for all $N$ large enough. For every fixed $z \in \lbr  N^{\expont} \rbr$ and $k\in \lbr z \rbr$, we define the events 
\begin{equation}\label{eq:UpperAux}
\mathcal{A}_k := \left\{ \Big| \Jn_{kT/z}-\Jn_{(k-1)T/z} -  \rho(1-\rho)(1-q)T/z  \Big| \leq \sqrt{N}  \right\} . 
\end{equation} 
Note that it suffices for \eqref{eq:UpperBoundCurrentN} to show that
\begin{equation}\label{eq:TargetUpperAux}
\P(\mathcal{A}_1)=\P(\mathcal{A}_k) \geq 1- C_2\exp(-c_2 z^{1/2})
\end{equation}
for some constants $C_2,c_2>0$. Since the ASEP on $\N$ is stationary, and the event $\mathcal{A}_k$ is measurable with respect to $(\eta^{\N}_t)_{t \geq 0}$, this yields the equality in \eqref{eq:TargetUpperAux}.
In order to show the inequality in \eqref{eq:TargetUpperAux}, let
\begin{equation*}
\mathcal{B}_k := \left\{ | \Jz_{kT/z} - \Jz_{(k-1)T/z}  -  T\rho(1-\rho)(1-q)/z | \leq  \sqrt{N} \right\} .
\end{equation*}
By Theorem \ref{thm:CurrentASEP} on moderate deviations for the current of the ASEP on $\Z$ (with $\theta=z$ and $y=1$), we get that for some constants $c_3,C_3>0$ and all $z \in \lbr N^{\expont} \rbr$ with $\expont$ from \eqref{def:KappaEpsilon}
\begin{equation*}
\P(\mathcal{B}_k) = \P(\mathcal{B}_1) \geq 1- C_3\exp(-c_3 z^{1/2}) ,
\end{equation*} using the stationarity of the ASEP on $\Z$. Note that $\eta^{\Z}_{(k-1)T/z}$ and $\eta^{\N}_{(k-1)T/z}$ have the same law on $\N_0$, and that $\Jz_{kT/z} - \Jz_{(k-1)T/z}$ as well as $\Jn_{kT/z} - \Jn_{(k-1)T/z}$ are measurable with respect to  $(\eta^{\Z}_{t})_{t \in [(k-1)T/z,kT/z)}$ and $(\eta^{\N}_{t})_{t \in [(k-1)T/z,kT/z)}$, respectively. Suppose that there exists some coupling $\mathbf{P}_k$ such that the event
\begin{equation}\label{eq:CurrentComCk}
\mathcal{C}_k := \left\{ \Big| (\Jz_{kT/z}-\Jz_{(k-1)T/z}) - (\Jn_{kT/z}-\Jn_{(k-1)T/z}) \Big| \leq \sqrt{N} \right\}
\end{equation}
satisfies for some constants $c_4,C_4 >0$ and all $z \in \lbr N^{\expont} \rbr$ and $k\in \lbr z \rbr$
\begin{equation}\label{eq:TargetUpperAuxNew}
\mathbf{P}_k\left( \mathcal{C}_k \right) \geq 1- C_4\exp(-c_4 z^{1/2}) .
\end{equation}
Then we get by a union bound on $k\in \lbr z \rbr$
\begin{equation*}
\P\left( \Big| \Jn_T -  \rho(1-\rho)(1-q)T \Big| \leq 2z \sqrt{N} \right)\geq 1 - z \P(\mathcal{B}_1^{\complement}) - \sum_{k=1}^{\lbr z \rbr}\mathbf{P}_k(\mathcal{C}_k^{\complement}) ,
\end{equation*} implying that \eqref{eq:TargetUpperAux} follows indeed from \eqref{eq:TargetUpperAuxNew} and a change of variables. \\

It remains to show that \eqref{eq:TargetUpperAuxNew} holds for all $k \in \N$. For the coupling $\mathbf{P}_k$, since $\eta^{\Z}_{(k-1)T/z}$ and $\eta^{\N}_{(k-1)T/z}$ have the same law, assume that $\eta^{\Z}_{(k-1)T/z}=\eta^{\N}_{(k-1)T/z}$, and evolve both processes according to the basic coupling $\mathbf{P}$ between times $(k-1)T/z$ and $kT/z$. Since the processes $(\eta^{\Z}_t)_{t \geq 0}$ and $(\eta^{\N}_t)_{t \geq 0}$ are stationary, it suffices to show  \eqref{eq:TargetUpperAuxNew} for $k=1$. Recall from the definition of the extended disagreement process that the current of the ASEP on $\N$ is at time $T/z$ given by the number of type $\Bup$ second class particles minus type $\Aup$ second class particles plus the current of the ASEP on $\Z$, i.e.,
\begin{equation}\label{eq:CurrentIdentity}
\Jn_{T/z} = \Jz_{T/z} + \sum_{x\in \N_0} \mathds{1}_{\big\{\xi^{\modi}_{T/z}(x)=\Bup\big\}} - \mathds{1}_{\big\{\xi^{\modi}_{T/z}(x)=\Aup\big\}} .
\end{equation} From Proposition \ref{pro:SpeedOfDisagreement} (with $\theta=z$ and $y=\frac{1}{2}z^{1/2}$), we get that for constants $c_5,C_5>0$
\begin{equation*}
\mathbf{P}\bigg( \sum_{v > Nz^{-1/2}}  \mathds{1}_{\big\{\xi^{\modi}_{T/z}(v)=\Bup\big\}}+\mathds{1}_{\big\{\xi^{\modi}_{T/z}(v)=\Aup\big\}} >0 \bigg) \leq C_5 \exp(-c_5 z^{1/2})
\end{equation*}  for all $z \in  \lbr N^{\expont} \rbr$, and $N$ large enough.
Moreover, there exist constants $c_6,C_6>0$ such that
\begin{equation}\label{eq:ParticleDifferenceBulk}
\mathbf{P}\bigg( \Big| \sum_{v \leq Nz^{-1/2}} \mathds{1}_{\big\{\xi^{\modi}_{T/z}(v)=\Bup\big\}} - \mathds{1}_{\big\{\xi^{\modi}_{T/z}(v)=\Aup\big\}}  \Big| \geq  \sqrt{N} \bigg) \leq C_6 \exp(-c_6 z) .
\end{equation} for all $z\leq N^{\expont}$ and $N$ large enough. This follows from the observation that by Lemma \ref{lem:SecondClassProjection}, the marginals $\eta^{\Aup}_{T/z}$ and $\eta^{\Bup}_{T/z}$ are Bernoulli-$\rho$-product measures on sites $\leq Nz^{-1/2}$, and a standard moderate deviation bound. Combining now \eqref{eq:CurrentIdentity} and \eqref{eq:ParticleDifferenceBulk}, we obtain \eqref{eq:TargetUpperAuxNew}.
\end{proof}

We get the following immediate consequence on the moderate deviations for the current of the ASEP on $\N$. This complements results by Barraquand et al.~in \cite{BBCW:HalfspaceASEP} and by He in \cite{H:Boundary}, who obtained limit theorems for the current of the ASEP on $\N$ starting from empty initial conditions, and which are expected to similarly hold for stationary initial data. 

\begin{corollary}\label{cor:ModDevitationCurrentN}
Let $\kappa<\frac{1}{2}$. Then under the same setup as in Proposition \ref{pro:CurrentZtoN},
\begin{equation}\label{eq:VarianceOnN}
\Var[\Jn_T] \leq C_0 N
\end{equation} for some constant $C_0$ and all $N$ large enough. Similarly, for $\kappa=\frac{1}{2}$, there exists a constant $C_0^{\prime}$ such that for all $N$ large enough
\begin{equation}\label{eq:VarianceOnNCritical}
\Var[\Jn_T] \leq C_0^{\prime} N \log^4(N) .
\end{equation}
\end{corollary}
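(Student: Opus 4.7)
The plan is to obtain the variance estimate by integrating the tail bound from Proposition~\ref{pro:CurrentZtoN}. Since the Bernoulli-$\rho_N$ product measure is stationary for the ASEP on $\N$ (with the prescribed boundary rates via the effective density $\rho_{\Lup}$), the expected current is $a := \E[\Jn_T] = T(1-q)\rho_N(1-\rho_N)$, and
\begin{equation*}
\Var[\Jn_T] \leq \E[(\Jn_T - a)^2] = 2\int_0^\infty y\, \P(|\Jn_T - a| > y)\, dy.
\end{equation*}
I would split this integral into a near-mean piece, a moderate-deviation piece, and an ultra-tail piece.

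For $\kappa < \frac12$, I would use the split $[0,\sqrt{N}] \cup [\sqrt{N}, N^{1/2+\expont}] \cup (N^{1/2+\expont},\infty)$. The first piece is bounded trivially by $\int_0^{\sqrt N} 2y\, dy = N$. On the middle piece, the substitution $y = x\sqrt{N}$ together with the exponential bound \eqref{eq:CurrentCompareZN} yields $2N\int_1^{N^{\expont}} x C_0 e^{-c_0 x}\, dx = \mathcal{O}(N)$. The main obstacle, and where a separate argument is needed, is the ultra-tail $y > N^{1/2+\expont}$: the proposition does not control this range directly. Here I would use Cauchy--Schwarz together with a crude fourth-moment bound. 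The current $\Jn_T$ through any fixed site is stochastically dominated by a Poisson random variable $N_T$ of mean $\mathcal{O}(T)$, since every transition that contributes $\pm 1$ to $\Jn_T$ is a clock ring at one of finitely many fixed edges; consequently $\E[(\Jn_T - a)^4] = \mathcal{O}(T^4) = \mathcal{O}(\mathrm{poly}(N))$. Since by monotonicity $\P(|\Jn_T - a| > y) \leq C_0 \exp(-c_0 N^{\expont})$ for every $y > N^{1/2+\expont}$, Cauchy--Schwarz gives
\begin{equation*}
\E\!\left[(\Jn_T-a)^2 \mathbf{1}_{\{|\Jn_T-a| > N^{1/2+\expont}\}}\right] \leq \big(\E[(\Jn_T-a)^4]\big)^{1/2} \big(C_0 e^{-c_0 N^{\expont}}\big)^{1/2} = o(1),
\end{equation*}
and the three contributions combine to give \eqref{eq:VarianceOnN}.

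For $\kappa = \frac12$, the same three-way split with cutoffs $C_1\log(N)\sqrt{N}$ and the polynomial tail \eqref{eq:CurrentCompareZNCritical} yields a near-mean contribution of at most $(C_1\log N)^2 N$, and the Cauchy--Schwarz/Poisson argument bounds the ultra-tail by $\mathcal{O}(T^2) \cdot N^{-11/2} = \mathcal{O}(N^{-3/2})$ using $T \leq N^2$. Altogether this gives $\Var[\Jn_T] \leq C N \log^2 N \leq C_0' N \log^4 N$, which proves \eqref{eq:VarianceOnNCritical}. The only delicate point in the whole argument is the Poisson stochastic domination used for the ultra-tail, but this is immediate from the graphical construction of the ASEP on $\N$, since the number of $\pm 1$ contributions to $\Jn_T$ at the reference site is at most the number of clock rings along the adjacent edges and at the boundary reservoir within $[0,T]$.
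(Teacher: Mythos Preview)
Your proof is correct and follows essentially the same strategy as the paper's: both control the moderate range via the tail bound \eqref{eq:CurrentCompareZN} and handle the ultra-tail by combining Cauchy--Schwarz with a crude Poisson bound on the fourth moment of the current. Your layer-cake presentation is slightly more explicit than the paper's indicator-splitting (the paper truncates at $\sqrt{N}\log^2 N$ rather than $N^{1/2+\expont}$), but the content is identical.
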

\begin{proof}
We will only consider the case that $\kappa<\frac{1}{2}$ as the arguments for $\kappa=\frac{1}{2}$ are again similar. In view of \eqref{eq:CurrentCompareZN}, it suffices for \eqref{eq:VarianceOnN} to argue that for all $N$ large enough
\begin{equation}\label{eq:PreCS}
\E\big[ (\Jn_T)^{2} \mathds{1}_{\mathcal{A}} \big] \leq 1 ,
\end{equation}
 where we define the event
\begin{equation*}
\mathcal{A} := \left\{ | \Jn_T - { T \rho(1-\rho)(1-q) }| \geq N^{1/2}\log^{2}(N)  \right\} .
\end{equation*} Applying the Cauchy--Schwarz inequality, we see that
\begin{equation}\label{eq:CS}
\E\big[ (\Jn_T)^{2} \mathds{1}_{\mathcal{A}} \big]  \leq \E\big[ (\Jn_T)^{4} \big]^{\frac{1}{2}} \P(\mathcal{A})^{\frac{1}{2}}  .
\end{equation} Using a simple Poisson bound, we get that for some constant $c_1>0$,
\begin{equation}\label{eq:PoissonCurrentOnN}
 \P( \Jn_T \geq 4 y T  ) \leq \exp( - c_1 y T )
\end{equation} for all $y\geq 1$ and $N\in \N$, counting the number of times the Poisson clocks at the origin rings until time $T$. In particular, we have the very rough bound $\E\big[ (\Jn_T)^{4} \big] \leq N^9$ for all $N$ large enough. Together with \eqref{eq:CurrentCompareZN} for $x=\log^{2}(N)$, this yields \eqref{eq:PreCS}, and thus \eqref{eq:VarianceOnN}.
\end{proof}

\subsection{From the ASEP on $\N$ to the open ASEP}\label{sec:NToOpen}

In the following, we transfer the moderate deviation result for the current of the ASEP $(\eta^{\N}_t)_{t \geq 0}$ on $\N$ to the current of the open ASEP $(\eta_t)_{t \geq 0}$. Recall that we denote by $\rho_{\Lup}$ and $\rho_{\Rup}$ the effective reservoir densities of the open ASEP. We assume that $(\eta^{\N}_t)_{t \geq 0}$ and $(\eta_t)_{t \geq 0}$ share the same boundary parameters $\alpha>0$ and $\gamma \geq 0$. Recall the basic coupling $\mathbf{P}$ from Definition \ref{def:BasicCoupling} between $(\eta^{\N}_t)_{t \geq 0}$ and $(\eta_t)_{t \geq 0}$, and note that second class particles are only created at site $N$. {Let $\mathfrak{a}\in (0,\frac{1}{2})$.} Suppose that
\begin{equation}\label{eq:ProductCondition}
\rho_{\Lup}=\rho_{\Rup}=\rho
\end{equation}
{for some $\rho \in [\frac{1}{2}+N^{-1/2},1-\mathfrak{a}] \cup \{\frac{1}{2}\}$.} Moreover, assume that $\eta^{\N}_0(v)=\eta_0(v)$
 for all $v \in \lbr N \rbr$, and that $\eta_0 \sim \text{Ber}_{\N}(\rho)$, {where we recall that $\text{Ber}_{\N}(\rho)$ denotes a Bernoulli-$\rho$-product  measure on $\{0,1\}^{\N}$.} Let $(Z_t)_{t \geq 0}$ denote the position of the left-most second class particle in the disagreement process $(\xi_t)_{t \geq 0}$ between $(\eta^{\N}_t)_{t \geq 0}$ and $(\eta_t)_{t \geq 0}$, with the convention $Z_t=-\infty$ if a second class particle has exited by time $t$ at site $1$. We have the following lemma on $(Z_t)_{t \geq 0}$. Since we will apply the same arguments as in Proposition~\ref{pro:SpeedOfDisagreement}, we will only describe the necessary changes in the proof.

\begin{lemma}\label{lem:SpeedOfPropagationCoupling}
Let $q$ from \eqref{def:TriplePointScaling} satisfy $\kappa<\frac{1}{2}$. Fix some $\mathfrak{a}\in (0,\frac{1}{2})$. Let $N\in \N$, and let  $\rho \in [\frac{1}{2}+N^{-1/2},1-\mathfrak{a}]$. 
Let $\theta \geq 1$ and set
\begin{equation}
    T = \theta^{-1}N^{1+\kappa}(2\rho-1)^{-1} . 
\end{equation}
Then there exists constants $c_0,C_0>0$ and $\expont>0$ such that for all $1 \leq \theta, y \leq N^{\expont}$,
\begin{equation}\label{eq:OpenSecond1}
\P( Z_s \geq N - y\theta^{-1} N \, \forall s \in [0,T] ) \geq 1- C_0 \exp(-c_0 y^{3}\theta^{-1})
\end{equation} for all $N$ large enough. Similarly, for $\rho=\frac{1}{2}$, we get that \eqref{eq:OpenSecond1} holds with  $T=\theta^{-1}N^{3/2+\kappa}$. 
Now let $\kappa=\frac{1}{2}$. Then for every $\phi>0$, there exists some $z>0$ such that for all 
\begin{equation}
    \rho \in \left[\frac{1}{2}+N^{-1/2},\frac{1}{2}+c_1 \log^{-1}(N)\right]
\end{equation}
 with $c_1>0$ from Lemma \ref{lem:ModDeviationMaxSecondClass}, we get
\begin{equation}\label{eq:OpenSecond2}
\P( Z_s \geq (1-\phi)N \ \forall s \in [0,T] ) \geq 1- \frac{1}{2}N^{-10}
\end{equation} with $T=zN^{3/2}\log^{-1}(N)(2\rho-1)^{-1}$, for all $N$ large enough. Similarly, when $\rho=\frac{1}{2}$, then we get that \eqref{eq:OpenSecond2} holds with respect to $T=zN^{2}\log^{-1}(N)$.
\end{lemma}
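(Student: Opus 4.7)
The plan is to adapt the proof of Proposition~\ref{pro:SpeedOfDisagreement} with the spatial roles of left and right reversed. Since $(\eta^{\N}_t)$ and $(\eta_t)$ share the left boundary parameters $\alpha,\gamma$, agree on $\lbr N \rbr$ initially, and $\eta^{\N}_0$ is Bernoulli-$\rho_N$ product on all of $\N$, the basic coupling produces disagreements only at site $N$: either through the right-reservoir clocks of the open ASEP (rates $\beta,\delta$), or through the edge $\{N,N+1\}$, which updates only $(\eta^{\N}_t)$. I would therefore build an extended disagreement process with exactly the same types $\{\one,\Aup',\Aup,\Bup,\Bup',\zero\}$ and partial order as in Definition~\ref{def:BasicCouplingExtended}, but with the special boundary rules moved from site $1$ and edge $\{0,1\}$ to site $N$ and edge $\{N,N+1\}$. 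The leftmost second class particle $(Z_t)$ is then the leftmost position of a site of type in $\{\Aup,\Aup',\Bup,\Bup'\}$ in this process (before any such particle exits at site $1$).

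Next I would mirror the auxiliary $\ast/\star$ construction from Section~\ref{sec:ModerateSecondClassMulti}. Introduce a type $\ast$ obeying $\Aup\succeq \ast\succeq \Bup$ and seed $\ast$-particles on first-class sites in a window of length $N^{\kappa+\expon+1/2}$ placed just to the left of $\lceil N-y\theta^{-1}N/4\rceil$, using the spatially reflected law $\tilde\pi_N$ from \eqref{eq:InitialSecondClass1} when $\kappa<\tfrac12$, or $\tilde\pi_N^{\ast}$ built on \eqref{eq:InitialSecondClassCritical1} when $\kappa=\tfrac12$. Let $\tau_\ast$ be the first time an $\ast$-particle reaches site $N$. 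Up to $\tau_\ast$, the $\ast$-block inherits (up to a spatial shift) the law of the type-$2$ particles of the multi-species ASEP on $\Z$ governed by Lemmas~\ref{lem:MaxSecondClassOrder} and \ref{lem:MaxSecondClassOrderCritical}, so the leftmost $\ast$-position $(L_t)$ is controlled by \eqref{eq:MaxDisplacementOrder}, \eqref{eq:MaxDisplacementOrderHalf}, and \eqref{eq:MaxDisplacementOrderHalfCritical}. Because the expected leftward drift $(1-q)(1-2\rho_N)T$ is of order $\theta^{-1}N$ while the fluctuation scale $(\theta^{-1}N2^n)^{2/3}$ is $o(\theta^{-1}N)$ in the stated ranges, these estimates give $L_s\geq N-\tfrac{y}{2}\theta^{-1}N$ for all $s\leq\min(\tau_\ast,T)$ with probability at least $1-C_0\exp(-c_0 y^3)$ (respectively $1-\tfrac14 N^{-10}$).

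The crucial comparison step is the spatial mirror of Lemma~\ref{lem:TypeBRightofSecond}: the leftmost $\Bup/\Bup'$-particle in $\xi^{\modi}$ is dominated by $L_t$ up to $\tau_\ast$. This follows by projecting $\xi^{\modi}_t$ onto types $\{\ast,\Bup\}$, erasing all other sites and merging the resulting edges to obtain a censored asymmetric exclusion process on a closed interval whose initial state is the extremal profile covered by Lemma~\ref{lem:CensoringASEP}; Lemma~\ref{lem:BlockingMeasureMaximum} combined with Remark~\ref{rem:ASEPRestricted} then bounds the leftmost empty site of the projection, and hence the leftmost $\Bup$-particle. The analogous construction with a type $\star$ placed above $\Aup'$, seeded to the left of the $\ast$-block, controls the leftmost $\Aup/\Aup'$-particle by the same Lemma~\ref{lem:MaxSecondClassOrder}. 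A final union bound over the $\Aup$- and $\Bup$-estimates together with $\{\tau_\ast>T\}$ yields \eqref{eq:OpenSecond1}; the bound \eqref{eq:OpenSecond2} follows by the identical reduction with Lemma~\ref{lem:MaxSecondClassOrderCritical} replacing Lemma~\ref{lem:MaxSecondClassOrder}.

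The principal technical subtleties to address are finite-volume effects. First, one must check that neither the $\ast$- nor the $\star$-block interacts with the left boundary at site $1$ during $[0,T]$, since Lemmas~\ref{lem:MaxSecondClassOrder} and \ref{lem:MaxSecondClassOrderCritical} are stated on the integers; this is automatic because both the drift and the fluctuation scales of the leftmost $\ast$-position are $\Theta(\theta^{-1}N)\ll N$ in the parameter range considered, so the events where the block would reach site $1$ are already captured by the complements of the moderate-deviation bounds. Second, one must ensure that the right-boundary reservoir updates at site $N$ remain compatible with the projection used in the censoring argument: by construction the $\ast$- and $\star$-blocks start strictly to the left of $N$ and any $\ast$-particle reaching $N$ triggers $\tau_\ast$, so the only role of the right reservoir before $\tau_\ast$ is to create additional $\Aup$- and $\Bup$-particles at $N$, which are absorbed into the same censoring scheme without altering the bound on $L_t$.
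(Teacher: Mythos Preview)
Your overall strategy—mirror Proposition~\ref{pro:SpeedOfDisagreement} with the roles of left and right reversed—matches the paper's. But the concrete implementation you give has a genuine gap in the censoring step, and the paper's use of the \emph{reversed} partial order from Remark~\ref{rem:ReverseOrder} is not cosmetic: it is what makes the mirror work.

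Here is the issue. You seed $\ast$ on first–class sites with $\Aup\succeq\ast\succeq\Bup$, so in the $\{\ast,\Bup\}$–projection one has $\ast\to$ particle and $\Bup\to$ hole. The $\ast$'s sit to the \emph{left} of site $N$, while the $\Bup$'s are created at site $N$, i.e.\ to the \emph{right}. Thus the projected initial profile has particles on the left and holes on the right—exactly the \emph{opposite} of the ground state $\vartheta_n$ that Lemma~\ref{lem:CensoringASEP} requires. Worse, under the ASEP dynamics holes drift left relative to particles, so the $\Bup$'s (holes) will overtake the $\ast$'s; the quantity ``leftmost empty site'' that you invoke is not what Lemma~\ref{lem:BlockingMeasureMaximum} controls (it bounds the \emph{rightmost} empty site and the \emph{leftmost} particle). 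In the proof of Lemma~\ref{lem:TypeBRightofSecond}, the embedding into $\mathcal{A}_0$ works precisely because new $\Bup$'s appear to the \emph{left} of all $\ast$'s and hence land on sites that are already holes in the extension; in your mirror, each new $\Bup$ lands on a site that was a particle in the extension, so the process leaves $\mathcal{A}_0$ and never sits at a ground state.

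The paper repairs this by passing to the reversed hierarchy $\one\succeq_{\rev}\Bup'\succeq_{\rev}\Bup\succeq_{\rev}\Aup\succeq_{\rev}\Aup'\succeq_{\rev}\zero$ and bounding the leftmost $\Aup/\Aup'$ first, then the leftmost $\Bup/\Bup'$. This is exactly the particle–hole dual of the original argument: after swapping the roles of the second–class types, the relevant projection has the type being bounded mapped to \emph{particles} and the auxiliary barrier mapped to \emph{holes}, so that the projected initial state is again $\vartheta_0$ and Lemmas~\ref{lem:CensoringASEP}–\ref{lem:BlockingMeasureMaximum} apply verbatim. An equivalent repair in your framework would be to seed the barrier on \emph{empty} sites with $\ast$ strictly below $\Bup'$, so that $\Bup\to$ particle and $\ast\to$ hole; either way one needs the opposite priority to what you wrote.

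A second, smaller point: the $\ast$–block does \emph{not} inherit the law of second–class particles in the ASEP on $\Z$, since the projection here lives in the half–line (or open) ASEP. Your remark that ``this is automatic because the fluctuation scales are $\ll N$'' is circular—you are using the $\Z$–bound to justify applying the $\Z$–bound. The paper closes this loop by invoking Proposition~\ref{pro:SpeedOfDisagreement} itself to transfer Lemmas~\ref{lem:MaxSecondClassOrder}–\ref{lem:MaxSecondClassOrderCritical} from $\Z$ to $\N$ via the basic coupling; you should do the same.
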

\begin{proof}
Consider the extended disagreement process $(\xi^{\modi}_t)_{t \geq 0}$ between $(\eta_t)_{t \geq 0}$ and $(\eta^{\N}_t)_{t \geq 0}$ defined as  in Definition \ref{def:BasicCouplingExtended}, but where second class particles of types $\Aup$ and $\Bup$ are created at site $N$. We proceed as in the proof of Proposition \ref{pro:SpeedOfDisagreement} to bound the location of the leftmost second class particle of types $\Aup,\onep,\Bup,\zerop$ in $(\xi^{\modi}_t)_{t \geq 0}$,  respectively. However, since second class particles enter from the left in the present setup (corresponding to creating second class particles at site $N$ in Proposition \ref{pro:SpeedOfDisagreement}), we use the reversed partial order from Remark~\ref{rem:ReverseOrder} and first bound the location of the leftmost type $\Aup$ and type $\onep$ second class particles and then the leftmost type $\Bup$ and type $\zerop$ second class particle. Moreover, we require that the estimates from Lemma~\ref{lem:MaxSecondClassOrder} and Lemma~\ref{lem:MaxSecondClassOrderCritical} continue to hold  for a collection of second class particles placed in the ASEP on $\N$. This is ensured by Proposition~\ref{pro:SpeedOfDisagreement} for the basic coupling between a multi-species ASEP on $\Z$ and on $\N$.
\end{proof}

We are now ready to state the main result on the current of the open ASEP when the invariant measure $\mu_N$ is a Bernoulli-$\rho$-product measure with {$\rho \in [\frac{1}{2}+N^{-1/2},1-\mathfrak{a}] \cup \{\frac{1}{2}\}$}. Since the arguments are analogous to Proposition~\ref{pro:CurrentZtoN}, we will again only describe the necessary adjustments required for the proof.

\begin{proposition}\label{pro:CurrentNtoO}
Let $q$ satisfy \eqref{def:TriplePointScaling} for some $\kappa<\frac{1}{2}$ and recall $\expont$ from \eqref{def:KappaEpsilon}. Fix some $\mathfrak{a} \in (0,\frac{1}{2})$.  
For all $N\in \N$, suppose that $ \rho \in \left[\frac{1}{2}+N^{-1/2},1-\mathfrak{a}\right]$
and set
\begin{equation}
T = (2\rho-1)^{-1} N (1-q)^{-1}.
\end{equation}  {Consider a stationary open ASEP $(\eta_t)_{t \geq 0}$ on the interval of length $N$, and assume that the boundary parameters $\alpha,\beta,\gamma,\delta \geq 0$ are such that $\mu_N$ is a Bernoulli-$\rho$-product measure on $\{0,1\}^{N}$. Recall that we denote by $(\Jo_t^N)_{t \geq 0}$ the current of $(\eta_t)_{t \geq 0}$. Then there exist constants $c_0,C_0>0$, depending only on $\mathfrak{a}$,  such that for all $1 \leq x \leq N^{\expont}$, and $N$ large enough
\begin{equation}\label{eq:CurrentOpen}
 \P\left( | \Jo_T^N - T\rho(1-\rho)(1-q) | > x \sqrt{N} \right) \leq C_0 \exp\big( -  c_0 x^{1/2} \big) .
\end{equation} For $\rho=\frac{1}{2}$, the statement \eqref{eq:CurrentOpen} holds with respect to $T=N^{3/2}(1-q)^{-1}$. When $\kappa=\frac{1}{2}$ and 
\begin{equation}
    \rho \in \left[\frac{1}{2}+N^{-1/2},\frac{1}{2}+c_1 \log^{-1}(N)\right]
\end{equation} with $c_1>0$ from Lemma~\ref{lem:ModDeviationMaxSecondClass},} there exists some constant $C_1>0$ such that
\begin{equation}\label{eq:CurrentOpenCritical}
 \P\left( | \Jo_T^N - T \rho(1-\rho)(1-q) | > C_1 \log(N)\sqrt{N} \right) \leq N^{-10}
\end{equation} for $T = (2\rho-1)^{-1}N(1-q)^{-1}$. For $\rho=\frac{1}{2}$, we get that \eqref{eq:CurrentOpenCritical} holds with $T=N^{2}$.
\end{proposition}
\begin{proof}
Let $\kappa<\frac{1}{2}$ and let $(\eta^\N_t)_{t \geq 0}$ an ASEP on $\N$ with the same parameters $\alpha,\gamma \geq 0$ as $(\eta_t)_{t \geq 0}$. Recall that  $(\Jn_{t})_{t \geq 0}$ denotes the current of $(\eta^\N_t)_{t \geq 0}$.
Note that it suffices for \eqref{eq:CurrentOpen} to show that for all $1 \leq z \leq N^{\expont}$,
\begin{equation}\label{eq:UpperAuxOpen}
\begin{split}
\tilde{\mathcal{B}}_k &:= \left\{ \Big| \Jn_{kT/z}-\Jn_{(k-1)T/z} -  \rho(1-\rho)(1-q)T/z  \Big| \leq \sqrt{N}  \right\}  \\
\tilde{\mathcal{C}}_k &:= \left\{ \Big| \Jn_{kT/z}-\Jn_{(k-1)T/z} + \Jo_{kT/z}^N-\Jo_{(k-1)T/z}^N \Big| \leq \sqrt{N} \right\}
\end{split}
\end{equation} satisfy for all $k \in \lbr z \rbr$ with some constants $c_2,C_2,c_3,C_3>0$
\begin{align}
\label{eq:BkBound}\P(\tilde{\mathcal{B}}_k) &= \P(\tilde{\mathcal{B}}_1) \geq 1- C_2\exp(-c_2 z^{1/2}) \\
\label{eq:CkBound}\P(\tilde{\mathcal{C}}_k) &= \P(\tilde{\mathcal{C}}_1) \geq 1- C_3\exp(-c_3 z^{1/2}) .
\end{align}
The lower bound in \eqref{eq:BkBound} follows from \eqref{eq:TargetUpperAux} in Proposition~\ref{pro:CurrentZtoN}. The lower bound in \eqref{eq:CkBound} follows from the same arguments as \eqref{eq:TargetUpperAuxNew} in the proof of Proposition~\ref{pro:CurrentZtoN}, replacing Proposition \ref{pro:SpeedOfDisagreement} by Lemma  \ref{lem:SpeedOfPropagationCoupling} to bound the location of the second class particles in the extended disagreement process between $(\eta^\N_t)_{t \geq 0}$ and $(\eta_t)_{t \geq 0}$. The case $\kappa=\frac{1}{2}$ follows by the same arguments. 
\end{proof}



Using Propositions~\ref{pro:CurrentZtoN} and~\ref{pro:CurrentNtoO}, we  have the following moderate deviation bound for the current of the open ASEP. The proof is identical to Corollary~\ref{cor:ModDevitationCurrentN} and therefore omitted.

\begin{corollary}\label{cor:ModDevitationCurrentO}
{Fix some $\mathfrak{a} \in (0,\frac{1}{2})$, and recall $q$ from \eqref{def:TriplePointScaling} with some $\kappa \in [0,\frac{1}{2}]$. Consider an open ASEP started from its stationary distribution $\mu_N=\textup{Ber}_N(\rho)$ for some $\rho \in [\frac{1}{2}+N^{-1/2},1-\mathfrak{a}]$, and set
\begin{equation}
    T = (1-q)^{-1}N(2\rho-1)^{-1} . 
\end{equation}
For $\kappa<\frac{1}{2}$,  we get that
\begin{equation}\label{eq:VarianceO}
\Var[\Jo_T^N] \leq C_1 N
\end{equation} for some constant $C_1>0$ and all $N$ large enough. Similarly, when $\rho=\frac{1}{2}$, \eqref{eq:VarianceO} holds with respect to $T=(1-q)^{-1}N^{3/2}$. 
For $\kappa=\frac{1}{2}$, we set $T=N^{3/2}(1-2\rho)^{-1}$ when $\rho \in [\frac{1}{2}+N^{-1/2},1-\mathfrak{a}]$, and $T=N^2$ when $\rho=\frac{1}{2}$. Then we get that
\begin{equation}\label{eq:VarianceOCritical}
\Var[\Jo_T^N] \leq C_2 N \log^4(N)
\end{equation} for some constant $C_2>0$, and all $N$ large enough. }
\end{corollary}

Next, we study moderate deviations for the current of the open ASEP for  general effective densities $\rho_{\Lup} \neq \rho_{\Rup}$ close to the triple point. More precisely, we consider two open ASEPs $(\eta_t^1)_{t \geq 0}$ and $(\eta_t^2)_{t \geq 0}$ with currents $(\mathcal{J}_t^{(1)})_{t \geq 0}$ and $(\mathcal{J}_t^{(2)})_{t \geq 0}$ and invariant measures $\mu_N^{1},\mu_N^{2}$, respectively, where the associated boundary parameters satisfy
\begin{equation}\label{eq:ParameterOrdering}
\max\big(|\alpha^{2} - \alpha^{1}|, |\beta^{2} - \beta^{1}|, |\gamma^{2} - \gamma^{1}|, |\delta^{2} - \delta^{1}|\big) = \mathcal{O}(N^{-1/2}) ,
\end{equation}
and the invariant measure $\mu_N^{1}$ of $(\eta_t^1)_{t \geq 0}$ is a Bernoulli-$\frac{1}{2}$-product measure on $\{ 0,1 \}^{N}$.

\begin{lemma}\label{lem:CurrentOpenASEPgeneral}
Let $q$ from \eqref{def:TriplePointScaling} satisfy $\kappa<\frac{1}{2}$ and $\psi>0$, and assume \eqref{eq:ParameterOrdering}. Then
\begin{equation}\label{eq:VarianceOG}
\Var[\Jo^{(2)}_{N^{3/2}(1-q)^{-1}}] \leq C_1 N
\end{equation} holds for some constant $C_1>0$, and all $N$ large enough. For $\kappa=\frac{1}{2}$,
\begin{equation}\label{eq:VarianceOGCritical}
\Var[\Jo^{(2)}_{N^2}] \leq C_2 N \log^4(N)
\end{equation} holds for some constant $C_2>0$, and all $N$ large enough.
\end{lemma}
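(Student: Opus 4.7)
The plan is to mirror the derivation of Corollary~\ref{cor:ModDevitationCurrentO} from Proposition~\ref{pro:CurrentNtoO}: first establish a moderate-deviation estimate of the form
\begin{equation*}
\P\bigl(|\Jo^{(2)}_T - T J_N^{(2)}| \geq x\sqrt{N}\bigr) \leq C_0 \exp(-c_0 x)
\end{equation*}
(with an $x/\log(N)$ correction in the exponent when $\kappa=\tfrac{1}{2}$), and then invoke the Cauchy--Schwarz/Poisson-truncation argument in the proof of Corollary~\ref{cor:ModDevitationCurrentN} to convert it into the claimed variance bound. Since the invariant measure $\mu_N^{1}$ is a Bernoulli-$\tfrac{1}{2}$-product measure, Corollary~\ref{cor:ModDevitationCurrentO} applies directly to $\eta^{1}$ with $\rho_N=\tfrac{1}{2}$, giving $\Var[\Jo^{(1)}_T] \leq C_0 N$ (respectively $C_0 N\log^4(N)$ when $\kappa=\tfrac{1}{2}$), and we decompose
\begin{equation*}
\Var[\Jo^{(2)}_T] \leq 2\Var[\Jo^{(1)}_T] + 2\Var[\Jo^{(2)}_T - \Jo^{(1)}_T] .
\end{equation*}

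To bound the variance of the current difference, I would couple $\eta^{1}$ and $\eta^{2}$ by starting both processes from the common stationary distribution $\mu_N^{1} = \textup{Ber}_N(\tfrac{1}{2})$ and evolving them via the basic coupling of Definition~\ref{def:BasicCoupling}, splitting each boundary Poisson clock into a shared part at rate $\min(\alpha^1,\alpha^2)$, and similarly for $\beta,\gamma,\delta$, together with a private part at rate $|\alpha^1-\alpha^2| = \mathcal{O}(N^{-1/2})$, etc. Via the extended disagreement process of Section~\ref{sec:ExtendedBasicCoupling} and the identity \eqref{eq:CurrentIdentity} transposed to two open systems, the difference $\Jo^{(2)}_T - \Jo^{(1)}_T$ equals (up to a deterministic initial term) the net count of type-$\Bup$ minus type-$\Aup$ second-class particles present at time $T$. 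One then needs to pass from $\eta^{2}$ started from $\mu_N^{1}$ to $\eta^{2}$ started from its own stationary measure $\mu_N^{2}$ by a separate comparison argument exploiting that the local marginals of $\mu_N^{1}$ and $\mu_N^{2}$ agree up to $\mathcal{O}(N^{-1/2})$ in the weakly maximal current regime.

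The main obstacle is showing that the net type-$\Bup$-minus-type-$\Aup$ count has variance $\mathcal{O}(N)$. The cumulative number of new disagreement particles created by the private boundary clocks is Poisson with mean $\mathcal{O}(TN^{-1/2}) = \mathcal{O}(N^{1+\kappa})$, which naively exceeds the target. The resolution rests on two observations. First, under Liggett's condition \eqref{def:LiggettTypeCondition} the entry and exit rates on each boundary agree to $\mathcal{O}(N^{-1/2})$ between the two processes, so creation events of type-$\Aup$ and type-$\Bup$ particles occur at matched rates and tend to annihilate through the dynamics. Second, by Lemma~\ref{lem:SpeedOfPropagationCoupling} (which extends to compare two open ASEPs with the same proof, since the boundary rates remain of order one and only differ by $\mathcal{O}(N^{-1/2})$), disagreement particles propagate at characteristic speed $\Theta(1-q)=\Theta(N^{-\kappa})$ and traverse the system in time $\asymp N^{1+\kappa}$. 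The net bulk count at time $T$ is then controlled by the imbalance within a single traversal window together with the standard $\mathcal{O}(\sqrt{N})$ product-measure fluctuations, yielding variance $\mathcal{O}(N)$. Once the moderate-deviation bound is secured, the crude Poisson fourth-moment bound $\E[(\Jo^{(2)}_T)^4] \lesssim N^{9}$ combined with the exponential tail on the event $\{|\Jo^{(2)}_T - TJ_N^{(2)}| \geq N^{1/2}\log^2(N)\}$ completes the truncation argument verbatim as in Corollary~\ref{cor:ModDevitationCurrentN}.
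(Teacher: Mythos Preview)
Your approach departs from the paper's and carries a genuine gap. The paper never introduces the decomposition $\Var[\Jo^{(2)}_T]\le 2\Var[\Jo^{(1)}_T]+2\Var[\Jo^{(2)}_T-\Jo^{(1)}_T]$ and never tries to control the cross term $\Jo^{(2)}_T-\Jo^{(1)}_T$ under a boundary-clock-splitting coupling. Instead, the paper runs $(\eta^2_t)$ \emph{directly} from the Bernoulli-$\tfrac12$ initial law $\mu_N^1$ and observes that the comparison chain open $\leadsto$ half-line $\leadsto$ integers behind Lemma~\ref{lem:SpeedOfPropagationCoupling} and Proposition~\ref{pro:CurrentNtoO} does not require $(\eta^2_t)$ itself to be stationary: what is used is only that the configuration seen by the second-class particles is Bernoulli-$\tfrac12$ at the initial time of each piece. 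This yields the moderate-deviation estimate \eqref{eq:CurrentOpenEst} for $(\eta^2_t)$ started from $\mu_N^1$ with no new coupling ideas. The passage from $\mu_N^1$ to the true stationary law $\mu_N^2$ is then a one-line initial-condition argument: by Lemma~\ref{lem:Bernoulli} and assumption~\eqref{eq:ParameterOrdering}, $\mu_N^2$ is sandwiched between $\textup{Ber}_N(\tfrac12\pm cN^{-1/2})$, so there is a coupling of $\mu_N^1$ and $\mu_N^2$ with at most $O(y\sqrt{N})$ discrepancies except on an event of probability $\exp(-c y^2)$; under the basic coupling for $(\eta^2_t)$ from the two initial states these discrepancies can only exit, so the currents differ by at most $O(y\sqrt{N})$. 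The Cauchy--Schwarz/Poisson truncation then closes the argument exactly as in Corollary~\ref{cor:ModDevitationCurrentN}.

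The gap in your route is the control of $\Var[\Jo^{(2)}_T-\Jo^{(1)}_T]$. You correctly note that over time $T\asymp N^{3/2}(1-q)^{-1}$ the private boundary clocks create order $N^{1+\kappa}\gg\sqrt{N}$ discrepancies, and your proposed resolution (annihilation of matched $\Aup/\Bup$ pairs plus a traversal-time argument) is a heuristic, not a proof. Identity~\eqref{eq:CurrentIdentity} has a clean form because all second-class particles enter at a single boundary; once discrepancies enter at \emph{both} endpoints, the current difference at site $1$ is no longer simply the instantaneous net $\Bup-\Aup$ count, and tracking the balance requires the full multi-type bookkeeping of Section~\ref{sec:MultiSpeciesOpenASEP}. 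Even granting such an identity, showing that the net count has variance $O(N)$ would require quantitative mixing of the second-class particles within a traversal window, which is not available at this stage of the argument and is in fact comparable in difficulty to what you are trying to prove. The paper sidesteps this entirely by never coupling $\eta^1$ and $\eta^2$; it only couples two copies of $\eta^2$ with nearby initial conditions, where the number of discrepancies is monotone in time.
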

\begin{proof} We will only consider the case $\kappa<\frac{1}{2}$ as similar arguments apply for $\kappa=\frac{1}{2}$.
Consider the open ASEP $(\eta_t^2)_{t \geq 0}$ started from the Bernoulli-$\frac{1}{2}$-product measure $\mu_N^1$. Then using the same arguments as in Lemma~\ref{lem:SpeedOfPropagationCoupling} and Proposition \ref{pro:CurrentNtoO} in order to couple $(\eta_t^1)_{t \geq 0}$ and $(\eta_t^2)_{t \geq 0}$ under a common starting configuration (but different boundary parameters), there exist constants $c_0,C_0>0$ such that for all $1 \leq x\leq N^{\expont}$, the current of $(\eta_t^2)_{t \geq 0}$ satisfies
\begin{equation}\label{eq:CurrentOpenEst}
 \P\Big( \big| \Jo^{(2)}_{N^{3/2}(1-q)^{-1}} - \frac{1}{4}N^{3/2} \big| > x \sqrt{N} \, \Big| \, \eta_0^2 \sim \mu_N^1 \Big) \leq C_0 \exp( -  c_0 x^{1/2} ) 
\end{equation} for all $N$ large enough. Observe that due to assumption \eqref{eq:ParameterOrdering} and Lemma \ref{lem:Bernoulli}, there exists a coupling $\bar{\mathbf{P}}$ between $\tilde{\eta}_0^{1} \sim \mu_N^{1}$ and $\tilde{\eta}^{2}_0\sim \mu_N^{2}$ and constants $\tilde{c}_0,\tilde{C}_0>0$ such that for all $1 \leq y \leq N^{\expont}$ and $N$ large enough, by a standard moderate deviation estimate, 
\begin{equation}\label{eq:InitialDiff}
 \bar{\mathbf{P}}\Big( \Big| \sum_{v \in \lbr N \rbr} \tilde{\eta}_0^{2}(v) - \sum_{v \in \lbr N \rbr}\tilde{\eta}_0^{1}(v) \Big| \geq y \sqrt{N} \Big) \leq \exp(-c_0 y^2) .
\end{equation}
Thus, using the basic coupling for $(\eta_t^2)_{t \geq 0}$ started from $\tilde{\eta}_0^{1}$ and $\tilde{\eta}_0^{2}$, respectively, we see that \eqref{eq:CurrentOpenEst} holds true for $\eta_0^2 \sim \mu_N^2$. Using now the same arguments as in Corollary \ref{cor:ModDevitationCurrentN} to convert the moderate deviations estimates to a variance bound, we conclude.
\end{proof}

\begin{remark}\label{rem:ExtendedMaxCurrent} {We conjecture that Lemma~\ref{lem:CurrentOpenASEPgeneral} can be extended to the entire maximal current phase of the open ASEP by proving a similar bound as \eqref{eq:InitialDiff} for all boundary parameters in the maximal current phase (by using for example the results from \cite{BW:Density,BW:AskeyWilsonProcess}).}
\end{remark}

\subsection{Current bounds for the open ASEP}\label{sec:CurrentBoundsSecondMoment}

In this subsection, we record several consequences of the bounds on the stationary current and the moderate deviations for the current of the open ASEP. { For the upcoming results, it will be convenient to reparametrize the densities $\rho$, i.e., we set
\begin{equation}
    \rho_{n} := \frac{1}{2} + 2^{-n}
\end{equation} for all $n\in \N$. 
}
We start with the weakly high density phase and $\kappa<\frac{1}{2}$.

\begin{lemma}\label{lem:CurrentHighLowFinal}
Let $q$ satisfy \eqref{def:TriplePointScaling} for $\kappa<\frac{1}{2}$.
For all $N\in \N$, let $n\in \lbr \frac{1}{2}\log_2(N)-1 \rbr$ and
\begin{equation}
T = 2^{n{+1}} N (1-q)^{-1} .
\end{equation}
Recall $\expont$ from \eqref{def:KappaEpsilon}. Consider two stationary open ASEPs $(\eta^{1}_t)_{t \geq 0}$ and $(\eta^{2}_t)_{t \geq 0}$  such that the respective effective densities of the two processes satisfy
\begin{align*}
\rho_{\Lup}^{(1)}&=\rho^{(1)}_N \quad \text{ and  } \quad \rho_{\Rup}^{(1)}=\rho^{(1)}_N \quad \text{ where } \quad \rho^{(1)}_N=\frac{1}{2}+2^{-n} , \\
\rho_{\Lup}^{(2)}&=\rho_N^{(1)} \quad \text{ and  } \quad \rho_{\Rup}^{(2)}=\rho^{(2)}_N \quad \text{ where } \quad \rho^{(2)}_N=\frac{1}{2}+2^{-(n+1)} .
\end{align*} For any coupling $\bar{\mathbf{P}}$ of $(\eta^{1}_t)_{t \geq 0}$ and $(\eta^{2}_t)_{t \geq 0}$, the currents $(\mathcal{J}^{(1)}_t)_{t \geq 0}$ and $(\mathcal{J}^{(2)}_t)_{t \geq 0}$ satisfy
\begin{equation}\label{eq:LowerDeviation}
\bar{\mathbf{P}}\left( \mathcal{J}^{(2)}_{T} \geq \mathcal{J}^{(1)}_{T} +  2^{-(n+2)}N \right) \geq 1- C_0\exp\big(-c_0\min(2^{-n}N^{1/2},N^{\expont})^{1/2}\big)
\end{equation} for some constants $c_0,C_0>0$, depending only on $\kappa$, and for all $N$ large enough.
 \end{lemma}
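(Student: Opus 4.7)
The plan is to introduce an auxiliary open ASEP $(\eta_t^{(\star)})_{t \geq 0}$ with effective densities $\rho_L^{(\star)} = \rho_R^{(\star)} = \rho^{(2)}_N$; by Lemma \ref{lem:Bernoulli}, its stationary measure is $\mu_N^{(\star)} = \textup{Ber}_N(\rho^{(2)}_N)$, placing it in the product-form regime to which Proposition \ref{pro:CurrentNtoO} applies. Since ASEP~$2$ lies in the fan region of the weakly high density phase (as $A_2 > 1$ and $A_2 C_2 < 1$), Lemma \ref{lem:CurrentHighLow} gives $J_N^{(2)} \geq (1-q)\rho^{(2)}_N(1-\rho^{(2)}_N)=J_N^{(\star)}$, and combined with the product-form identity $J_N^{(1)} = (1-q)\rho^{(1)}_N(1-\rho^{(1)}_N)$, a direct computation with $T=2^n N(1-q)^{-1}$ yields
\begin{equation*}
\E[\mathcal{J}^{(2)}_T] - \E[\mathcal{J}^{(1)}_T] \geq 2^n N\bigl(\rho^{(2)}_N(1-\rho^{(2)}_N) - \rho^{(1)}_N(1-\rho^{(1)}_N)\bigr) = 3 \cdot 2^{-(n+2)} N.
\end{equation*}

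Since \eqref{eq:LowerDeviation} is formulated for an arbitrary coupling $\bar{\mathbf{P}}$, it suffices to establish marginal tail bounds for each current and combine them via the union bound. For $\mathcal{J}^{(1)}_T$, Proposition \ref{pro:CurrentNtoO} applies directly to the product-form ASEP~$1$ and gives $\P(\mathcal{J}^{(1)}_T \geq \E[\mathcal{J}^{(1)}_T] + x\sqrt{N}) \leq C\exp(-cx)$ for $1 \leq x \leq N^{\expont}$. For the lower tail of $\mathcal{J}^{(2)}_T$, I would use the monotone coupling from Lemma~2.1 of \cite{GNS:MixingOpen}: since $\mu_N^{(2)} \succeq \mu_N^{(\star)}$ by Lemma \ref{lem:Bernoulli}, initialize $\eta_0^{(2)} \sim \mu_N^{(2)}$ and $\eta_0^{(\star)} \sim \mu_N^{(\star)}$ with $\eta_0^{(2)} \geq \eta_0^{(\star)}$ pointwise, and choose boundary parameters consistent with the effective densities so that $\alpha^{(2)} \geq \alpha^{(\star)}$, $\gamma^{(2)} \leq \gamma^{(\star)}$, $\beta^{(2)} = \beta^{(\star)}$, $\delta^{(2)} = \delta^{(\star)}$; under the basic coupling this ordering persists for all $t\geq 0$. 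The particle-conservation identity $\mathcal{J}_t(1) - \mathcal{J}_t(N+1) = N_t - N_0$ together with the monotonicity of both the particle count and of the right-boundary flux (as used in the proof of Lemma \ref{lem:CurrentHighLow}) gives
\begin{equation*}
\mathcal{J}^{(2)}_T(1) \geq \mathcal{J}^{(\star)}_T(1) - (N_0^{(2)} - N_0^{(\star)}) \quad \text{almost surely,}
\end{equation*}
so passing to marginals reduces the lower tail of $\mathcal{J}^{(2)}_T$ to that of $\mathcal{J}^{(\star)}_T$ up to an additive correction. The lower tail of $\mathcal{J}^{(\star)}_T$ is controlled by an adaptation of Proposition \ref{pro:CurrentNtoO} applied with $\theta=2$ in the underlying Theorem \ref{thm:CurrentASEP} (and in the propagation estimates of Lemmas \ref{lem:MaxSecondClassOrder} and \ref{lem:SpeedOfPropagationCoupling}, which are already stated with $\theta$ as a free parameter), so that the native time $2^{n+1}N(1-q)^{-1}$ for $\rho^{(\star)}_N = 1/2 + 2^{-(n+1)}$ reduces to our time $T = 2^n N(1-q)^{-1}$.

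Combining the two marginal bounds via the union bound with $x = \tfrac{1}{8}\min(2^{-n}\sqrt{N}, N^{\expont})$ then yields \eqref{eq:LowerDeviation}, since $3\cdot 2^{-(n+2)}N - 2x\sqrt{N} \geq 2^{-(n+2)}N$ under this choice. The principal obstacle is controlling the additive correction $N_0^{(2)} - N_0^{(\star)}$ with the required $\exp(-c\cdot 2^{-n}\sqrt{N})$-type tail: while the naive sandwich $\mu_N^{(2)} \preceq \textup{Ber}_N(\rho^{(1)}_N)$ from Lemma \ref{lem:Bernoulli} only gives $N_0^{(2)} - N_0^{(\star)} \leq \textup{Bin}(N, 2^{-(n+1)})$ stochastically --- a mean of $2\cdot 2^{-(n+2)}N$ that is comparable to the signal --- a finer analysis of the density profile of $\mu_N^{(2)}$ in the weakly high density fan region (where the bulk density approaches $\rho_R^{(2)} = \rho^{(2)}_N$ with only an $o(2^{-(n+2)}N)$ excess from the left boundary layer) shows that $\E[N_0^{(2)} - N_0^{(\star)}] = o(2^{-(n+2)}N)$, so the contribution is absorbed into the error budget by standard Bernoulli concentration. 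The restriction $n \in \lbr \tfrac{1}{2}\log_2(N) - 1 \rbr$ ensures that the $\sqrt{N}$-scale fluctuations of $N_0^{(2)} - N_0^{(\star)}$ remain dominated by the signal.
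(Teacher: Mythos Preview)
Your detour through the auxiliary process $(\eta_t^{(\star)})$ is more elaborate than what the paper does, and it introduces a genuine problem that you do not resolve. The paper simply separates the two tails,
\[
\bar{\mathbf{P}}\bigl(\mathcal{J}^{(2)}_T \geq \mathcal{J}^{(1)}_T + 2^{-(n+2)}N\bigr) \;\geq\; 1 - \P\bigl(\mathcal{J}^{(2)}_T \leq \E[\mathcal{J}^{(2)}_T] - 2^{-(n+3)}N\bigr) - \P\bigl(\mathcal{J}^{(1)}_T \geq \E[\mathcal{J}^{(1)}_T] + 2^{-(n+3)}N\bigr),
\]
and bounds each marginal tail directly via Proposition~\ref{pro:CurrentNtoO}. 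No auxiliary process, no initial-particle correction.

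The gap in your route is exactly where you flag it: the term $N_0^{(2)} - N_0^{(\star)}$. From Lemma~\ref{lem:Bernoulli} you only get $\mu_N^{(2)} \preceq \textup{Ber}_N(\rho^{(1)}_N)$, so under the monotone coupling $N_0^{(2)} - N_0^{(\star)}$ is stochastically bounded by $\textup{Bin}(N,2^{-(n+1)})$, whose mean $2\cdot 2^{-(n+2)}N$ eats two thirds of your signal $3\cdot 2^{-(n+2)}N$. What remains after subtracting this and the $O(\sqrt N)$ current fluctuations is strictly \emph{less} than the target $2^{-(n+2)}N$, so the naive bound cannot deliver the stated inequality. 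Your proposed fix --- that a finer density-profile analysis of $\mu_N^{(2)}$ in the weakly high density fan region gives $\E[N_0^{(2)} - N_0^{(\star)}]=o(2^{-(n+2)}N)$ --- is plausible heuristically (the boundary layer should have width $\sim N^{\kappa}\ll N$), but it is nowhere established in the paper and would require nontrivial input on $\mu_N^{(2)}$ (e.g.\ via the matrix product ansatz or Askey--Wilson representations). Worse, even granting the mean estimate, you would still need concentration of $N_0^{(2)}$ under the \emph{non-product} measure $\mu_N^{(2)}$ with tails of order $\exp(-c\,2^{-n}\sqrt N)$; ``standard Bernoulli concentration'' does not apply, and nothing in the paper supplies this.

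The paper's route sidesteps all of this by never comparing $\mathcal{J}^{(2)}_T$ to $\mathcal{J}^{(\star)}_T$: it controls the lower tail of $\mathcal{J}^{(2)}_T$ about its own mean using the moderate-deviation machinery of Proposition~\ref{pro:CurrentNtoO} (whose proof technique --- coupling the open ASEP to the half-line ASEP with the \emph{same} left density $\rho_L^{(2)}=\rho_N^{(1)}$ and controlling second-class particles created at site $N$ --- does not actually require $\rho_L=\rho_R$; compare the argument in Lemma~\ref{lem:CurrentOpenASEPgeneral}). That is the missing idea you should use instead of the $(\star)$-comparison.
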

\begin{proof}
We get from Lemma \ref{lem:CurrentHighLow} that the stationary currents satisfy
\begin{equation*}
  \E\big[  \mathcal{J}^{(2)}_T \big] - \E\big[\mathcal{J}^{(1)}_T \big]  \geq T (2^{-2n} - 2^{-2(n+1)})(1-q) \geq  2^{-(n+1)}N  .
\end{equation*}
Moreover, we have that
\begin{align*}
\bar{\mathbf{P}}\left( \mathcal{J}^{(2)}_{T} \geq \mathcal{J}^{(1)}_{T} +  2^{-(n+2)}N \right) \geq 1 &- \P\left(\mathcal{J}^{(2)}_{T} \leq  \E\big[  \mathcal{J}^{(2)}_T \big] - 2^{-(n+3)}N \right) \\
&- \P\left(\mathcal{J}^{(1)}_{T} \geq  \E\big[  \mathcal{J}^{(1)}_T \big] + 2^{-(n+3)}N \right) .
\end{align*} We conclude \eqref{eq:LowerDeviation} using the moderate deviations in Proposition~\ref{pro:CurrentNtoO} with $y=\min(N^{\expont},2^{-n}N^{1/2})$ to bound the fluctuations of the current of $(\eta^{1}_t)_{t \geq 0}$ and $(\eta^{2}_t)_{t \geq 0}$ at time $T$, respectively.
\end{proof}

We have the following result on the current in the maximal current phase of the open ASEP when $\kappa<\frac{1}{2}$, which is similar to Lemma~\ref{lem:CurrentHighLowFinal}.

\begin{lemma}\label{lem:CurrentMaxPhase}
Let $q$ satisfy \eqref{def:TriplePointScaling} for some $\kappa<\frac{1}{2}$ and $\psi>0$. Fix some $m \in \N$ and let
\begin{equation}
T = m N^{3/2} (1-q)^{-1} .
\end{equation}
Consider two stationary open ASEPs $(\eta^{1}_t)_{t \geq 0}$ and $(\eta^{2}_t)_{t \geq 0}$ such that $(\eta^{1}_t)_{t \geq 0}$ satisfies  assumptions \eqref{def:LiggettTypeCondition} and \eqref{eq:ScalingCondition} for some $(\alpha,\beta,\gamma,\delta,q)$. Let  $C_L,C_R>0$ be constants, depending only on $\kappa$ and $\psi>0$, and assume that $(\eta^{2}_t)_{t \geq 0}$ has the same boundary parameters $\gamma$ and $\delta$ as $(\eta^{1}_t)_{t \geq 0}$, and  $\alpha' \geq \alpha$ as well as $\beta' \geq \beta$ so that the respective effective densities satisfy
\begin{align*}
\rho_{\Lup}^{(2)}= \frac{1}{2}+ C_L\frac{1}{\sqrt{N}}  \quad \text{and} \quad
\rho_{\Rup}^{(2)}= \frac{1}{2}- C_R\frac{1}{\sqrt{N}}  .
\end{align*} Then we find constants $c_0,c_1>0$ such that under the basic coupling $\mathbf{P}$, the respective currents $(\mathcal{J}^{(1)}_t)_{t \geq 0}$ and $(\mathcal{J}^{(2)}_t)_{t \geq 0}$ of $(\eta^{1}_t)_{t \geq 0}$ and $(\eta^{2}_t)_{t \geq 0}$ satisfy
\begin{equation}\label{eq:LowerDeviationMax}
\mathbf{P}\left( \mathcal{J}^{(2)}_{T} \geq \mathcal{J}^{(1)}_{T} + m c_0 \sqrt{N} \right) \geq  \frac{c_1}{m^2}
\end{equation} for all $m \in \N$, and all $N$ large enough.
\end{lemma}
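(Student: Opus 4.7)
The plan is to compare the stationary currents via the second-order asymptotics from Proposition~\ref{pro:CurrentMaxCurrent1} and then convert the expected difference to a probability bound using the variance estimate from Lemma~\ref{lem:CurrentOpenASEPgeneral}. First, I apply Proposition~\ref{pro:CurrentMaxCurrent1} to both processes to obtain
\begin{equation*}
J_N^{(i)}=\frac{1-q}{4}\left[1+\frac{F(\tilde A_i,\tilde C_i)}{N}+o(N^{-1})\right]
\end{equation*}
for $i\in\{1,2\}$, with $F$ as in \eqref{def:FunctionF}. From $\rho_{\Lup}^{(2)}=\tfrac12+C_L N^{-1/2}$ and $\rho_{\Rup}^{(2)}=\tfrac12-C_R N^{-1/2}$, a direct expansion of \eqref{def:a} and \eqref{def:c} yields $\tilde A_2=4C_R+o(1)$ and $\tilde C_2=4C_L+o(1)$. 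Since $F(\tilde A,\tilde C)\to\tfrac{3}{2}$ as $\tilde A,\tilde C\to\infty$ by Lemma~\ref{lemMonotonicity}, I choose $C_L,C_R>0$ large enough (depending only on $\tilde A_1,\tilde C_1$, hence only on $\kappa$ and $\psi$) to guarantee $F(\tilde A_2,\tilde C_2)-F(\tilde A_1,\tilde C_1)\geq 8c_0$ for some constant $c_0>0$. Multiplying $J_N^{(2)}-J_N^{(1)}$ by $T=mN^{3/2}(1-q)^{-1}$ and absorbing the $o(N^{-1})$ correction then gives $\E[\mathcal{J}^{(2)}_T]-\E[\mathcal{J}^{(1)}_T]\geq 2mc_0\sqrt{N}$ for all $N$ sufficiently large.

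For the variance step, set $T_0=N^{3/2}(1-q)^{-1}$. Introducing an auxiliary open ASEP with Bernoulli-$\tfrac12$ stationary measure and applying Lemma~\ref{lem:CurrentOpenASEPgeneral} to each comparison gives $\Var(\mathcal{J}^{(i)}_{T_0})\leq C_1 N$ for both $i\in\{1,2\}$, since the boundary rates of all three processes differ by $\mathcal{O}(N^{-1/2})$ and therefore satisfy \eqref{eq:ParameterOrdering}. Partitioning $[0,T]$ into $m$ intervals of length $T_0$, stationarity of each process together with the elementary inequality $\Var(\sum_{k=1}^m Y_k)\leq m\sum_{k=1}^m \Var(Y_k)$ yield $\Var(\mathcal{J}^{(i)}_T)\leq m^2 C_1 N$. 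For $X:=\mathcal{J}^{(2)}_T-\mathcal{J}^{(1)}_T$, the bound $\Var(X)\leq 2\Var(\mathcal{J}^{(1)}_T)+2\Var(\mathcal{J}^{(2)}_T)\leq 4m^2 C_1 N$ then holds under any coupling, in particular under the basic coupling $\mathbf{P}$.

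Combining the mean and variance bounds via the Paley--Zygmund (second moment) inequality,
\begin{equation*}
\mathbf{P}\big(X\geq mc_0\sqrt{N}\big)\geq \frac{(\E[X]-mc_0\sqrt{N})^2}{\Var(X)+(\E[X]-mc_0\sqrt{N})^2}\geq \frac{m^2c_0^2N}{4m^2C_1N+m^2c_0^2N}=\frac{c_0^2}{4C_1+c_0^2},
\end{equation*}
which is a positive constant independent of $m$. Setting $c_1:=c_0^2/(4C_1+c_0^2)$ gives $\mathbf{P}(X\geq mc_0\sqrt{N})\geq c_1\geq c_1/m^2$ for every $m\geq 1$, establishing \eqref{eq:LowerDeviationMax}. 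The main obstacle lies in the first step: producing a quantitative strict gap $F(\tilde A_2,\tilde C_2)-F(\tilde A_1,\tilde C_1)\geq 8c_0$ that dominates the $o(N^{-1})$ correction uniformly in $N$. This relies crucially on the strict monotonicity of $F$ from Lemma~\ref{lemMonotonicity} combined with the limit $F\to\tfrac{3}{2}$ at infinity, so that $F(\tilde A_2,\tilde C_2)$ can be pushed arbitrarily close to $\tfrac{3}{2}$ by enlarging $C_L,C_R$, while $F(\tilde A_1,\tilde C_1)$ is some fixed value strictly smaller than $\tfrac{3}{2}$.
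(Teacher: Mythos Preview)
Your proof is correct and follows the same overall template as the paper: compare the stationary currents via Proposition~\ref{pro:CurrentMaxCurrent1} and Lemma~\ref{lemMonotonicity}, control the variance of each current through Lemma~\ref{lem:CurrentOpenASEPgeneral} applied to the $m$ stationary increments of length $N^{3/2}(1-q)^{-1}$, and conclude with a second-moment inequality.

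There are two differences worth noting. First, your Cauchy--Schwarz step is sharper: from $\Var(\sum_{k=1}^m Y_k)\le m\sum_k\Var(Y_k)$ you get $\Var(\mathcal{J}^{(i)}_T)\le m^2C_1N$, whereas the paper records the cruder bound $\Var(\mathcal{J}^{(2)}_T-\mathcal{J}^{(1)}_T)\le m^4C_1N$. Second, you use the one-sided Chebyshev (Cantelli) inequality rather than Paley--Zygmund; this dispenses with the almost-sure ordering $\mathcal{J}^{(2)}_T\ge\mathcal{J}^{(1)}_T$ under the basic coupling that the paper invokes. The combination of these two changes gives you a lower bound that is a \emph{constant} independent of $m$, strictly stronger than the stated $c_1/m^2$. (The paper's looser $m^{-2}$ bound is of course still sufficient for the downstream application in Section~\ref{sec:MixingTriple}.) One small terminological quibble: what you apply is Cantelli's inequality, not Paley--Zygmund, though both are second-moment methods and the distinction is immaterial here.
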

\begin{proof}
From Proposition \ref{pro:CurrentMaxCurrent1}, verifying that $B(\beta,\delta,q)$ and $D(\alpha,\gamma,q)$ as well as $B(\beta^{\prime},\delta,q)$ and $D(\alpha^{\prime},\gamma,q)$ have the desired form, and Lemma~\ref{lemMonotonicity} for strict monotonicity, we get that
there exist some constants $c_2,C_2>0$ such that for any $m \in \N$
\begin{equation*}
m c_2 \sqrt{N} \leq  \E\big[  \mathcal{J}^{(2)}_{T} \big] -  \E\big[\mathcal{J}^{(1)}_{T} \big]  \leq   m C_2  \sqrt{N}
\end{equation*} and all $N$ large enough. Note that under the basic coupling, we can ensure that
\begin{equation*}
\mathbf{P}\left( \mathcal{J}^{(2)}_{T} \geq \mathcal{J}^{(1)}_{T} \right) = 1 .
\end{equation*}
A standard fact for (not necessarily independent) random variables $(X_i)_{i \in \lbr m \rbr}$ and $(Y_i)_{i \in \lbr m \rbr}$ is that by Cauchy--Schwarz
\begin{equation}
\Var\Big( \sum_{i \in \lbr m \rbr} (X_i-Y_i) \Big) \leq m^{4} \max_{i \in \lbr m \rbr} ( \max(\Var(X_i),\Var(Y_i)))  .
\end{equation}
Hence, writing the current by time increments, we get that for all $N$ large enough \begin{equation*}
\Var(\mathcal{J}^{(2)}_{T}- \mathcal{J}^{(1)}_{T}) \leq m^4 \max_{i\leq \lbr m \rbr , j\in \{1,2\} }\Var\Big(\mathcal{J}^{(j)}_{iT/m} - \mathcal{J}^{(j)}_{(i-1)T/m} \Big) \leq  m^{4} C_1 N
\end{equation*}
with some  $C_1>0$, where we use Corollary~\ref{cor:ModDevitationCurrentO} for the last inequality. Using the Paley--Zygmund inequality, noting that $\mathcal{J}^{(2)}_{T} \geq \mathcal{J}^{(1)}_{T}$ almost surely under the basic coupling $\mathbf{P}$, 
\begin{equation*}
\mathbf{P}\left( \mathcal{J}^{(2)}_{T} \geq \mathcal{J}^{(1)}_{T} + \frac{m c_2}{2}  \sqrt{N}\right) \geq \frac{\E\big[  \mathcal{J}^{(2)}_{T} -\mathcal{J}^{(1)}_{T} \big]^2}{4\Var\big(\mathcal{J}^{(2)}_{T} - \mathcal{J}^{(1)}_{T}\big)+4\E\big[\mathcal{J}^{(2)}_{T} -\mathcal{J}^{(1)}_{T}\big]^2}\geq c_3 m^{-2}
\end{equation*} for some constant $c_3>0$, which finishes the proof.
\end{proof}

In the case where $q$ satisfies \eqref{def:TriplePointScaling} with $\kappa=\frac{1}{2}$, we have the following result on the current of the open ASEP in the high density phase.

\begin{lemma}\label{lem:CurrentCriticalKappaHigh}
Let $q$ satisfy \eqref{def:TriplePointScaling} for $\kappa=\frac{1}{2}$ and $\psi>0$. For all $N\in \N$, and $n\in \N$ set
\begin{equation}
T = 2^{n+1} N (1-q)^{-1} .
\end{equation}
Consider two stationary open ASEPs $(\eta^{1}_t)_{t \geq 0}$ and $(\eta^{2}_t)_{t \geq 0}$  such that the respective effective densities which satisfy
\begin{align*}
\rho_{\Lup}^{(1)}&=\rho^{(1)}_N \quad \text{ and  } \quad \rho_{\Rup}^{(1)}=\rho^{(1)}_N \quad \text{ where } \quad \rho^{(1)}_N=\frac{1}{2}+2^{-n} , \\
\rho_{\Lup}^{(2)}&=\rho_N^{(1)} \quad \text{ and  } \quad \rho_{\Rup}^{(2)}=\rho^{(2)}_N \quad \text{ where } \quad \rho^{(2)}_N=\frac{1}{2}+2^{-(n+1)} .
\end{align*}
Then there exists a constant $c_2>0$ such that for all $n$ with
\begin{equation}\label{eq:nChoice}
n \in \big[ \log_2(c_1^{-1}\log(N)), c_2 \log_2(\sqrt{N}\log^{-1}(N))\big] , 
\end{equation}
 where the constant $c_1>0$ is taken from Lemma \ref{lem:ModDeviationMaxSecondClass}, and for all $N$ large enough,
\begin{equation}
\label{eq:LowerDeviationSpecial}
\mathbf{P}\left( \mathcal{J}^{(2)}_{T} \geq \mathcal{J}^{(1)}_{T} + 2^{-(n+2)}N \log(N) \right) \geq 1- N^{-9} .
\end{equation}
\end{lemma}
\begin{proof}
As for Lemma \ref{lem:CurrentHighLowFinal}, we get from Lemma \ref{lem:CurrentHighLow} that the expected currents satisfy
\begin{equation*}
  \E\big[  \mathcal{J}^{(2)}_T \big] - \E\big[\mathcal{J}^{(1)}_T \big]  \geq T (2^{-2n} - 2^{-2(n+1)})(1-q) \geq  2^{-(n+1)}N \geq 4C_1 N^{1/2} \log(N) ,
\end{equation*} where $C_1>0$ is taken from Proposition~\ref{pro:CurrentNtoO}. Note that the current $\mathcal{J}^{(2)}_T$ is stochastically dominated by the current of a stationary open ASEP in a Bernoulli-$\rho_N^{(2)}$-product measure. The result now follows from \eqref{eq:CurrentOpenCritical} in Proposition~\ref{pro:CurrentNtoO}.
\end{proof}

Similarly, we can estimate the current in the maximal current phase when $\kappa=\frac{1}{2}$.

\begin{lemma}\label{lem:CurrentCriticalKappaMax}
Let $q$ satisfy \eqref{def:TriplePointScaling} for $\kappa=\frac{1}{2}$ and $\psi>0$. For all $N\in \N$, consider two stationary open ASEPs $(\eta^{1}_t)_{t \geq 0}$ and $(\eta^{2}_t)_{t \geq 0}$  with effective densities
\begin{align*}
\rho_{\Lup}^{(1)}&= \frac{1}{2} + \frac{C_L}{\sqrt{N}} \quad \text{ and  } \quad \rho_{\Rup}^{(1)}= \frac{1}{2} + \frac{C_R}{\sqrt{N}} ,   \\
\rho_{\Lup}^{(2)}&= \frac{1}{2} + \frac{C^{\prime}_L}{\sqrt{N}} \quad \text{ and  } \quad \rho_{\Rup}^{(2)}= \frac{1}{2} + \frac{C^{\prime}_R}{\sqrt{N}}
\end{align*} such that  \eqref{def:LiggettTypeCondition} and \eqref{eq:ScalingCondition} holds, as well as the condition $\tilde{B},\tilde{D}>-\psi$ in Proposition \ref{pro:CurrentMaxCurrent3} is satisfied for both sets of boundary parameters.
Let $T= mN^{2}\log(N)$ for some $m\in \N$.
For $C_L,C_R \in \R$ and $C_0>0$, we can choose $C_L^{\prime},C_R^{\prime}>0$ above such that
\begin{equation}
\label{eq:LowerDeviationMaxSpecial}
\mathbf{P}\left( \mathcal{J}^{(2)}_{T} \geq \mathcal{J}^{(1)}_{T} + c_0 m \sqrt{N}\log(N) \right) \geq  \frac{c_1}{\log^{2}(N)m^2}
\end{equation}
with some constants $c_0,c_1>0$, all $m\in \N$ fixed, and all $N$ large enough.
\end{lemma}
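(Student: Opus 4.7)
The plan is to adapt the Paley--Zygmund argument of Lemma \ref{lem:CurrentMaxPhase} to the critical scale $\kappa = \tfrac{1}{2}$, substituting the first-order expansion of the current from Proposition \ref{pro:CurrentMaxCurrent1} by Proposition \ref{pro:CurrentMaxCurrent3}, and the monotonicity from Lemma \ref{lemMonotonicity} by Lemma \ref{lemMonotonicityBis}.

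Applying Proposition \ref{pro:CurrentMaxCurrent3} to both open ASEPs gives
\[
J_N^{(j)} = \frac{1-q}{4}\Big[1 + \frac{1}{4N}\tilde F(\tilde A_j, \tilde C_j)\Big] + \Or(N^{-9/8})
\]
for $j \in \{1,2\}$, where $\tilde A_j, \tilde C_j$ denote the scaling parameters of the $j$-th process. For any $(C_L, C_R)$, by taking $(C_L', C_R')$ sufficiently large (while preserving $\tilde B, \tilde D > -\psi$ as required), we ensure $\tilde A_2 \gg 1$ and $\tilde C_2 \gg 1$. Lemma \ref{lemMonotonicityBis} then yields $\tilde F(\tilde A_2, \tilde C_2) - \tilde F(\tilde A_1, \tilde C_1) \geq \delta$ for some $\delta > 0$. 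Combining with $1-q \sim \psi/\sqrt{N}$ and $T = mN^2\log(N)$, we obtain
\[
\E[\mathcal{J}^{(2)}_T - \mathcal{J}^{(1)}_T] \,=\, T(J_N^{(2)} - J_N^{(1)}) \,\geq\, 2 c_0\, m\sqrt{N}\log(N)
\]
for some constant $c_0 > 0$ and all $N$ large enough.

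To bound $\Var(\mathcal{J}^{(2)}_T - \mathcal{J}^{(1)}_T)$, I partition $[0,T]$ into $m$ sub-intervals of length $N^2\log(N)$. Since both processes lie in the maximal current phase with effective densities within $\mathcal{O}(1/\sqrt{N})$ of $\tfrac{1}{2}$, an extension of Corollary \ref{cor:ModDevitationCurrentO} via Lemma \ref{lem:CurrentOpenASEPgeneral} and the moderate-deviation estimate in Proposition \ref{pro:CurrentNtoO} yields $\Var[\mathcal{J}^{(j)}_{iN^2\log N} - \mathcal{J}^{(j)}_{(i-1)N^2\log N}] \leq C N\log^4(N)$ for each $i \in \lbr m \rbr$ and $j \in \{1,2\}$. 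Applying the Cauchy--Schwarz-type bound used in the proof of Lemma \ref{lem:CurrentMaxPhase} then gives
\[
\Var(\mathcal{J}^{(2)}_T - \mathcal{J}^{(1)}_T) \,\leq\, m^4 \cdot CN\log^4(N).
\]

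The basic coupling $\mathbf{P}$ together with the monotonicity of the boundary parameters ensures that $X := \mathcal{J}^{(2)}_T - \mathcal{J}^{(1)}_T \geq 0$ almost surely (cf.\ Lemma 2.1 in \cite{GNS:MixingOpen}), and the Paley--Zygmund inequality yields
\[
\mathbf{P}\bigl(X \geq c_0\, m\sqrt{N}\log(N)\bigr) \,\geq\, \frac{\E[X]^2}{4\bigl(\Var X + \E[X]^2\bigr)} \,\geq\, \frac{c_1}{m^2\log^2(N)}
\]
for some $c_1 > 0$ and all $N$ large enough. The main technical obstacle is the extension of the variance bound in Corollary \ref{cor:ModDevitationCurrentO} from time $N^2$ to the sub-interval length $N^2\log(N)$ without losing logarithmic factors: a crude telescoping of $\log(N)$ sub-intervals of length $N^2$ together with the naive Cauchy--Schwarz bound would degrade the final probability lower bound from $c_1/(m^2\log^2(N))$ to a worse power of $\log(N)$. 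Obtaining the exponent stated in the lemma requires reproducing the proof of Proposition \ref{pro:CurrentNtoO} with the extra logarithmic factor absorbed directly into the tail scale, thereby keeping the variance bound at $\mathcal{O}(N\log^4(N))$ at the longer timescale.
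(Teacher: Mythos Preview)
Your proposal is correct and follows essentially the same route as the paper: Proposition~\ref{pro:CurrentMaxCurrent3} and Lemma~\ref{lemMonotonicityBis} for the mean gap, Lemma~\ref{lem:CurrentOpenASEPgeneral} for the variance on sub-intervals, and the Paley--Zygmund inequality exactly as in Lemma~\ref{lem:CurrentMaxPhase}. The paper's proof simply writes ``By Lemma~\ref{lem:CurrentOpenASEPgeneral}'' and bounds $\Var\big(\mathcal{J}^{(j)}_{iT/m}-\mathcal{J}^{(j)}_{(i-1)T/m}\big)\leq C_2 N\log^4(N)$ for sub-intervals of length $T/m=N^2\log(N)$, without further comment, so the ``technical obstacle'' you flag at the end is in fact glossed over in the original; your proposed resolution (rerunning the moderate-deviation argument of Proposition~\ref{pro:CurrentNtoO} with the extra $\log(N)$ absorbed into the deviation scale, which keeps the variance at $\mathcal{O}(N\log^4(N))$ on the longer interval) is the right fix.
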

\begin{proof}
By Proposition \ref{pro:CurrentMaxCurrent3} and Lemma \ref{lemMonotonicityBis}, there exist $c_1,C_1>0$ such that
\begin{equation*}
c_1 m \sqrt{N}\log(N) \leq  \E\big[  \mathcal{J}^{(2)}_{T} \big] -  \E\big[\mathcal{J}^{(1)}_{T} \big]  \leq  C_1 m \sqrt{N}\log(N) 
\end{equation*} for all $N \in \N$ large enough. Here, we choose  $C_L^{\prime},C_R^{\prime}$ large enough by increasing $\alpha$ and $\beta$ while decreasing $\gamma$ and $\delta$ to meet the assumptions of Proposition \ref{pro:CurrentMaxCurrent3}. By Lemma \ref{lem:CurrentOpenASEPgeneral},
\begin{equation*}
\Var(\mathcal{J}^{(1)}_{T}-\mathcal{J}^{(2)}_{T}) \leq 4 m^4\max_{i\leq \lbr m \rbr , j\in \{1,2\} }\Var\Big(\mathcal{J}^{(j)}_{iT/m} - \mathcal{J}^{(j)}_{(i-1)T/m} \Big) \leq C_2 m^4 N\log^{4}(N)
\end{equation*} for some constant $C_2>0$ and all $N$ large enough.
As in Lemma \ref{lem:CurrentMaxPhase}, we apply the Paley--Zygmund inequality to conclude.
\end{proof}

\section{Mixing times in the weakly high and low density phase}\label{sec:MixingHighLow}

In this section, we prove estimates on the mixing times for the {weakly high and weakly low density phase, i.e. the parameters $A$ and $C$ from \eqref{def:a} and \eqref{def:c} satisfy $A>\max(C,1)$ (respectively $C>\max(A,1)$) with $A,C\rightarrow 1$ as $N \rightarrow \infty$.} We provide an iterative scheme to bound the number of second class particles in the segment over time. This follows ideas from Section~7 of \cite{GNS:MixingOpen} for mixing times in the high and low density phase with constant boundary and bias parameters.

\subsection{Iterative bounds on the mixing time}\label{sec.IterativeMixingTimes}

Throughout this section, we assume that $A>\max(1,C)$ as well as that $q$ satisfies \eqref{def:TriplePointScaling} for some $\kappa \in [0,\frac{1}{2}]$. We have the following result on the mixing time of the open ASEP in the weakly high density phase.

\begin{proposition}\label{pro:MixingTimesWeaklyHighLow}
Suppose that $\kappa<\frac{1}{2}$ and assume that $\mu_N=\textup{Ber}_N(\rho_n)$ such that
\begin{equation}
\rho_n = \frac{1}{2} + 2^{-n}
\end{equation} for some $n\in \lbr \frac{1}{2}\log_2(N)\rbr$.
There exist constants $C_0,c_1,C_1>0$ such that the $\varepsilon$-mixing time of the open ASEP satisfies for all $N$ large enough
\begin{equation}
 \frac{t^{N}_{\mix}(\varepsilon)}{(1-q)^{-1} 2^{n} N} \leq C_0
\end{equation}
for all $\varepsilon \in (0,1)$ with $\varepsilon \geq C_1\exp(-c_1\min(2^{-n}N^{1/2},N^{\expont})^{1/2})$, where we take $\expont$ from \eqref{def:KappaEpsilon}.
\end{proposition}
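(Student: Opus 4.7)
The plan is to adapt the proof strategy from Section~7 of Gantert, Nestoridi, and Schmid~\cite{GNS:MixingOpen}, which treats the high density phase with constant parameters, to the weakly high density regime; the refined inputs will come from the current estimates in Section~\ref{sec:CurrentEstimates} and Lemma~\ref{lem:CurrentHighLowFinal}. The starting point is the standard reduction to a coalescence problem: couple two copies of the open ASEP, $(\eta^{\uparrow}_t)_{t \geq 0}$ and $(\eta^{\downarrow}_t)_{t \geq 0}$, with parameters $(\alpha,\beta,\gamma,\delta,q)$ starting from the extremal configurations $\one$ and $\zero$, via the basic coupling of Definition~\ref{def:BasicCoupling}. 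The resulting disagreement process initially contains $N$ type-$\Bup$ second class particles; because boundary updates are synchronized in the basic coupling, the total number of disagreements $D_t := \sum_{x \in \lbr N \rbr}(\eta^\uparrow_t(x) - \eta^\downarrow_t(x))$ is almost surely non-increasing. A standard monotonicity argument then reduces the claim to showing $\P(D_{T_\mix} \geq 1) \leq \varepsilon$ for a suitable constant $C_0$ and $T_\mix := C_0 (1-q)^{-1} 2^n N$.

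To control $(D_t)_{t \geq 0}$, introduce two auxiliary stationary open ASEPs $(\eta^{(1)}_t)_{t \geq 0}$ and $(\eta^{(2)}_t)_{t \geq 0}$ whose invariant measures are $\textup{Ber}_N(\rho_N^{(1)})$ and $\textup{Ber}_N(\rho_N^{(2)})$, with $\rho_N^{(1)} = \tfrac{1}{2} + 2^{-n}$ matching $\mu_N$, and $\rho_N^{(2)} = \tfrac{1}{2} + 2^{-(n+1)}$ obtained by decreasing $\alpha$ and $\beta$ as in Lemma~\ref{lem:CurrentHighLowFinal}. Following Section~7 of~\cite{GNS:MixingOpen}, embed all four processes into a partially ordered multi-species exclusion process, in which pairs sharing the same dynamics are coupled by the basic coupling, while $(\eta^{(1)}_t)_{t\geq 0}$ and $(\eta^{(2)}_t)_{t \geq 0}$ are linked through an auxiliary layer that compensates for their mismatched boundary rates by inserting labelled second class particles at sites $1$ and $N$. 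The design of this coupling ensures that, in each time window of length $T := 2^n N (1-q)^{-1}$, the decrease of $D_t$ is stochastically dominated below by a quantity comparable to the stationary current difference $\mathcal{J}^{(2)}_T - \mathcal{J}^{(1)}_T$ through a fixed interior site, up to correction terms of order $\sqrt{N}$ arising from the initial density fluctuations and from the boundary flux discrepancy between $(\eta^{(1)}_t)_{t \geq 0}$ and $(\eta^{(2)}_t)_{t \geq 0}$.

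By Lemma~\ref{lem:CurrentHighLowFinal}, $\mathcal{J}^{(2)}_T - \mathcal{J}^{(1)}_T \geq 2^{-(n+2)} N$ holds with probability at least $1 - C_1 \exp(-c_1 \min(2^{-n}\sqrt{N}, N^{\expont}))$. Iterating the comparison on a carefully chosen telescoping sequence of auxiliary density pairs across successive windows of length $T$, and using the monotonicity of $(D_t)_{t \geq 0}$ together with a union bound over the rare events in each window, drives $D_t$ below one before time $T_\mix$, with total failure probability bounded by $C_2 \exp(-c_2 \min(2^{-n}\sqrt{N}, N^{\expont}))$, which yields the announced bound on the mixing time. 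The principal difficulty is constructing the multi-species coupling so that this deterministic domination of the disagreement decrease by the current difference is preserved in the weakly high density regime: the boundary rate differences $\alpha-\alpha'$ and $\beta-\beta'$ scale as $N^{-1/2}$ and must be tracked carefully against the $N$-dependent bias $q$ and the $N^{-1/2}$ error tolerated in~\eqref{def:LiggettTypeCondition}. The sharp moderate deviation threshold in Lemma~\ref{lem:CurrentHighLowFinal}, together with the precise second order current asymptotics of Proposition~\ref{pro:CurrentMaxCurrent1}, is exactly what enables this coupling argument to close.
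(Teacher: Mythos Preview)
Your outline captures the right machinery (the partially ordered multi-species coupling from Section~7 of \cite{GNS:MixingOpen} together with Lemma~\ref{lem:CurrentHighLowFinal}), but the time accounting does not close. You fix a single window length $T=2^n N(1-q)^{-1}$ and assert that in each such window the disagreement count $D_t$ drops by an amount comparable to $\mathcal{J}^{(2)}_T-\mathcal{J}^{(1)}_T\asymp 2^{-n}N$. Starting from $D_0=N$, this would require order $2^{n}$ windows and hence total time of order $2^{2n}N(1-q)^{-1}$, which for $2^n$ near $\sqrt N$ is $N^{2}(1-q)^{-1}$, not the claimed $N^{3/2}(1-q)^{-1}$. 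The phrase ``telescoping sequence of auxiliary density pairs'' hints at a fix but is not a proof: with the window length held at $T$ and the pair changed, you would have to redo Lemma~\ref{lem:CurrentHighLowFinal} at each level and, more importantly, you would need to know that the number of surviving disagreements before the $k$-th window is already $O(2^{-(k-1)}N)$---otherwise the diminished-process comparison (Lemma~\ref{lem:DiminishedMultiASEP}) gives you nothing useful, since its conclusion depends on the initial count $y$ of type $1,2,3$ particles.

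The paper closes this gap by an explicit recursion on the \emph{model}, not on time windows within a fixed model. It introduces the coupling time $t^{N,n}_{\cou}(\varepsilon)$ and proves (Lemma~\ref{lem:IterateMixingTimesHigh})
\[
t^{N,n}_{\cou}\bigl(2\varepsilon+f(n)\bigr)\;\le\;t^{N,n-1}_{\cou}(\varepsilon)\;+\;4\,(1-q)^{-1}2^{n}N,
\]
with base case $t^{N,1}_{\cou}(\varepsilon)\le C(1-q)^{-1}N$. The point is that $t^{N,n-1}_{\cou}$ refers to a \emph{different} open ASEP, namely the one with stationary density $\tfrac12+2^{-(n-1)}$; by sandwiching the level-$n$ process between two level-$(n-1)$ processes (one high density and one low density, via monotonicity of the basic coupling in the boundary rates), one first waits $t^{N,n-1}_{\cou}(\varepsilon)$ so that the level-$n$ disagreement process is squeezed between $\textup{Ber}_N(\tfrac12\pm 2^{-(n-1)})$ samples, leaving only $O(2^{-(n-1)}N)$ disagreements. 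Only then does the current-difference argument (Lemma~\ref{lem:ExittimesSecondClass}) apply and kill the remaining disagreements in one additional $O(2^nN(1-q)^{-1})$ step. Summing the geometric series over levels gives $O(2^nN(1-q)^{-1})$ in total. This two-stage structure---first reduce the disagreement count via an auxiliary model at the previous level, then eliminate---is precisely what your proposal is missing. Two smaller points: Proposition~\ref{pro:CurrentMaxCurrent1} concerns the maximal current phase and plays no role here; and the auxiliary process $(\eta^{(2)}_t)$ in Lemma~\ref{lem:CurrentHighLowFinal} is obtained by keeping $\alpha,\gamma$ fixed and increasing $\beta$ (so that only $\rho_{\Rup}$ drops to $\tfrac12+2^{-(n+1)}$), not by decreasing both $\alpha$ and $\beta$.
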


In order to show Proposition \ref{pro:MixingTimesWeaklyHighLow}, we will prove a recursion on the \textbf{coupling time} $t_{\cou}=t_{\cou}^{N,n}(\varepsilon)$, where we set for all $\varepsilon\in (0,1)$
\begin{equation}\label{def:MixingTimeCoupling}
t_{\cou}^{N,n}(\varepsilon) := \inf\Big\{ t \geq 0 \colon \mathbf{P}\big(\tau^{N,n}_{\textup{coal}} \leq t \big) \geq 1-\varepsilon \Big\}
\end{equation}
as the first time $t$ such that the probability under the basic coupling that the open ASEP from any pair of initial states has coalesced by time $t$ is larger than $1-\varepsilon$. Here, $\tau^{N,n}_{\textup{coal}}$ denotes the coalescence time of two open ASEPs under the basic coupling $\mathbf{P}$ with the same parameters and stationary distribution $\mu_N=\textup{Ber}_N(\rho_n)$, started from the extremal configurations $\one$ and $\zero$, respectively.
Note that $t^{N,n}_{\cou}(\varepsilon) \geq t^{N}_{\mix}(\varepsilon)$ using the coupling representation of the total variation distance -- see Lemma 2.2 in \cite{GNS:MixingOpen} -- and that $t_{\cou}^{N,n}(\varepsilon)$ is decreasing in $\varepsilon$.

\begin{lemma}\label{lem:IterateMixingTimesHigh}
Suppose that $\kappa<\frac{1}{2}$ and let $\mu_N=\textup{Ber}_N(\rho_n)$ be such that
\begin{equation}
\rho_n = \frac{1}{2} + 2^{-n}
\end{equation} for some $n\in \lbr \frac{1}{2}\log_2(N)\rbr$.
Then there exist constants $c_0,C_0>0$ such that the coupling  time of the open ASEP satisfies for any $\varepsilon \in (0,1)$ 
\begin{equation}\label{eq:MixingHighIteration}
t_{\cou}^{N,n}\left(2\varepsilon + C_0\exp(- c_0\min(2^{-n}N^{1/2},N^{\expont})^{1/2}\right) \leq t_{\cou}^{N,n-1}\big(\varepsilon\big) + 4(1-q)^{-1}2^{n}N
\end{equation} for all $N$ large enough, Moreover, there exist  constants $C_1,C_2,c_2>0$ such that 
\begin{equation}\label{eq:MixingBaseCase}
t_{\cou}^{N,1}(\varepsilon) \leq C_1 (1-q)^{-1} N
\end{equation} for all $\varepsilon \geq C_2\exp(-c_2N^{\expont/2})$, and all $N$ large enough.
\end{lemma}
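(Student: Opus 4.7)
The base case \eqref{eq:MixingBaseCase} concerns $n=1$, where the stationary density $\rho_N^{(1)}=1$ places the process well inside the (weakly) high density phase. Here the strategy of \cite{GNS:MixingOpen} for constant boundary parameters carries over almost verbatim: under the basic coupling between $(\eta^{\one}_t)_{t \geq 0}$ and $(\eta^{\zero}_t)_{t \geq 0}$, the disagreement gets absorbed through the right boundary in time $O((1-q)^{-1}N)$, with the $N$-dependence of $q$ appearing only via the time rescaling $(1-q)^{-1}$. The tail budget $C_2\exp(-c_2 N^{\expont})$ is supplied by the moderate deviation bound of Proposition \ref{pro:CurrentNtoO} applied at the level $n=1$.

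For the iteration step \eqref{eq:MixingHighIteration}, fix $n\geq 2$ and couple the two level-$n$ open ASEPs $(\eta^{\one}_t)_{t \geq 0}$ and $(\eta^{\zero}_t)_{t \geq 0}$ from the extremal configurations through the basic coupling. The key input is Lemma \ref{lem:CurrentHighLowFinal}: introducing an auxiliary stationary open ASEP with effective density $\rho_N^{(2)}=\tfrac12+2^{-(n+1)}$, closer to the triple point, produces a current excess of at least $2^{-(n+2)}N$ over the stationary level-$n$ current by time $T':=(1-q)^{-1}2^n N$, with probability at least $1-C_0\exp(-c_0\min(2^{-n}\sqrt{N},N^{\expont}))$. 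I plan to iterate this estimate over the four subintervals $[(k-1)T',k T']$ for $k=1,\dots,4$, so that over the full duration $T_n:=4T'=4(1-q)^{-1}2^n N$ the cumulative current gap reaches order $2^{-n}N$, while the failure probability stays of the same exponential order after a union bound.

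With that flux gap in hand, the next step is to show that at time $T_n$ the pair $(\eta^{\one}_{T_n},\eta^{\zero}_{T_n})$ can be re-coupled, with probability at least $1-\varepsilon-\tfrac12 C_0\exp(-c_0\min(2^{-n}\sqrt{N},N^{\expont}))$, to a fresh pair of level-$(n-1)$ open ASEPs started from the extremal configurations. Concretely, the current difference forces the density in the disagreement process to equilibrate sufficiently that Lemma \ref{lem:Bernoulli} — together with the monotonicity of the basic coupling in boundary parameters — sandwiches $(\eta^{\one}_{T_n},\eta^{\zero}_{T_n})$ between two level-$(n-1)$ trajectories run from $\one$ and $\zero$ respectively. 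A standard application of the strong Markov property at time $T_n$ together with the inductive bound $t_{\cou}^{N,n-1}(\varepsilon)$ then completes the coupling within the stated additional time $4(1-q)^{-1}2^n N$ and within the error $2\varepsilon+C_0\exp(-c_0\min(2^{-n}\sqrt{N},N^{\expont}))$.

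The main obstacle I anticipate is implementing the sandwich step cleanly: because the level-$n$ and level-$(n-1)$ processes have \emph{different} boundary parameters, producing a coupling that simultaneously upper-bounds $\eta^{\one}_{T_n}$ and lower-bounds $\eta^{\zero}_{T_n}$ by level-$(n-1)$ extremal trajectories requires choosing the auxiliary boundary rates so that the monotone basic coupling from \cite{GNS:MixingOpen} respects the required stochastic order, and then absorbing the boundary-matching moderate deviation errors into the two exponentials that form the minimum in \eqref{eq:MixingHighIteration}. This is where the two regimes $2^{-n}\sqrt N$ versus $N^{\expont}$ arise, corresponding respectively to the Proposition \ref{pro:CurrentNtoO} tail and to the tail in Lemma \ref{lem:CurrentHighLowFinal}: whichever dominates determines the effective accuracy of the bracketing.
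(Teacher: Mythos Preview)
Your plan has the two phases in the wrong order and, more critically, is missing the mechanism that converts the current gap of Lemma~\ref{lem:CurrentHighLowFinal} into an exit-time bound for the second class particles. In the paper one first runs, under the basic coupling for different boundary parameters, the level-$n$ pair $(\tilde\eta^{(1)},\tilde\eta^{(2)})$ together with a high-density level-$(n-1)$ process $\tilde\eta^{(3)}$ (stationary density $\tfrac12+2^{-(n-1)}$) and a low-density level-$(n-1)$ process $\tilde\eta^{(4)}$ (stationary density $\tfrac12-2^{-(n-1)}$), all from extremals, for time $T=t_{\cou}^{N,n-1}(\varepsilon)$. At this time the bracketing processes have, with probability $\geq 1-2\varepsilon$, reached their Bernoulli stationary laws, so the level-$n$ pair satisfies the hypothesis~\eqref{eq:Condi12} of Lemma~\ref{lem:ExittimesSecondClass}. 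Only then does one run for an additional $O(2^n N(1-q)^{-1})$ and apply that lemma. Your reversed order (run $T_n$ first, then sandwich, then apply $t_{\cou}^{N,n-1}$) fails at the sandwich step: to bound $\eta^{\one}_{T_n}$ from above and $\eta^{\zero}_{T_n}$ from below by processes started afresh at $\one$ and $\zero$, the monotone coupling in the boundary parameters forces the lower bracket to have \emph{smaller} entrance rates than level~$n$, hence to be a low-density process; the two brackets then have different stationary laws and their individual coupling does not squeeze the level-$n$ pair together.

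The deeper gap is the sentence ``the current difference forces the density in the disagreement process to equilibrate''. The current gap in Lemma~\ref{lem:CurrentHighLowFinal} compares two \emph{stationary} systems $(\eta^3,\eta^4)$, not the non-stationary pair $(\eta^{\one},\eta^{\zero})$. Transferring this flux information to the exit of the discrepancies in $(\xi_t)$ is exactly the content of Section~\ref{sec:MultiSpeciesOpenASEP}: one builds the partially ordered multi-species process $(\chi_t)$ of Definition~\ref{def:PartiallyOrdered} and its diminished projection $(\chi^\star_t)$ of Definition~\ref{def:PartiallyDiminished}, and then uses the censoring/blocking-measure estimate of Lemma~\ref{lem:DiminishedMultiASEP} to show that once order $2^{-n}N$ type-$4$ particles have exited at site~$1$, all type-$1,2,3$ particles (the genuine discrepancies) must have exited as well. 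This is Lemma~\ref{lem:ExittimesSecondClass}, and it is the argument your plan does not supply; the role of condition~\eqref{eq:Condi12} is precisely to cap the initial number of type-$1,2,3$ particles at $O(2^{-n}N)$ so that the current surplus suffices to flush them all out. The base case $n=1$ is obtained from the same Lemma~\ref{lem:ExittimesSecondClass}, where \eqref{eq:Condi12} is vacuous.
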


\begin{remark}\label{rem:MixingTimesWeaklyLow}
Using the particle-hole duality, the same result holds for the coupling time and the mixing time of the open ASEP in the low density phase,  where the densities $\rho_n$ take the form $\rho_n=\frac{1}{2} - 2^{-n}$ with some $n\in \lbr \frac{1}{2}\log_2(N)\rbr$.
\end{remark}

Using the recursion on the coupling time for the open ASEP in the high density phase, we can deduce Proposition \ref{pro:MixingTimesWeaklyHighLow}.

\begin{proof}[Proof of Proposition \ref{pro:MixingTimesWeaklyHighLow} using Lemma \ref{lem:IterateMixingTimesHigh}]

We define the function $f$ by 
\begin{equation}
   f(n):=C_0\exp\big(- c_0\min(2^{-n}N^{1/2},N^{\expont})^{1/2}\big) 
\end{equation}
 for all $n\in \N$. Then we can rewrite \eqref{eq:MixingHighIteration} as
\begin{equation}\label{eq:ItEq}
t_{\cou}^{N,n}(\varepsilon)\leq t_{\cou}^{N,n-1}\big(2^{-1}\varepsilon- 2^{-1}f(n)\big) + 4(1-q)^{-1}2^{n}N .
\end{equation}
Iterating \eqref{eq:ItEq}, we get that
\begin{equation}
t_{\cou}^{N,n}(\varepsilon)\leq t_{\cou}^{N,1}\big(2^{-n}\varepsilon- F(n)\big) + 8(1-q)^{-1}2^{n}N,
\end{equation}
where we set $F(n):=\sum_{k=1}^n 2^{-k}f(k)$.
Choosing now constants $c_3,C_3>0$ accordingly, depending on the choice of $c_2,C_2>0$ in Lemma \ref{lem:IterateMixingTimesHigh}, a simple computation shows that for any $n \in \lbr \frac{1}{2} \log_2(N) \rbr$, any $\varepsilon \geq C_3 \exp(-c_3 \min(2^{-n}N^{1/2},N^{\expont})^{1/2})$, and all $N$ large enough,
\begin{equation}
2^{-n}\varepsilon- F(n)\geq C_2 \exp\big(-c_2 N^{\expont/2}\big) .
\end{equation}
Using \eqref{eq:MixingBaseCase} and the fact that $t_{\cou}^{N,n}(\varepsilon) \geq t_{\mix}^{N,n}(\varepsilon)$ for all $\varepsilon \in (0,1)$, we conclude.
\end{proof}

Before we give the proof of Lemma \ref{lem:IterateMixingTimesHigh},
we state a corresponding result for the case where $q$ from \eqref{def:TriplePointScaling} satisfies $\kappa=\frac{1}{2}$. Again, note that  the results stated for the high density phase also apply in the low density phase using the particle hole symmetry.

\begin{proposition}\label{pro:MixingTimesWeaklyHighLowCritical}
Let $q$ from \eqref{def:TriplePointScaling} satisfy $\kappa=\frac{1}{2}$.  Let $\mu_N=\textup{Ber}_N(\rho_n)$ be such that we have $
\rho_n = \frac{1}{2} + 2^{-n}$ for some $n$ with
\begin{equation}\label{eq:nassumption}
n \in \big\lbr \log_2(c_1^{-1}\log(N)), \log_2(c_2 \sqrt{N}\log^{-1}(N))\big\rbr
\end{equation} and some constant $c_2>0$, where the constant $c_1>0$ is taken from Lemma \ref{lem:ModDeviationMaxSecondClass}.  Then there exists a constant $C_0>0$ such that the mixing time of the open ASEP satisfies for all $\varepsilon \geq N^{-8}$
\begin{equation}
\limsup_{N \rightarrow \infty} \frac{t^{N}_{\mix}(\varepsilon)}{\max(2^{n},n\log(N)) N^{3/2} } \leq C_0 .
\end{equation}
\end{proposition}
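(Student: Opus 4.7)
My plan would be to follow the iterative scheme used to prove Proposition \ref{pro:MixingTimesWeaklyHighLow}, adapted to the critical scaling $\kappa=\frac{1}{2}$. Two genuine new difficulties arise: first, the moderate-deviation inputs at the triple point (notably Lemma \ref{lem:CurrentCriticalKappaHigh}) give only polynomial success probabilities $1-N^{-9}$, so the iteration can only afford at most polynomially many rounds; second, the admissible range in \eqref{eq:nassumption} prevents iteration past the level $n_0:=\lceil \log_2(c_1^{-1}\log N)\rceil$, which therefore has to serve as the base case.

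I would first establish a critical analogue of \eqref{eq:MixingHighIteration}: for every $n$ satisfying \eqref{eq:nassumption} with $n-1\geq n_0$,
\begin{equation*}
t_{\cou}^{N,n}\bigl(2\varepsilon + N^{-9}\bigr) \leq t_{\cou}^{N,n-1}(\varepsilon) + C\cdot 2^{n}\,N^{3/2}
\end{equation*}
for some constant $C>0$. The argument would mirror Lemma \ref{lem:IterateMixingTimesHigh} line by line, using Lemma \ref{lem:CurrentCriticalKappaHigh} in place of Lemma \ref{lem:CurrentHighLowFinal} for the current comparison over the time horizon $T=2^n N(1-q)^{-1}\asymp 2^n N^{3/2}$, and the critical version of Lemma \ref{lem:SpeedOfPropagationCoupling} to control the rightmost second-class particle in the coupled open ASEPs (the extra $\log^{-1}(N)$ factor in the propagation estimate is absorbed by the $2^n N^{3/2}$ time budget).

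Next, I would handle the base case at level $n_0$, where the density is $\rho_{n_0}=\frac{1}{2}+\Theta(1/\log N)$ and further iteration is no longer available. The plan is to reapply the coupling argument of the iteration step at level $n_0$ a total of $O(n_0)=O(\log\log N)$ times, each time over a window of length $\Theta(2^{n_0}N^{3/2})=\Theta(\log(N)\,N^{3/2})$ during which Lemma \ref{lem:CurrentCriticalKappaHigh} guarantees a constant-fraction drop in the number of boundary disagreements. With a union bound on the $O(n_0)$ failure probabilities of size $N^{-9}$, this would yield
\begin{equation*}
t_{\cou}^{N,n_0}\bigl(\tilde\varepsilon + N^{-8}\bigr) \leq C'\, n_0 \cdot 2^{n_0}\, N^{3/2}
\end{equation*}
for $\tilde\varepsilon\geq N^{-9}$. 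Chaining the iteration step from level $n$ down to $n_0$ (tracking how the error parameter evolves from $\varepsilon\geq N^{-8}$ at level $n$ to at least $N^{-9}$ at level $n_0$, using $n-n_0\leq \frac{1}{2}\log_2 N$) and invoking the base case gives
\begin{equation*}
t_{\cou}^{N,n}(\varepsilon) \leq C'\,n_0 \cdot 2^{n_0}\,N^{3/2} + C\sum_{k=n_0+1}^{n} 2^k\,N^{3/2} \leq C''\,\max\bigl(2^n,\,n\log N\bigr)\,N^{3/2}.
\end{equation*}
The claim then follows from $t_{\mix}^N(\varepsilon)\leq t_{\cou}^{N,n}(\varepsilon)$.

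The hard part will be the base case: without the exponential tails available in the proof of Lemma \ref{lem:IterateMixingTimesHigh}, one must justify that a single current-comparison round at level $n_0$ already reduces the count of disagreements by a constant fraction, so that only $O(\log\log N)$ rounds are needed and the union bound stays below $N^{-8}$. A secondary but nontrivial subtlety is the logarithmic slowdown in the critical propagation estimates, which needs to be tracked carefully through the coupling but is ultimately absorbed by the time scale of each iteration.
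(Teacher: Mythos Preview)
Your iteration step is correct and matches the paper's Lemma~\ref{lem:IterateMixingTimesHighCritical} essentially verbatim: one feeds Lemma~\ref{lem:CurrentCriticalKappaHigh} (in place of Lemma~\ref{lem:CurrentHighLowFinal}) into the exit-time machinery of Section~\ref{sec:MultiSpeciesOpenASEP}, obtaining the recursion with polynomial error $N^{-9}$. Your tracking of the error parameter from $\varepsilon\ge N^{-8}$ at level $n$ down to level $n_0$ is also fine.

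The gap is in your base case. Lemma~\ref{lem:CurrentCriticalKappaHigh} does \emph{not} give a constant-fraction drop in the number of disagreements: over one window of length $\Theta(2^{n_0}N^{3/2})$ it only guarantees that $\Theta(2^{-n_0}N)\asymp N/\log N$ type-$4$ particles exit in the partially ordered multi-species process $(\chi_t)_{t\ge 0}$, and this count does not depend on how many type-$1,2,3$ particles remain. The reason the drop looks like a constant fraction in the iteration step is the Bernoulli sandwich $\frac12\pm 2^{-(n-1)}$ on the initial data; but after one round at level $n_0$ you have no way to re-establish such a sandwich, since that requires the coupling time at level $n_0-1$, which lies outside the admissible range~\eqref{eq:nassumption}. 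So $O(n_0)=O(\log\log N)$ rounds are not enough.

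The paper therefore handles the base case differently (second part of Lemma~\ref{lem:ExittimesSecondClassCritical}): start directly from the extremals $\mathbf{1},\mathbf{0}$ with $N$ disagreements, and iterate the current-comparison bound $2^{n_0}\asymp\log N$ times (not $n_0$ times), so that at least $8N$ type-$4$ particles have exited, which forces out all $\le N$ type-$1,2,3$ particles via Lemma~\ref{lem:DiminishedMultiASEP}. This costs $\Theta(2^{2n_0}N^{3/2})$ time for the base case rather than your claimed $\Theta(n_0\cdot 2^{n_0}N^{3/2})$.

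A minor remark: the $\log^{-1}(N)$ factor in the critical propagation estimate does not enter the exit-time argument. The control on the relative order of second-class particle types inside $(\chi_t)_{t\ge 0}$ comes from the blocking-measure bound Lemma~\ref{lem:DiminishedMultiASEP}, not from Lemma~\ref{lem:SpeedOfPropagationCoupling}.
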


Similar to Lemma~\ref{lem:IterateMixingTimesHigh}, we have the following recursion on the coupling time. 

\begin{lemma}\label{lem:IterateMixingTimesHighCritical}
Let $q$ from \eqref{def:TriplePointScaling} satisfy $\kappa=\frac{1}{2}$.  Let $\mu_N=\textup{Ber}(\rho_n)$ for $
\rho_n = \frac{1}{2} + 2^{-n}
$ be such that $n-1$ and $n$ satisfies \eqref{eq:nassumption}.
Then there exists a constant $C_0>0$ such that the coupling time of the open ASEP satisfies
\begin{equation}\label{eq:MixingHighIterationCritical}
t^{N,n}_{\cou}\left(2\varepsilon + C_0 N^{-9}\right) \leq t^{N,n-1}_{\cou}(\varepsilon) + (1-q)^{-1}2^{n+6}N
\end{equation} for all $N$ large enough. Moreover, there exists a constant $C_1>0$ such that for $n = \lceil \log_2(c_1^{-1}\log(N)) \rceil $, we get that
\begin{equation}\label{eq:MixingBaseCaseCritical}
t^{N,n}_{\cou}(\varepsilon) \leq C_1 N^{3/2} \log(N)
\end{equation} for all $\varepsilon \geq 2N^{-9}$, and all $N$ large enough.
\end{lemma}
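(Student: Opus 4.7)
The plan is to mirror the strategy for Lemma \ref{lem:IterateMixingTimesHigh}, substituting the critical-regime current estimate in Lemma \ref{lem:CurrentCriticalKappaHigh} for its $\kappa<1/2$ analogue in Lemma \ref{lem:CurrentHighLowFinal}, and adapting the time windows to absorb the logarithmic corrections intrinsic to $\kappa=1/2$. The overall mechanism follows Section~7 of \cite{GNS:MixingOpen}: we run the extremal coupling for a time $T_n := 64(1-q)^{-1}2^{n}N$ and show that the pair $(\eta^{\uparrow}_{T_n}, \eta^{\downarrow}_{T_n})$ started from $(\one,\zero)$ can be dominated, up to a failure probability of order $N^{-9}$, by a fresh extremal pair at the larger density gap $2^{-(n-1)}$. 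Concatenating this with $t_{\cou}^{N,n-1}(\varepsilon)$ and applying the triangle inequality for total variation distance then yields~\eqref{eq:MixingHighIterationCritical}.

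To implement the recursion, I would introduce two auxiliary stationary open ASEPs $(\eta^{(1)}_t)$ and $(\eta^{(2)}_t)$ as in Lemma \ref{lem:CurrentCriticalKappaHigh}, with effective densities $\rho^{(1)}_{\Lup}=\rho^{(1)}_{\Rup}=\rho_N = 1/2 + 2^{-n}$, while $\rho^{(2)}_{\Lup}=\rho_N$ but $\rho^{(2)}_{\Rup}=1/2+2^{-(n+1)}$. Under the basic coupling together with monotonicity of the open ASEP in its boundary parameters (Lemma~2.1 of~\cite{GNS:MixingOpen}), one obtains the componentwise sandwich $\eta^{\downarrow}_t\leq \eta^{(2)}_t\leq \eta^{(1)}_t\leq \eta^{\uparrow}_t$. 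The boundary current identity expresses $\sum_x(\eta^{\uparrow}_{T_n}(x)-\eta^{\downarrow}_{T_n}(x))$ in terms of differences of the currents through the left and right boundaries, so the current separation $\mathcal{J}^{(2)}_{T_n}-\mathcal{J}^{(1)}_{T_n}\geq c\,2^{-n}N$ supplied by Lemma \ref{lem:CurrentCriticalKappaHigh} (iterated $64$ times over disjoint sub-intervals, using stationarity) translates into a reduction of disagreements between $\eta^{\uparrow}$ and $\eta^{\downarrow}$ of order $2^{-n}N$ with probability $\geq 1-C_0N^{-9}$. A stochastic sandwich argument in the spirit of Section~7 of \cite{GNS:MixingOpen} then identifies $(\eta^{\uparrow}_{T_n}, \eta^{\downarrow}_{T_n})$ with the extremal pair at density $\rho_{n-1}=1/2+2^{-(n-1)}$, up to this failure probability.

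For the base case \eqref{eq:MixingBaseCaseCritical}, take $n_0 := \lceil \log_2(c_1^{-1}\log N)\rceil$, so that $\rho_{n_0}=1/2+\Theta(\log^{-1} N)$ sits sufficiently deep in the weakly high density phase. A single application of the current comparison in Lemma \ref{lem:CurrentCriticalKappaHigh} over the window $T_{n_0}=O(N^{3/2}\log N)$, combined with the moderate deviation estimate in Proposition \ref{pro:CurrentNtoO}, already produces enough disagreement reduction between the extremal copies to force coalescence up to failure probability at most $2N^{-9}$, terminating the iteration.

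The main obstacle is the comparatively loose error budget at $\kappa=1/2$: in the $\kappa<1/2$ case the recursion tolerates a stretched-exponential error of the form $\exp(-c\min(2^{-n}\sqrt N, N^{\expont}))$, whereas here one is limited to the polynomial bound $N^{-9}$ coming from Lemma \ref{lem:CurrentCriticalKappaHigh}. This dictates the restriction \eqref{eq:nassumption} on the range of $n$; in particular, the upper endpoint $2^{n}\leq c_2\sqrt N\log^{-1}(N)$ is precisely the regime in which Lemma \ref{lem:CurrentCriticalKappaHigh} remains valid with $N^{-9}$ confidence, and it is exactly what is needed for the telescoping sum of failure probabilities across the $O(\log N)$ iteration steps to remain negligible.
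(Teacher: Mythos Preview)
Your proposal correctly identifies Lemma~\ref{lem:CurrentCriticalKappaHigh} as the $\kappa=\tfrac12$ replacement for Lemma~\ref{lem:CurrentHighLowFinal} and the resulting $N^{-9}$ error budget, but the structure of the recursion is inverted and one of the key couplings fails.

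In the paper, the two phases run in the opposite order. One first waits time $T=t_{\cou}^{N,n-1}(\varepsilon)$: auxiliary processes $\tilde\eta^{(3)},\tilde\eta^{(4)}$ with \emph{different} boundary parameters (effective densities $\tfrac12\pm 2^{-(n-1)}$), started from $\one$ and $\zero$, are coupled to their stationary laws; since they sandwich the level-$n$ extremal pair $\tilde\eta^{(1)},\tilde\eta^{(2)}$ under the basic coupling, at time $T$ one has $\tilde\eta^{(1)}_T\preceq_{\c}\eta_0^{\textup{up}}\sim\textup{Ber}_N(\tfrac12+2^{-(n-1)})$ and $\tilde\eta^{(2)}_T\succeq_{\c}\eta_0^{\textup{low}}\sim\textup{Ber}_N(\tfrac12-2^{-(n-1)})$ with probability $\geq 1-2\varepsilon$. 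Only then does one apply the exit-time Lemma~\ref{lem:ExittimesSecondClassCritical}, which shows that from this sandwiched state all second class particles leave in additional time $(1-q)^{-1}2^{n+6}N$ with failure probability $C_0 N^{-9}$. Your version runs $T_n$ first and then tries to ``concatenate with $t_{\cou}^{N,n-1}(\varepsilon)$'', but $t_{\cou}^{N,n-1}$ is the coupling time of a \emph{different} open ASEP (the one with stationary law $\textup{Ber}_N(\tfrac12+2^{-(n-1)})$), so this concatenation does not bound $t_{\cou}^{N,n}$.

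There is also a concrete gap in your sandwich $\eta^{\downarrow}_t\leq\eta^{(2)}_t\leq\eta^{(1)}_t\leq\eta^{\uparrow}_t$. The process $\eta^{(2)}$ has right-boundary effective density $\tfrac12+2^{-(n+1)}<\tfrac12+2^{-n}$, hence a strictly larger $\beta$ than $\eta^{\downarrow}$; the basic coupling with different boundary rates (Lemma~2.1 of \cite{GNS:MixingOpen}) then does \emph{not} preserve $\eta^{\downarrow}\leq\eta^{(2)}$. In the paper this issue never arises because $\eta^{1},\eta^{2},\eta^{3}$ all share the level-$n$ boundary parameters, while $\eta^{4}$ is only compared to $\eta^{3}$ via \eqref{eq:ConstructionZeta}. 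The conversion of the current gap $\mathcal{J}^{(2)}_T-\mathcal{J}^{(1)}_T$ into the exit of the disagreements in $(\xi_t)$ is not a ``boundary current identity'' but goes through the partially ordered multi-species construction (Definition~\ref{def:PartiallyOrdered}) and the diminished process bound of Lemma~\ref{lem:DiminishedMultiASEP}: the outflow of type~$4$ second class particles at site~$1$ (measured by the current difference) forces the type~$1,2,3$ particles to exit, up to an $O(N^{\kappa}\log N)$ defect. This machinery is what actually proves Lemma~\ref{lem:ExittimesSecondClassCritical}, and your sketch bypasses it. The base case \eqref{eq:MixingBaseCaseCritical} likewise comes from the second part of Lemma~\ref{lem:ExittimesSecondClassCritical}, iterating the current bound $2^{n_0}$ times to push out all $N$ initial disagreements.
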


\begin{proof}[Proof of Proposition~\ref{pro:MixingTimesWeaklyHighLowCritical} using Lemma~\ref{lem:IterateMixingTimesHighCritical}]
    This is immediate from the same recursion as in the case $\kappa<\frac{1}{2}$.
\end{proof}

\subsection{A multi-species open ASEP with partial ordering}\label{sec:MultiSpeciesOpenASEP}

We deduce Lemma \ref{lem:IterateMixingTimesHigh} and Lemma  \ref{lem:IterateMixingTimesHighCritical}
using similar a setup as for the mixing time of the open ASEP in the high and low density phase in \cite{GNS:MixingOpen}.
To this end, we define the partially ordered multi-species open ASEP and a corresponding diminished exclusion process. The arguments are  similar to \cite{GNS:MixingOpen},  except that we require refined estimates on the current established in Section~\ref{sec:CurrentBoundsSecondMoment} -- see also Remark~7.4 in \cite{GNS:MixingOpen} -- and additional classes of particles. Let us stress that this multi-species open ASEP will be different from the multi-species ASEP on $\Z$ defined in Section~\ref{sec:ASEPs} as the different particle types only satisfy a partial order. \\

In order to define this multi-species ASEP, we couple four exclusion processes on $\lbr N \rbr$ according to the basic coupling. Adapting the notation from Section 7 in \cite{GNS:MixingOpen}, for $j \in \lbr 4 \rbr$, we define the open ASEPs $(\eta^{j}_t)_{t \geq 0}$. The processes $(\eta^{1}_t)_{t \geq 0}$, $(\eta^{2}_t)_{t \geq 0}$ and $(\eta^{3}_t)_{t \geq 0}$ are defined with respect to parameters $(q,\alpha,\beta,\gamma,\delta)$
and started from (random) configurations $\eta^{(i)}$ such that under the basic coupling
\begin{equation}\label{eq:OrderedInitialConditions}
\mathbf{P}\Big( \eta^{(1)}_t \succeq_{\textup{c}} \eta^{(3)}_t \succeq_{\textup{c}} \eta^{(2)}_t \, \forall t \geq 0 \Big) = 1 ,
\end{equation}  and such that $(\eta^{3}_t)_{t \geq 0}$ is stationary.
We define the fourth open ASEP $(\eta^{4}_t)_{t \geq 0}$  with respect to the same $q$ as the other three processes, but parameters $(\alpha^{\prime},\beta^{\prime},\gamma^{\prime},\delta^{\prime})$ for some $\alpha^{\prime} \geq \alpha$ and $\beta^{\prime} \geq \beta$, as well as $\gamma \geq \gamma^{\prime}$ and $\delta \geq \delta^{\prime}$ specified later on, and with stationary initial data. In the following, let $(\xi_t)_{t \geq 0}$ denote the disagreement process between $(\eta^{1}_t)_{t \geq 0}$ and $(\eta^{2}_t)_{t \geq 0}$ and let $(\zeta_t)_{t \geq 0}$ be the disagreement process between $(\eta^{3}_t)_{t \geq 0}$ and $(\eta^{4}_t)_{t \geq 0}$. Recall from Remark~\ref{rem:Disagreement} that we can interpret $(\xi_t)_{t \geq 0}$ and $(\zeta_t)_{t \geq 0}$ as Markov processes on $\{0,1,2\}^N$ (due to \eqref{eq:OrderedInitialConditions}) and $\{\zero,\one,\Aup,\Bup,\onep,\zerop\}^N$, respectively. Here, we use  the basic coupling for $(\xi_t)_{t \geq 0}$ and the extended disagreement process from Definition \ref{def:BasicCouplingExtended} for $(\zeta_t)_{t \geq 0}$ with respect to the same Poisson clocks. Since $(\eta^{3}_t)_{t \geq 0}$ and $(\eta^{4}_t)_{t \geq 0}$ are stationary, we can assume without loss of generality that the process $(\zeta_t)_{t \geq 0}$ is stationary as well.
Our key observation is that $\zeta_t(x) =\zero$ implies that $\xi_t(x) \in \{0,2\}$, due to assumption \eqref{eq:OrderedInitialConditions}. Similarly, $\zeta_t(x) = \one$ implies that $\xi_t(x) \in \{1,2\}$ for any $t \geq 0$ and $x\in \lbr N \rbr$. In particular, $(\zeta_t,\xi_t)_{t \geq 0}$ does not attain $(\zero,1)$ and $(\one,0)$.

\begin{definition}[Partially ordered multi-species open ASEP]\label{def:PartiallyOrdered}
Consider the combined disagreement process $(\zeta_t,\xi_t)_{t \geq 0}$, which is a Markov process on $(\{\zero,\one,\Aup,\Bup,\onep,\zerop\} \times \{0,1,2\})^{N}$. We denote by $(\chi_t)_{t \geq 0}$ a partially ordered multi-species exclusion process on
\begin{equation}
\tilde{\Omega}_{N} := \{ 1, 2_{-1},2_0,2_1,2_2, 2_3,2_4, 2_5 , 0 \}^N ,
\end{equation}
and refer to $2_i$ as a \textbf{second class particle of type} $\mathbf{i}$. Let $\chi_0(x)$ for all $x\in \lbr N \rbr$ be given by
\begin{equation}
\chi_0(x) := \begin{cases} 0 & \text{ if } \xi_0(x)=0 \text{ and }\zeta_0(x)=\zero,  \\
2_0 & \text{ if }\  \xi_0(x)=0 \text{ and }  \zeta_0(x) \in \{\Aup,\Bup,\onep,\zerop \} , \\
2_1 & \text{ if }\ \xi_0(x)=2 \text{ and } \zeta_0(x)=\zero, \\
2_2 & \text{ if }\ \xi_0(x)=2 \text{ and } \zeta_0(x) \in \{\Aup,\Bup,\onep,\zerop \} ,  \\
2_3 & \text{ if }\ \xi_0(x)=2 \text{ and } \zeta_0(x)=\one , \\
2_4 & \text{ if }\ \xi_0(x)=1 \text{ and } \zeta_0(x) \in \{\Aup,\Bup,\onep,\zerop \}  , \\
1 & \text{ if }\ \xi_0(x)=1 \text{ and }\zeta_0(x)=\one .
\end{cases}
\end{equation}
 We obtain the process $(\chi_t)_{t \geq 0}$ by following the updates of $(\zeta_t,\xi_t)_{t \geq 0}$ as a multi-species exclusion process under the basic coupling, i.e., along each edge we sort the states at rate $1$ in increasing order, and at rate $q$ in decreasing order.
However, we have the following exceptions: Whenever under the basic coupling we set $\zeta_t(N)=\Bup$ and $\xi_t(N)=1$ at some time $t$, we place a second class particle of type $4$ in $(\chi_t)_{t \geq 0}$ at site $N$, i.e., $\chi_t(N)=2_4$. Similarly, whenever we set $\zeta_t(1)=\Aup$ and $\xi_t(N)=0$ at some time $t$, we place a second class particle of type $0$ in $(\chi_t)_{t \geq 0}$ at site $1$, i.e., $\chi_t(N)=2_0$. Moreover, when a type $3$ and a type $4$ second class particle receive an update, we turn the type $3$ particle into type $2$, and the type $4$ particle into type $5$. Similarly, when a type $0$ and a type $1$ second class particle are updated, we turn the type $0$ particle into type $-1$, and the type $1$ particle into type $2$.
\end{definition}
While Definition \ref{def:PartiallyOrdered} may seem daunting at first glance, it is a key tool to relate the exit time of second class particles to a stationary system.
Since $(\zeta_t,\xi_t)_{t \geq 0}$ can not attain the values $(\one,0)$ and $(\zero,1)$ by construction, $(\zeta_t,\xi_t)_{t \geq 0}$ and $(\chi_t)_{t \geq 0}$ are in one-to-one correspondence after projecting type $2_5$  to first class particles, type $2_{-1}$ to empty sites, and types $\Aup,\Bup,\onep,\zerop$ to second class particles.
Observe that the second class particles of types $-1$ to $5$ obey the partial ordering indicated in Figure \ref{fig:SecondClassHierachy}.
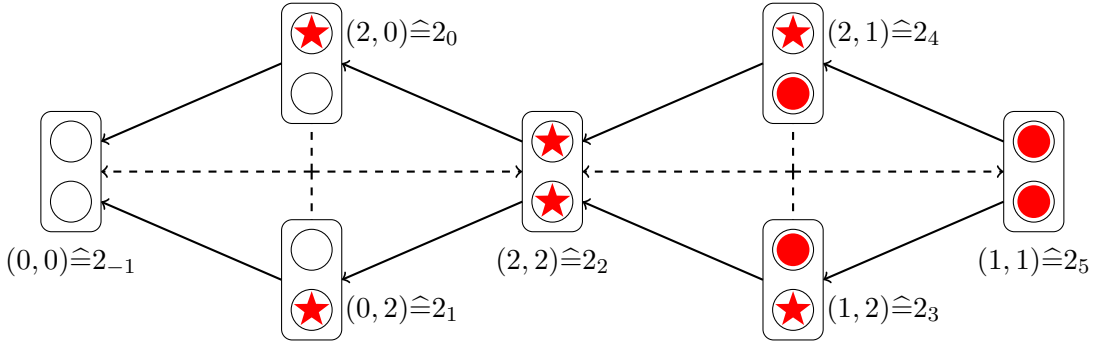
\begin{figure}
\centering
\begin{tikzpicture}[scale=0.8]
\draw[rounded corners] (-4, 0) rectangle (-3, 2);

\draw[rounded corners] (0, -1.8) rectangle (1, 0.2);

\node (X) at (2,-1.3) {$(\zero,2)\widehat{=} 2_1$};

\node (X) at (2,3.3) {$(\two,0)\widehat{=} 2_{0}$};

\draw[rounded corners] (0, 1.8) rectangle (1, 3.8);


\node (X) at (4.5-8,-0.5) {$(\zero,0)\widehat{=} 2_{-1}$};

\draw[rounded corners] (4, 0) rectangle (5, 2);

\node (X) at (4.5,-0.5) {$(\two,2)\widehat{=} 2_2$};

\draw[rounded corners] (8, -1.8) rectangle (9, 0.2);

\node (X) at (10,-1.3) {$(\one,2)\widehat{=} 2_3$};

\draw[rounded corners] (8, 1.8) rectangle (9, 3.8);

\node (X) at (10,3.3) {$(\two,1)\widehat{=} 2_{4}$};

\draw[rounded corners] (12, 0) rectangle (13, 2);

\node (X) at (12.5,-0.5) {$(\one,1)\widehat{=} 2_5$};

\draw[thick,->] (8-8, -0.8) -> (5-8, 0.5);
\draw[thick,->] (8-8, 2.8) -> (5-8, 1.5);
\draw[thick,->] (12-8, 1.5) -> (9-8, 2.8);
\draw[thick,->] (12-8, 0.5) -> (9-8,-0.8);

\draw[thick,dashed] (8.5-8,1) -- (8.5-8,1.8);
\draw[thick,dashed] (8.5-8,1) -- (8.5-8,0.2);
\draw[thick, dashed, ->] (8.5-8,1) -- (5-8, 1);
\draw[thick, dashed, ->] (8.5-8,1) -- (12-8, 1);

\draw[thick,->] (8, -0.8) -> (5, 0.5);
\draw[thick,->] (8, 2.8) -> (5, 1.5);
\draw[thick,->] (12, 1.5) -> (9, 2.8);
\draw[thick,->] (12, 0.5) -> (9,-0.8);

\draw[thick,dashed] (8.5,1) -- (8.5,1.8);
\draw[thick,dashed] (8.5,1) -- (8.5,0.2);
\draw[thick, dashed, ->] (8.5,1) -- (5, 1);
\draw[thick, dashed, ->] (8.5,1) -- (12, 1);

\node[shape=circle,scale=1.5,draw] (E) at (0.5-4,0.5){} ;

\node[shape=circle,scale=1.5,draw] (E) at (0.5-4,1.5){} ;

\node[shape=circle,scale=1.5,draw] (E) at (0.5,2.3){} ;
\node[shape=circle,scale=1.5,draw] (E) at (0.5,3.3){} ;
\node[shape=star,star points=5,star point ratio=2.5,fill=red,scale=0.55] (Y1) at (0.5,3.3) {};

\node[shape=circle,scale=1.5,draw] (E) at (0.5,-1.3){} ;
\node[shape=star,star points=5,star point ratio=2.5,fill=red,scale=0.55] (Y1) at (0.5,-1.3) {};
\node[shape=circle,scale=1.5,draw] (E) at (0.5,-0.3){} ;

\node[shape=circle,scale=1.5,draw] (E) at (4.5,1.5){} ;
\node[shape=star,star points=5,star point ratio=2.5,fill=red,scale=0.55] (Y1) at (4.5,1.5) {};

\node[shape=circle,scale=1.5,draw] (E) at (4.5,0.5){} ;
\node[shape=star,star points=5,star point ratio=2.5,fill=red,scale=0.55] (Y1) at (4.5,0.5) {};

\node[shape=circle,scale=1.5,draw] (E) at (8.5,-1.3){} ;
\node[shape=star,star points=5,star point ratio=2.5,fill=red,scale=0.55] (Y1) at (8.5,-1.3) {};

\node[shape=circle,scale=1.5,draw] (E) at (8.5,-0.3){} ;
\node[shape=circle,scale=1.2,fill=red] (Y1) at (8.5,-0.3) {};

\node[shape=circle,scale=1.5,draw] (E) at (8.5,2.3){} ;
\node[shape=circle,scale=1.2,fill=red] (Y1) at (8.5,2.3) {};

\node[shape=circle,scale=1.5,draw] (E) at (8.5,3.3){} ;
\node[shape=star,star points=5,star point ratio=2.5,fill=red,scale=0.55] (Y1) at (8.5,3.3) {};

\node[shape=circle,scale=1.5,draw] (E) at (12.5,1.5){} ;
\node[shape=circle,scale=1.2,fill=red] (Y1) at (12.5,1.5) {};

\node[shape=circle,scale=1.5,draw] (E) at (12.5,0.5){} ;
\node[shape=circle,scale=1.2,fill=red] (Y1) at (12.5,0.5) {};
	\end{tikzpicture}	
\caption{\label{fig:SecondClassHierachy}
Illustration of the possible pairs for $(\zeta_t,\xi_t)_{t \geq 0}$  in Definition \ref{def:PartiallyOrdered} and as well as their corresponding values in $(\chi_t)_{t \geq 0}$. The partial ordering is indicated by the directions of the (solid) arrows. The vertical dashed line indicates which pairs are not comparable under the partial ordering, and the horizontal dashed arrows indicate the outcome when an incomparable pair receives an update. Here, $\two$ for $\zeta_t$ stands for any value in the set $\{\Aup,\onep,\Bup,\zerop\}$.}
 \end{figure}
Note that all second class particles which enter at site $N$ must have type $4$ and all second class particles which enter at sites $1$ must have type~$0$. \\
We will now follow the construction of a diminished process $(\chi^{\star}_t)_{t \geq 0}$ as in the proof of Lemma~7.2 of \cite{GNS:MixingOpen}, which uses the partially ordered multi-species open ASEP $(\chi_t)_{t \geq 0}$ (however, in contrast to \cite{GNS:MixingOpen}, we will allow in the partially ordered multi-species open ASEP $(\chi_t)_{t \geq 0}$ also for type $-1$ and type $0$ particles).  In this process $(\chi^{\star}_t)_{t \geq 0}$, we delete all sites which are either empty,  occupied by a first class particle or of type $0$ or $-1$ in $\chi_t$, merging edges if necessary. Then replace all type $1,2,3$ second class particles by empty sites, and all type $4$ and type $5$ second class particles by first class particles. Finally, we extended the configuration to an element of $\{0,1\}^{\Z}$. We will now formalize this construction.
\begin{definition}[Diminished partially ordered multi-species ASEP]\label{def:PartiallyDiminished} Given the process $(\chi_t)_{t \geq 0}$, we start by constructing a family of vectors $(v_t)_{t \geq 0}$, where $v_t \in  \{ 0,1\}^k$ with some $k=k(t)\in \N \cup \{ 0\}$. For all $t\geq 0$, let $v_t$ denote the vector of type $1$ to $5$ second class particles which have left the segment at the site $1$ by time $t$. More precisely, we place
\begin{itemize}
\item  $1$ at position $k+1-i$  if the $i^{\text{th}}$ second class particle which exited is of type $4$ or $5$,
\item  $0$ at position $k+1-i$ if the $i^{\text{th}}$ second class particle which exited is of type $1,2,3$.
\end{itemize}
Let us stress that we only consider the type $1$ to $5$ second class particles exiting for $(v_t)_{t \geq 0}$.
For all $t\geq 0$, we assign a configuration $\chi^{\star}_t=\chi^{\star}_t(v_t) \in \{0,1\}^\Z$  as follows.
\begin{itemize}
\item Delete all vertices in $\chi_t$ which are empty, contain a first class particle or a type $-1$ or type $0$ second class particle to obtain a configuration $\chi^{\prime}_t$.
\item Concatenate the vector $v_t$ at the left-hand side of the diminished configuration $\chi^{\prime}_t$.
\item Turn all second class particles in $(v_t,\chi^{\prime}_t)$ to empty sites if they are of type $1,2$ or $3$ and turn them into first class particles if they are of type $4$ or $5$ to get an element~$\chi^{\prime\prime}_t$.
\item Extend $\chi^{\prime\prime}_t$ to a configuration $\chi^{\star}_t \in \{0,1\}^\Z$ by adding empty sites at the left-hand side and first class particles at the right-hand side of the segment.
\end{itemize}
Note that so far, $\chi^{\star}_t$ is only defined up to translations on $\Z$. We define the process $(\chi^{\star}_t)_{t \geq 0}$ from $(\chi_t)_{t \geq 0}$ such that $\chi^{\star}_0 \in \mathfrak{A}_0$, for $\mathfrak{A}_0$ defined in \eqref{def:BlockingStateSpace}. For $t>0$, suppose that $\chi^{\star}_{t_-} \in \mathfrak{A}_m$ holds for some $m\in\Z$. If at time $t$ a second class particle of type $1,2$ or $3$ exits at the right-hand side boundary in $\chi_t$, we choose the updated configuration such that $\chi^{\star}_{t} \in \mathfrak{A}_{m-1}$ holds. In all other cases, we choose $\chi^{\star}_{t} \in \mathfrak{A}_{m}$.
\end{definition}

A visualization of this construction (without type $-1$ and type $0$ particles) can be found as Figure 8 in \cite{GNS:MixingOpen}. Note that by construction, the process $(\chi^{\star}_t)_{t \geq 0}$ is supported on $\mathfrak{A}=\bigcup_{m \in \Z} \mathfrak{A}_m$, with $\mathfrak{A}_m$ from \eqref{def:BlockingStateSpace},  and thus has almost surely a finite left-most particle $(L^{\star}_t)_{t \geq 0}$ and right-most empty site $(R^{\star}_t)_{t \geq 0}$.

 \begin{lemma}\label{lem:DiminishedMultiASEP}
Let $q$ satisfy \eqref{def:TriplePointScaling} for some $\kappa \in [0,\frac{1}{2}]$ and $\psi>0$. Suppose that $\xi_0$ contains at most $y$ many second class particles, and hence $\chi_0$ at most $y$ many second class particles of types $1,2,3$. Then there exist constants $c_0,C_0,C_1>0$ such that for all $x \geq C_1 \log(N)$, and all $N$ large enough,
\begin{equation}\label{eq:DiminishedMulti}
 \P \left(  L^{\star}_{t},R^{\star}_{t} \in \left[ -xN^{\kappa}-y,  xN^{\kappa}\right] \,  \forall 0 \leq t \leq N^3  \right) \geq 1- C_0\exp(-c_0 x) .
\end{equation}
 \end{lemma}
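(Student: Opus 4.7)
The plan is to realize the diminished process $(\chi^\star_t)_{t \geq 0}$ as a censored asymmetric exclusion process on $\bigcup_{m \in \Z} \mathcal{A}_m$, and then invoke Lemma~\ref{lem:BlockingMeasureMaximum}. The first key step will be to establish a conservation property: the number of particles of types $1,2,3$ in the partially ordered multi-species ASEP $(\chi_t)_{t \geq 0}$ is non-increasing in time. Inspecting Definition~\ref{def:PartiallyOrdered}, sorting moves between comparable particle types preserve all counts; the two special transitions $(2_3, 2_4) \mapsto (2_2, 2_5)$ and $(2_0, 2_1) \mapsto (2_{-1}, 2_2)$ each map one particle of type in $\{1,2,3\}$ to another particle of type in $\{1,2,3\}$; and the boundary creation rules introduce only particles of type $0$ (at site $1$) or type $4$ (at site $N$). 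Hence the cardinality of $\{x \in \lbr N \rbr : \chi_t(x) \in \{2_1, 2_2, 2_3\}\}$ can only decrease through boundary exits of such particles. Since $\chi_0$ contains at most $y$ particles of these types, the total number of type $1,2,3$ exits at site $N$ is bounded by $y$ deterministically. By Definition~\ref{def:PartiallyDiminished}, this is precisely the event that decreases the integer $n_t$ satisfying $\chi^\star_t \in \mathcal{A}_{n_t}$, and since $\chi^\star_0 \in \mathcal{A}_0$, I conclude that $n_t \in [-y, 0]$ for all $t \geq 0$.

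Next, I will interpret $(\chi^\star_t)_{t \geq 0}$ as a censored ASEP on $\bigcup_m \mathcal{A}_m$. Updates in $(\chi_t)$ that involve only deleted sites — empty sites, first class particles, and type $-1$ or type $0$ second class particles — have no effect on $\chi^\star_t$ and correspond to censored edges. Every other internal transition corresponds, after the projection of types $4$ and $5$ to first class particles and types $1,2,3$ to empty sites (together with a relabeling of positions to account for the deletions), to a genuine asymmetric exclusion move in $\chi^\star_t$. Although the pattern of censored edges depends on the state of $(\chi_t)$ rather than being externally prescribed, the conclusion of Lemma~\ref{lem:BlockingMeasureMaximum} still applies via Remark~\ref{rem:ASEPRestricted}: one discretizes time between the finitely many jump events in $(\chi_t)$, applies the censoring inequality on each sub-interval during which the censoring scheme is constant, and takes a union bound, invoking Lemma~\ref{lem:BlockingMeasureMaximum} for the evolution within $\mathcal{A}_{n_t}$.

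Putting the two ingredients together, Lemma~\ref{lem:BlockingMeasureMaximum} applied to the censored process $(\chi^\star_t)_{t \geq 0}$ on $\mathcal{A}_{n_t}$ yields that $L^\star_t, R^\star_t \in [n_t - x N^\kappa, n_t + x N^\kappa]$ for all $t \in [0, N^3]$ with probability at least $1 - C_0\exp(-c_0 x)$, provided $x \geq C_1 \log(N)$. Combining this with $n_t \in [-y, 0]$ immediately gives $L^\star_t, R^\star_t \in [-xN^\kappa - y, xN^\kappa]$ for all $t \in [0, N^3]$ with the same probability, which is the desired conclusion.

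The main obstacle I anticipate is formalizing the censored ASEP interpretation, particularly handling the shifts $n_t \to n_t - 1$ caused by type $1,2,3$ exits at site $N$ and the interactions at sites $1$ and $N$ between entering type $0$ (resp.\ type $4$) particles and the non-deleted particles already present. Since there are at most $y$ shift events and the instantaneous rate of boundary updates in $(\chi_t)$ is bounded, one can absorb these exceptional transitions by stopping and restarting the censored dynamics a bounded number of times, with the moderate deviation bound from Lemma~\ref{lem:BlockingMeasureMaximum} applied to each stretch; the $-y$ term in the final window precisely accounts for the at most $y$ accumulated shifts.
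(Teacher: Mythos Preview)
Your approach matches the paper's: interpret $(\chi^\star_t)$ as a censored ASEP and invoke Lemma~\ref{lem:BlockingMeasureMaximum}. Two technical points are handled more cleanly in the paper. First, instead of stopping and restarting at each of the at most $y$ level shifts, the paper sandwiches $\chi^\star_t$ under the basic coupling between two censored ASEPs $(\eta^0_t)$ on $\mathcal{A}_0$ and $(\eta^{-y}_t)$ on $\mathcal{A}_{-y}$, each started from its ground state and using the same censoring scheme $\mathcal{C}$; since replacing the rightmost empty site by a particle can only increase the configuration in $\succeq_h$, one obtains $\eta^{-y}_t \succeq_h \chi^\star_t \succeq_h \eta^0_t$ for all $t$, and Lemma~\ref{lem:BlockingMeasureMaximum} applies directly to each bounding process with no restarts needed. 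Second, your appeal to Remark~\ref{rem:ASEPRestricted} does not actually address the state dependence of the censoring; the paper instead observes that the projection $\hat{\chi}_t$ sending $1 \mapsto 1$, $\{2_1,\dots,2_5\} \mapsto 2$, and $\{2_{-1},2_0,0\} \mapsto 0$ is a Markov chain in its own right, so the censored edges --- which are determined entirely by $(\hat{\chi}_t)$ --- carry no information about the internal ordering among types $1$--$5$ that $\chi^\star_t$ records, and the censoring inequality applies conditionally on $(\hat{\chi}_t)$.
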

 \begin{proof}
 Recall the partial order $\succeq_{\h}$ from \eqref{eq:PartialHeight} for a simple exclusion process on $\mathfrak{A}$. Verifying the marginal transition rates, we observe that the process $(\chi^{\star}_t)_{t \geq 0}$ has the law of an ASEP on $\Z$ with censoring, where the rightmost empty site in $\chi^{\star}_s$ is replaced by a first class particle whenever a second class particle of type $1,2,3$ exits from $(\chi_t)_{t\geq 0}$ from site $N$ at time $s$. An edge $e$ is in the censoring scheme $\mathcal{C}$ for $(\chi^{\star}_t)_{t \geq 0}$ at time $t$ if and only if it was merged in $\chi_t$ in the deletion step, or if one of its endpoints is occupied by a particle which is not present in $\chi_t$, and thus was only added in the construction when extending the configuration to $\Z$. To see that $\mathcal{C}$ is a genuine censoring scheme, observe that $(\hat{\chi}_t)_{t \geq 0}$ with
 \begin{equation}
\hat{\chi}_t(x) := \begin{cases} 1 &\text{ if } \chi_t(x)=1 \\
2 &\text{ if } \chi_t(x) \in \{ 2_1, 2_2, 2_3, 2_4,2_5  \} \\
0 &\text{ if } \chi_t(x) \in \{ 2_0, 2_{-1} , 0\}
 \end{cases}
 \end{equation} for all $x\in \lbr N \rbr$ and $t\geq 0$ is again a continuous-time Markov chain, noting in particular that an update of a $\{2_0,2_1\}$ pair to $\{2_{-1},2_2\}$ in the definition of $(\chi_t)_{t \geq 0}$ is consistent with the projection $(\hat{\chi}_t)_{t \geq 0}$. Recall the configurations $\vartheta_{m}$ from \eqref{def:GroundStates}. Let $(\eta^{-y}_{t})_{t \geq 0}$ and $(\eta^{0}_{t})_{t \geq 0}$ be two ASEPs on $\mathfrak{A}_{-y}$ and $\mathfrak{A}_{0}$, started from $\vartheta_0$ and $\vartheta_{-y}$, and using the same censoring scheme $\mathcal{C}$ as $(\chi^{\star}_t)_{t \geq 0}$, respectively.
Observe that since $\chi^{\star}_0=\vartheta_0$, and $\chi_0$ contains at most $y$ many type $1,2,3$ particles by our assumptions, the basic coupling $\mathbf{P}$ ensures that
\begin{equation}\label{eq:DominationASEP}
\mathbf{P}( \eta^{-y}_{t} \succeq_{\h} \chi^{\star}_t \succeq_{\h} \eta^{0}_t \, \forall t \geq 0) = 1 .
\end{equation} The result now follows from Lemma \ref{lem:BlockingMeasureMaximum} applied to the processes $(\eta^{-y}_{t})_{t \geq 0}$ on $\mathfrak{A}_{-y}$ and $(\eta^{0}_{t})_{t \geq 0}$ on $\mathfrak{A}_0$ together with \eqref{eq:DominationASEP}.
 \end{proof}

\begin{remark}\label{rem:Diminished}
In the same way as in Definition \ref{def:PartiallyDiminished}, we can define a second diminished process $(\bar{\chi}^{\star}_t)_{t \geq 0}$, swapping the roles of type $4,5$ and type $-1,0$ second class particles. More precisely, let $(w_t)_{t \geq 0}$ be a family of $\{0,1\}$-valued vectors, denoting the second class particles of types $-1$ to~$3$ which have left the segment at the site $N$ by time~$t$. For all $t\geq 0$, we assign a configuration $\chi^{\star}_t=\chi^{\star}_t(v_t) \in \{0,1\}^\Z$ by first deleting all vertices in $\chi_t$ which are empty, contain a first class particle or a type $4$ or type $5$ second class particle.
We then concatenate the vector $w_t$ at the right-hand side of the diminished segment and turn all second class particles to first class particles if they are of type $1,2$ or $3$ and into empty sites if they are of type $-1$ or $0$. Finally, we extend the segment to a configuration $\bar{\chi}_t^{\star} \in \{0,1\}^\Z$ in the same way as in Definition~\ref{def:PartiallyDiminished} to ensure that $(\bar{\chi}^{\star}_t)_{t \geq 0}$ can be interpreted as an ASEP on $\Z$ with censoring,  where whenever a type $1,2$ or $3$ second class particles leaves at site $N$, the left-most particle is replaced by an empty site.
It is straightforward to verify that a version of Lemma~\ref{lem:DiminishedMultiASEP} continues to hold for the diminished process $(\bar{\chi}^{\star}_t)_{t \geq 0}$.
\end{remark}

\subsection{Exit time of second class particles}\label{sec:ExitSecondClassHigh}

We will now utilize the processes $(\chi_t)_{t \geq 0}$ and $(\chi^{\star}_t)_{t \geq 0}$ to establish Lemma \ref{lem:IterateMixingTimesHigh} and Lemma  \ref{lem:IterateMixingTimesHighCritical} on the open ASEP in the weakly high density phase.
The processes $(\eta^{1}_t)_{t \geq 0}$, $(\eta^{2}_t)_{t \geq 0}$ and $(\eta^{3}_t)_{t \geq 0}$ are defined with respect to parameters $(q,\alpha,\beta,\gamma,\delta)$ such that the corresponding boundary densities $\rho^{(j)}_{\Lup}$ and $\rho^{(j)}_{\Rup}$ for $j \in \lbr 3 \rbr$ satisfy
\begin{equation*}
\rho_{\Lup}^{(j)}=\rho_{\Rup}^{(j)} = \rho_n = \frac{1}{2} + 2^{-n}
\end{equation*} with some $n \in \lbr \frac{1}{2} \log_2(N)\rbr$. We denote by $\succeq_{\c}$ the componentwise ordering and assume that 
\begin{equation}\label{eq:Condi12}
\eta_0^{1}  \preceq_{\c} \eta_0^{\textup{up}} \sim \textup{Ber}_N\Big( \frac{1}{2} + 2^{-(n-1)} \Big) \quad \text{ and } \quad
\eta_0^{2} \succeq_{\c} \eta_0^{\textup{low}} \sim \textup{Ber}_N\Big( \frac{1}{2} - 2^{-(n-1)} \Big) ,
\end{equation}
as well as that $\eta^{1}_0 \succeq_{\c} \eta^{2}_0$. For the process $(\eta^{4}_t)_{t \geq 0}$, we will set in the following $\alpha^{\prime}=\alpha$ and $\gamma^{\prime}=\gamma$, and hence $\rho_{\Lup}^{(4)}=\rho_{\Lup}^{(1)}$, while we choose $\beta^{\prime}>\beta$ and $\delta^{\prime}=\delta$ such that $\rho_{\Rup}^{(4)}$ satisfies
\begin{equation*}
\rho_{\Rup}^{(4)} = \frac{1}{2} + 2^{-(n+1)} < \rho_{\Rup}^{(1)} .
\end{equation*}
Note that $(\eta^{4}_t)_{t \geq 0}$ is by construction an open ASEP in the fan region of the high density phase, and that $(\chi_t)_{t \geq 0}$ contains no second class particles of types $0$ or $-1$. Observe that due to Lemma~\ref{lem:Bernoulli}, we can choose the initial configurations of $(\eta^{3}_t)_{t \geq 0}$ and $(\eta^{4}_t)_{t \geq 0}$ such that
\begin{equation}\label{eq:ConstructionZeta}
\mathbf{P}\left( \eta^{3}_t  \succeq_\c  \eta^{4}_t  \, \forall t \geq 0\right) = 1  .
\end{equation}
In particular, all second class particles in $(\zeta_t)_{t \geq 0}$ are of type $\Bup$.
We start with the case where $\kappa<\frac{1}{2}$ for $q$ from \eqref{def:TriplePointScaling}, and study the time $\texit$
\begin{equation}\label{def:ExitTimes}
\texit := \inf\{ t \geq 0 \colon \xi_t(x) \in \{0,1\} \, \forall x\in \lbr N \rbr \} 
\end{equation}
when all second class particles left the disagreement process $(\xi_t)_{t \geq 0}$ for $(\eta^{1}_t)_{t \geq 0}$ and $(\eta^{2}_t)_{t \geq 0}$.
\begin{lemma}\label{lem:ExittimesSecondClass} Let $q$ from \eqref{def:TriplePointScaling} with $\kappa<\frac{1}{2}$ and $\psi>0$.
Let $n \in \lbr \frac{1}{2} \log_2(N) \rbr$. Then there exist constants $c_0,C_0>0$ such that the exit time of second class particles for the disagreement process $(\xi_t)_{t \geq 0}$ satisfies
\begin{equation}
\mathbf{P}\left(\texit \geq  2^{n+6} N(1-q)^{-1}\right) \leq C_0\exp(-c_0 \min(2^{-n}N^{1/2},N^{\expont})^{1/2})
\end{equation} for all $N$ sufficiently large, where we recall $\expont$ from \eqref{def:KappaEpsilon}.
\end{lemma}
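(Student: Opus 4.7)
The plan is to follow the strategy from Section~7 of \cite{GNS:MixingOpen} for mixing times in the high density phase, combined now with the weakly-high-density current estimate of Lemma~\ref{lem:CurrentHighLowFinal}. First, set $T := 2^{n} N(1-q)^{-1}$ and $T_\star := 64T = 2^{n+6} N(1-q)^{-1}$, and partition $[0, T_\star]$ into $64$ consecutive subintervals of length $T$. Applying Lemma~\ref{lem:CurrentHighLowFinal} to the stationary pair $(\eta^{3}_t, \eta^{4}_t)$ on each subinterval, a union bound gives that, outside an event of probability $C_0 \exp(-c_0 \min(2^{-n}N^{1/2}, N^{\expont}))$, the cumulative current difference satisfies $\mathcal{J}^{(2)}_{T_\star} - \mathcal{J}^{(1)}_{T_\star} \geq 64 \cdot 2^{-(n+2)} N = 2^{4-n} N$. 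A Chernoff bound combined with \eqref{eq:Condi12} and Lemma~\ref{lem:Bernoulli} simultaneously controls the initial $\xi$-second class particle count by $y_0 := 5 \cdot 2^{-n} N$, and Lemma~\ref{lem:DiminishedMultiASEP} applied with $x = N^{\expont}$ and $y = y_0$ confines both the leftmost particle and rightmost empty site of the diminished process $(\chi^\star_t)$ within $C_1(N^{\kappa+\expont} + 2^{-n}N)$ of the origin throughout $[0, T_\star]$, all on events whose failure probability does not exceed the above bound.

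Next, the current difference at site~$1$ must be translated into a count of $\xi$-second class particle exits. Because $(\eta^{3}_t)$ and $(\eta^{4}_t)$ share the parameters $\alpha$ and $\gamma$, no $\zeta$-second class particles are created at site~$1$, so $\mathcal{J}^{(4)}_{T_\star} - \mathcal{J}^{(3)}_{T_\star}$ equals the net number of $\zeta$-absorptions at the left boundary. In the multi-species process $(\chi_t)$ of Definition~\ref{def:PartiallyOrdered}, each such absorption corresponds to a boundary transition at site~$1$ out of type $2_0$, $2_2$, or $2_4$: type $2_2$ removals simultaneously exit a $\xi$-second class particle, type $2_4$ removals contribute a $1$ to the vector $v_t$ of $(\chi^\star_t)$, and type $2_0$ removals leave $v_t$ unchanged. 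The window bound from Lemma~\ref{lem:DiminishedMultiASEP} forces the imbalance between the $0$s and $1$s in $v_t$ to stay bounded by $C_1(N^{\kappa+\expont} + 2^{-n}N)$, while Proposition~\ref{pro:SpeedOfDisagreement} controls any residual contribution from $\zeta$-particles still in the bulk at time $T_\star$. Combining these estimates, the number of $\xi$-second class particle exits at site~$1$ is at least $2^{4-n}N - C_1(N^{\kappa+\expont} + 2^{-n}N) - o(N) > y_0$, so $\texit \leq T_\star$ on the intersection of all these high-probability events.

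The hard part will be the bookkeeping in the second paragraph: one must verify the precise correspondence between $\zeta$-absorptions at site~$1$, boundary transitions in $(\chi_t)$, and entries of $v_t$, taking account of the new type~$2_{-1}$ and type~$2_0$ dynamics that do not appear in the constant-parameter setup of \cite{GNS:MixingOpen}, and must rule out undesirable contributions from $\zeta$-particles still travelling through the bulk at time $T_\star$. A symmetric use of the mirror process $(\bar\chi^\star_t)$ from Remark~\ref{rem:Diminished} applied at site~$N$ is likely to be needed in order to close the identity between current difference and exit count.
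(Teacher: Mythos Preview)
Your overall strategy matches the paper's: bound the initial number of $\xi$-second class particles via \eqref{eq:Condi12}, use the current gap from Lemma~\ref{lem:CurrentHighLowFinal} to force many $\zeta$-second class particles through site~$1$, and invoke Lemma~\ref{lem:DiminishedMultiASEP} to conclude that the type~$1,2,3$ particles in $(\chi_t)$ must have been flushed out. The paper applies Lemma~\ref{lem:CurrentHighLowFinal} in one shot at time $T_\star$ rather than on $64$ subintervals, but this is cosmetic.

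Where you over-complicate is in the bookkeeping of the second paragraph. In the weakly high density setup of Section~\ref{sec:ExitSecondClassHigh}, the fourth process is chosen with $\alpha'=\alpha$, $\gamma'=\gamma$ (only $\beta$ is increased), and by \eqref{eq:ConstructionZeta} one has $\eta^3_t \succeq_{\c} \eta^4_t$ for all $t\geq 0$. Two consequences follow immediately: all $\zeta$-second class particles are of type~$\Bup$ and are created only at site~$N$, and the combined process $(\chi_t)$ never contains particles of type~$2_0$ or~$2_{-1}$. The paper states this explicitly just before Lemma~\ref{lem:ExittimesSecondClass}. So every $\zeta$-absorption at site~$1$ is automatically a type~$2_2$ or~$2_4$ transition; there is no need to separate out type~$2_0$ removals, no need for Proposition~\ref{pro:SpeedOfDisagreement} to control bulk $\zeta$-particles (the simpler stationary bound \eqref{eq:Particles2} at times $0$ and $T_\star$ suffices, since the total $\zeta$-count is at most $2N2^{-n}$), and no need for the mirror process $(\bar\chi^\star_t)$ of Remark~\ref{rem:Diminished}. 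The mirror process enters only in Section~\ref{sec:MixingTriple}, where both $\alpha$ and $\beta$ are modified and $\zeta$-particles of both types~$\Aup$ and~$\Bup$ appear.

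With this simplification the argument closes directly: on the good events, $M_4(T_\star)\geq 8N2^{-n}$ type~$4$ particles have exited at site~$1$ and hence sit in $v_{T_\star}$ as $1$s to the left of every remaining type~$1,2,3$ particle; Lemma~\ref{lem:DiminishedMultiASEP} caps this number by $N^{\kappa+\expont}\leq N^{1/2}\leq 2^{-n}N$, and since at most $4N2^{-n}$ type~$5$ particles can ever be produced (each requires a type~$3$ to coalesce), no type~$1,2,3$ particle can remain.
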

\begin{proof}
We claim that under the basic coupling $\mathbf{P}$, there exist constants  $c_1,C_1>0$ such that for any $t \geq 0$
\begin{equation}\label{eq:Particles1}
\mathbf{P}\left( \sum_{x \in \lbr N \rbr} \mathds{1}_{\{ \eta^{1}_t(x) \neq  \eta^{2}_t(x)\}} \geq 4 N 2^{-n} \right) \leq C_1 \exp\big(-c_1 2^{-n}N^{1/2}\big) 
\end{equation} for all $N$ large enough. To see this, note that  second class particles can only exit the segment in $(\xi_t)_{t \geq 0}$, but not enter. Assumption \eqref{eq:Condi12} for time $0$ together with a standard Chernoff bound gives the claim.
Similarly, using Lemma \ref{lem:Bernoulli} for the marginals of the stationary process $(\zeta_t)_{t \geq 0}$, there exist constants $c_2,C_2>0$ such that for any $t \geq 0$
\begin{equation}\label{eq:Particles2}
\mathbf{P}\left( \sum_{x \in \lbr N \rbr} \mathds{1}_{\{ \eta^{3}_t(x) \neq  \eta^{4}_t(x)\}} \geq 2 N 2^{-n} \right) \leq C_2 \exp\big(-c_2 2^{-n}N^{1/2} \big)
\end{equation} 
for all $N$ large enough. For all $t\geq 0$, let $M_4(t)$ denote the number of second class particles in the process $(\zeta_t)_{t \geq 0}$, which have left at site $1$ by time $t$. Note that this agrees with the number of type $4$ second class particles in the process $(\chi_t)_{t \geq 0}$, which have exited the segment at site $1$ until time $t$. Recall $\expont$ from \eqref{def:KappaEpsilon}. From Lemma~\ref{lem:CurrentHighLowFinal} (applied iteratively for times $[(i-1)2^{n+1}N(1-q)^{-1},i2^{n+1}N(1-q)^{-1}]$ with $i\in \lbr 2^5 \rbr$) and \eqref{eq:Particles2} for times $t=0$ and $t=2^{n+6}N(1-q)^{-1}$, we get that
\begin{equation}\label{eq:LowerDeviationApplied}
\mathbf{P}\left( M_{4}(2^{n+6}N(1-q)^{-1}) \geq  8 N 2^{-n} \right) \geq 1- C_3\exp\big(-c_3\min(2^{-n}N^{1/2},N^{\expont})^{1/2}\big)
\end{equation} for some constants $c_3,C_3>0$, and all $N$ large enough. Recall from Lemma \ref{lem:DiminishedMultiASEP} that with probability at least $1-C_4 \exp(-c_4 N^{\expont})$ for some constants $C_4,c_4>0$, every type $1,2,3$ second class particle in $(\chi_t)_{t \geq 0}$ has until time $N^{3}$ at most $N^{\kappa+\expont}\leq N^{1/2}$ many second class particles of type $4$ or type $5$ to its left, counting also all second class particles which have exited the segment.
Since on the event in \eqref{eq:Particles1}, the process $(\chi_t)_{t \geq 0}$ can create a total of at most $4N2^{-n}$ type $5$ second class particles by coalescence of type $3$ and type $4$ second class particles, together with \eqref{eq:Particles2} and  \eqref{eq:LowerDeviationApplied}, this yields the desired bound on the exit time of type $1,2,3$ second class particles in $(\chi_t)_{t \geq 0}$.
\end{proof}

Similarly, we have the following bound when $\kappa=\frac{1}{2}$. We will only provide the necessary modifications in the proof of Lemma \ref{lem:ExittimesSecondClass}. Write in the following $\overrightarrow{1}$ and $\overrightarrow{0}$ for the all full and all empty configurations.

\begin{lemma}\label{lem:ExittimesSecondClassCritical}
Let $q$ from \eqref{def:TriplePointScaling} with $\kappa=\frac{1}{2}$ and $\psi>0$. Assume that $n$ and $n-1$ satisfy \eqref{eq:nassumption}. Then there exist constants $C_0,C_1>0$ such that the exit time of second class particles for the disagreement process $(\xi_t)_{t \geq 0}$ satisfies
\begin{equation}\label{eq:FirstCritical}
\mathbf{P}\left(\texit \geq C_1 2^{n} N^{3/2}\right) \leq C_0 N^{-9}
\end{equation} for all $N$ sufficiently large. Moreover, when $n = \lceil \log_2(c_1^{-1}\log(N)) \rceil $ for the constant $c_1>0$ from Lemma \ref{lem:ModDeviationMaxSecondClass}, and $\eta_0^{1} = \overrightarrow{1}$ as well as $\eta_0^{2} = \overrightarrow{0}$, then we get that for some constants $C_2,C_3>0$ and all $N$ large enough
\begin{equation}\label{eq:SecondCritical}
\mathbf{P}\left(\texit \geq C_3 \log(N)N^{3/2} \log(N) \right) \leq C_2 N^{-9} .
\end{equation}
\end{lemma}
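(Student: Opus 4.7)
The plan is to mimic the proof of Lemma~\ref{lem:ExittimesSecondClass} almost verbatim, replacing the current estimate of Lemma~\ref{lem:CurrentHighLowFinal} by its $\kappa=\tfrac12$ counterpart Lemma~\ref{lem:CurrentCriticalKappaHigh} and carrying the extra $\log N$ factors coming from the diminished-process spread through the argument. For Part~1 I first bound the initial disagreement $|\{x : \xi_0(x) = 2\}|$ by $4N 2^{-n}$ using the componentwise domination \eqref{eq:Condi12} and a binomial tail estimate (failure probability $N^{-10}$, which suffices because under \eqref{eq:nassumption} we have $N2^{-n}\geq \sqrt{N}\log N$); the disagreement of the stationary $(\zeta_t)$ at a fixed time is bounded by $2N2^{-n}$ via Lemma~\ref{lem:Bernoulli} and the same concentration. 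Applying Lemma~\ref{lem:CurrentCriticalKappaHigh} to the pair $(\eta_t^3, \eta_t^4)$ at time $T = C_1 2^n N^{3/2}$ then shows that at least $8N 2^{-n}$ type-$4$ particles in $(\chi_t)$ exit through site~$1$ with failure probability $N^{-9}$, and Lemma~\ref{lem:DiminishedMultiASEP} with $x = C\log N$ and $y = 4N 2^{-n}$ gives a diminished-process window of size $O(N^{1/2}\log N + N 2^{-n})$, which by the upper constraint $2^n \leq c_2\sqrt{N}/\log N$ in \eqref{eq:nassumption} is absorbed into $O(N 2^{-n})$. Since each type-$5$ particle is produced by a coalescence with a type-$3$, at most $4N 2^{-n}$ such coalescences can occur, and the remaining $\geq 4N 2^{-n}$ type-$4$ exits force all type-$1,2,3$ particles through the boundary as in the last paragraph of the proof of Lemma~\ref{lem:ExittimesSecondClass}.

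For Part~2, the extremal choice $\eta_0^1 = \mathbf{1}$, $\eta_0^2 = \mathbf{0}$ yields $N$ initial type-$2_3$ particles and violates~\eqref{eq:Condi12}, so I will iterate the mechanism of Part~1. At $n = \lceil\log_2(c_1^{-1}\log N)\rceil$, during each block of length $T' = c\, 2^n N^{3/2} \asymp N^{3/2}\log N$ the Part~1 argument (applied with the current snapshot as the new initial state, and the same reference pair $(\eta^3,\eta^4)$) removes at least $\Theta(N/\log N)$ type-$1,2,3$ particles through site~$1$. Running $K\log N$ such blocks with $K$ large, the surviving count shrinks by a factor $1 - c/\log N$ per block, so $N(1-c/\log N)^{K\log N} < 1$ and all second class particles have exited by the end. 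A union bound over the $O(\log N)$ blocks keeps the overall failure probability $O(N^{-8})$, and the total duration $K\log N \cdot N^{3/2}\log N = O(N^{3/2}\log^2 N)$ matches the right-hand side of \eqref{eq:SecondCritical} after adjusting $C_3$.

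The main obstacle will be ensuring that the spread bound $O(N^{1/2}\log N)$ from Lemma~\ref{lem:DiminishedMultiASEP} is genuinely smaller than the number $N 2^{-n}$ of initial type-$2_3$ particles: this is exactly what forces the upper constraint $2^n \leq c_2\sqrt{N}/\log N$ of \eqref{eq:nassumption}, and together with the matching lower bound $2^n \geq c_1^{-1}\log N$ coming from Lemma~\ref{lem:ModDeviationMaxSecondClass} it explains why Part~2 must be treated as a separate base case rather than one more step of the iteration. A secondary concern is that in Part~2 the current-difference estimate must be applied in each of the $\log N$ blocks to a process that is no longer in a product initial state; this is remedied by invoking the censoring inequality as in the proof of Proposition~\ref{pro:MixingTimesWeaklyHighLow} to stochastically dominate the current snapshot by stationary open ASEPs with slightly adjusted effective densities, which suffices for the current lower bound to go through with the required polynomial failure probability.
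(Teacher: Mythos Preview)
Your treatment of Part~1 matches the paper's: replace Lemma~\ref{lem:CurrentHighLowFinal} by Lemma~\ref{lem:CurrentCriticalKappaHigh}, use the same binomial bounds on the number of disagreements, and absorb the $O(N^{1/2}\log N)$ spread from Lemma~\ref{lem:DiminishedMultiASEP} into $N2^{-n}$ via the upper constraint in \eqref{eq:nassumption}.

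For Part~2 you and the paper both iterate over order $\log N$ blocks, but your bookkeeping is needlessly complicated and has a gap. The paper does \emph{not} attempt to count how many type-$1,2,3$ particles are removed per block. It simply iterates the type-$4$ exit bound \eqref{eq:LowerDeviationApplied} over $2^n$ consecutive time intervals; because the reference pair $(\eta^3_t,\eta^4_t)$ is stationary, the type-$4$ exit count is additive across intervals, so after $2^n$ blocks at least $8N$ type-$4$ particles have left through site~$1$. Since there are at most $N$ initial type-$1,2,3$ particles (hence at most $N$ type-$5$ ever created), a single application of Lemma~\ref{lem:DiminishedMultiASEP} with $y=N$ at the terminal time forces every type-$1,2,3$ particle to have exited. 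In particular, there is no need to ``restart'' or re-dominate anything: the current estimate is always applied to the stationary pair $(\eta^3,\eta^4)$, never to a snapshot of $(\eta^1,\eta^2)$, so your secondary concern is a non-issue.

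Your per-block removal claim is the actual gap. Part~1 does not say that a block removes $\Theta(N/\log N)$ type-$1,2,3$ particles; it says that \emph{all} of them exit, conditional on there being at most $4N2^{-n}$ initially --- a hypothesis that fails when you start with $N$. Extracting an additive per-block removal count from the diminished-process machinery would require a separate argument you have not given, and the multiplicative phrasing (``shrinks by a factor $1-c/\log N$'') would further require the removal to be proportional to the surviving count, which the mechanism does not provide. The paper's additive accounting of type-$4$ exits avoids all of this.
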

\begin{proof}
The first part \eqref{eq:FirstCritical} follows from the same arguments as Lemma~\ref{lem:ExittimesSecondClass} using Lemma~\ref{lem:CurrentCriticalKappaHigh} under assumption \eqref{eq:nassumption} instead of Lemma~\ref{lem:CurrentHighLowFinal} for a lower bound on number of second class particles which have left in $(\zeta_t)_{t \geq 0}$ the segment until time $C_1 2^{n} N^{3/2}$ at site $1$. Here, we choose $C_1>0$ sufficiently large so that the right-hand side in \eqref{eq:LowerDeviationApplied} is at least $1-N^{-9}$ and the constant $c_2$ in \eqref{eq:nassumption} sufficiently small such that $2^{n} \geq C_1 \log(N)$ holds.
For the second statement \eqref{eq:SecondCritical}, for $n = \lceil \log_2(c_1^{-1}\log(N)) \rceil $, we iterate the lower bound in \eqref{eq:LowerDeviationApplied} $2^{n}$ many times to ensure that at least $8N$ many second class particles of type $4$ have exited at site $1$ in $(\chi_t)_{t \geq 0}$ with probability at least $1-N^{-8}$. Noting that the process $(\chi_t)_{t \geq 0}$ contains at most $N$ many second class particles of types $1,2,3$, and hence at most $N$ many second class particles of type $5$ are created, we conclude.
\end{proof}

Using Lemma \ref{lem:ExittimesSecondClass} and Lemma \ref{lem:ExittimesSecondClassCritical}, we obtain the desired bounds on the coupling time, and hence the mixing time of the open ASEP in the weakly high density phase.

\begin{proof}[Proof of Lemma \ref{lem:IterateMixingTimesHigh} and Lemma \ref{lem:IterateMixingTimesHighCritical}]
Consider open ASEPs $(\tilde{\eta}^{(j)}_t)_{t \geq 0}$ for $j \in \lbr 4 \rbr$ with respect to the same parameters $\gamma,\delta,q$, and
\begin{equation*}
\alpha^{(3)} \geq \alpha^{(1)} =  \alpha^{(2)}  \geq \alpha^{(4)} \quad \text{ and } \quad  \beta^{(4)} \geq \beta^{(1)} = \beta^{(2)} \geq \beta^{(3)}
\end{equation*} such that the respective effective densities satisfy
\begin{equation*}
\rho_{\Lup}^{(1)}=\rho_{\Rup}^{(1)}=\frac{1}{2}+2^{-n}, \quad \rho_{\Lup}^{(3)}=\rho_{\Rup}^{(3)}=\frac{1}{2}+2^{-(n-1)}, \quad
\rho_{\Lup}^{(4)}=\rho_{\Rup}^{(4)}=\frac{1}{2}-2^{-(n-1)} .
\end{equation*}  We let $\tilde{\eta}^{(1)}_0=\tilde{\eta}^{(3)}_0=\overrightarrow{1}$ and  $\tilde{\eta}^{(2)}_0=\tilde{\eta}^{(4)}_0=\overrightarrow{0}$ almost surely, where we recall that $\overrightarrow{1}$ and $\overrightarrow{0}$ denote the all full and all empty configuration. Note that under the basic coupling of the open ASEP for different boundary conditions -- see for example Lemma 2.1 in \cite{GNS:MixingOpen} -- we get that
\begin{equation*}
\mathbf{P}\left( \tilde{\eta}^{(3)}_t \succeq_{\c} \tilde{\eta}^{(1)}_t  \succeq_{\c}  \tilde{\eta}^{(2)}_t  \succeq_{\c}  \tilde{\eta}^{(4)}_t \, \forall t \geq 0 \right) = 1 .
\end{equation*}
Recalling Remark \ref{rem:MixingTimesWeaklyLow}, as well as $\eta_0^{\textup{up}}$ and $\eta_0^{\textup{low}}$ from \eqref{eq:Condi12}, there exists a coupling $\tilde{\mathbf{P}}$ such that at time $T=t^{N,n-1}_{\cou}(\varepsilon)$,  we get that for all $N$ large enough
\begin{equation*}
\tilde{\mathbf{P}}\left( \left\{ \tilde{\eta}^{(3)}_{T} = \eta_0^{\textup{up}}\right\} \cap \left\{ \tilde{\eta}^{(4)}_{T} =  \eta_0^{\textup{low}} \right\}\right) \geq 1 - 2 \varepsilon .
\end{equation*}  When $\kappa<\frac{1}{2}$, we apply Lemma \ref{lem:ExittimesSecondClass} for $\eta_0^1=\tilde{\eta}^{(1)}_{T}$ and $\eta_0^2=\tilde{\eta}^{(2)}_{T}$ to conclude. Note that this includes the special case $n=1$, which yields \eqref{eq:MixingBaseCase}. For $\kappa=\frac{1}{2}$, we apply Lemma \ref{lem:ExittimesSecondClass} for $\eta_0^1=\tilde{\eta}^{(1)}_{T}$ and $\eta_0^2=\tilde{\eta}^{(2)}_{T}$ whenever $n-1$ and $n$ satisfy \eqref{eq:Condi12}, and \eqref{eq:SecondCritical} otherwise, to obtain the statement~\eqref{eq:MixingBaseCaseCritical}.
\end{proof}

\section{Upper bounds on the mixing time at the triple point}\label{sec:MixingTriple}

Using a similar strategy as in Section \ref{sec:MixingHighLow} for the mixing time of the open ASEP in the weakly high and low density phase, we  now show the upper bounds on the mixing times in Theorem \ref{thm:MixingTimesTriple} and Theorem \ref{thm:MixingTimesKPZ}. We start by recalling the setup of Section~\ref{sec:MultiSpeciesOpenASEP}, adapted for the open ASEP at  the triple point. \\

Recall the process $(\chi_t)_{t \geq 0}$ from Definition \ref{def:PartiallyOrdered} with respect to open ASEPs $(\eta^{j}_t)_{t \geq 0}$ for $j \in \lbr 4 \rbr$, which we now specify. For $q$ from \eqref{def:TriplePointScaling} with some $\kappa \in [0,\frac{1}{2}]$ and $\psi>0$, assume that the boundary parameters $\alpha,\beta,\gamma,\delta$ satisfy the conditions \eqref{def:LiggettTypeCondition} and \eqref{eq:ScalingCondition} with respect to some constants $\tilde{A},\tilde{C} \in \R$. Recall $D$ and $B$ from \eqref{def:d} and \eqref{def:b}, respectively, where
\begin{equation}\label{eq:BDRelation}
B=-q e^{-\tilde B N^{-1/2} + o(N^{-1/2})} \quad \text{ and } \quad D=-q e^{-\tilde D N^{-1/2} + o(N^{-1/2})}
\end{equation}
for some constants $\tilde{B},\tilde{D} \in \R$ when $\kappa<\frac{1}{2}$ and $\tilde{B}=\tilde{D} > - \psi$ for $\kappa=\frac{1}{2}$. Note that for $\gamma,\delta>0$
\begin{equation}\label{eq:ScalingRelationABCD}
 CD = - \frac{\alpha}{\gamma} \quad \text{ and } \quad  AB = - \frac{\beta}{\delta} .
\end{equation}
The processes $(\eta^{1}_t)_{t \geq 0}$, $(\eta^{2}_t)_{t \geq 0}$ and $(\eta^{3}_t)_{t \geq 0}$ are defined with respect to parameters $(q,\alpha,\beta,\gamma,\delta)$.
Moreover, we set
\begin{equation}\label{eq:CondiNew}
\eta_0^{1} \preceq_{\c} \eta_0^{\textup{up}} \sim \textup{Ber}_N\Big( \frac{1}{2} + 2^{-k} \Big)  \quad \text{ and } \quad
\eta_0^{2} \succeq_{\c} \eta_0^{\textup{low}} \sim \textup{Ber}_N\Big( \frac{1}{2} - 2^{-\ell} \Big)
\end{equation}
for some $k,\ell \in \N$ specified later on,
and assert that $\eta^{1}_0(x) \geq \eta^{2}_0(x)$ for all $x\in \lbr N \rbr$.
For the process $(\eta^{4}_t)_{t \geq 0}$, we will set in the following $\alpha^{\prime}>\alpha$ and $\beta^{\prime}>\beta$, while  $\gamma^{\prime} \leq \gamma$ and $\delta^{\prime}\leq \delta$. In order to choose $\alpha^{\prime},\beta^{\prime},\gamma^{\prime},\delta^{\prime}$, recall the function $F$ from \eqref{def:F} for $\kappa<\frac{1}{2}$, and the function $\tilde{F}$ from \eqref{def:Ftilde} for $\kappa=\frac{1}{2}$. We will choose  $\alpha^{\prime}$ and $\beta^{\prime}$ sufficiently large such that
\begin{equation}\label{eq:ScalingConditionNew}
A' = A(\beta^{\prime},\delta^{\prime},q)= \exp(-\tilde{A}^{\prime}N^{-1/2}) \quad \text{and} \quad C' = C(\alpha^{\prime},\gamma^{\prime},q)=\exp(-\tilde{C}^{\prime}N^{-1/2})
\end{equation} with respect to some constants $\tilde{A}^{\prime},\tilde{C}^{\prime}$ satisfying
\begin{equation}\label{eq:DifferenceCondition}
\begin{split}
F(\tilde{A}^{\prime},\tilde{C}^{\prime}) - F(\tilde{A},\tilde{C}) \geq c_{\kappa,\tilde{A},\tilde{C}}>0 & \ \text{ if } \kappa<\frac{1}{2} \\
\tilde{F}(\tilde{A}^{\prime},\tilde{C}^{\prime}) - \tilde{F}(\tilde{A},\tilde{C}) \geq c_{\kappa,\tilde{A},\tilde{C}}>0 & \ \text{ if } \kappa=\frac{1}{2}
\end{split}
\end{equation} for some constant $c_{\kappa,\tilde{A},\tilde{C}}>0$, depending only on $\kappa \in [0,\frac{1}{2}]$, $\tilde{A}$ and $\tilde{C}$. At the same time, we require that \eqref{def:LiggettsCondition} holds with respect to $\alpha^{\prime},\beta^{\prime},\gamma^{\prime},\delta^{\prime},q$. Note that we can always find such $\alpha^{\prime}$ and $\beta^{\prime}$ and a strictly positive constant $c_{\kappa,\tilde{A},\tilde{C}}$ due to Lemma~\ref{lemMonotonicity} for $\kappa<\frac{1}{2}$ and Lemma~\ref{lemMonotonicityBis} for $\kappa=\frac{1}{2}$, and suitable $\gamma^{\prime},\delta^{\prime}$ due to \eqref{eq:ScalingRelationABCD} for $\kappa=\frac{1}{2}$.
The processes $(\eta^{3}_t)_{t \geq 0}$ and $(\eta^{4}_t)_{t \geq 0}$ are both chosen to be stationary, satisfying
\begin{equation}\label{eq:ConstructionZetaNew}
\mathbf{P}\left( \eta^{3}_t  \succeq_\c  \eta^{4}_t  \, \forall  t \geq 0\right) = 1
\end{equation} with respect to the basic coupling $\mathbf{P}$.

\subsection{Exit time of second class particles}\label{sec:ExitSecondClass}

In the following, we adapt the strategy from  Section~\ref{sec:ExitSecondClassHigh} for mixing times in the weakly high and low density phase. Recall from \eqref{def:ExitTimes} that $\texit$ denotes the exit time of the second class particles in the disagreement process $(\xi_t)_{t \geq 0}$ between $(\eta^{1}_t)_{t \geq 0}$ and $(\eta^{2}_t)_{t \geq 0}$ with initial conditions  \eqref{eq:CondiNew}. We have the following result on the exit time of second class particles at the triple point when $\kappa<\frac{1}{2}$.

\begin{lemma}\label{lem:ExitMax}
Let $q$ from \eqref{def:TriplePointScaling} with $\kappa<\frac{1}{2}$ and $\psi>0$. Assume that the parameters $\alpha,\beta,\gamma,\delta$ satisfy the conditions \eqref{def:LiggettTypeCondition} and \eqref{eq:ScalingCondition} for some constants $\tilde{A},\tilde{C}$, and that there exist constants $C_{\textup{up}},C_{\textup{low}}>0$ such that $k,\ell$ from \eqref{eq:CondiNew} satisfy
\begin{equation}
\frac{C_{\textup{up}}}{\sqrt{N}} \geq 2^{-k}  \geq -2^{-\ell} \geq - \frac{C_{\textup{low}}}{\sqrt{N}}
\end{equation} for all $N$ large enough. Then there exist constants $C_0,c_0>0$, depending only $\tilde{A},\tilde{C},C_{\textup{up}},C_{\textup{low}}$, such that we have
\begin{equation}
\mathbf{P}\left(\texit \leq  C_0 N^{3/2}(1-q)^{-1}\right) \geq c_0
\end{equation} for all $N$ sufficiently large.
%
%
\end{lemma}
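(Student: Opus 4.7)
My plan is to adapt the strategy of Lemma~\ref{lem:ExittimesSecondClass}, replacing the high-probability current estimate in Lemma~\ref{lem:CurrentHighLowFinal} by the Paley--Zygmund-type positive-probability estimate in Lemma~\ref{lem:CurrentMaxPhase}. I will set $T = m N^{3/2}(1-q)^{-1}$ for an integer constant $m$ (depending on $\tilde{A}, \tilde{C}, C_{\textup{up}}, C_{\textup{low}}$, and the constant $c_{\kappa,\tilde A,\tilde C}$ from \eqref{eq:DifferenceCondition}) to be fixed at the end. The first step is to control the number of second class particles. Since second class particles can only exit in $(\xi_t)_{t \geq 0}$, the stochastic dominations in \eqref{eq:CondiNew} together with standard Bernoulli concentration yield $\sum_{x\in\lbr N\rbr} \mathds{1}_{\{\xi_t(x)=2\}} \leq C_1\sqrt{N}$ for all $t\geq 0$ with probability at least $1-\exp(-c_1\sqrt{N})$, with $C_1, c_1$ depending only on $C_{\textup{up}}, C_{\textup{low}}$. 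An analogous fixed-time bound holds for $(\zeta_t)_{t \geq 0}$ by stationarity and Lemma~\ref{lem:Bernoulli}, since the effective densities of $(\eta^{3}_t)$ and $(\eta^{4}_t)$ differ by $O(N^{-1/2})$ under \eqref{eq:ScalingConditionNew}.

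Next, I invoke Lemma~\ref{lem:CurrentMaxPhase} applied to the stationary pair $((\eta^{3}_t), (\eta^{4}_t))$ with time horizon $T$. The construction \eqref{eq:DifferenceCondition}, made possible by the strict monotonicity in Lemma~\ref{lemMonotonicity}, ensures that the constant $c_0$ appearing in Lemma~\ref{lem:CurrentMaxPhase} can be bounded below by a positive quantity depending only on $\kappa, \psi, \tilde A, \tilde C$. Consequently, under the basic coupling, with probability at least $c_2/m^2$ the currents satisfy $\mathcal{J}^{(4)}_T - \mathcal{J}^{(3)}_T \geq m c_3 \sqrt{N}$. Interpreting this difference as the net boundary flow of type-$\Bup$ particles in $(\zeta_t)_{t\geq 0}$ and absorbing an $O(\sqrt{N})$ correction from the initial and final type-$\Bup$ stock (controlled by the previous step), this gives $M_4(T) \geq \tfrac{1}{2} m c_3 \sqrt{N}$ on the same event, where $M_4(T)$ denotes the number of type $2_4$ exits at site $1$ in $(\chi_t)_{t \geq 0}$ by time $T$.

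Finally, I observe that type $2_3$ particles are never created in $(\chi_t)_{t\geq 0}$: they can only be destroyed by collision with a type $2_4$, which produces a type $2_5$. Hence the total number of type $2_5$ particles ever produced is bounded by the initial type $2_3$ count, itself at most $C_1\sqrt{N}$ by the first step. Choosing $m$ large enough that $\tfrac{1}{2} m c_3 > 2 C_1$, the diminished process $(\chi^{\star}_t)_{t \geq 0}$ from Definition~\ref{def:PartiallyDiminished} together with Lemma~\ref{lem:DiminishedMultiASEP}, applied with $y = C_1 \sqrt{N}$ and $x = C \log N$, then forces with high probability every initial type $2_1, 2_2, 2_3$ particle to exit at site $1$ by time $T$, yielding $\texit \leq T$. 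Intersecting all of these events, the resulting probability is bounded below by $c_2/(2m^2) =: c_0 > 0$, and we set $C_0 := m$. The principal obstacle is that Lemma~\ref{lem:CurrentMaxPhase} only supplies positive probability rather than exponential concentration; reconciling this requires choosing $m$ large enough that the $c_2/m^2$-event comfortably survives the intersection with the high-probability events from the first and third steps, and that the deterministic current gap $m c_3 \sqrt{N}$ dominates all the additive $O(\sqrt{N})$ error terms arising from the boundary stock and the type $2_5$ creations.
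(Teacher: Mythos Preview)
Your overall strategy is right, but there is a genuine gap in the second and third steps that stems from a difference between the weakly-high-density setup of Section~\ref{sec:ExitSecondClassHigh} and the triple-point setup of Section~\ref{sec:MixingTriple}. In the latter, the auxiliary process $(\eta^{4}_t)_{t\geq 0}$ has \emph{both} $\alpha'>\alpha$ and $\beta'>\beta$ (see the discussion around \eqref{eq:ScalingConditionNew}--\eqref{eq:DifferenceCondition}, and this is exactly what Lemma~\ref{lem:CurrentMaxPhase} requires). Consequently, second class particles in $(\zeta_t)_{t\geq 0}$ are created at \emph{both} boundaries: one species at site~$N$ and the other at site~$1$. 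The current gap therefore decomposes as
\[
\mathcal{J}^{(4)}_T-\mathcal{J}^{(3)}_T \;=\; \mathcal{J}^{\Aup}_T+\mathcal{J}^{\Bup}_T \;+\; O(\sqrt{N}),
\]
where $\mathcal{J}^{\Aup}_T$ counts exits at site~$1$ and $\mathcal{J}^{\Bup}_T$ counts exits at site~$N$ (cf.\ \eqref{eq:Helper1}). Your claim that ``this gives $M_4(T)\geq \tfrac12 m c_3\sqrt{N}$'' identifies the entire current gap with exits at one boundary only; in fact one can only conclude that $M_4(T)+M_0(T)$ is of order $m\sqrt{N}$, where $M_0(T)$ counts type~$2_0$ exits at site~$N$. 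It is perfectly possible that on the positive-probability event from Lemma~\ref{lem:CurrentMaxPhase} one has $M_4(T)=o(\sqrt{N})$ while $M_0(T)\asymp m\sqrt{N}$, in which case your application of $(\chi^{\star}_t)_{t\geq 0}$ and Lemma~\ref{lem:DiminishedMultiASEP} does not force the type $2_1,2_2,2_3$ particles out.

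The paper's proof closes this gap by a two-sided argument: from \eqref{eq:Helper2} one deduces that $\max\big(\mathbf{P}(M_4(T)\geq \tfrac{c_4}{4}m\sqrt{N}),\,\mathbf{P}(M_0(T)\geq \tfrac{c_4}{4}m\sqrt{N})\big)$ is bounded below by a constant times $m^{-2}$ (see \eqref{eq:MaxBoundAm}), and then uses \emph{both} diminished processes---$(\chi^{\star}_t)_{t\geq 0}$ from Definition~\ref{def:PartiallyDiminished} for the $M_4$ event, and $(\bar\chi^{\star}_t)_{t\geq 0}$ from Remark~\ref{rem:Diminished} for the $M_0$ event---to show that either event forces $\texit\leq T$. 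Your bound on type~$2_5$ creations is correct, but you also need the mirror bound on type~$2_{-1}$ creations (bounded by the initial type~$2_1$ count) to run the $(\bar\chi^{\star}_t)$ argument. Once you add this second branch, your proof matches the paper's.
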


\begin{proof}
We follow a similar strategy as for Lemma \ref{lem:ExittimesSecondClass}.
Note that under the basic coupling $\mathbf{P}$, there exist constants  $c_1,C_1>0$ such that for any $t \geq 0$ and $m\in \N$
\begin{equation}\label{eq:Particles1New}
\mathbf{P}\left( \sum_{x \in \lbr N \rbr} \mathds{1}_{ \{ \eta^{1}_t(x) \neq  \eta^{2}_t(x) \}} \geq \sqrt{m} (C_{\textup{up}}+C_{\textup{low}})\sqrt{N} \right) \leq C_1 \exp(- c_1 \sqrt{m})
\end{equation} for all $N$ large enough. Similarly, using Lemma \ref{lem:Bernoulli} for the marginals of the stationary process $(\zeta_t)_{t \geq 0}$, and assumption \eqref{eq:ConstructionZetaNew}, there exist constants $c_2,C_2,C_3>0$ such that for any $t \geq 0$ and $m\in \N$
\begin{equation}\label{eq:Particles2New}
\mathbf{P}\left( \sum_{x \in \lbr N \rbr} \mathds{1}_{ \{ \eta^{3}_t(x) \neq  \eta^{4}_t(x) \} } \geq \sqrt{m} C_3 \sqrt{N} \right) \leq C_2 \exp(- c_2 \sqrt{m})
\end{equation} for all $N$ large enough. For the disagreement process $(\zeta_t)_{t \geq 0}$ between $(\eta^{3}_t)_{t \geq 0}$ and $(\eta^{4}_t)_{t \geq 0}$, let $\mathcal{J}^{\Aup}_{t}$ denote the number of type $\Aup$ second class particles which have exited the segment at site $1$ by time $t\geq 0$. Similarly, let $\mathcal{J}^{\Bup}_{t}$ denote the number of type $\Bup$ second class particles which have exited the segment at site $N$ by time $t\geq 0$.
Set in the following
\begin{equation*}
T=T(m)= m N^{3/2}(1-q)^{-1} .
\end{equation*}
Let $(\mathcal{J}^{3}_{t})_{t \geq 0}$ and $(\mathcal{J}^{4}_{t})_{t \geq 0}$ denote the current in $(\eta^{3}_t)_{t \geq 0}$ and $(\eta^{4}_t)_{t \geq 0}$ at site $1$, respectively. Since in the disagreement process $(\zeta_t)_{t \geq 0}$ type $\Aup$ particles are only created at site $N$ while type $\Bup$ particles are only created at site $1$, we observe that
\begin{equation}\label{eq:Helper1}
  \mathcal{J}^{4}_{t} - \mathcal{J}^{3}_{t} = \mathcal{J}^{\Aup}_{t} + \mathcal{J}^{\Bup}_{t} + \sum_{x \in \lbr N \rbr} \mathds{1}_{\{ \zeta_t(x) = \Bup \}}  - \mathds{1}_{\{ \zeta_0(x) = \Bup \}}
\end{equation} almost surely for all $t\geq 0$.
From Lemma \ref{lem:CurrentMaxPhase}, with our choice of the boundary parameters according to \eqref{eq:DifferenceCondition}, and \eqref{eq:Particles2New} at times $t=0$ and $t=T$, we see that there exist constants $c_4,c_5>0$ such that for all $m\in \N$
\begin{equation}\label{eq:Helper2}
\mathbf{P}\left( \mathcal{J}^{\Aup}_{T} + \mathcal{J}^{\Bup}_{T} \geq  m c_4 \sqrt{N} \right) \geq \frac{c_5}{m^2} .
\end{equation}
Let $M_0(s)$ denote the number of type $0$ second class particles which have left in $(\chi_t)_{t \geq 0}$ until time $s\geq 0$ at site $N$. Similarly, let $M_4(s)$ denote the number of type $4$ second class particles which have left in $(\chi_t)_{t \geq 0}$ until time $s\geq 0$ at site $1$. We define the events
\begin{align*}
\mathcal{A}^{m}_1 := \left\{ M_4(T) \geq \frac{c_4}{4} m \sqrt{N} \right\} \quad \text{ and } \quad \mathcal{A}^{m}_2 &:= \left\{ M_0(T) \geq \frac{c_4}{4} m \sqrt{N} \right\} , 
\end{align*}
and recall that only type $4$ second class particles are created at site $N$ (paired with a type $\Aup$ second class particle in $(\zeta_t)_{t \geq 0}$) while only type $0$ second class particles are created at site $1$ (paired with a type $\Bup$ second class particle in $(\zeta_t)_{t \geq 0}$) in the process $(\chi_t)_{t \geq 0}$.
Since the events in \eqref{eq:Particles1New} and \eqref{eq:Particles2New} ensure that there are at most $\sqrt{m}(C_3+C_{\textup{up}}+C_{\textup{low}})\sqrt{N}$ many second class particles of types $\Aup$ and $\Bup$ in the segment at times $0$ and $T$, we get from \eqref{eq:Helper1} and \eqref{eq:Helper2} that there exists some $m_0 \in \N$ and $c_6>0$ such that
\begin{equation}\label{eq:MaxBoundAm}
\begin{split}
\max\big(\mathbf{P}(\mathcal{A}^{m}_1),\mathbf{P}(\mathcal{A}^{m}_2)\big) &= \max\Big(\mathbf{P}\big(M_4(T) \geq \frac{c_4}{4} m \sqrt{N}\big),\mathbf{P}\big(M_0(T) \geq \frac{c_4}{4} m \sqrt{N}\big) \Big) \\
&\geq \frac{1}{2}\mathbf{P}\left( \mathcal{J}^{\Aup}_{T} + \mathcal{J}^{\Bup}_{T} \geq  m c_4 \sqrt{N} \right)  - 2 C_2 \exp(-c_2 \sqrt{m}) \geq  \frac{c_6}{4m^2}
\end{split}
\end{equation} for all $m \geq m_0$, and all $N$ large enough. The same arguments as in Lemma \ref{lem:ExittimesSecondClass}, using the diminished process $(\chi^{\star}_t)_{t \geq 0}$, guarantee that with probability at least $1-C_7\exp(-c_7 N^{\expont})$ for some constants $c_7,C_7>0$ and $\expont$ from \eqref{def:KappaEpsilon}, there are at most $N^{\kappa+\expont}\leq N^{1/2}$ many second class particles of type $4$ to the left of any type $1,2,3$ second class particle at time $T$. Similarly, using the diminished process $(\bar{\chi}^{\star}_t)_{t \geq 0}$ from Remark~\ref{rem:Diminished}, we see that with probability at least $1-C_8\exp(-c_8 N^{\expont})$ for some constants $c_8,C_8>0$, there are at most $N^{\kappa+\expont}$ many second class particles of type $0$ to the right of any type $1,2,3$ second class particle at time~$T$. Hence, choosing $m = \max(8 c^{-1}_4,m_0)$, either of the events $\mathcal{A}^{m}_1$ and $\mathcal{A}^{m}_2$ implies that $\texit$ has occurred by time $T$ with probability tending to $1$ as $N \rightarrow \infty$, allowing us to conclude by the bounds in \eqref{eq:MaxBoundAm}.
\end{proof}

We have the following result when $\kappa=\frac{1}{2}$. Since we apply the same arguments as in the proof of Lemma~\ref{lem:ExittimesSecondClassCritical}, we only describe the necessary adjustments of the proof.

\begin{lemma}\label{lem:ExitMaxCritical}
Let $q$ from \eqref{def:TriplePointScaling} with $\kappa=\frac{1}{2}$ and $\psi>0$. Assume that the parameters $\alpha,\beta,\gamma,\delta$ satisfy the conditions \eqref{def:LiggettTypeCondition} and \eqref{eq:ScalingCondition} for some constants $\tilde{A},\tilde{C}$, and that there exist constants $C_{\textup{up}},C_{\textup{low}}>0$ such that $k,\ell$ from \eqref{eq:CondiNew} satisfy
\begin{equation}
\frac{C_{\textup{up}}\log(N)}{\sqrt{N}} \geq 2^{-k}  \geq -2^{-\ell} \geq - \frac{C_{\textup{low}}\log(N)}{\sqrt{N}}
\end{equation} for all $N$ large enough. Then there exist constants $C_0,c_0>0$, depending only $\tilde{A},\tilde{C},C_{\textup{up}},C_{\textup{low}}$, such that we have for all $N$ sufficiently large
\begin{equation}
\mathbf{P}\left(\texit \leq  C_0 N^2 \log(N) \right) \geq \frac{c_0}{\log^{2}(N)} .
\end{equation}

\end{lemma}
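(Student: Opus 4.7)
The plan is to follow the structure of Lemma~\ref{lem:ExitMax}, modifying each ingredient to account for the extra $\log(N)$ factors that arise at $\kappa=\tfrac{1}{2}$. Set up the four open ASEPs $(\eta^{j}_{t})_{t\ge 0}$ for $j\in\lbr 4\rbr$ as in the opening of Section~\ref{sec:MixingTriple}, choosing $\alpha',\beta',\gamma',\delta'$ via Lemma~\ref{lemMonotonicityBis} so that $\tilde A',\tilde C'$ are sufficiently large and \eqref{eq:DifferenceCondition} holds for $\tilde F$ with some constant $c_{\kappa,\tilde A,\tilde C}>0$, while respecting Liggett's condition.

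First I would establish the analogues of \eqref{eq:Particles1New} and \eqref{eq:Particles2New}: since $\rho_{\Lup},\rho_{\Rup}$ for the $\xi$- and $\zeta$-processes all lie within $C\log(N)/\sqrt{N}$ of $\tfrac{1}{2}$, a Bernstein-type tail bound for a sum of independent Bernoulli variables (using Lemma~\ref{lem:Bernoulli} for $\zeta_{0}$ and hypothesis \eqref{eq:CondiNew} for $\xi_{0}$) yields constants $c_{1},C_{1}>0$ such that, for every $t\ge 0$ and every $m\in\N$,
\begin{equation*}
\mathbf{P}\Big(\sum_{x\in\lbr N\rbr}\mathds{1}_{\{\eta^{1}_{t}(x)\neq \eta^{2}_{t}(x)\}}\ge \sqrt{m}(C_{\textup{up}}+C_{\textup{low}})\sqrt{N}\log(N)\Big)\le C_{1}\exp(-c_{1}\sqrt{m}),
\end{equation*}
and similarly for the number of $\zeta$-disagreements with a constant $C_{3}>0$ in place of $C_{\textup{up}}+C_{\textup{low}}$.

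Next I would run the current comparison. Set $T=m N^{2}\log(N)$ for $m\in\N$ to be chosen and apply Lemma~\ref{lem:CurrentCriticalKappaMax} (with $m$ there playing the role of $m$ here) to produce a constant $c_{4}>0$ such that
\begin{equation*}
\mathbf{P}\big(\mathcal{J}^{4}_{T}-\mathcal{J}^{3}_{T}\ge c_{4}m\sqrt{N}\log(N)\big)\ge \frac{c_{5}}{m^{2}\log^{2}(N)}.
\end{equation*}
Identity \eqref{eq:Helper1} still holds in the present setting, so combining it with the bound on interior $\zeta$-disagreements at times $0$ and $T$ yields that the event
\[
\big\{\mathcal{J}^{\Aup}_{T}+\mathcal{J}^{\Bup}_{T}\ge c_{4}m\sqrt{N}\log(N)/2\big\}
\]
has probability at least $c_{5}/(2m^{2}\log^{2}(N))$ once $m\ge m_{0}$ is large enough (independent of $N$). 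Since the exits of type $\Bup$ at site $1$ in $\zeta$ correspond bijectively to exits of type $4$ in $\chi$, and likewise for type $\Aup$ with type $0$, we obtain via the argument in \eqref{eq:MaxBoundAm} that
\[
\max\big(\mathbf{P}(\mathcal{A}^{m}_{1}),\mathbf{P}(\mathcal{A}^{m}_{2})\big)\ge \frac{c_{6}}{m^{2}\log^{2}(N)},
\]
where $\mathcal{A}^{m}_{1}=\{M_{4}(T)\ge c_{4}m\sqrt{N}\log(N)/4\}$ and $\mathcal{A}^{m}_{2}=\{M_{0}(T)\ge c_{4}m\sqrt{N}\log(N)/4\}$.

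Finally, choose $m=m_{0}$ as a sufficiently large constant so that $c_{4}m_{0}/4$ exceeds $2(C_{\textup{up}}+C_{\textup{low}})$ by a wide margin. Then on $\mathcal{A}^{m_{0}}_{1}$, the number of type-$4$ exits at site $1$ exceeds the initial count of type $1,2,3$ particles in $\chi_{0}$ (controlled by the moderate deviation bound above), and Lemma~\ref{lem:DiminishedMultiASEP} applied to the diminished process $(\chi^{\star}_{t})_{t\ge 0}$ guarantees, up to an error of order $\exp(-c N^{\expont})$, that every type $1,2,3$ particle has at most $N^{\kappa+\expont}\ll\sqrt{N}\log(N)$ type $4$ particles to its left at any time $s\le N^{3}$; thus every type $1,2,3$ particle is pushed out by time $T$. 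The symmetric argument, using the diminished process $(\bar\chi^{\star}_{t})$ from Remark~\ref{rem:Diminished}, handles the case $\mathcal{A}^{m_{0}}_{2}$. Taking $C_{0}=m_{0}$ and $c_{0}=c_{6}/(2m_{0}^{2})$ completes the proof. The main obstacle is the quantitative bookkeeping: the $\log(N)$ surplus in the initial number of disagreements must not swallow the $\log(N)$ gain in the current difference supplied by Lemma~\ref{lem:CurrentCriticalKappaMax}, which is precisely why the probability in the conclusion degrades by a factor $\log^{-2}(N)$ rather than staying bounded below.
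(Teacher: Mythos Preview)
Your overall strategy is the same as the paper's, and almost every step is correct. There is, however, one genuine gap in your final step.

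When you invoke Lemma~\ref{lem:DiminishedMultiASEP} you claim the error is ``of order $\exp(-cN^{\expont})$'' and the bound on type~4 particles to the left of any type~$1,2,3$ particle is $N^{\kappa+\expont}$. But at $\kappa=\tfrac{1}{2}$ the constant $\expont=(1-2\kappa)/12$ from \eqref{def:KappaEpsilon} is exactly zero, so your error probability is $\exp(-c\cdot 1)=e^{-c}$, a fixed constant. Subtracting a constant error from the target success probability $c_{6}/(m_{0}^{2}\log^{2}(N))\to 0$ gives a negative number, and the argument collapses.

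The fix, which is what the paper does, is to apply Lemma~\ref{lem:DiminishedMultiASEP} with $x=C_{4}\log(N)$ for a sufficiently large constant $C_{4}$. This yields an error probability $C_{0}\exp(-c_{0}C_{4}\log(N))=C_{0}N^{-c_{0}C_{4}}$, which can be made $\leq N^{-8}\ll \log^{-2}(N)$, at the cost of the window becoming $xN^{\kappa}=C_{4}\log(N)\sqrt{N}$ rather than $\sqrt{N}$. Consequently the bound on the number of type~4 (resp.\ type~0) particles to the left (resp.\ right) of any type~$1,2,3$ particle is of order $\log(N)\sqrt{N}$, the \emph{same} order as $M_{4}(T)$ on $\mathcal{A}^{m}_{1}$. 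This is why one must take the constant $m$ large enough that $c_{4}m/4$ beats both the initial disagreement count $(C_{\textup{up}}+C_{\textup{low}}+C_{3})$ \emph{and} the diminished-process constant $C_{4}$; your choice ``$c_{4}m_{0}/4>2(C_{\textup{up}}+C_{\textup{low}})$'' does not by itself suffice.
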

\begin{proof}
Note that by Lemma \ref{lem:Bernoulli} and a standard moderate deviation bound, there exists a constant $C_1>0$ such that
\begin{equation}\label{eq:Particles3New}
\mathbf{P}\Big( \sum_{x \in \lbr N \rbr} \mathds{1}_{ \{ \eta^{1}_t(x) \neq  \eta^{2}_t(x) \}} + \mathds{1}_{ \{ \eta^{3}_t(x) \neq  \eta^{4}_t(x) \} }  \geq C_1 \log(N)\sqrt{N} \Big) \leq N^{-9}
\end{equation} for all $N$ large enough. Set $T=mN^{2}\log(N)$ for some constant $m\in \N$ chosen later on.
 Recall that $M_4(s)$ and $M_0(s)$ denote the number of type $4$ and type $0$ second class particles, which have exited in $(\chi_t)_{t \geq 0}$ by time $s$ at site $1$ and site $N$, respectively. The same arguments as in Lemma \ref{lem:ExitMax} with a lower bound on the current of second class particles in $(\zeta_t)_{t \geq 0}$ by Lemma \ref{lem:CurrentCriticalKappaMax} yield that
\begin{equation}\label{eq:LowerM04}
\max\Big( \mathbf{P} \big(M_4(T) \geq  m c_2 \log(N)\sqrt{N} \big),  \mathbf{P} \big(M_0(T) \geq  m c_2 \log(N)\sqrt{N} \big) \Big)  \geq \frac{c_3}{\log^{2}(N)m^2}
\end{equation} for some constants $c_2,c_3>0$, for all $m$ fixed, and all $N$ large enough. Moreover, by Lemma~\ref{lem:DiminishedMultiASEP} and Remark~\ref{rem:Diminished}, there exists a constant $C_4>0$ such that with probability at least $1-N^{-8}$ at most $C_4 \log(N)\sqrt{N}$ many type $4$ second class particles are to the left of any type $1,2,3$ second class particle in $\chi_T$, while at most $C_4 \log(N)\sqrt{N}$ many type $0$ second class particles are to the right of any type $1,2,3$ second class particle in $\chi_T$. Choosing now the constant $m$ in \eqref{eq:LowerM04} sufficiently large, we conclude.
\end{proof}

\subsection{Proof of the upper bound on the mixing time}

We have now all tools to establish an upper bound on the mixing time of the open ASEP at the triple point. We start with the case where $q$  satisfies \eqref{def:TriplePointScaling} with $\kappa<\frac{1}{2}$.

\begin{proof}[Proof of the upper bound in Theorem \ref{thm:MixingTimesTriple}]

Let $C_{\textup{up}},C_{\textup{low}}>0$ be two constants such that
\begin{equation}\label{eq:OrderingDensitiesFinal}
\frac{1}{2}+\frac{C_{\textup{up}}}{\sqrt{N}} \geq \max\left( \frac{A}{A+1},\frac{1}{C+1}\right)  \geq \min\left( \frac{A}{A+1},\frac{1}{C+1}\right) \geq \frac{1}{2}-\frac{C_{\textup{low}}}{\sqrt{N}}
\end{equation} for all $N$ large enough, assuming without loss of generality that $\frac{C_{\textup{up}}}{\sqrt{N}}$ and $\frac{C_{\textup{low}}}{\sqrt{N}}$ are powers of $2$ (this is where we make the choice for $k$ and $\ell$ from \eqref{eq:CondiNew}). Recall the coupling time $t_{\cou}(\varepsilon)$ from \eqref{def:MixingTimeCoupling} for the open ASEP with respect to boundary parameters $\alpha,\beta,\gamma,\delta$ and consider four open ASEPs $(\tilde{\eta}^{(j)}_t)_{t \geq 0}$ for $j \in \lbr 4 \rbr$. We use the same parameters $\gamma,\delta,q$ as well as
\begin{equation*}
\alpha^{(3)} \geq \alpha^{(1)} =  \alpha^{(2)}  \geq \alpha^{(4)} \quad \text{ and } \quad  \beta^{(4)} \geq \beta^{(1)} = \beta^{(2)} \geq \beta^{(3)}
\end{equation*} such that the respective effective densities satisfy
\begin{equation*}
\rho_{\Lup}^{(1)}=\frac{1}{C+1}, \quad \rho_{\Rup}^{(1)}=\frac{A}{A+1}, \quad \rho_{\Lup}^{(3)}=\rho_{\Rup}^{(3)}=\frac{1}{2}+\frac{C_{\textup{up}}}{\sqrt{N}}, \quad
\rho_{\Lup}^{(4)}=\rho_{\Rup}^{(4)}=\frac{1}{2}-\frac{C_{\textup{low}}}{\sqrt{N}}.
\end{equation*}
We let $\tilde{\eta}^{(1)}_0=\tilde{\eta}^{(3)}_0=\overrightarrow{1}$ and  $\tilde{\eta}^{(2)}_0=\tilde{\eta}^{(4)}_0=\overrightarrow{0}$ almost surely, where we recall that $\overrightarrow{1}$ and $\overrightarrow{0}$ denote the all full and all empty configuration, respectively. Under the basic coupling, we get
\begin{equation}\label{eq:CouplingMaxFinal}
\mathbf{P}\left( \tilde{\eta}^{(3)}_t \succeq_{\c} \tilde{\eta}^{(1)}_t  \succeq_{\c}  \tilde{\eta}^{(2)}_t  \succeq_{\c}  \tilde{\eta}^{(4)}_t \, \forall t \geq 0 \right) = 1 .
\end{equation}
Consider two (random) configurations
\begin{equation*}
\eta_0^{\textup{up}} \sim \textup{Ber}_N\Big(\frac{1}{2}+\frac{C_{\textup{up}}}{\sqrt{N}}\Big) \quad \text{ and }\quad \eta_0^{\textup{low}}\sim \textup{Ber}_N\Big(\frac{1}{2}-\frac{C_{\textup{low}}}{\sqrt{N}}\Big) .
\end{equation*}
 Then by Proposition \ref{pro:MixingTimesWeaklyHighLow}, with $2^{-n}=C_{\textup{up}}/\sqrt{N}$, increasing the constants $C_{\textup{up}}$ and $C_{\textup{low}}$ from \eqref{eq:OrderingDensitiesFinal} if necessary, for every $\varepsilon>0$, there exists some constant $C_0>0$ and a coupling $\tilde{\mathbf{P}}$ such that for all $N$ large enough
\begin{equation}\label{eq:COuplingAid}
\tilde{\mathbf{P}}\left(  \tilde{\eta}^{(3)}_{C_0N^{3/2}(1-q)^{-1}} = \eta_0^{\textup{up}} \text{ and }  \tilde{\eta}^{(4)}_{C_0N^{3/2}(1-q)^{-1}} =  \eta_0^{\textup{low}} \text{ with } \eta_0^{\textup{up}} \succeq_{\c} \eta_0^{\textup{low}} \right) \geq 1 - \varepsilon /2 .
\end{equation} Let $c_0>0$ be taken from Lemma \ref{lem:ExitMax} and set $\varepsilon=c_0/2$. Then by \eqref{eq:COuplingAid} together with Lemma~\ref{lem:ExitMax} and \eqref{eq:CouplingMaxFinal}, there exists a constant $C_2>0$ and a coupling $\bar{\mathbf{P}}$ such that
\begin{equation}\label{eq:CoalFinite}
\bar{\mathbf{P}}\left( \tilde{\eta}^{(1)}_{C_2N^{3/2}(1-q)^{-1}} = \tilde{\eta}^{(2)}_{C_2N^{3/2}(1-q)^{-1}} \right) \geq \varepsilon
\end{equation} for all $N$ large enough. Since $\varepsilon>0$ does not depend on $N$, iterating \eqref{eq:CoalFinite} order $\varepsilon^{-1}$ many times and using the coupling representation of the total-variation distance gives the desired result.
\end{proof}

Using a similar argument, we obtain an upper bound on mixing time of the open ASEP when $q$ satisfies \eqref{def:TriplePointScaling} with $\kappa=\frac{1}{2}$.

\begin{proof}[Proof of the upper bound in Theorem~\ref{thm:MixingTimesKPZ}]

We use the same setup as in the proof of the upper bound of Theorem~\ref{thm:MixingTimesTriple}, but assert that
\begin{equation*}
\rho_{\Lup}^{(3)}=\rho_{\Rup}^{(3)}=\frac{1}{2}+\frac{C_{\textup{up}}\log(N)}{\sqrt{N}} = \frac{1}{2} + 2^{-k}, \quad
\rho_{\Lup}^{(4)}=\rho_{\Rup}^{(4)}=\frac{1}{2}-\frac{C_{\textup{low}}\log(N)}{\sqrt{N}} = \frac{1}{2} - 2^{-\ell}.
\end{equation*} for some constants $C_{\textup{up}},C_{\textup{low}}>0$ and $k,\ell \in \N$, where we recall $k$ and $\ell$ from \eqref{eq:CondiNew}.
Then by Proposition \ref{pro:MixingTimesWeaklyHighLowCritical}, increasing the constants $C_{\textup{up}},C_{\textup{low}}$ if necessary, there exists a constant $C_0>0$ such that
\begin{equation*}
\mathbf{P}\left(  \tilde{\eta}^{(3)}_{C_0N^{2}\log(N)^{-1}} = \eta_0^{\textup{up}} \text{ and }  \tilde{\eta}^{(4)}_{C_0N^{2}\log(N)^{-1}} =  \eta_0^{\textup{low}} \text{ with } \eta_0^{\textup{up}} \succeq_{\c} \eta_0^{\textup{low}} \right) \geq 1 - 2N^{-8}
\end{equation*} for all $N$ large enough. Now  Lemma~\ref{lem:ExitMaxCritical} yields that for some constant $C_1>0$
\begin{equation}\label{eq:AlmostPositive}
\liminf_{N \rightarrow \infty}  \log^2(N)  \mathbf{P}\left( \tilde{\eta}^{(1)}_{C_1N^{2}\log(N)} =   \tilde{\eta}^{(2)}_{C_1N^{2}\log(N)} \right)> 0 .
\end{equation} Iterating  \eqref{eq:AlmostPositive} order $\log^{2}(N)$ many times gives the desired result.
\end{proof}

\begin{remark}\label{rem:MaxCurrent}
{In view of Remark~\ref{rem:ExtendedMaxCurrent}, we believe that the above arguments can be used to show that for $\kappa=0$ and any $A,C\leq 1$, the open ASEP under the basic coupling contains after a time of order $N^{3/2}$ at most order $N^{1/2}$ many second class particles. 
 In the proof of Theorem~\ref{thm:MixingTimesTriple}, we crucially rely on the fact that by decreasing the parameters $A,C$ from \eqref{eq:ScalingCondition} by order $N^{-1/2}$, this changes the expected current at order $N^{-1}$; see Proposition \ref{pro:CurrentMaxCurrent1}. This allows us bound the exit time of the remaining order $N^{1/2}$ many second class particles. Since we do not expect this to hold for general $A,C<1$, a different argument will be required to bound the exit time of the remaining order $N^{1/2}$ many second class particles. }
\end{remark}

\begin{remark}\label{rem:GeneralKappa}
Observe that we apply the same line of arguments for both regimes $\kappa<\frac{1}{2}$ and $\kappa=\frac{1}{2}$. It is therefore natural to conjecture that the upper bound of order $N^{3/2+\kappa}$ on the mixing time can be extended to the case $\kappa=\frac{1}{2}$.
However, this requires an improved bound on the variance of the current of the open ASEP of order $N$ until time $T \asymp N^{2}$, which could for example be achieved by an improved bound on the speed of second class particles in Proposition \ref{pro:SpeedOfDisagreement} when $\kappa=\frac{1}{2}$. We leave this to future work.
\end{remark}

\section{Lower bound on the mixing times}\label{sec:MixingTimesLowerBounds}

In this section, we provide lower bounds on the mixing times of the open ASEP at the triple point.
We start with an auxiliary result on the speed of second class particles for an ASEP on $\Z$.
Fix some $z>0$ (specified later on), and let $\bar{\pi}^{z}_N$ denote the product measure { on $\{1,2,\infty\}^{\Z}$ with marginals
\begin{equation}\label{eq:InitialLower1}
\begin{split}
    \bar{\pi}^{z}_N(\eta(x)=1) = \frac{1}{2} , \quad 
    \bar{\pi}^{z}_N(\eta(x)=2) = \frac{z}{\sqrt{N}} , \quad 
    \bar{\pi}^{z}_N(\eta(x)=\infty) = \frac{1}{2} - \frac{z}{\sqrt{N}} , 
\end{split}
\end{equation} for all $x\in [\frac{3}{8}N,\frac{5}{8}N]$ as well as
\begin{equation}\label{eq:InitialLower2}
\bar{\pi}^{z}_N(\eta(x)=1) = \bar{\pi}^{z}_N(\eta(x)=\infty) = \frac{1}{2}
\end{equation} for all $x\notin [\frac{3}{8}N,\frac{5}{8}N]$. }With a slight abuse of notation, we will also interpret $\bar{\pi}^{z}_N$ as a probability measure on the space $\{0,1,2\}^{N}$ for some $N \in \N$ {(identifying types $0$ and $\infty$)}. We have the following result on the location of second class particles in an ASEP on $\Z$ started from~$\bar{\pi}^{z}_N$.

\begin{lemma}\label{lem:ExtraSecondClass} Let $q$ satisfy \eqref{def:TriplePointScaling} with some $\kappa<\frac{1}{2}$. Consider an ASEP $(\eta_t^{\Z})_{t \geq 0}$ on $\Z$ with initial distribution $\bar{\pi}^{z}_N$. Then for every $\varepsilon,z>0$, there exists some constant $c_1>0$ such that for all $N$ large enough
\begin{equation}\label{eq:Extra1}
\P\left( \eta_t^{\Z}(x) \in \{ 0,1 \} \, \forall t \in [0,c_1(1-q)^{-1}N^{3/2}] \, , x \notin \big[\tfrac{1}{4}N,\tfrac{3}{4}N\big] \right) \geq 1- \varepsilon .
\end{equation}
Similarly, if $q$ satisfies \eqref{def:TriplePointScaling} with $\kappa=\frac{1}{2}$, then for every $\varepsilon,z>0$, there exists some constant $c_2>0$ such that for all $N$ large enough
\begin{equation}\label{eq:Extra2}
\P\left( \eta_t^{\Z}(x) \in \{ 0,1 \} \, \forall t \in [0,c_2 N^{2}\log^{-1}(N)] \, , x \notin \big[\tfrac{1}{4}N,\tfrac{3}{4}N\big] \right) \geq 1- \varepsilon .
\end{equation}
\end{lemma}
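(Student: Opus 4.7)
The plan is to adapt the barrier-and-reference scheme from the proof of Lemma~\ref{lem:MaxSecondClassOrder} to the modified initial distribution $\bar{\pi}^{z}_N$. I will focus on $\kappa<1/2$; the case $\kappa=1/2$ is analogous, with Lemma~\ref{lem:MaxSecondClassOrderCritical} replacing Lemma~\ref{lem:ModDeviationMaxSecondClass}. Since $\rho=1/2$, a single second-class particle in a Bernoulli-$\frac12$ environment has zero mean velocity and sub-diffusive fluctuations. More precisely, Lemma~\ref{lem:ModDeviationMaxSecondClass} with $\rho_N=1/2$ and $\theta=1/c_1$ gives a maximal-displacement bound of order $y\, c_1^{2/3}N$ over $[0, c_1(1-q)^{-1}N^{3/2}]$, which is $\leq N/16$ for $c_1$ a small absolute constant and $y$ a fixed constant depending on $\varepsilon$. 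The task is to transfer this single-particle bound to the $\Theta(\sqrt N)$ interacting second-class particles seeded by $\bar{\pi}^{z}_N$.

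To do this, I would enrich the initial configuration by auxiliary ``type-$3$'' and ``type-$4$'' second-class particles, in close analogy with the construction in the proof of Lemma~\ref{lem:MaxSecondClassOrder}. Concretely, flip each empty site of $\bar{\pi}^{z}_N$ in $\lbr \lfloor N/4\rfloor, \lfloor 3N/8\rfloor-1 \rbr$ independently into a type-$3$ second-class particle with probability $N^{-1/2}$, and symmetrically flip each particle in $\lbr \lceil 5N/8\rceil+1, \lceil 3N/4\rceil \rbr$ into a type-$3$ second-class particle with the same probability; then mark the leftmost first-class particle to the left of $N/4$ and the rightmost empty site to the right of $3N/4$ as type-$4$ barrier particles. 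Let $(\bar{\zeta}_t)_{t\geq 0}$ be the resulting multi-species ASEP, with priority ordering as in the proof of Lemma~\ref{lem:MaxSecondClassOrder}. The analogues of the events $\mathcal{A}_1, \mathcal{A}_2, \mathcal{A}_3$ from~\eqref{def:A123Events} hold except on an event of probability $\leq C\exp(-c N^{\expont})$: a Hoeffding bound controls the number of type-$3$ particles, while the projections of $(\bar{\zeta}_t)$ onto the type-$2$-vs.-type-$3$ and type-$3$-vs.-type-$4$ dynamics embed as censored ASEPs on intervals to which Lemma~\ref{lem:BlockingMeasureMaximum} and Remark~\ref{rem:ASEPRestricted} apply, yielding separation up to time $N^3$.

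On these separation events, Theorem~\ref{thm:BSCoupling} (microscopic concavity) stochastically dominates the right type-$4$ barrier from above and the left type-$4$ barrier from below by single reference second-class particles $(Z^R_t)$, $(Z^L_t)$ started at $\approx 3N/4$ and $\approx N/4$ respectively in pure Bernoulli-$\frac12$ environments on $\Z$. By the layered construction, the rightmost (resp.~leftmost) original second-class particle satisfies $R_t\leq Z^R_t$ (resp.~$L_t\geq Z^L_t$) throughout the time window. Applying Lemma~\ref{lem:ModDeviationMaxSecondClass} with $\rho_N=1/2$, $\theta=1/c_1$, and $y$ a sufficiently large constant depending on $\varepsilon$, each of $Z^L_t$, $Z^R_t$ stays within $y\, c_1^{2/3}N\leq N/16$ of its starting point with probability $\geq 1-\varepsilon/3$. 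Picking $c_1$ small enough gives $L_t\geq N/4$ and $R_t\leq 3N/4$ for all $t\in [0, c_1(1-q)^{-1}N^{3/2}]$, which is exactly~\eqref{eq:Extra1}. The case $\kappa=1/2$ proceeds along the same lines, with Lemma~\ref{lem:MaxSecondClassOrderCritical} providing a $\phi N$-displacement bound for the reference particles on the scale $T\asymp N^{2}\log^{-1}(N)$ with polynomial-in-$N$ error, giving~\eqref{eq:Extra2}.

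The main obstacle is the passage from single-particle to $\Theta(\sqrt N)$-particle control. Theorem~\ref{thm:BSCoupling} only compares \emph{single} second-class-particle trajectories, so a naive union bound over the $\sqrt N$ particles would cost a $\log^{1/3}N$ factor in $y$, which is incompatible with pushing the displacement scale down to $N/16$ at constant $c_1$. The barrier construction is what bypasses this: it funnels the entire $\sqrt N$-particle cluster between two single reference trajectories, reducing the estimate to exactly two applications of Lemma~\ref{lem:ModDeviationMaxSecondClass}. Verifying the separation of the type-$4$ barriers from the type-$2$ particles, via censoring and blocking-measure bounds, is thus the main technical point to be executed carefully.
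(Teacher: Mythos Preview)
Your proposal is correct and takes essentially the same approach as the paper: the paper's proof simply says ``same arguments as for $\rho_N=1/2$ in Lemma~\ref{lem:MaxSecondClassOrder}, shifting the lattice by $N/2$,'' and you have spelled out precisely those arguments (the type-$3$/type-$4$ barrier construction, censoring plus blocking-measure separation, microscopic concavity reduction to a single reference particle, and the single-particle maximal-displacement bound). Two minor slips: your type-$4$ markers should be the \emph{rightmost} particle to the left of $N/4$ and the \emph{leftmost} empty site to the right of $3N/4$ (not leftmost/rightmost as written), but this does not affect the argument.
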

\begin{proof}
Let $\kappa<\frac{1}{2}$, and recall Lemma \ref{lem:MaxSecondClassOrder} on moderate deviations for a collection of second class particles. The claim in \eqref{eq:Extra1} follows now by the same arguments as for $\rho=1/2$ in Lemma \ref{lem:MaxSecondClassOrder}, shifting the lattice by $N/2$, and using the fact that $N^{\kappa+\expont+\frac{1}{2}} \leq N/8$ for all $N$ large enough. The bound in \eqref{eq:Extra2} for the case $\kappa=\frac{1}{2}$ is similar using Lemma \ref{lem:MaxSecondClassOrderCritical} instead of Lemma \ref{lem:MaxSecondClassOrder} for a moderate deviation estimate.
\end{proof}


In order to show the desired lower bounds on the mixing time, we require a simple observation on the number of particles under the stationary distribution $\mu_N$ of the open ASEP.

\begin{lemma}\label{lem:StationaryParticleDensity}
Let $q$ satisfy \eqref{def:TriplePointScaling} for some $\kappa \in [0,\frac{1}{2}]$ and assume that the boundary parameters satisfy the assumption \eqref{eq:ScalingCondition}. Then for every $\varepsilon>0$, there exists some constant $C_0>0$ such that for all $i\in \lbr 4 \rbr$
\begin{equation}
\mu_N\Bigg( \sum_{(i-1)N/4< x \leq i N/4 } \eta(x) \notin  \left[\tfrac12 N - C_0\sqrt{N},\tfrac12 N + C_0\sqrt{N} \right]\Bigg) \leq \varepsilon
\end{equation} with $N \in \N$ large enough.
\end{lemma}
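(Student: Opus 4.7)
The plan is to reduce the quantitative density estimate for $\mu_N$ to a standard concentration bound for sums of independent Bernoulli random variables, via the stochastic domination supplied by Lemma~\ref{lem:Bernoulli}. Under the scaling assumption \eqref{eq:ScalingCondition}, writing $A = \exp(-\tilde A N^{-1/2})$ and $C = \exp(-\tilde C N^{-1/2})$ and expanding to first order yields
\begin{equation*}
\rho_{\Lup} = \frac{1}{1+C} = \tfrac{1}{2} + \frac{\tilde C}{4\sqrt{N}} + \Or(N^{-1}), \qquad \rho_{\Rup} = \frac{A}{1+A} = \tfrac{1}{2} - \frac{\tilde A}{4\sqrt{N}} + \Or(N^{-1}),
\end{equation*}
so both effective reservoir densities are within $\Or(N^{-1/2})$ of $\tfrac{1}{2}$, uniformly in $N$. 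The same conclusion holds for any $q$ satisfying \eqref{def:TriplePointScaling} since the expansions of $\rho_{\Lup}$ and $\rho_{\Rup}$ depend only on $A$ and $C$.

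For any fixed $i \in \lbr 4 \rbr$ and $m \in \R$, the event $\bigl\{\sum_{(i-1)N/4 < x \le iN/4} \eta(x) \ge m \bigr\}$ is increasing in $\eta$, while the event with reversed inequality is decreasing. The stochastic domination statement in Lemma~\ref{lem:Bernoulli} therefore bounds the upper (respectively lower) tail under $\mu_N$ by the corresponding tail under $\textup{Ber}_N(\max(\rho_{\Lup},\rho_{\Rup}))$ (respectively $\textup{Ber}_N(\min(\rho_{\Lup},\rho_{\Rup}))$). In either case the reference product measure has parameter $\rho = \tfrac{1}{2} + \Or(N^{-1/2})$.

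Under $\textup{Ber}_N(\rho)$, the sum of the $N/4$ i.i.d.\ Bernoulli-$\rho$ variables in a single quarter has mean $\rho\,N/4 = \tfrac{N}{8} + \Or(\sqrt{N})$ and variance at most $N/16$, so Chebyshev's inequality yields, for every $t > 0$,
\begin{equation*}
\P\!\left( \Bigl|\sum_{x} \eta(x) - \rho\,N/4\Bigr| \ge t \right) \le \frac{N}{16\, t^{2}}.
\end{equation*}
Choosing $t = C_0 \sqrt{N}$ with $C_0$ large enough (depending on $\varepsilon$ and on the implicit constants in the expansions of $\rho_{\Lup}$ and $\rho_{\Rup}$) first absorbs the $\Or(\sqrt{N})$ discrepancy between $\rho\,N/4$ and the target midpoint into the interval and then makes the right-hand side at most $\varepsilon/2$. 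Combining the two resulting tail bounds proves the claim. The argument is entirely elementary given Lemma~\ref{lem:Bernoulli}, and I do not anticipate any serious obstacle; the only point that requires a little care is tracking the $\Or(\sqrt{N})$ shift in the mean so that it can be absorbed into the interval by enlarging $C_0$.
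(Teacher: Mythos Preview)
Your proof is correct and follows exactly the same approach as the paper's own proof, which simply states that the result is immediate from Lemma~\ref{lem:Bernoulli} because $\mu_N$ is sandwiched between Bernoulli product measures with parameters $\tfrac12 \pm m N^{-1/2}$. You have merely spelled out the details (the first-order expansion of $\rho_{\Lup},\rho_{\Rup}$, the monotonicity of the relevant events, and the Chebyshev bound) that the paper leaves implicit.
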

\begin{proof}
This statement is immediate from Lemma \ref{lem:Bernoulli} which says that $\mu_N$ is stochastically dominated from below by a Bernoulli-$(\frac{1}{2}-m N^{-1/2})$-product measure and from above by a Bernoulli-$(\frac{1}{2}+m N^{-1/2})$-product measure with some constant $m>0$, together with a standard moderate deviation bound.
\end{proof}

We have now all tools to show the desired lower bounds on the mixing times.

\begin{proof}[Proof of the lower bound in Theorem \ref{thm:MixingTimesTriple} and Theorem \ref{thm:MixingTimesKPZ}]

We let $(\eta_t)_{t \geq 0}$ denote an open ASEP on $\{0,1\}^{N}$ with respect to boundary parameters $\alpha,\beta,\gamma,\delta$ and $q$,  which satisfy \eqref{eq:ScalingCondition}.
We define $(\eta^{(1)}_t)_{t \geq 0}$ to be an open ASEP on $\{0,1\}^{N}$ and $(\eta^{(2)}_t)_{t \geq 0}$ to be a disagreement processes on $\{0,1,2\}^{N}$ with the same parameters $\gamma,\delta,q$ as $(\eta_t)_{t \geq 0}$, but where $\alpha^{\prime},\beta^{\prime}$ are chosen such that the stationary distribution $\bar{\mu}_N$ of both processes satisfies
\begin{equation*}
\bar{\mu}_N \sim \textup{Ber}_N\Big(\frac{1}{2}\Big) .
\end{equation*} We assert that $\eta_0=\eta^{(1)}_0 \sim \bar{\mu}_N$ while $\eta^{(2)}_0 \sim \bar{\pi}^{z}_N$ for some $z>0$ specified later on. Moreover, we assume that $\eta^{(2)}_0 \succeq_{\c} \eta^{(1)}_0$,  extending the component-wise ordering $\succeq_{\textup{c}}$ with respect to the ordering $1 \succeq 2 \succeq 0$ on $\{0,1,2\}^{N}$.
 Let $\kappa<\frac{1}{2}$ and $\varepsilon>0$. Let $(\eta^{\Z}_t)_{t \geq 0}$ be an ASEP on $\Z$ with $\eta_0^{(2)}(x)=\eta_0^{\Z}(x)$ for all $x \in \lbr N \rbr$, and a Bernoulli-$\frac{1}{2}$-product measure for all $x \notin \lbr N \rbr$. Note that when projecting all second class particles to first class particles, the law of $(\eta^{\Z}_t)_{t \geq 0}$ is stochastically dominated from above by a Bernoulli-$\big(\frac{1}{2}+\frac{z}{\sqrt{N}}\big)$-product measure. Thus, combining Proposition \ref{pro:SpeedOfDisagreement} and Lemma~\ref{lem:SpeedOfPropagationCoupling}, there exists a constant $c_1>0$, depending only on $\varepsilon$ and $z$, such that under the basic coupling between $(\eta^{(2)}_t)_{t \geq 0}$ and $(\eta^{\Z}_t)_{t \geq 0}$, we get that for all $N$ large enough
\begin{equation}\label{eq:LowPre1}
\mathbf{P}\Big( \eta_t^{(2)}(x)=\eta_t^{\Z}(x) \, \forall t \in [0,c_1(1-q)^{-1}N^{3/2}] , x \in \left[\tfrac{1}{4}N,\tfrac{3}{4}N\right] \Big) \geq 1- \varepsilon .
\end{equation}
From \eqref{eq:LowPre1} and Lemma~\ref{lem:ExtraSecondClass} to bound the motion of second class particles within $(\eta^{\Z}_t)_{t \geq 0}$, we see that for every $\varepsilon,z>0$, there exists some constant $c_2>0$ such that for all $N$ large enough
\begin{equation}\label{eq:Low1}
\mathbf{P}\left( \eta_t^{(1)}(x)=\eta_t^{(2)}(x) \, \forall t \in [0,c_2(1-q)^{-1}N^{3/2}] ,  x \notin \left[\tfrac{1}{4}N,\tfrac{3}{4}N\right] \right) \geq 1-\varepsilon .
\end{equation}
When $\kappa=\frac{1}{2}$, a similar argument yields that for every $\varepsilon,z>0$, there exists some constant $c_3>0$ such that for all $N$ large enough
\begin{equation}\label{eq:Low2}
\mathbf{P}\left( \eta_t^{(1)}(x)=\eta_t^{(2)}(x) \, \forall t \in [0,c_3 N^{2}\log^{-1}(N)] , x \notin \left[\tfrac{1}{4}N,\tfrac{3}{4}N\right] \right) \geq 1-\varepsilon .
\end{equation}
Using again Proposition~\ref{pro:SpeedOfDisagreement} and Lemma~\ref{lem:SpeedOfPropagationCoupling} when $\kappa<\frac{1}{2}$, we see that for every $\varepsilon,z>0$, there exists some constant $c_4>0$ such that for all $N$ large enough
\begin{equation}\label{eq:Low3}
\mathbf{P}\left( \eta_t^{(1)}(x)=\eta_t(x) \, \forall t \in [0,c_4(1-q)^{-1}N^{3/2}] , x \in \left[\tfrac{1}{4}N,\tfrac{3}{4}N\right] \right) \geq 1- \varepsilon .
\end{equation}
Similarly, when $\kappa=\frac{1}{2}$, we get that for every $\varepsilon,z>0$, there exists some constant $c_5>0$ such that for all $N$ large enough
\begin{equation}\label{eq:Low4}
\mathbf{P}\left( \eta_t^{(1)}(x)=\eta_t(x) \, \forall t \in [0,c_5 N^{2}\log^{-1}(N)] , x \in \left[\tfrac{1}{4}N,\tfrac{3}{4}N\right] \right) \geq 1- \varepsilon .
\end{equation}
Combining \eqref{eq:Low1} and \eqref{eq:Low3} for $\kappa<\frac{1}{2}$, with probability at least $1-2\varepsilon$, no second class particle in $(\eta_t)_{t \geq 0}$ started from $\bar{\pi}_N$ has left the segment by time $\min(c_2,c_4)N^{3/2}(1-q)^{-1}$.
Similarly, combining \eqref{eq:Low2} and \eqref{eq:Low4} for $\kappa=\frac{1}{2}$, with probability at least $1-2\varepsilon$, no second class particle in $(\eta_t)_{t \geq 0}$ started from $\bar{\pi}_N$ has left the segment by time $\min(c_3,c_5)N^{2}\log^{-1}(N)$.
In view of Lemma \ref{lem:StationaryParticleDensity}, for any $\varepsilon>0$, first taking $z>0$ sufficiently large and then the constants $c_2,c_3,c_4,c_5>0$ sufficiently small, this yields the desired lower bounds on the mixing time in Theorem \ref{thm:MixingTimesTriple} and Theorem \ref{thm:MixingTimesKPZ}.
\end{proof}

\subsection*{Acknowledgment}
We are grateful to Amol Aggarwal and Ivan Corwin for valuable comments on moderate deviations for the current of the open ASEP.
Moreover, we thank Márton Balázs and Guillaume Barraquand for helpful discussions. This work was partly funded by the Deutsche Forschungsgemeinschaft (DFG, German Research Foundation) the CRC 1720 – 539309657 and under Germany’s Excellence Strategy - EXC 2047/1 – 390685813. D.S. is partially funded by the Packard Foundation via Amol Aggarwal's Packard Fellowships for Science and Engineering and by the Simons Foundation via Ivan Corwin's Investigator Award.

\bibliographystyle{imsart-number} 
\bibliography{ASEPTriple}   


\end{document}